\definecolor{light-gray}{gray}{0.60}
\newcounter{notes}%
\theoremstyle{plain}
\newtheorem{theorem}{Theorem}
\newtheorem{proposition}[theorem]{Proposition}
\newtheorem{corollary}[theorem]{Corollary}
\newtheorem{lemma}[theorem]{Lemma}
\newtheorem{fact}[theorem]{Fact}
\newtheorem*{BenoistKobayashiThm}{Benoist--Kobayashi's properness criterion}
\theoremstyle{definition}
\newtheorem{definition}[theorem]{Definition}
\newtheorem{example}[theorem]{Example}
\newtheorem{remark}[theorem]{Remark}
\newtheorem{question}[theorem]{Question}
\newtheorem{conjecture}[theorem]{Conjecture}
\numberwithin{theorem}{section}
\numberwithin{equation}{section}
\newcommand{\N}{\mathbb{N}}
\newcommand{\Z}{\mathbb{Z}}
\newcommand{\K}{\mathbb{K}}
\newcommand{\Q}{\mathbb{Q}}
\newcommand{\R}{\mathbb{R}}
\newcommand{\C}{\mathbb{C}}
\newcommand{\HH}{\mathbf{H}}
\newcommand{\PP}{\mathbf{P}}
\newcommand{\SL}{\mathrm{SL}}
\newcommand{\GL}{\mathrm{GL}}
\newcommand{\SO}{\mathrm{SO}}
\newcommand{\OO}{\mathrm{O}}
\newcommand{\PO}{\mathrm{PO}}
\newcommand{\PSL}{\mathrm{PSL}}
\newcommand{\Sp}{\mathrm{Sp}}
\newcommand{\U}{\mathrm{U}}
\newcommand{\SU}{\mathrm{SU}}
\newcommand{\g}{\mathfrak{g}}
\newcommand{\h}{\mathfrak{h}}
\newcommand{\kk}{\mathfrak{k}}
\newcommand{\p}{\mathfrak{p}}
\newcommand{\q}{\mathfrak{q}}
\newcommand{\aaa}{\mathfrak{a}}
\newcommand{\bb}{\mathfrak{b}}
\renewcommand{\1}{\mathbf{1}}
\newcommand{\Rrank}{\mathrm{rank}_{\mathbb{R}}}
\newcommand{\Ad}{\operatorname{Ad}}
\newcommand{\ad}{\operatorname{ad}}
\newcommand{\Hom}{\mathrm{Hom}}
\newcommand{\ie}{i.e.\ }
\newcommand{\eg}{e.g.\ }
\newcommand{\resp}{resp.\ }
\newcommand{\AdS}{\mathrm{AdS}}
\newcommand{\Diag}{\mathrm{Diag}}
\newcommand{\cohdim}{\mathrm{vcd}}
\newcommand{\Lie}{\mathrm{Lie}}
\newcommand{\vect}[1]{ \overset{\rightarrow}{#1}}
\newcommand{\equaldef}{\coloneqq}
\newcommand{\red}{\textit{red}}
\newcommand{\multidist}{\vect{\smash{d}\vphantom{x}}\vphantom{d}}
\title[Sharpness of proper and cocompact actions]{Sharpness of proper and cocompact actions on reductive homogeneous spaces}
\author{Fanny Kassel}
\address{CNRS and Laboratoire Alexander Grothendieck, Institut des Hautes \'Etudes Scientifiques, Universit\'e Paris-Saclay, 35 route de Chartres, 91440 Bures-sur-Yvette, France}
\email{kassel@ihes.fr}
\author{Nicolas Tholozan}
\address{DMA, ENS, 45 rue d'Ulm, 75005 Paris, France}
\email{Nicolas.Tholozan@ens.fr}
\thanks{This project received funding from the European Research Council (ERC) under the European Union's~Ho\-rizon 2020 research and innovation programme (ERC starting grant DiGGeS, grant agreement No. 715982).}
\begin{document}

\begin{abstract}
We prove that if $G$ is any noncompact connected real reductive linear Lie group and $\Gamma$ any discrete subgroup of~$G$ acting properly discontinuously and cocompactly on some homogeneous space $G/H$ of~$G$, then $\Gamma$ is quasi-isometrically embedded in~$G$ and the action of $\Gamma$ on $G/H$ is sharp, \ie satisfies a strong, quantitative form of proper discontinuity.
For noncompact reductive~$H$, this was known as the Sharpness Conjecture, with applications to spectral analysis on pseudo-Riemannian locally symmetric spaces developed in \cite{kk16}.
For $G/H$ rational of real corank one, we use sharpness to fully characterize properly discontinuous and cocompact actions on $G/H$ in terms of Anosov representations.
This enables us to show that in real corank one, acting properly discontinuously and cocompactly on $G/H$ is an open property, and also to prove that a number of homogeneous spaces do not admit compact quotients, such as $\SL(n+1,\mathbb{K})/\SL(n,\mathbb{K})$ for $n>1$ and $\mathbb{K}=\R$, $\C$, or the quaternions.
\end{abstract}

\maketitle
\tableofcontents

%%%%%%%%%%%%%%%%%%%%%%%%%%%%%%%%%%%%%%%%%%%%%%%%%%%
\section{Introduction}

%%%%%%%%%%%%%%%%%%%%%%%%%
\subsection{Compact quotients of reductive homogeneous spaces} \label{subsec:intro-cpt-quot}

Let $X$ be a manifold equipped with a faithful and transitive action of a connected real linear Lie group $G$.
Then $X$ identifies with the right quotient $G/H$, where $H$ is the stabilizer of a basepoint in $X$.
A \emph{compact quotient} of~$X$ (sometimes also called a \emph{compact Clifford--Klein form} of $X$) is a closed orbifold of the form $\Gamma \backslash X$, where $\Gamma$ is a discrete subgroup of $G$ acting properly discontinuously and cocompactly on~$X$.
Up to passing to a finite-index subgroup, one can furthermore assume (by the Selberg lemma \cite[Lem.\,8]{sel60}) that the action of $\Gamma$ is free, so that the quotient $\Gamma \backslash X$ is a closed manifold.

Compact quotients of homogeneous spaces are thus (up to a finite orbifold cover) manifolds whose universal covering has a ``geometric incarnation''; they were both a motivation and a source of examples for Poincar\'e's introduction of the notion of fundamental group.

Research on compact quotients of homogeneous spaces~$X$ is driven by the following three main questions:

\begin{question} \label{ques:Existence}
Does $X$ admit compact quotients?
\end{question}

\begin{question} \label{ques:Rigidity}
If $X$ admits compact quotients, are these quotients rigid?
\end{question}

\begin{question} \label{ques:Topology}
If $X$ admits compact quotients, what is their topology/geometry?
\end{question}

These questions have been thoroughly investigated when the isotropy group $H$ is compact.
In this case, $G$ preserves a Riemannian metric on $X$, any discrete subgroup $\Gamma$ of $G$ automatically acts properly discontinuously on $X$, and this action is cocompact if and only if $\Gamma$ is a \emph{uniform lattice} in~$G$.
When $G$ is simple, Question~\ref{ques:Existence} is thus answered positively by Borel's existence theorem for uniform lattices \cite{bor63}, while Calabi--Weil's local rigidity theorem \cite{wei62} states that any uniform lattice in $G$ is rigid except when $G$ is isogenous to $\PSL(2,\R)$, answering Question~\ref{ques:Rigidity}; if $K$ is a maximal compact subgroup of~$G$, then $\Gamma \backslash X$ fibers over $\Gamma \backslash G/K$, which is a closed nonpositively-curved Riemannian (hence aspherical) orbifold.
This already gives very strong information on the inexhaustible Question~\ref{ques:Topology}.

In contrast, the case where $H$ is noncompact is far from understood.
In this case, not all discrete subgroups of~$G$ act properly discontinuously on $G/H$ (\eg infinite discrete subgroups of~$H$ do not, as they have a global fixed point in $G/H$); determining when the action of a discrete subgroup $\Gamma$ can be simultaneously properly discontinuous (which requires $\Gamma$ to be ``not too large'') and cocompact (which requires $\Gamma$ to be ``not too small'') is a difficult problem, which has given rise to a very rich literature (see \eg the surveys \cite{ky05,con14} or the introduction of \cite{kas-PhD}).
Various obstructions have been given to the existence of compact quotients of certain homogeneous spaces $G/H$, but Question~\ref{ques:Existence} remains open in general.
For instance, it is known from \cite{cm62,wol62,kob89} that if $G$ and~$H$ are reductive with $\Rrank(G) = \Rrank(H)$, then only finite groups can act properly discontinuously on $G/H$ (\emph{Calabi--Markus phenomenon}).
In some cases where $\Rrank(G) > \Rrank(H)$, only virtually cyclic or virtually abelian groups can act properly discontinuously on $G/H$ (see \cite{ben96,kas08}), and these are too small to give rise to compact quotients.
In other cases certain nonabelian free groups can act properly discontinuously, but still $G/H$ does not admit any compact quotients.
The known constructions of compact quotients are often rigid, but several infinite families of homogeneous spaces admitting nonrigid compact quotients have also been found \cite{gol85,kob98,kas12,mst}.
Finally, very little is known about the topology of compact quotients.
For instance, it is not known whether the fundamental group of a compact quotient of the affine space must be virtually solvable (this is the famous Auslander conjecture, see \eg \cite{ddgs22}).

In this paper, we will focus on the case that $G$ is reductive (and $H$ noncompact).
We will prove a general ``sharpness'' property of compact quotients of $G/H$ (Theorem~\ref{thm:sharp}), which has strong applications to Questions \ref{ques:Existence}, \ref{ques:Rigidity}, and~\ref{ques:Topology} above.
Let us start by explaining what sharpness is.

%%%%%%%%%%%%%%%%%%%%%%%%%
\subsection{Sharpness} \label{subsec:intro-sharp}

A general and systematic study of compact quotients of homogeneous spaces $G/H$ for reductive~$G$ and noncompact~$H$ was initiated by Kobayashi in the late 1980s.
He and Benoist independently formulated a criterion for a subgroup $\Gamma$ of $G$ to act properly discontinuously on $G/H$.
Let $\aaa$ be a Cartan subspace of the Lie algebra $\g$ of~$G$, let $\aaa^+ \subset \aaa$ be a choice of closed positive Weyl chamber, and let $\mu: G \to \aaa^+$ be the corresponding Cartan projection (see Section~\ref{subsec:Cartan-decomp}).
Let $d_{\aaa}$ be the distance function on~$\aaa$ associated to some Euclidean norm $\Vert\cdot\Vert$.

\begin{BenoistKobayashiThm}[\cite{ben96,kob96}]
Let $G$ be a connected real linear reductive Lie group, with Cartan projection $\mu : G\to\aaa^+$, and let $H$ and $\Gamma$ be two closed subgroups of~$G$.
Then $\Gamma$ acts properly discontinuously on $G/H$ if and only if for every $R>0$, the set
$$\{\gamma \in \Gamma ~|~ d_{\aaa}(\mu(\gamma),\mu(H))\leq R\}$$
is finite.
\end{BenoistKobayashiThm}

In her thesis \cite{kas-PhD}, the first-named author proved a stronger property for properly~discon\-tinuous actions on the $3$-dimensional anti-de Sitter space $\mathrm{AdS}^3 = G/H = \PO(2,2)/\OO(2,1)\linebreak \simeq (\PSL(2,\R)\times\PSL(2,\R))/\Diag(\PSL(2,\R))$: namely, in this case, if a discrete subgroup $\Gamma$ of~$G$ acts properly discontinuously and cocompactly on $G/H$, then the image of~$\Gamma$ under the Cartan projection~$\mu$ escapes \emph{linearly} from the set $\mu(H)$.
This led to the introduction of the notion of \emph{sharpness}, which should be thought of as a strong, quantitative version of proper discontinuity.

\begin{definition}[\cite{kk16}] \label{def:sharp}
Let $G$ be a connected real linear reductive Lie group, $H$ a closed subgroup of~$G$, and $\Gamma$ a discrete subgroup of~$G$.
The action of $\Gamma$ on $G/H$ is \emph{sharp} if there exist $c,c'>0$ such that
$$d_{\aaa}(\mu(\gamma),\mu(H)) \geq c\,\Vert\mu(\gamma)\Vert - c' \quad\quad\text{for all }\gamma\in\Gamma.$$ 
\end{definition}

Equivalently, the action of $\Gamma$ on $G/H$ is sharp if the \emph{limit cones} of $\Gamma$ and $H$ (see Section~\ref{subsec:lim-cone}) intersect only at~$0$.

If the action of $\Gamma$ on $G/H$ is sharp, then it is properly discontinuous by the Benoist--Kobayashi properness criterion.
The converse is false in general (see \cite[\S\,10.1]{gk17}).
However, the Sharpness Conjecture \cite[Conj.\,4.12]{kk16} states that if the action of $\Gamma$ on $G/H$ is properly discontinuous \emph{and cocompact}, then it should be sharp.
In the present paper, we prove this conjecture.

\begin{theorem} \label{thm:sharp}
Let $G$ be a connected real linear reductive Lie group and $H$ a closed subgroup of~$G$.
Then the Sharpness Conjecture holds for $G/H$.
More precisely, any discrete subgroup $\Gamma$ of~$G$ acting properly discontinuously and cocompactly on $G/H$ is finitely generated and \emph{sharply embedded} in~$G$ with respect to~$H$.
\end{theorem}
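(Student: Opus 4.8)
The plan is to exploit the cocompactness of $\Gamma\backslash G/H$ to ``transfer'' geometric control from $G$ to $\Gamma$, in two stages: first show $\Gamma$ is finitely generated and quasi-isometrically embedded (giving a comparison between word length and $\Vert\mu(\gamma)\Vert$), then upgrade proper discontinuity to the linear escape estimate of Definition~\ref{def:sharp}. For the first stage, pick a compact subset $\Omega\subset G$ whose image covers $\Gamma\backslash G/H$, so that $G=\Gamma\,\Omega\,H$; this is the classical mechanism by which cocompactness of a quotient action forces $\Gamma$ to be ``large'' in $G$. A Milnor--\v Svarc type argument in the coarse geometry of $G$ (with the pseudo-metric $d_\aaa(\mu(g),\mu(g'))$, or better a genuine left-invariant metric and the Cartan projection's coarse properties) should yield that $\Gamma$ is finitely generated and that the orbit map $\gamma\mapsto\gamma\cdot o$ into $G/K$ is a quasi-isometric embedding; equivalently $\Vert\mu(\gamma)\Vert$ is comparable to the word length $|\gamma|_S$ up to affine error. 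The subtlety here is that $G/H$ is not Riemannian when $H$ is noncompact, so properness of the $\Gamma$-action is only coarse (Benoist--Kobayashi), and one must run the Milnor--\v Svarc argument with that coarse properness rather than metric properness.

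For the second stage — the heart of the matter — I would argue by contradiction: suppose no constants $c,c'$ work, so there is a sequence $\gamma_n\in\Gamma$ with $\Vert\mu(\gamma_n)\Vert\to\infty$ but $d_\aaa(\mu(\gamma_n),\mu(H))=o(\Vert\mu(\gamma_n)\Vert)$. After extracting, $\mu(\gamma_n)/\Vert\mu(\gamma_n)\Vert$ converges to a unit vector $v$ lying in the limit cone of $H$; by the quasi-isometric embedding from stage one, the corresponding rays in $G/K$ are uniform quasi-geodesics, and $v$ should be realized as (a positive multiple of) the "direction at infinity" of a geodesic ray in $\Gamma$ tracked by these $\gamma_n$. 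The goal is to leverage cocompactness to produce, from such a ray, an actual element of $\Gamma$ whose Cartan projection lies exactly (or within bounded distance) in $\mu(H)$ — which violates proper discontinuity via Benoist--Kobayashi. Concretely, cocompactness gives a bounded-displacement "fundamental domain" along the ray, and one extracts a recurrence/asymptotic-translation phenomenon: the failure of linear escape means infinitely many group elements land in a fixed $d_\aaa$-neighborhood of $\mu(H)$, contradicting finiteness in the Benoist--Kobayashi criterion. Packaging this requires relating the tube $\{g: d_\aaa(\mu(g),\mu(H))\le R\}$ to the geometry of the $\Gamma$-action on $G/H$ itself, using $G=\Gamma\,\Omega\,H$ to see that every such $g$ can be written $\gamma\omega h$ with $\gamma$ constrained.

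The main obstacle, I expect, is making the second stage rigorous without a Riemannian structure on $G/H$: one cannot directly talk about geodesics or displacement in $G/H$. The resolution is presumably to work in $G/K$ (which \emph{is} nonpositively curved Riemannian) and translate the cocompactness hypothesis on $G/H$ into a statement about how the $H$-orbits $\{gH\}$ sit in $G/K$ — i.e. about the $\mu(H)$-directions — and then combine the Cartan projection's additivity-up-to-bounded-error properties with the coarse density $G=\Gamma\,\Omega\,H$. A second, more technical difficulty is extracting a genuine contradiction with the finiteness statement in Benoist--Kobayashi rather than merely with "non-sharpness": one needs that the elements witnessing failure of linear escape can be taken pairwise distinct and to land in a \emph{single} fixed tube $\{d_\aaa(\mu(\cdot),\mu(H))\le R\}$, which should follow from cocompactness bounding the number of $\Gamma$-translates meeting $\Omega H$. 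I would expect the actual paper to organize stage one via a careful Cartan-projection Milnor--\v Svarc lemma and stage two via an argument extracting, from non-sharpness plus cocompactness, an infinite subset of $\Gamma$ inside a bounded neighborhood of $H$, directly contradicting proper discontinuity.
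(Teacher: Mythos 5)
Your stage one is broadly fine (the paper gets compact generation from the \v{S}varc--Milnor lemma applied to the $\Gamma$-action on $G/H$ equipped with a $\Gamma$-invariant Riemannian metric; note also that the quasi-isometric embedding into $G$ is automatic once the sharp embedding estimate is proved, since $0\in\mu(H)$ gives $d_{\aaa}(\mu(\gamma),\mu(H))\leq\Vert\mu(\gamma)\Vert$). The gap is in stage two. Your plan is to derive a contradiction with the finiteness condition in the Benoist--Kobayashi criterion, i.e.\ to show that failure of linear escape forces infinitely many $\gamma\in\Gamma$ into a single tube $\{d_{\aaa}(\mu(\cdot),\mu(H))\leq R\}$. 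This cannot work as stated: failure of sharpness only yields $d_{\aaa}(\mu(\gamma_n),\mu(H))=o(\Vert\mu(\gamma_n)\Vert)$, which is perfectly compatible with $d_{\aaa}(\mu(\gamma_n),\mu(H))\to\infty$ and hence with proper discontinuity. Indeed the paper recalls (citing \cite{gk17}) that properly discontinuous actions which are not sharp do exist, so no argument whose only target is the Benoist--Kobayashi finiteness statement can prove the theorem; cocompactness must enter quantitatively, and you do not explain how it converts sublinear escape into bounded escape. The decomposition $G=\Gamma\,\Omega\,H$ does not do this by itself.

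The missing idea is a direct (not by contradiction) mechanism. The paper sets $\delta(x,x')=d_{\aaa}(\vect{d}(x,x'),\mu(H))$ on $G/K\times G/K$, interprets it via the polar decomposition $G=K\exp(\p\cap\q)H$ as a ``spacelike gap'' between the maximal timelike subspaces $\mathcal{K}_x,\mathcal{K}_{x'}\subset G/H$, and pulls it back to a $\Gamma$-invariant function $\Delta_{\Gamma}$ on $G/H\times G/H$ via a $\Gamma$-equivariant section $G/H\to G/K$. Two properties are then established: (a) for any compact $\mathcal{C}\subset G/H$ the set $\{y\mid \exists\, y'\in\mathcal{C},\ \Delta_{\Gamma}(y,y')\leq 1\}$ is compact --- this is exactly where proper discontinuity and cocompactness are used together --- and (b) a subdivision property: for all $y,y'$ and $t\in[0,1]$ there exists $z_t$ with $\Delta_{\Gamma}(y,z_t)\leq t\,\Delta_{\Gamma}(y,y')$ and $\Delta_{\Gamma}(z_t,y')\leq(1-t)\,\Delta_{\Gamma}(y,y')$, constructed explicitly from a spacelike geodesic in $G/H$. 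A chaining argument in the style of \v{S}varc--Milnor then gives $\Delta_{\Gamma}\geq R^{-1}d_{G/H}-1$, whence the linear lower bound on $d_{\aaa}(\mu(\gamma),\mu(H))$ in terms of $|\gamma|_S$. Your proposal contains no analogue of the interpolation step (b), and that step is precisely what upgrades the coarse information you have to a linear estimate. Finally, for a general closed subgroup $H$ the polar decomposition is unavailable; the paper first reduces to reductive $H$ by replacing $(G,H,\Gamma)$ with $(G\times G,\Diag(G),\Gamma\times H)$, a reduction absent from your outline.
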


Here we use the following terminology.

\begin{definition} \label{def:sharp-embed}
Let $G$ be a connected real linear reductive Lie group, with Cartan projection $\mu : G\to\aaa^+$, and let $H$ be a closed subgroup of~$G$.
A compactly generated closed subgroup $\Gamma$ of $G$, with compact generating subset~$S$ and associated word length function $|\cdot|_S : \Gamma\to\N$, is \emph{sharply embedded in~$G$ with respect to~$H$} if there exist $c,c'>0$ such that $d_{\aaa}(\mu(\gamma),\mu(H)) \geq c\,|\gamma|_S - c'$ for all $\gamma\in\Gamma$.
\end{definition}

Note that changing the compact generating set~$S$ may change the constants $c,c'$, but not their existence, so the notion is well defined independently of~$S$ (see Section~\ref{subsec:compact-gen}).
This notion is equivalent to the action of $\Gamma$ on $G/H$ being sharp and $\Gamma$ being quasi-isometrically embedded in~$G$, see Corollary~\ref{cor:sharp-embed-explain}.

Theorem~\ref{thm:sharp} was previously proved for $G/H = (G_0\times G_0)/\mathrm{Diag}(G_0)$ with $G_0=\PSL(2,\R)$ in \cite{kas-PhD} (this is the $3$-dimensional anti-de Sitter space), with $G_0=\SO(n,1)$ in \cite{gk17}, and with $G_0$ a general semisimple linear group of real rank one in \cite{ggkw17}.
Note that \cite{kas-PhD,gk17,ggkw17} actually have weaker assumptions than $\Gamma$ being cocompact.
In particular, for $G_0= \PSL(2,\R)$, properly discontinuous actions are sharp as soon as $\Gamma$ is finitely generated.
On the other hand, there are some infinitely generated groups acting properly discontinuously but not sharply on the $3$-dimensional anti-de Sitter space \cite[\S\,10.1]{gk17}.

%%%%%%%%%%%%%%%%%%%%%%%%%
\subsection{Application 1: spectral analysis on pseudo-Riemannian locally symmetric spaces}

Throughout the paper, we denote by $K_H$ a maximal compact subgroup of $H$ and by $K$ a maximal compact subgroup of~$G$ containing~$K_H$.

Suppose that the homogeneous space $G/H$ is of reductive type, \ie the closed subgroup $H$ has finitely many connected components and is stable under a Cartan involution of the reductive Lie group~$G$. Then the space $G/H$ carries a $G$-invariant pseudo-Riemannian metric of signature $(\mathtt{d}_+,\mathtt{d}_-)$, where 
\begin{equation} \label{eqn:d+d-}
\mathtt{d}_+ = \dim (G/K)-\dim(H/K_H) \quad \textrm{and} \quad \mathtt{d}_-= \dim(K_G/K_H).
\end{equation}
(For semisimple~$G$, such a metric is induced by the Killing form on the Lie algebra of~$G$.)
This pseudo-Riemannian metric descends to any quotient manifold $\Gamma \backslash G/H$ and defines a Laplacian on the quotient.
We note that the principal symbol of this differential operator is the pseudo-Riemannian metric, hence it is not elliptic when $H$ is noncompact.

In \cite{kk11,kk16}, Kobayashi and the first-named author initiated the study of the spectral theory of the Laplacian on $\Gamma\backslash G/H$.
In particular, assuming $G/H$ is symmetric, they proved \cite[Th.\,1.8]{kk16} that if the discrete spectrum of the Laplacian on $G/H$ is infinite and if the action of $\Gamma$ on $G/H$ is sharp, then the discrete spectrum of the Laplacian on $\Gamma\backslash G/H$ is infinite; this was done by constructing generalized Poincar\'e series.
The fact that the discrete spectrum of the Laplacian on $G/H$ is infinite is known \cite{fle80,mo84} to be equivalent to the rank condition $\mathrm{rank}(G/H) = \mathrm{rank}(K/K_H)$; see \cite[\S\,2]{kk16} for examples.
By Theorem~\ref{thm:sharp}, the action of $\Gamma$ on $G/H$ is sharp whenever the quotient $\Gamma\backslash G/H$ is compact.

%%%%%%%%%%%%%%%%%%%%%%%%%
\subsection{Application 2: openness for proper and cocompact actions}

It is expected that properly discontinuous and cocompact actions on homogeneous spaces $G/H$ of reductive type should always be stable under small deformations.

\begin{conjecture} \label{conj:openness}
Let $G/H$ be a homogeneous space of reductive type and $\Gamma$ a discrete subgroup of~$G$ acting properly discontinuously and cocompactly on $G/H$.
Then there is a neighborhood $\mathcal{U} \subset \Hom(\Gamma,G)$ of the natural inclusion such that every $\rho \in \mathcal{U}$ is discrete and faithful and satisfies that the action of $\Gamma$ on $G/H$ via~$\rho$ is properly discontinuous and cocompact.
\end{conjecture}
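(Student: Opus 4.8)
The approach begins with three reductions. First, by Selberg's lemma one may replace $\Gamma$ by a torsion-free finite-index subgroup $\Gamma'$: proper discontinuity, discreteness, faithfulness and cocompactness all behave well under passage between a group and a finite-index subgroup, and one pulls back the neighborhood along the restriction map $\Hom(\Gamma,G)\to\Hom(\Gamma',G)$; so assume $\Gamma$ torsion-free. Second, if the action on $G/H$ through a homomorphism~$\rho$ is properly discontinuous then $\rho$ is automatically discrete and faithful: applying the properness criterion to a compact subset of $G/H$ with nonempty interior forces $\ker\rho$ to be finite --- hence trivial --- and $\rho(\Gamma)$ to be discrete in~$G$. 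Third, granting that the deformed action is properly discontinuous (hence discrete and faithful), cocompactness is free for $\rho$ near the inclusion~$\iota$: by the Ehresmann--Thurston principle every $\rho$ near~$\iota$ is the holonomy of a $(G,G/H)$-structure on the closed manifold $M=\Gamma\backslash G/H$, whose developing map is an $(\iota,\rho)$-equivariant local diffeomorphism from the universal cover of~$M$ into $G/H$; it descends to a local diffeomorphism from~$M$ onto an open subset of $\rho(\Gamma)\backslash G/H$, and since $M$ is compact and $\rho(\Gamma)\backslash G/H$ is connected, that subset is everything, so $\rho(\Gamma)\backslash G/H$ is compact. (When $G/H$ is not simply connected one runs the same argument with the universal cover of~$G/H$ and a suitable extension of~$\Gamma$.) Everything thus reduces to showing that proper discontinuity is an open condition near~$\iota$.

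This is where Theorem~\ref{thm:sharp} is meant to enter. Fix a compact generating set~$S$ of~$\Gamma$; by Theorem~\ref{thm:sharp} there are $C,C'>0$ with $d_\aaa(\mu(\gamma),\mu(H))\geq C\,|\gamma|_S-C'$ for all $\gamma\in\Gamma$. If one could show that for every $\eps>0$ there is a neighborhood of~$\iota$ on which
$$d_\aaa\bigl(\mu(\rho(\gamma)),\mu(H)\bigr)\;\geq\;d_\aaa\bigl(\mu(\gamma),\mu(H)\bigr)-\eps\,|\gamma|_S-C_{\eps}\qquad(\gamma\in\Gamma),$$
then, choosing $\eps<C$, one would get $d_\aaa(\mu(\rho(\gamma)),\mu(H))\geq(C-\eps)\,|\gamma|_S-(C'+C_{\eps})\to+\infty$ as $\gamma\to\infty$ in the finitely generated group~$\Gamma$; by the Benoist--Kobayashi criterion the action of~$\Gamma$ on $G/H$ via~$\rho$ would then be properly discontinuous, and the previous paragraph would complete the proof.

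The displayed estimate is the crux, and it \emph{fails in general}: the Cartan projection~$\mu$ is only subadditive, not a coarse homomorphism, so a deformation moving the finitely many generators a small amount in~$G$ can change $\mu(\rho(\gamma))$ by far more than $\eps\,|\gamma|_S$ --- a small group element sandwiched between two elements with large Cartan projection can have an arbitrarily large effect on the Cartan projection of the product. This is exactly why the present paper establishes openness only when $G/H$ has real corank one: in that case the sharpness of Theorem~\ref{thm:sharp} can be upgraded to the statement that $\Gamma$ is Anosov in~$G$ with respect to a parabolic subgroup attached to~$H$, the Anosov representations form an open subset of $\Hom(\Gamma,G)$, and on that subset the relevant properness condition --- disjointness of the limit cone of $\rho(\Gamma)$ from that of~$H$ --- is itself open.

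To reach the conjecture in general I would try to prove the analogous structural statement in arbitrary real corank: that a discrete subgroup acting properly discontinuously and cocompactly on a reductive-type $G/H$ is always ``Anosov-like'' --- Anosov, or relatively Anosov, or at least uniformly transverse --- with respect to the pair of opposite parabolic subgroups of~$G$ naturally attached to~$H$. Concretely, one would promote the coarse geometry of the cocompact action --- by the Milnor--\v{S}varc lemma $\Gamma$ is quasi-isometric to $G/H$, and by Theorem~\ref{thm:sharp} its Cartan projection escapes $\mu(H)$ linearly --- to genuine hyperbolic-type dynamics on the associated flat bundle, by building a $\Gamma$-equivariant boundary map from a suitable boundary of~$\Gamma$ into the relevant flag variety and checking transversality together with a uniform contraction property. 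Making such a boundary map well defined and dynamics-preserving in higher real corank is, I expect, the genuine obstacle; it is currently open, which is why the statement of Conjecture~\ref{conj:openness} remains a conjecture.
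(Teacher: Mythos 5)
You were asked about Conjecture~\ref{conj:openness}, which the paper itself does not prove: it is stated as an open conjecture, and the paper establishes it only under the corank-one hypothesis of Theorem~\ref{thm:openness}. Your proposal is therefore correctly \emph{not} a proof, and your account of the situation matches the paper's. Your second and third reductions are sound: proper discontinuity of the $\rho$-action forces $\ker\rho$ finite and $\rho(\Gamma)$ discrete, and once properness of the deformed action is granted, Ehresmann--Thurston plus the open-and-closed-image argument does give cocompactness for free (this is consistent with the paper's remark relating the conjecture to Markus-type completeness). Your diagnosis of the crux is also exactly right: since $\mu$ is only subadditive, sharpness of the inclusion cannot be transported to nearby $\rho$ by a generator-by-generator estimate, and the paper instead goes through dynamics --- Theorem~\ref{thm:sharp} gives a sharp embedding, which in corank one is converted into a $\tau$-projective Anosov condition after composing with a suitable linear representation of~$G$ (Proposition~\ref{prop:corank-1-sharp<->Ano}), and openness of the Anosov property together with openness of the limit-cone condition yields Corollary~\ref{cor:corank-1-sharp-qi-open} and hence Theorem~\ref{thm:openness}. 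Note that even in corank one this requires the detour through a representation $\tau$, not Anosovness in $G$ itself, when $\mu(H)$ is not a union of walls.

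Two concrete cautions. First, in your opening reduction, faithfulness does not pass up from a finite-index subgroup: if $\rho$ is faithful on a torsion-free normal $\Gamma'\leq\Gamma$ of finite index, you only get that $\ker\rho$ injects into $\Gamma/\Gamma'$, so ruling out a nontrivial finite kernel for $\rho$ near the inclusion needs a separate (short but nonempty) argument about torsion elements. Second, and more seriously for your closing program: in corank greater than one one cannot hope to show that $\Gamma$ is Anosov, or even word hyperbolic, with respect to parabolic data attached to~$H$. For the group manifold $(G_0\times G_0)/\Diag(G_0)$ with $\Rrank(G_0)\geq 2$, a cocompact lattice of $G_0$ embedded in one factor acts properly discontinuously and cocompactly but is a higher-rank lattice, hence not Gromov hyperbolic; similarly, standard quotients by lattices of intermediate reductive subgroups $L$ need not be hyperbolic. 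So any general structural statement behind Conjecture~\ref{conj:openness} must go strictly beyond the Anosov framework (the paper handles such non-hyperbolic quotients by entirely different arguments, cf.\ Theorem~\ref{thm:group-mfd-SL3} and the known case of standard quotients in \cite{kas12}), which is precisely why the conjecture remains open.
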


\begin{remark}
By the Ehresmann--Thurston principle (see \eg \cite{bg04}), small deformations of the inclusion $\iota: \Gamma \to G$ are holonomies of $(G,G/H)$-structures on a closed manifold $\Gamma \backslash G/H$. Conjecture~\ref{conj:openness} would thus hold if we knew that closed manifolds locally modeled on $G/H$ are always covered by $G/H$, which is a reductive version of the \emph{Markus completeness conjecture} for affine manifolds (see \cite{car89,kli96}).
\end{remark}

Conjecture~\ref{conj:openness} has been proved when $\Gamma$ is a lattice in a connected Lie subgroup $L$ of~$G$ \cite{kas12} (the so-called \emph{standard quotients}), and also when $G/H = (G_0\times G_0)/ \Diag(G_0)$, with $G_0$ a semisimple Lie group of real rank~$1$ \cite{kas-PhD,gk17,ggkw17}.
Here we use Theorem~\ref{thm:sharp} to prove the following, where $W \equaldef N_G(\aaa)/Z_G(\aaa)$ is the restricted Weyl group of $G$.

\begin{theorem} \label{thm:openness}
Conjecture~\ref{conj:openness} holds whenever $\Rrank(G)-\Rrank(H) = 1$.

More generally, let $G$ be a connected real linear reductive Lie group and $H$ any closed subgroup of~$G$ such that $\mu(H)$ is the intersection of~$\aaa^+$ with the $W$-orbit of a hyperplane of~$\aaa$.
Then for any discrete subgroup  $\Gamma$ of~$G$ acting properly discontinuously and cocompactly on $G/H$, there is a neighborhood $\mathcal{U} \subset \Hom(\Gamma,G)$ of the natural inclusion such that every $\rho \in \mathcal{U}$ is discrete and faithful and satisfies that the action of $\Gamma$ on $G/H$ via~$\rho$ is properly discontinuous and cocompact.
\end{theorem}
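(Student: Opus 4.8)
The plan is to derive Theorem~\ref{thm:openness} from Theorem~\ref{thm:sharp}. Let $\Gamma$ act properly discontinuously and cocompactly on~$G/H$; we may assume $\Gamma$ is torsion-free by passing to a finite-index subgroup, which does not affect the conclusion. By Theorem~\ref{thm:sharp}, $\Gamma$ is finitely generated and sharply embedded in~$G$ with respect to~$H$; writing $\mu(H)=\aaa^+\cap W\cdot V$ with $V=\ker\varphi$ for a suitably normalized dominant functional~$\varphi$, this means there are $C,C'>0$ with $\min_{w\in W}|(w\varphi)(\mu(\gamma))|\ge C|\gamma|_S-C'$ for all $\gamma\in\Gamma$. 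The three steps I would carry out are: (1) use this drift \emph{and the cocompactness} to show that the inclusion $\iota\colon\Gamma\hookrightarrow G$ satisfies an Anosov-type condition relative to~$H$ --- in the prototypical cases ($G/H=\mathrm{AdS}^{2n+1}$, or $(G_0\times G_0)/\Diag(G_0)$ with $G_0$ of real rank one) this says that $\Gamma$ is word-hyperbolic, that $\iota$ is $P_\theta$-Anosov for the parabolic~$P_\theta$ determined by~$\varphi$, and that the limit cone of~$\Gamma$ stays linearly away from~$W\cdot V$; (2) show that this condition is open in $\Hom(\Gamma,G)$, so that every nearby $\rho$ is discrete, faithful, and sharp for~$G/H$, hence acts properly discontinuously on $G/H$ by the Benoist--Kobayashi criterion; (3) upgrade properness to cocompactness via the Ehresmann--Thurston principle.

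\textbf{The main obstacle is step~(1).} Sharpness literally provides only the linear drift of the single functional~$\varphi$ away from~$W\cdot V$, which is weaker than linear growth of $\langle\alpha,\mu(\gamma)\rangle$ for the simple roots~$\alpha$ in the support of~$\varphi$ --- a priori $\mu(\Gamma)$ could approach a wall of~$\aaa^+$ while staying away from~$W\cdot V$ --- so cocompactness, not just properness, must be used to ``spread out'' the limit cone of~$\Gamma$ enough to recover the full Anosov package, and in particular the word-hyperbolicity of~$\Gamma$. Once that is in place, step~(2) for the $P_\theta$-Anosov part is the openness of Anosov representations (Kapovich--Leeb--Porti; Gu\'eritaud--Guichard--Kassel--Wienhard), and for the ``domination'' part one perturbs the Cartan projection along the quasigeodesics furnished by the Anosov structure; the technical point underlying this, and governing the general hyperplane case where $\Gamma$ need not be hyperbolic, is that $\mu$ is not additive along products, so closeness of~$\rho$ to~$\iota$ on the finite generating set~$S$ cannot be propagated to all of~$\Gamma$ by telescoping --- the remedy is a local-to-global (fellow-traveling) argument for $\mu\circ\rho$ along geodesic words, which is precisely where one uses that $\Gamma$ is \emph{quasi-isometrically} embedded and not merely that the action is proper. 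Granting step~(2), we obtain $d_\aaa(\mu(\rho(\gamma)),\mu(H))\ge c|\gamma|_S-c'$ for all~$\gamma$, so $\rho(\Gamma)$ is sharp for~$G/H$ and thus properly discontinuous; discreteness and faithfulness of~$\rho$ follow from this same sharp/Anosov behaviour.

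For step~(3): the closed manifold $M_0:=\Gamma\backslash G/H$ carries a complete $(G,G/H)$-structure with holonomy~$\iota$. By the Ehresmann--Thurston principle, every~$\rho$ in a small enough neighborhood of~$\iota$ is the holonomy of a $(G,G/H)$-structure on~$M_0$; passing to the intermediate cover $G/H\to M_0$, its developing map is a $\rho$-equivariant local diffeomorphism $D\colon G/H\to G/H$, $C^0$-close to the identity on compact sets. Since~$\rho(\Gamma)$ is discrete and acts properly discontinuously on~$G/H$ by step~(2), the map~$D$ descends to a local diffeomorphism $\overline D\colon M_0\to\rho(\Gamma)\backslash G/H$; as $M_0$ is compact, $\overline D$ is a covering onto its image, the image is open and closed, and $\rho(\Gamma)\backslash G/H$ is connected because $G/H$ is, so $\overline D$ is onto and $\rho(\Gamma)\backslash G/H$ is compact. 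Assembling the three steps yields a neighborhood $\mathcal U\subset\Hom(\Gamma,G)$ of~$\iota$ all of whose elements are discrete, faithful, and properly discontinuous and cocompact on~$G/H$. The first assertion of the theorem is the special case $\Rrank(G)-\Rrank(H)=1$, in which $\mu(H)$ automatically takes the stated form.
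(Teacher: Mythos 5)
Your overall architecture (sharpness from Theorem~\ref{thm:sharp} $\to$ an Anosov-type open condition $\to$ proper discontinuity of nearby actions via Benoist--Kobayashi $\to$ cocompactness) is the same as the paper's, but the heart of the argument --- your steps (1)--(2) --- is not actually carried out, and you say so yourself (``the main obstacle is step~(1)''). Sharpness gives linear drift of $d_\aaa(\mu(\gamma),W\cdot\ker\varphi)$, and the problem is to convert this into a condition that is \emph{open} in $\Hom(\Gamma,G)$ for an arbitrary (possibly irrational) hyperplane $\ker\varphi$ and an arbitrary (possibly virtually cyclic, possibly non-hyperbolic a priori) $\Gamma$. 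Your proposed remedy --- a local-to-global/fellow-traveling argument for $\mu\circ\rho$ along geodesic words --- is exactly the kind of statement whose proof is the content of the Anosov openness theorems, and you cannot invoke those theorems unless you first exhibit a genuine Anosov condition equivalent to sharpness. The paper's resolution, absent from your proposal, is: (a) use Fact~\ref{fact:kas08} (which needs only proper discontinuity, not cocompactness, contrary to what you suggest) to confine $\mu(\Gamma)$ to a single connected component $\mathcal{C}$ of $\aaa^+\smallsetminus\mu(H)$, and Lemma~\ref{lem:C-Ker-alpha} to find simple roots positive on~$\mathcal{C}$; (b) build the auxiliary representation $(\tau,V)=\bigotimes_{\varphi\in\mathcal{F}_{\mathcal{C}}}(V_{N\varphi^+}\oplus V_{N\varphi^-})$ of \eqref{eqn:big-rep}, whose first gap $\langle\varepsilon_1-\varepsilon_2,\mu_V(\cdot)\rangle$ computes exactly the minimum of the $\langle\varphi,\mu(\cdot)\rangle$ and the relevant simple roots, so that ``QI $+$ sharp'' becomes equivalent to ``$\tau\circ\rho$ is $P_1$-Anosov'' (Proposition~\ref{prop:corank-1-sharp<->Ano}); (c) handle separately the rationality of $\varphi$ (by approximating the limit cone by a rational polyhedral subcone), the virtually cyclic case, and the possibility that the limit cone jumps to another component under perturbation (via continuity of the Jordan projection on a fixed infinite-order element). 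Without (b) in particular, your step~(2) does not go through in the stated generality, since the inclusion $\Gamma\hookrightarrow G$ itself need not be Anosov with respect to any parabolic of~$G$.

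Your step~(3) is a correct alternative in spirit, but it is heavier than what the paper does and has loose ends when $G/H$ is not simply connected (the developing map lives on $\widetilde{M_0}$, and descending it to a map $G/H\to G/H$ requires tracking the holonomy on $\pi_1(G/H)$). The paper avoids Ehresmann--Thurston entirely: once nearby $\rho$ are faithful with discrete image acting properly discontinuously, cocompactness is immediate from Fact~\ref{fact:vcd}, since $\mathrm{cd}$ is an isomorphism invariant and equals $\dim(G/H)-\dim(K/K_H)$ for the original cocompact action.
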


Theorem~\ref{thm:openness} applies in particular to some homogeneous spaces $G/H$ where $H$ is not reductive, and not even cocompact in a reductive subgroup of~$G$, such as the exotic homogeneous spaces $G/H$ with $G=\SO(2,2n)$ constructed by Oh--Witte \cite{ow00}.

\begin{remark} \label{rem:G-connected}
Throughout the paper we assume that $G$ is connected for convenience.
However, Theorem~\ref{thm:openness} holds for any real linear reductive Lie group~$G$ with finitely many connected components, and so do Theorem~\ref{thm:proper-compact<->Ano}, Proposition~\ref{prop:cork-1-proper-compact->Ano}, and Theorem~\ref{thm:cpt-quotient->H-QI}.
Indeed, if $G$ is a real linear reductive Lie group and $H,\Gamma$ are closed subgroups of~$G$ with $\Gamma$ discrete, then, denoting by $G_0$ the identity component of~$G$, the group $\Gamma \cap G_0$ has finite index in~$\Gamma$, and $\Gamma$ acts properly discontinuously and cocompactly on $G/H$ if and only if $\Gamma \cap G_0$ acts properly discontinuously and cocompactly on $G_0/(H \cap G_0)$.
\end{remark}

In order to establish Theorem~\ref{thm:openness}, we develop further the connection between sharpness and \emph{Anosov representations} which was initiated in \cite{ggkw17}.

%%%%%%%%%%%%%%%%%%%%%%%%%
\subsection{Sharpness and Anosov representations} \label{subsec:intro-Ano}

Anosov representations are representations of word hyperbolic groups into noncompact real semisimple (or reductive) Lie groups which have finite kernel, discrete image, and certain strong dynamical properties.
They were first introduced by Labourie \cite{lab06}, and further studied by Guichard--Wienhard \cite{gw12} and many other authors.
They now play an important role in higher Teichm\"uller theory and in recent developments on discrete subgroups of Lie groups (see \eg \cite{kas-notes}).

Anosov representations with values in~$G$ are defined with respect to the choice of a conjugacy class of proper parabolic subgroups of~$G$, which is equivalent to the choice of a nonempty subset of the simple restricted roots defining the Weyl chamber~$\aaa^+$.

%%%%%%%%
\subsubsection{The case where $\mu(H)$ is a union of walls}

Here is an application of Theorem~\ref{thm:sharp} in the case that $\mu(H)$ is a union of walls of~$\aaa^+$.

\begin{theorem} \label{thm:proper-compact<->Ano}
Let $G$ be a connected real linear reductive Lie group with Cartan projection $\mu : G\to\aaa^+$.
Let $\theta\subset\Delta$ be a subset of the simple restricted roots of~$G$, and $H$ a closed subgroup of~$G$ such that $\mu(H) = \aaa^+ \cap \bigcup_{\alpha\in\theta} \mathrm{Ker}(\alpha)$.
Then for any discrete subgroup $\Gamma$ of~$G$, the following are equivalent:
\begin{enumerate}
  \item\label{item:proper-compact} $\Gamma$ acts properly discontinuously and cocompactly on $G/H$,
  \item\label{item:Ano-dim} $\Gamma$ is word hyperbolic, its Gromov boundary has covering dimension $\dim(G/K) - \dim(H/K_H)-\nolinebreak 1$, and the natural inclusion $\Gamma \hookrightarrow G$ is a $P_{\theta}$-Anosov representation.
\end{enumerate}
\end{theorem}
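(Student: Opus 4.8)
The plan is to prove the two implications separately, using Theorem~\ref{thm:sharp} as the engine for the direction $(\ref{item:proper-compact})\Rightarrow(\ref{item:Ano-dim})$.

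First, I would show $(\ref{item:proper-compact})\Rightarrow(\ref{item:Ano-dim})$. Suppose $\Gamma$ acts properly discontinuously and cocompactly on $G/H$. By Theorem~\ref{thm:sharp}, $\Gamma$ is finitely generated and sharply embedded in $G$ with respect to $H$, so there exist $C, C' > 0$ with $d_{\aaa}(\mu(\gamma), \mu(H)) \geq C\,|\gamma|_S - C'$ for all $\gamma \in \Gamma$. Since here $\mu(H) = \aaa^+ \cap \bigcup_{\alpha \in \theta} \mathrm{Ker}(\alpha)$, the distance $d_{\aaa}(\mu(\gamma), \mu(H))$ is comparable (up to the geometry of $\aaa^+$ and the finite set $\theta$) to $\min_{\alpha \in \theta} \alpha(\mu(\gamma))$. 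Hence there are $c, c' > 0$ with $\min_{\alpha \in \theta} \alpha(\mu(\gamma)) \geq c\,|\gamma|_S - c'$; in particular $\alpha(\mu(\gamma)) \to \infty$ at a linear rate along any geodesic sequence in $\Gamma$. This is precisely a \emph{uniform $\theta$-regularity / CLI (controlled linear increase) condition}, which by the work linking these growth conditions to the Anosov property (Kapovich--Leeb--Porti, Bochi--Potrie--Sambarino, Gu\'eritaud--Guichard--Kassel--Wienhard, etc.) implies that $\Gamma$ is word hyperbolic and the inclusion $\Gamma \hookrightarrow G$ is $P_\theta$-Anosov, \emph{provided} we also know $\Gamma$ is quasi-isometrically embedded --- which is part of the conclusion of Theorem~\ref{thm:sharp} via ``sharply embedded''. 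For the dimension of the Gromov boundary: the $P_\theta$-Anosov inclusion gives a continuous equivariant embedding $\partial_\infty \Gamma \hookrightarrow G/P_\theta$, and since $\Gamma \backslash G/H$ is compact, a cohomological-dimension count (the cocompact action forces $\mathrm{vcd}(\Gamma) = \dim(G/H) - \dim(K/K_H)$, because $\Gamma \backslash G/H$ fibers with fiber $H/K_H$-type pieces over an aspherical base of the right dimension, and for a hyperbolic group $\dim \partial_\infty \Gamma = \mathrm{vcd}(\Gamma) - 1$) yields the stated value $\dim(G/H) - \dim(K/K_H) - 1$.

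Conversely, for $(\ref{item:Ano-dim})\Rightarrow(\ref{item:proper-compact})$: if $\Gamma \hookrightarrow G$ is $P_\theta$-Anosov, then by definition $\alpha(\mu(\gamma)) \to \infty$ linearly in $|\gamma|_S$ for every $\alpha \in \theta$, so $d_{\aaa}(\mu(\gamma), \mu(H)) \to \infty$ (indeed linearly), and the Benoist--Kobayashi properness criterion gives that $\Gamma$ acts properly discontinuously on $G/H$ --- in fact sharply. It remains to upgrade properness to \emph{cocompactness}. Here I would use the dimension hypothesis: the action being proper means $\Gamma \backslash G/H$ is a manifold (orbifold) of dimension $\dim(G/H)$, and it contains the quotient of the ``thick part'' which is compact; the Anosov boundary map identifies $\partial_\infty \Gamma$ with a subset of $G/P_\theta$ of the correct dimension so that the limit set is not too small. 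The cleanest route is a cohomological-dimension / degree argument: properness plus $\mathrm{vcd}(\Gamma) = \dim(G/H) - \dim(K/K_H)$ (which follows from the boundary dimension hypothesis via $\dim \partial_\infty \Gamma = \mathrm{vcd}(\Gamma) - 1$ for hyperbolic $\Gamma$, once we check $\Gamma$ is torsion-free up to finite index and of the right vcd) forces the proper action to be cocompact, because a proper action of a group of ``maximal possible'' cohomological dimension on a contractible-fiber bundle over $G/H$ cannot have noncompact quotient without lowering the effective dimension. Alternatively one invokes the criterion that a proper action with $\Gamma$ Anosov and the limit set occupying a compact model of $\partial_\infty\Gamma$ of top dimension is automatically cocompact, following the argument scheme of \cite{ggkw17} in rank one and \cite{kas12} for standard quotients.

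The main obstacle I anticipate is the \emph{cocompactness} half of $(\ref{item:Ano-dim})\Rightarrow(\ref{item:proper-compact})$: properness and the Anosov condition are essentially ``dynamical/coarse-geometric'' and follow formally, but promoting to cocompactness genuinely uses the dimension hypothesis and requires a careful topological argument --- controlling that the quotient has no ``ends'' or missing pieces, which amounts to showing the $\Gamma$-action on the complement of a neighborhood of the limit set in the relevant flag bundle over $G/H$ is cocompact. This is where one must combine the convex-cocompactness-type picture coming from the Anosov property (a $\Gamma$-invariant ``convex core'' upstairs, after Kapovich--Leeb or Danciger--Gu\'eritaud--Kassel) with the exact matching of dimensions; getting the bookkeeping right between $\dim(K/K_H)$, $\dim(H/K_H)$, and $\dim \partial_\infty\Gamma$ is delicate but mechanical once the framework is set up.
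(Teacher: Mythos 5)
Your proposal follows essentially the same route as the paper: Theorem~\ref{thm:sharp} together with the Kapovich--Leeb--Porti characterization (Fact~\ref{fact:klp}) gives word hyperbolicity and the $P_\theta$-Anosov property, Bestvina--Mess (Fact~\ref{fact:dim-bound-vcd}) combined with Fact~\ref{fact:vcd} gives the boundary dimension, and the converse uses the Benoist--Kobayashi criterion for properness. The cocompactness step you flag as the main obstacle is in fact immediate from the equality case of Fact~\ref{fact:vcd} (a proper action is cocompact if and only if $\mathrm{cd}(\Gamma)=\dim(G/H)-\dim(K/K_H)$, proved via the double fibration $\Gamma\backslash G/K_H\to\Gamma\backslash G/H$ and $\Gamma\backslash G/K_H\to\Gamma\backslash G/K$), so no convex-core or ends argument is needed.
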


Here we denote by $P_{\theta}$ a parabolic subgroup of type~$\theta$ (with the convention that $P_{\Delta}$ is a minimal parabolic subgroup of~$G$, see \eqref{eqn:P-theta}).

The implication \eqref{item:proper-compact}~$\Rightarrow$~\eqref{item:Ano-dim} follows from Theorem~\ref{thm:sharp}, from the main result of \cite{klp-morse} (see Fact~\ref{fact:klp}), and from Facts \ref{fact:vcd} and~\ref{fact:dim-bound-vcd} below.
The implication \eqref{item:Ano-dim}~$\Rightarrow$~\eqref{item:proper-compact} follows from the Benoist--Kobayashi properness criterion and from Facts \ref{fact:vcd} and~\ref{fact:dim-bound-vcd}; it was first observed in \cite{ggkw17}.

One important property of Anosov representations \cite{lab06} is that they form an open subset of $\Hom(\Gamma,G)$.
Theorem~\ref{thm:proper-compact<->Ano} and this property immediately imply Conjecture~\ref{conj:openness} for homogeneous spaces $G/H$ of reductive type such that $\mu(H)$ is a union of walls of~$\aaa^+$.

\begin{table}[!ht]
\centering
\begin{tabular}{|c|c|c|c|c|c|}
\hline
& $G$ & $H$ & restricted root system of $G$ & $\theta$\tabularnewline
\hline
(i) & $\SO(p,q)_0$ & $\SO(p,q-1)_0$ & $B_q$ & $\{\alpha_q\}$\tabularnewline
(ii) & $\SU(p,q)$ & $\U(p,q-1)$ & $(BC)_q$ (if $p>q$) or $C_q$ & $\{\alpha_q\}$\tabularnewline
(iii) & $\Sp(p,q)$ & $\Sp(p,q-1)\times\Sp(1)$ & $(BC)_q$ (if $p>q$) or $C_q$ & $\{\alpha_q\}$\tabularnewline
(iv) & $\SO(2p+k,2)_0$ & $\U(p,1)$ & $B_2$ & $\{\alpha_1\}$\tabularnewline
(v) & $\SU(2p+k,2)$ & $\Sp(p,1)$ & $(BC)_2$ (if $2p+k>2$) or $C_2$ & $\{\alpha_1\}$\tabularnewline
%(vi) & $\Sp(2m,\R)$, $m\geq 1$ & $\U(m-1,1)$ & $C_m$ & $\{\alpha_1\}$\tabularnewline
\hline
\end{tabular}
\vspace{0.2cm}
\caption{Some triples $(G,H,\theta)$ to which Theorem~\ref{thm:proper-compact<->Ano} applies. In example~(i) we assume $p>q>1$, in examples (ii)--(iii) we assume $p\geq q>1$, and in examples (iv)--(v) we assume $p\geq 1$ and $k\geq 0$, with $2p+k>2$ in example~(iv).}
\label{table1}
\end{table}

Theorem~\ref{thm:proper-compact<->Ano} applies to the examples in Table~\ref{table1}.
In these examples, the set $\mu(H)$ is a single wall $\mathrm{Ker}(\alpha)$ of~$\aaa^+$, for some simple restricted root $\alpha$ of~$G$, and $\theta = \{\alpha\}$.
Discrete subgroups $\Gamma$ of~$G$ acting properly discontinuously and cocompactly on $G/H$ are known to exist in case~(i) for $(p,q)\in (2\N^*,2)$ or $(4\N^*,4)$ or $(8,8)$, in case~(ii) for $(p,q) \in (2\N^*,2)$, and in cases (iv) and~(v) for $p\in\N^*$ and $k=0$.

We refer to Proposition~\ref{prop:QI-sharp<->Ano} below for a version of Theorem~\ref{thm:proper-compact<->Ano} which holds for discrete subgroups $\Gamma$ of~$G$ whose action on $G/H$ is sharp but not necessarily cocompact.
In that broader setting, all values of $p,q,k$ may arise in examples (i), (ii), (iii), (iv), (v) of Table~\ref{table1}.

%%%%%%%%
\subsubsection{Geometric aspects in one example}

A particularly interesting example is $G/H = \SO(2p,2)/\U(p,1)$.
This is a pseudo-Riemannian symmetric space of signature $(\mathtt{d}_+,\mathtt{d}_-) = (2p,p(p-1))$, which also carries a $G$-invariant indefinite K\"ahler structure (see \cite[\S\,10.3]{kk16}).
Here $G/\{\pm\mathrm{I}\} = \PO(2p,2)$ is the isometry group of the $(2p+1)$-dimensional \emph{anti-de Sitter space} $\AdS^{2p+1} = \HH^{2p,1} = \PO(2p,2)/\OO(2p,1)$, which is a model space for $(2p+1)$-dimensional Lorentzian manifolds of constant negative sectional curvature.
We denote by $P_1 = P_{\{\alpha_1\}}$ the stabilizer in $G=\SO(2p,2)$ of an isotropic line of $\R^{2p,2}$.

\begin{theorem} \label{thm:SO/U}
Let $\Gamma$ be a discrete subgroup of $\SO(2p,2)$, where $p\geq 1$.
Then the following are equivalent:
\begin{enumerate}
  \item[(1)]\label{item:SO/U-1} $\Gamma$ acts properly discontinuously and cocompactly on $\SO(2p,2)/\U(p,1)$,
  \item[(2)]\label{item:SO/U-2} $\Gamma$ is word hyperbolic, its Gromov boundary has covering dimension $2p-1$, and the natural inclusion $\Gamma \hookrightarrow \SO(2p,2)$ is $P_1$-Anosov,
  \item[(2')]\label{item:SO/U-2'} $\Gamma$ is word hyperbolic, its Gromov boundary is a $(2p-1)$-dimensional sphere, and the natural inclusion $\Gamma \hookrightarrow \SO(2p,2)$ is $P_1$-Anosov,
  \item[(3)]\label{item:SO/U-3} $\Gamma$ preserves and acts cocompactly on a convex spacelike hypersurface in $\AdS^{2p+1} =\nolinebreak \HH^{2p,1}$,
  \item[(4)]\label{item:SO/U-4} $\Gamma$ is $\HH^{2p,1}$-convex cocompact in the sense of \cite{dgk-Hpq-cc,dgk-proj-cc} and has virtual cohomological dimension~$2p$.
\end{enumerate}
\end{theorem}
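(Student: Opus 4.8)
The plan is to derive the equivalence (1)~$\Leftrightarrow$~(2) from Theorem~\ref{thm:proper-compact<->Ano}, and then to obtain the remaining equivalences from the theory of convex cocompactness in pseudo-Riemannian hyperbolic spaces \cite{dgk-Hpq-cc,dgk-proj-cc}. For the first step I would check that $H=\U(p,1)$, realized inside $G=\SO(2p,2)$ through the identification $\R^{2p,2}=\C^{p,1}$, satisfies the hypothesis of Theorem~\ref{thm:proper-compact<->Ano} with $\theta=\{\alpha_1\}$: the presence of the complex structure forces the nonzero singular values of an element of $\U(p,1)$ to occur in equal pairs, so that $\mu(H)=\aaa^+\cap\mathrm{Ker}(\alpha_1)$, where $\alpha_1$ is the simple restricted root of $\SO(2p,2)$ for which $P_1=P_{\{\alpha_1\}}$ is the stabilizer of an isotropic line of $\R^{2p,2}$. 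One then records the numerical identity
\[
\dim(G/H)-\dim(K/K_H)-1 \;=\; p(p+1)-p(p-1)-1 \;=\; 2p-1,
\]
so that for $p\geq 2$ conditions (1) and (2) of Theorem~\ref{thm:SO/U} are \emph{verbatim} the two conditions of Theorem~\ref{thm:proper-compact<->Ano} for this $G/H$; this yields (1)~$\Leftrightarrow$~(2) at once, and it is through this application, via Theorem~\ref{thm:sharp}, that sharpness enters. The case $p=1$, where $G/H\simeq\HH^2$ and $\AdS^{2p+1}=\AdS^3$, is the classical anti-de Sitter picture and can be handled separately.

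For the remaining equivalences I would regard $\Gamma$ as acting on $\AdS^{2p+1}=\HH^{2p,1}$ and use the $\HH^{2p,1}$-convex cocompactness machinery together with its ``quasi-Fuchsian'' top-dimensional case. The easier half goes as follows. For (3)~$\Rightarrow$~(4): a convex spacelike hypersurface $S\subset\HH^{2p,1}$ is a Lipschitz graph, hence diffeomorphic to $\R^{2p}$, so $\Gamma\backslash S$ is a closed aspherical $2p$-manifold and $\cohdim(\Gamma)=2p$, while the properly convex region bounded by $S$ and its dual hypersurface exhibits $\HH^{2p,1}$-convex cocompactness in the sense of \cite{dgk-Hpq-cc,dgk-proj-cc}; conversely, for (4)~$\Rightarrow$~(3), an $\HH^{2p,1}$-convex cocompact group with $\cohdim(\Gamma)=2p$ has a convex core of nonempty interior whose spacelike boundary components are cocompact convex hypersurfaces. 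For (4)~$\Rightarrow$~(2$'$): $\HH^{2p,1}$-convex cocompactness implies word hyperbolicity, $P_1$-Anosovness, and that the limit set is a topological $(\cohdim(\Gamma)-1)$-sphere by \cite{dgk-Hpq-cc}, i.e.\ a $(2p-1)$-sphere, which the Anosov boundary map identifies homeomorphically with $\partial_\infty\Gamma$; and (2$'$)~$\Rightarrow$~(2) is trivial.

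The one substantial step is (2)~$\Rightarrow$~(4), and this is where I expect the main obstacle. Starting from a $P_1$-Anosov inclusion $\Gamma\hookrightarrow\SO(2p,2)$ with $\dim\partial_\infty\Gamma=2p-1$, I would invoke the characterization of $\HH^{2p,1}$-convex cocompactness as ``$P_1$-Anosov with negative (spacelike, achronal) limit set'' from \cite{dgk-Hpq-cc}, and then prove that a $P_1$-Anosov limit set $\Lambda\cong\partial_\infty\Gamma$ sitting inside the conformally Lorentzian manifold $\partial\HH^{2p,1}$, of the \emph{maximal} possible dimension $2p-1$, is automatically negative. The expectation is that a non-achronal such $\Lambda$ would contain a causally related pair of points whose causal future or past, propagated by the minimality of the $\Gamma$-action on $\Lambda$ and the incidence constraints coming from the Anosov property, would force $\Lambda$ to contain an open subset of $\partial\HH^{2p,1}$, hence to have dimension $2p$, a contradiction. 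Making this achronal-versus-dimension-$2p$ dichotomy rigorous is the crux; once it is in place, the implications (2)~$\Rightarrow$~(4), (4)~$\Rightarrow$~(2$'$), (2$'$)~$\Rightarrow$~(2), (3)~$\Rightarrow$~(4) and (4)~$\Rightarrow$~(3), together with (1)~$\Leftrightarrow$~(2), close all the loops.
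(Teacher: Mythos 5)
Your treatment of the genuinely new content of this theorem --- the equivalence (1)~$\Leftrightarrow$~(2) --- is exactly the paper's: one checks that $\mu(\U(p,1))=\aaa^+\cap\mathrm{Ker}(\alpha_1)$ inside $\SO(2p,2)$ and that $\dim(G/H)-\dim(K/K_H)-1=p(p+1)-p(p-1)-1=2p-1$, and then applies Theorem~\ref{thm:proper-compact<->Ano} (the implication (1)~$\Rightarrow$~(2) being the part that uses Theorem~\ref{thm:sharp}; the converse was already in \cite{ggkw17}). Your computation is correct, and your caution about $p=1$ is unnecessary: Table~\ref{table2} shows the hypothesis of Theorem~\ref{thm:proper-compact<->Ano} holds with $\theta=\{\alpha_1\}$ also when the restricted root system degenerates to $A_1\times A_1$.

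Where you diverge is in the remaining equivalences. The paper does not prove them: it cites \cite{bm12} for (2')~$\Leftrightarrow$~(3) and \cite{dgk-Hpq-cc,dgk-proj-cc} for (2)~$\Leftrightarrow$~(2')~$\Leftrightarrow$~(4). You instead set out to reprove them, and your sketch of (2)~$\Rightarrow$~(4) --- showing that a $P_1$-Anosov limit set of maximal covering dimension $2p-1$ in $\partial\HH^{2p,1}$ is automatically negative --- is left as an ``expectation'' rather than an argument; as written this is a genuine gap in your proposal. The dichotomy you want (non-achronal limit sets of a $P_1$-Anosov subgroup cannot have dimension $2p-1$ unless they fill an open set) is precisely the content of the top-dimensional case of the results of \cite{dgk-Hpq-cc,dgk-proj-cc}, so the efficient fix is simply to cite that equivalence rather than re-derive it; if you do want a self-contained proof, you would need to supply the propagation argument in detail (minimality of the action on the limit set plus transversality of the Anosov boundary maps), which is nontrivial and is not something the present paper attempts.
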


(See Section~\ref{subsec:proof-vcd} for the notion of virtual cohomological dimension.)

Condition~(3) states that $\Gamma$ is the holonomy of a GHMC (globally hyperbolic maximal spatially compact) $(2p+1)$-dimensional Lorentzian manifold.
The equivalence (2')~$\Leftrightarrow$~(3) was established in \cite{bm12}.
The equivalences (2)~$\Leftrightarrow$~(2')~$\Leftrightarrow$~(4) were established in \cite{dgk-Hpq-cc,dgk-proj-cc}.
The implication (2)~$\Rightarrow$~(1) was first observed in \cite{ggkw17}.
Theorem~\ref{thm:proper-compact<->Ano} applied to $\SO(2p,2)/\U(p,1)$ gives the converse implication (1)~$\Rightarrow$~(2) (see Remark~\ref{rem:G-connected}).

These groups have a rich deformation theory; Barbot \cite{bar15} proved that the space of $P_1$-Anosov representations $\rho : \Gamma\to\SO(2p,2)$ is not only open in $\mathrm{Hom}(\Gamma,\SO(2p,2))$, but also closed.
Exotic examples (which are not deformations of $\SO(2p,1)$-lattices) were constructed by Lee--Marquis \cite{lm19} and by Monclair, Schlenker and the second-named author \cite{mst}.
It is proved in \cite{mst} that if condition~(3) of Theorem~\ref{thm:SO/U} holds, then there is a natural $\Gamma$-equivariant fibration of $\SO(2p,2)/\U(p,1)$ over a $\Gamma$-invariant spacelike hypersurface in $\AdS^{2p+1}$.
Combining this result with our Theorem~\ref{thm:proper-compact<->Ano} (which gives the implication (1)~$\Rightarrow$~(2) of Theorem~\ref{thm:SO/U}), we obtain the following.

\begin{corollary}
Let $\Gamma$ be a discrete subgroup of $G= \SO(2p,2)$ acting freely, properly discontinuously and cocompactly on $G/H=\SO(2p,2)/\U(p,1)$. Then $\Gamma \backslash G/H$ admits a smooth fibration over a closed aspherical manifold of dimension $\mathtt{d}_+ = 2p$ whose fibers are left translates of $K/K_H$.
\end{corollary}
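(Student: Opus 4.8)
The plan is to deduce this corollary from the implication (1)$\Rightarrow$(3) of Theorem~\ref{thm:SO/U} (which is supplied by Theorem~\ref{thm:proper-compact<->Ano}) combined with the fibration result of~\cite{mst}, followed by a descent to the quotient by~$\Gamma$.

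First, since $\Gamma$ acts properly discontinuously and cocompactly on $G/H=\SO(2p,2)/\U(p,1)$, Theorem~\ref{thm:SO/U} yields condition~(3): $\Gamma$ preserves a convex spacelike hypersurface in $\AdS^{2p+1}$ on which it acts cocompactly. By~\cite{mst} we may take this hypersurface $S$ to be strongly convex, and then there is a natural $\Gamma$-equivariant smooth fibration $\pi : G/H\to S$. Its fibers have dimension $\dim(G/H)-\dim S = (\mathtt{d}_++\mathtt{d}_-) - 2p = \mathtt{d}_- = \dim(K/K_H)$, using $\dim S = 2p = \mathtt{d}_+$ and~\eqref{eqn:d+d-}. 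I would then read off from the construction of~$\pi$ in~\cite{mst} that each fiber is a left translate $g\cdot KH/H$ of the submanifold $K/K_H=KH/H\subset G/H$; one may identify $KH/H$ intrinsically, e.g.\ as the maximal timelike totally geodesic submanifold through $eH$ of the pseudo-Riemannian symmetric space $G/H$, its tangent space being the negative-definite part of $\mathfrak{q}\simeq\Lie(G)/\Lie(H)$, and match it with the fibers of~$\pi$, which are totally geodesic.

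Next I would descend~$\pi$. Acting freely, properly discontinuously and cocompactly on $G/H$, the group $\Gamma$ is torsion-free. Since $\pi$ is a fiber bundle with compact fiber, it is a proper map, so properness and cocompactness of the $\Gamma$-action pass from $G/H$ to~$S$ along~$\pi$, and torsion-freeness then forces the $\Gamma$-action on~$S$ to be free as well. Hence $\Sigma\equaldef\Gamma\backslash S$ is a closed manifold of dimension $2p=\mathtt{d}_+$, and $\pi$ induces a smooth fibration $\bar\pi:\Gamma\backslash G/H\to\Sigma$ whose fiber over $\Gamma s$ is the image of the single fiber $\pi^{-1}(s)$ — a left translate of $K/K_H$ — the translates $\gamma\cdot\pi^{-1}(s)$ being pairwise disjoint by freeness. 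Finally, a convex spacelike hypersurface in $\AdS^{2p+1}$ is known to be diffeomorphic to $\R^{2p}$ (being a graph over a totally geodesic spacelike copy of $\HH^{2p}$; see~\cite{bm12}), in particular contractible, so $\Sigma$ is aspherical — alternatively one invokes that $\Gamma$ is $\HH^{2p,1}$-convex cocompact with $\cohdim(\Gamma)=2p$.

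The main obstacle is the step in the second paragraph where the fibers of the \cite{mst} fibration are identified with $G$-translates of $K/K_H$: this amounts to reconciling the geometric description of $\pi$ in terms of the spacelike hypersurface~$S$ with the homogeneous-space structure of $G/H$. If this is not already spelled out in~\cite{mst}, one would need to characterize $KH/H$ intrinsically (maximal timelike totally geodesic submanifold, or a leaf of a foliation defined via the $G$-invariant pseudo-K\"ahler structure) and verify that it coincides with the fibers. Everything else — the transfer of properness and cocompactness to~$S$, the descent of the fibration, and the asphericity of the base — is routine.
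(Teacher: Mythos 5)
Your proposal is correct and follows essentially the same route as the paper: the implication (1)$\Rightarrow$(3) of Theorem~\ref{thm:SO/U} (obtained from Theorem~\ref{thm:proper-compact<->Ano} together with \cite{bm12,dgk-Hpq-cc,dgk-proj-cc}) combined with the $\Gamma$-equivariant fibration of \cite{mst}, then descent to the quotient. The identification of the fibers with translates of $K/K_H$, which you flag as the remaining point to check, is exactly what the paper takes from the construction in \cite{mst}, and your descent and asphericity arguments are the routine completion the paper leaves implicit.
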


%%%%%%%%
\subsubsection{The general corank-one case}

In the corank-one case, even if $\mu(H)$ is not a union of walls of~$\aaa^+$, we can still prove that properly discontinuous and cocompact actions on $G/H$ yield Anosov representations in~$G$, using Theorem~\ref{thm:sharp}, \cite{kas08}, and \cite{klp-morse}.

\begin{proposition} \label{prop:cork-1-proper-compact->Ano}
Let $G$ be a connected real linear reductive Lie group, with Cartan projection $\mu : G\to\aaa^+$, and let $H$ be a closed subgroup of~$G$ such that $\mu(H)$ is the intersection of~$\aaa^+$ with the $W$-orbit of a hyperplane in~$\aaa$ (for instance, $G/H$ could be any homogeneous space of reductive type with $\Rrank(G) - \Rrank(H) = 1$).
Let $\Gamma$ be a discrete subgroup of~$G$ acting properly discontinuously and cocompactly on $G/H$.
Then $\Gamma$ is word hyperbolic and its Gromov boundary has covering dimension $\dim(G/K) - \dim(H/K_H) - 1$.
Moreover, the natural inclusion $\Gamma \hookrightarrow G$ is a $P$-Anosov representation, for some proper parabolic subgroup $P$ of~$G$, unless possibly if $\h \supset \aaa \cap [\g,\g]$ (in which case $\Gamma$ is virtually cyclic).
\end{proposition}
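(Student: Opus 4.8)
The plan is to feed the sharp embeddedness provided by Theorem~\ref{thm:sharp} into the Morse characterization of Anosov representations of \cite{klp-morse}, using the explicit shape of $\mu(H)$ in corank one from \cite{kas08} as the bridge. First, Theorem~\ref{thm:sharp} gives that $\Gamma$ is finitely generated and sharply embedded in~$G$ with respect to~$H$: fixing a compact generating set~$S$ of~$\Gamma$, there are $C,C'>0$ with $d_{\aaa}(\mu(\gamma),\mu(H))\geq C\,|\gamma|_S-C'$ for all $\gamma\in\Gamma$; since $0\in\mu(H)$ this already yields $\|\mu(\gamma)\|\geq C\,|\gamma|_S-C'$, and it says that $\Gamma$ is sharp for $G/H$, \ie the limit cone $\mathcal{C}_\Gamma\subset\aaa^+$ of~$\Gamma$ meets $\mu(H)$ only at~$0$.

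Next I would write $\mu(H)=\aaa^+\cap\bigcup_{w\in W}w\cdot V$ with $V=\ker\varphi$ a linear hyperplane of~$\aaa$, and prove the key claim: \emph{there is a nonempty $\theta\subseteq\Delta$ with $\mathcal{C}_\Gamma\cap\bigcup_{\alpha\in\theta}\ker\alpha=\{0\}$}. Since $\mathcal{C}_\Gamma$ is a closed \emph{convex} cone (Benoist) disjoint, away from~$0$, from the $W$-invariant hyperplane arrangement $\bigcup_w w\cdot V$, it lies in the closure of a single chamber of that arrangement; the point is then that, for $V$ as it arises here (for $\SL(n+1,\K)/\SL(n,\K)$, for instance, $\mu(H)$ is exactly the set of $\xi\in\aaa^+$ with a vanishing coordinate, and this is the kind of description \cite{kas08} supplies in general corank one), every chamber of $\bigcup_w w\cdot V$ meeting $\aaa^+$ is disjoint from at least one wall $\ker\alpha$ of~$\aaa^+$ — checked by a sign/index argument on the coordinates of a point of the chamber. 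Fixing such a~$\theta$, a rescaling argument (a sequence violating a linear bound would, after normalization, converge to a unit vector of $\mathcal{C}_\Gamma$ lying in some $\ker\alpha$, using that $\{\gamma\in\Gamma:\|\mu(\gamma)\|\leq M\}$ is finite for each $M$) gives $c_0>0$ with $\alpha(\mu(\gamma))\geq c_0\|\mu(\gamma)\|$ for all $\gamma\in\Gamma$ and $\alpha\in\theta$; combined with the previous paragraph, $\alpha(\mu(\gamma))\geq C''|\gamma|_S-C'''$ for all $\gamma\in\Gamma$ and $\alpha\in\theta$.

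This uniform linear lower bound on $\alpha\circ\mu$ is exactly the hypothesis of the Morse characterization of \cite{klp-morse} (Fact~\ref{fact:klp}): for a finitely generated group it forces $\Gamma$ to be word hyperbolic and the inclusion $\Gamma\hookrightarrow G$ to be $P_\theta$-Anosov, so one may take $P=P_\theta$, a proper parabolic subgroup. Finally, $\Gamma$ being word hyperbolic and acting properly discontinuously and cocompactly on $G/H$, Facts~\ref{fact:vcd} and~\ref{fact:dim-bound-vcd} give $\cohdim(\Gamma)=\dim(G/H)-\dim(K/K_H)$ and hence that the Gromov boundary of~$\Gamma$ has covering dimension $\cohdim(\Gamma)-1=\dim(G/H)-\dim(K/K_H)-1$.

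I expect the main obstacle to be the geometric claim in the second paragraph: that avoiding $\mu(H)$ — which is only the $\aaa^+$-part of the $W$-orbit of a single hyperplane, not visibly a union of walls — forces $\mathcal{C}_\Gamma$ to avoid some union of walls of~$\aaa^+$, and to do so \emph{uniformly} in~$\Gamma$ (one $\theta$ for the whole group, which is precisely where the convexity of $\mathcal{C}_\Gamma$ enters and where the explicit description of $\mu(H)$ from \cite{kas08} is needed). A minor further point is the reduction to semisimple~$G$ to absorb a possible split central torus of~$\aaa$, which is routine.
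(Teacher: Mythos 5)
Your overall strategy is the paper's: feed sharpness (Theorem~\ref{thm:sharp}) into the Kapovich--Leeb--Porti criterion (Fact~\ref{fact:klp}) after locating a wall $\mathrm{Ker}(\alpha)$ avoided by the Cartan projections, then get the boundary dimension from Facts~\ref{fact:vcd} and~\ref{fact:dim-bound-vcd}. But the step you rely on to confine $\mathcal{L}_\Gamma$ to the closure of a single chamber of the arrangement $\bigcup_{w}w\cdot V$ has a genuine gap: Benoist's convexity of the limit cone holds for \emph{Zariski-dense} subgroups, and the $\Gamma$ here is an arbitrary discrete subgroup (indeed, in the applications of Section~\ref{sec:quotients-SL3} the relevant groups are very far from Zariski dense, and limit cones of products or free products need not be convex). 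The paper replaces convexity by Fact~\ref{fact:kas08} (= \cite[Th.\,1.1]{kas08}): for any discrete $\Gamma$ acting properly discontinuously on $G/H$ with $\mu(H)$ a finite union of hyperplane traces, all but finitely many $\mu(\gamma)$ lie in a single connected component $\mathcal{C}$ of $\aaa^+\smallsetminus\mu(H)$, with $\mathcal{C}=\iota(\mathcal{C})$ when $\Gamma$ is not virtually cyclic. You cite \cite{kas08} only for the description of $\mu(H)$ (which in fact comes from the hypothesis and from \eqref{eqn:mu-H}), not for this confinement result, which is the actual bridge.

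The second issue is the combinatorial claim that every relevant component of $\aaa^+\smallsetminus\mu(H)$ misses some wall $\mathrm{Ker}(\alpha)$, $\alpha\in\Delta$. You verify it for type $A$ and gesture at a ``sign/index argument''; you rightly flag this as the main obstacle, but it does need a proof valid for all root systems and all hyperplanes $V=\mathrm{Ker}(\varphi_0)$. The paper's Lemma~\ref{lem:C-Ker-alpha} does this cleanly: writing $s_\alpha\cdot\varphi=\varphi-2\frac{(\varphi,\alpha)}{(\alpha,\alpha)}\alpha$ for a simple root $\alpha$ with $(\varphi,\alpha)>0$, one checks that on a component where $\varphi>0$ and whose closure meets $\mathrm{Ker}(\varphi)$ away from~$0$, necessarily $s_\alpha\cdot\varphi<0$, hence $\alpha=c(\varphi-s_\alpha\cdot\varphi)>0$ with $c>0$. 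The remainder of your argument (the rescaling to get $\langle\alpha,\mu(\gamma)\rangle\geq c_0\Vert\mu(\gamma)\Vert$ on the limit-cone side, the passage to a word-length lower bound, and the vcd count) matches the paper and is fine, modulo the implicit exclusion of virtually cyclic $\Gamma$, which the cocompactness hypothesis handles via Fact~\ref{fact:vcd}.
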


Recall that a group $\Gamma$ is said to \emph{virtually} satisfy a property if some finite-index subgroup of~$\Gamma$ satisfies it.
The degenerate case $\h \supset \aaa \cap [\g,\g]$ (in which $\Gamma$ is virtually cyclic) cannot happen when $G$ is semisimple.

Proposition~\ref{prop:cork-1-proper-compact->Ano} applies in particular to the cases in Table~\ref{table2}, where $G/H$ is a pseudo-Riemannian symmetric space with $\Rrank(G) - \Rrank(H) = 1$.
(This table is obtained from the classification of pseudo-Riemannian symmetric spaces of split rank one given \eg in \cite[Table\,II]{os84}.) 
Proposition~\ref{prop:cork-1-proper-compact->Ano} was previously proved in \cite{kas-PhD,gk17,ggkw17} for $G/H = (G_0\times G_0)/\mathrm{Diag}(G_0)$ with $\Rrank(G_0) =\nolinebreak 1$ (case~III$_i^d$ of Table~\ref{table2}).

\begin{table}[!ht]
\centering
\begin{tabular}{|c|c|c|c|c|c|}
\hline
& $G$ & $H$ & restr.\ root system of $G$ & $\theta$\tabularnewline
\hline
I$_1$ & $\SO(p,q)$ & $\OO(p,q-1)$ & $B_q$ & $\{\alpha_q\}$\tabularnewline
I$'_1$ & $\SO(p,p)$ & $\OO(p,p-1)$ & $D_p$ & $\{\alpha_{p-1}\}$ or $\{\alpha_q\}$\tabularnewline
I$_2$ & $\SU(p,q)$ & $\U(p,q-1)$ & $(BC)_q$ (if $p>q$) or $C_q$ & $\{\alpha_q\}$\tabularnewline
I$_3$ & $\Sp(p,q)$ & $\Sp(p,q-1)\times\Sp(1)$ & $(BC)_q$ (if $p>q$) or $C_q$ & $\{\alpha_q\}$\tabularnewline
III$_1$ & $\SO(2p,\C)$ & $\SO(2p-1,\C)$ & $D_p$ & $\{\alpha_{p-1}\}$ or $\{\alpha_p\}$\tabularnewline
III$_i^d$ & $G_0\times G_0$ & $\mathrm{Diag}(G_0)$ & $A_1\times A_1$ & $\{\alpha_1\}$ or $\{\alpha_2\}$\tabularnewline
IV$_1$ & $\SO^*(4p)$ & $\SO^*(4p-2)\times\SO^*(2)$ & $C_p$ & $\{\alpha_p\}$\tabularnewline
IV$_1^d$ & $\SO(2p,2)$ & $\U(p,1)$ & $B_2$ (if $p>1$) & $\{\alpha_1\}$ (if $p>1$)\tabularnewline
& $\SO(2,2)$ & $\U(1,1)$ & $A_1\times A_1$ & $\{\alpha_1\}$ or $\{\alpha_2\}$\tabularnewline
IV$_2^d$ & $\SU(2p,2)$ & $\Sp(p,1)$ & $(BC)_2$ (if $p>1$) or $C_2$ & $\{\alpha_1\}$\tabularnewline
IV$_3^d$ & $E_{6(-14)}$ & $F_{4(-20)}$ & $(BC)_2$ & $\{\alpha_2\}$\tabularnewline
\hline
\end{tabular}
\vspace{0.2cm}
\caption{List of all irreducible pseudo-Riemannian symmetric spaces $G/H$ of split rank~$1$ with $H$ noncompact and $\Rrank(G) - \Rrank(H) = 1$, up to coverings and compact factors. In case~I$_1$ we assume $p>q\geq 1$. In case~I$'_1$ we assume $p\geq 3$. In cases I$_2$, I$_3$, III$_1$, IV$_1$, IV$_1^d$, IV$_2^d$ we assume $p\geq q\geq 1$. In case~III$_i^d$ we assume $\Rrank(G_0)=1$. The last column gives a subset $\theta$ of the simple roots $\Delta$ such that Proposition~\ref{prop:cork-1-proper-compact->Ano} applies with $P=P_{\theta}$.}
\label{table2}
\end{table}

In general, properly discontinuous and cocompact actions are not necessarily fully characterized by an Anosov property in~$G$.
However, we will see (Proposition~\ref{prop:corank-1-sharp<->Ano} and Corollary~\ref{cor:corank-1-sharp-qi-open}) that they are essentially characterized by an Anosov property \emph{after composing with some appropriate finite-dimensional linear representation of~$G$}.
This will enable us to prove Theorem~\ref{thm:openness}.

\begin{remark}
When $G/H$ has real corank one and $G$ is semisimple, the sharp embeddedness condition in~$G$ with respect to~$H$ is equivalent to the \emph{$\omega$-undistorted} condition introduced independently by Davalo and Riestenberg in \cite{dr}, where $\omega$ is a linear form on $\aaa$ such that $\mu(H) = W\cdot \mathrm{Ker}(\omega) \cap \aaa^+$.
These authors also make a link with Anosov representations, and mention that as a consequence of their work $\omega$-undistortedness is an open condition.
We prove this openness in the present paper (see Corollary~\ref{cor:corank-1-sharp-qi-open}) with a completely different approach.
\end{remark}

%%%%%%%%%%%%%%%%%%%%%%%%%
\subsection{Application 3: nonexistence of compact quotients}

As mentioned in Section~\ref{subsec:intro-cpt-quot}, the existence problem for compact quotients of reductive homogeneous spaces has a long history and has given rise to a very rich literature.

In order to act properly discontinuously and cocompactly on $G/H$, a discrete subgroup $\Gamma$ of~$G$ must have sufficiently large \emph{virtual cohomological dimension}, as described by the following classical fact.
Recall that by the Cartan--Iwasawa--Malcev theorem (see \cite{bor-bourbaki50}), every connected Lie group (and indeed every connected locally compact group) admits maximal compact subgroups and they are all conjugate to one another.

\begin{fact} \label{fact:vcd}
Let $G$ be a real connected Lie group and $H$ a closed connected subgroup of~$G$.
Let $K$ be a maximal compact subgroup of~$G$ and $K_H$ a maximal compact subgroup of~$H$, with $K_H\subset K$.
Then for any finitely generated discrete subgroup $\Gamma$ of~$G$ acting properly discontinuously on $G/H$, the virtual cohomological dimension of~$\Gamma$ satisfies
\[\mathrm{vcd}(\Gamma) \leq \dim(G/K) - \dim(H/K_H) ,\]
with equality if and only if the action of $\Gamma$ on $G/H$ is cocompact.
\end{fact}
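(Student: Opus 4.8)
The plan is to compare three quotient manifolds. Write $d=\dim(G/K)$, $f=\dim(K/K_H)$ and $n=\dim(G/H)$, and observe that the number in the statement is $n-f=d-\dim(H/K_H)$. Since $K$ and $K_H$ are compact and $\Gamma$ is discrete, $\Gamma$ acts properly discontinuously on $G/K$ and on $G/K_H$, and since $\Gamma$ is torsion-free these actions and the action on $G/H$ are free. Hence $N:=\Gamma\backslash G/K$, $E:=\Gamma\backslash G/K_H$ and $M:=\Gamma\backslash G/H$ are manifolds of dimensions $d$, $d+f$ and $n$. As recalled above, maximal compact subgroups exist and are conjugate; moreover it is classical that $G/K$ and $H/K_H$ are each diffeomorphic to a Euclidean space. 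In particular $N$ is a $K(\Gamma,1)$, so $H^*(\Gamma;-)=H^*(N;-)$ and $\mathrm{cd}(\Gamma)\le d<\infty$. The natural projections give fiber bundles $\pi\colon E\to N$ with fiber $K/K_H$, a closed connected orientable $f$-manifold (connectedness and orientability coming from $K$ and $K_H$ being connected), and $\bar\pi\colon E\to M$ with fiber $H/K_H$, again diffeomorphic to a Euclidean space; as $\bar\pi$ has contractible fibers it is a homotopy equivalence, so $E\simeq M$.

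For the inequality $\mathrm{cd}(\Gamma)\le n-f$ I would run the Leray--Serre spectral sequence of $\pi$. Put $c:=\mathrm{cd}(\Gamma)$ and, using $c<\infty$, choose a $\Z\Gamma$-module $V$ with $H^c(N;V)\ne 0$. Since $\pi$ has oriented fibers with connected structure group $K$, one has $R^q\pi_*(\pi^*V)=0$ for $q>f$ and $R^f\pi_*(\pi^*V)\cong V$ (if $K/K_H$ were not orientable one would simply twist $V$ by the fiberwise orientation local system, which is $2$-torsion; here that twist is trivial). Hence in the spectral sequence no differential enters or leaves the corner $E_2^{c,f}=H^c(N;V)\ne 0$: nothing enters because the relevant higher direct images vanish, and nothing leaves because $c=\mathrm{cd}(N)$ is the top degree over $N$. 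So $E_\infty^{c,f}\ne 0$, whence $H^{c+f}(E;\pi^*V)\ne 0$. Since $E\simeq M$ and $M$ is an $n$-manifold we have $H^{>n}=0$, so $c+f\le n$, i.e.\ $\mathrm{cd}(\Gamma)\le n-f$.

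Now the equivalence. If $\mathrm{cd}(\Gamma)=n-f$, the construction above produces a local system $\mathcal L$ on $M$ with $H^n(M;\mathcal L)\ne 0$; but a connected noncompact $n$-manifold has vanishing top cohomology with arbitrary local coefficients (Poincar\'e--Lefschetz duality, or: it is homotopy equivalent to an $(n-1)$-dimensional complex), so $M$ is compact, i.e.\ the action is cocompact. Conversely, if the action is cocompact then $M$ is a closed connected $n$-manifold, so $H^n(M;\Z/2)\cong\Z/2\ne 0$ and hence $H^n(E;\Z/2)\ne 0$. In the Leray--Serre spectral sequence of $\pi$ with constant $\Z/2$-coefficients, the fiber being an $f$-manifold forces $E_2^{p,q}=0$ for $q>f$, so some surviving term $E_\infty^{p,n-p}$ has $p\ge n-f$; this yields $H^p(N;\mathcal H)\ne 0$ for a $\Z/2\Gamma$-module $\mathcal H$, hence $\mathrm{cd}(\Gamma)\ge p\ge n-f$. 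Combined with the previous inequality, $\mathrm{cd}(\Gamma)=n-f$ precisely when the action is cocompact.

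All the ingredients (maximal compacts and $G/K$ Euclidean, the Leray--Serre spectral sequence, Poincar\'e--Lefschetz duality) are standard; the one point that genuinely needs care is the transfer of information across the compact fiber $K/K_H$ — extracting $\mathrm{cd}(\Gamma)\le n-f$ from the mere homotopy equivalence $E\simeq M$ with the $n$-manifold $M$, ``shaving off'' exactly $\dim(K/K_H)$. This is where the spectral-sequence bookkeeping, together with the (here trivial) orientation twist ensuring that $R^f\pi_*$ returns the chosen coefficient module, must be arranged correctly; the rest is formal.
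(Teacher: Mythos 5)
Your argument is correct and follows essentially the same route as the paper: the three quotients $\Gamma\backslash G/K$, $\Gamma\backslash G/K_H$, $\Gamma\backslash G/H$, the homotopy equivalence coming from the contractible fibers $H/K_H$, the Leray--Serre spectral sequence across the compact fiber $K/K_H$, and the criterion that top-degree cohomology of the $n$-manifold $\Gamma\backslash G/H$ is nonzero exactly when it is compact. The only difference is that you spell out the spectral-sequence bookkeeping (vanishing of $R^{>f}\pi_*$, triviality of the monodromy on $H^f(K/K_H)$, the corner argument) which the paper invokes as a ``classical consequence''; this is a welcome expansion rather than a deviation.
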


We refer to Section~\ref{subsec:proof-vcd} below for the notion of virtual cohomological dimension.

Fact~\ref{fact:vcd} is proved in \cite[Cor.\,5.5]{kob89} for $G/H$ of reductive type, and in \cite[Lem.\,2.2]{mor17} in general. We give a short proof in Section~\ref{subsec:proof-vcd} for the reader's convenience.

On the other hand, when $\Gamma$ is the image of an Anosov representation to~$G$, the existence of a transverse boundary map from $\partial_\infty \Gamma$ into some flag variety of~$G$ gives an upper bound on the covering dimension of $\partial_\infty \Gamma$, hence on the virtual cohomological dimension of~$\Gamma$ (see Section~\ref{subsec:Ano-vcd}).
In many cases, this bound prevents $\Gamma$ from acting cocompactly on $G/H$, and Theorem~\ref{thm:proper-compact<->Ano} or Proposition~\ref{prop:cork-1-proper-compact->Ano} then implies that $G/H$ does not admit any compact quotients.
This approach enables us for instance to prove the following.

\begin{corollary} \label{cor:no-cpt-quot}
The homogeneous spaces $G/H$ given in Table~\ref{table3} do not admit any compact quotients for $k\geq 2$.
\end{corollary}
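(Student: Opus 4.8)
The plan is to deduce Corollary~\ref{cor:no-cpt-quot} from the corank-one analysis (Proposition~\ref{prop:cork-1-proper-compact->Ano}) together with the virtual-cohomological-dimension bound coming from Anosov boundary maps. First I would check that each entry $G/H$ in Table~\ref{table3} is a homogeneous space of reductive type with $\Rrank(G) - \Rrank(H) = 1$, so that Proposition~\ref{prop:cork-1-proper-compact->Ano} applies: if some torsion-free discrete subgroup $\Gamma \leq G$ acted properly discontinuously and cocompactly on $G/H$, then $\Gamma$ would be word hyperbolic, the natural inclusion $\Gamma \hookrightarrow G$ would be $P$-Anosov for some proper parabolic $P \leq G$, and $\partial_\infty\Gamma$ would have covering dimension exactly $\dim(G/H) - \dim(K/K_H) - 1$. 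I would compute this number $\mathtt{d}_+ - 1$ explicitly for each line of Table~\ref{table3} using \eqref{eqn:d+d-}; the point is that for these spaces $\mathtt{d}_+$ grows with the parameter $k$.

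Next I would extract the contradiction from the Anosov boundary map. A $P$-Anosov inclusion $\Gamma \hookrightarrow G$ gives a continuous $\Gamma$-equivariant transverse embedding of $\partial_\infty\Gamma$ into the flag variety $G/P$, which by the argument of Section~\ref{subsec:Ano-vcd} (and the facts cited there, in particular the bound on $\cohdim(\Gamma)$ in terms of the dimension of the smallest relevant Schubert-type cell or the "folding" of $G/P$) forces
\[
\dim(G/H) - \dim(K/K_H) - 1 \;=\; \dim_{\mathrm{cov}} \partial_\infty\Gamma \;\leq\; N(G),
\]
where $N(G)$ is an explicit upper bound depending only on $G$ (and on the conjugacy class of $P$, but one may take the maximum over proper parabolics). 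For the families in Table~\ref{table3} one has $\dim(G/H) - \dim(K/K_H)$ growing like a polynomial in $k$ of higher degree than the corresponding bound $N(G)$, so for $k \geq 2$ (and with the precise small-$k$ constants checked by hand) the inequality fails. Hence no such $\Gamma$ exists, and since a compact quotient can always be refined to a torsion-free one by Selberg's lemma, $G/H$ admits no compact quotient.

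The main obstacle, and where the real work lies, is the bookkeeping: for each of the finitely many families in Table~\ref{table3} I must (a) identify the restricted root system of $G$ and the subset $\theta$ (equivalently the parabolic $P$) produced by Proposition~\ref{prop:cork-1-proper-compact->Ano} — the second column of a table analogous to Table~\ref{table2} — then (b) compute the covering dimension $\mathtt{d}_+-1$ of $\partial_\infty\Gamma$ from \eqref{eqn:d+d-}, and (c) compute or bound the maximal covering dimension of a compact subset of $G/P$ that can carry a transverse boundary embedding. Step (c) is the delicate one: it is not enough to use $\dim(G/P)$ naively; one needs the sharper "$\cohdim$ versus antipodal/transverse dimension" bound, and in the borderline cases this must be done with care, possibly distinguishing the proximal model representation of $G$ associated to $\theta$ and counting the dimension of the largest isotropic subvariety in the corresponding Grassmannian. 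Once these three numbers are tabulated, the conclusion for $k \geq 2$ is a finite numerical check; I expect the small-$k$ edge cases to require the most attention, which is presumably exactly why the hypothesis reads $k \geq 2$ rather than $k \geq 0$.
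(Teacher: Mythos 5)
Your overall architecture matches the paper's: a compact quotient forces the inclusion $\Gamma\hookrightarrow G$ to be Anosov, the Anosov boundary map bounds $\mathrm{vcd}(\Gamma)$ from above, and Fact~\ref{fact:vcd} forces $\mathrm{vcd}(\Gamma)=\dim(G/K)-\dim(H/K_H)$, so the two bounds collide. But there is a genuine gap in how you propose to handle the parabolic subgroup: you write that one ``may take the maximum over proper parabolics'' of the dimension bound, and this fails already in case~(i) of Table~\ref{table3}. For $G=\SL(2\ell,\R)$ and $H=\SL(2\ell-1,\R)$ one has $\dim(G/K)-\dim(H/K_H)=2\ell$, whereas a $P_1$-Anosov representation only gives $\dim_{\mathrm{cov}}\partial_\infty\Gamma\leq 2\ell-1$, i.e.\ $\mathrm{vcd}(\Gamma)\leq 2\ell$ --- no contradiction. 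The contradiction ($\mathrm{vcd}(\Gamma)\leq\ell+1$, Lemma~\ref{lem:Ano-vcd-SL}) is available only for the middle parabolic $P_\ell$, and Proposition~\ref{prop:cork-1-proper-compact->Ano} used as a black box does not tell you that $P=P_\ell$. Pinning this down is the actual content of the paper's case~(i): one invokes Fact~\ref{fact:kas08} to place $\mu(\Gamma)$ (up to finitely many elements) in a single connected component $\mathcal{C}$ of $\aaa^+\smallsetminus\mu(H)$ which, since $\Gamma$ is not virtually cyclic, must be invariant under the opposition involution $\iota$; for $A_{2\ell-1}$ the only $\iota$-invariant component is the middle one, whence $P_\ell$-Anosov. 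This is not bookkeeping but the decisive step, and your plan both omits it and offers a fallback that does not work.

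Two smaller points. In cases (vi) and (vii) the opposition involution is trivial, so two components (hence two possible parabolics, $P_{\{\alpha_1\}}$ and $P_{\{\alpha_3\}}$) genuinely arise and must each be ruled out separately; for $P_{\{\alpha_3\}}$ in case~(vii) the paper needs the refined bound of Lemma~\ref{lem:Ano-vcd} with an auxiliary parabolic $Q=P_{\{\alpha_1\}}$, since the naive bound by $\dim(G/P_{\{\alpha_3\}})$ is far too weak. Also, your heuristic that $\dim(G/K)-\dim(H/K_H)$ grows in $k$ with ``higher degree'' than the Anosov bound is not what happens in cases (ii)--(iii): there both quantities are linear in $k$ with the same coefficient, and the strict inequality comes entirely from the $k'\geq 1$ term. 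These are repairable, but the parabolic identification above is the step your proof cannot do without.
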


\begin{table}[!ht]
\centering
\begin{tabular}{|c|c|c|c|c|}
\hline
& $G$ & $H$ & $\dim(G/K) - \dim(H/K_H)$\tabularnewline
\hline
(i) & $\SL(2\ell,\mathbb{K})$ & $\SL(2\ell-1,\mathbb{K})$ & $2\ell$ ($\mathbb{K}\!=\!\R$), $4\ell-1$ ($\mathbb{K}\!=\!\C$), or $8\ell-3$ ($\mathbb{K}\!=\!\mathbb{H}$)\tabularnewline
(ii) & $\Sp(2\ell,\mathbb{K})$ & $\Sp(2\ell-2,\mathbb{K})$ & $2\ell$ ($\mathbb{K}\!=\!\R$) or $4\ell-1$ ($\mathbb{K}\!=\!\C$)\tabularnewline
(iii) & $\SO(2k+k',2)$ & $\U(k,1)$ & $2(k+k')$\tabularnewline
(iv) & $\SU(2k+k',2)$ & $\Sp(k,1)$ & $4(k+k')$\tabularnewline
(v) & $\SO(\ell,\ell)$ & $\SL(\ell,\R)$ & $(\ell^2-\ell+2)/2$\tabularnewline
(vi) & $\SU(\ell,\ell)$ & $\SL(\ell,\C)$ & $\ell^2+1$\tabularnewline
(vii) & $\Sp(\ell,\ell)$ & $\SL(\ell,\mathbb{H})$ & $2\ell^2+\ell+1$\tabularnewline
(viii) & $\SO^*(4\ell)$ & $\SL(\ell,\mathbb{H})$ & $2\ell^2-\ell+1$\tabularnewline
(ix) & $\SO(\ell+1,\ell)$ & $\mathrm{S}(\OO(2)\times\OO(\ell-1,\ell)\mathrm{)}$ & $2\ell$\tabularnewline
(ix)' & $\SU(\ell+1,\ell)$ & $\mathrm{S}(\U(2)\times\U(\ell-1,\ell)\mathrm{)}$ & $4\ell$\tabularnewline
(ix)'' & $\Sp(\ell+1,\ell)$ & $\Sp(2)\times\Sp(\ell-1,\ell)\mathrm{)}$ & $8\ell$\tabularnewline
(x) & $\SO^*(4\ell)$ & $\U(\ell+1,\ell-1)$ & $2(\ell^2-\ell+1)$\tabularnewline
(x)' & $\SO^*(4\ell+2)$ & $\U(\ell+2,\ell-1)$ & $2(\ell^2+2\ell-2)$\tabularnewline
(xi) & $\mathrm{Spin}(5,3)$ & $G_{2(2)}$ & $7$\tabularnewline
(xii) & $\mathrm{Spin}(5,4)$ & $\mathrm{Spin}(4,3)$ & $8$\tabularnewline
\hline
\end{tabular}
\vspace{0.2cm}
\caption{Some homogeneous spaces $G/H$ that do not admit any compact quotients for $k,k'\geq 1$ and $\ell\geq 2$ (Corollary~\ref{cor:no-cpt-quot}); we require $\ell\geq 3$ in cases (v) and~(x), and $\ell\geq 4$ in case~(viii). Here $\mathbb{K}$ is $\R$, $\C$, or (in case~(i)) the ring $\mathbb{H}$ of quaternions. In case~(vii) we consider the composition of the spinor embedding $\mathrm{Spin}(4,3) \hookrightarrow \mathrm{Spin}(4,4)$ with the standard embedding $\mathrm{Spin}(4,4) \hookrightarrow \mathrm{Spin}(5,4)$.}
\label{table3}
\end{table}

Cases (ix)--(x)' were previously treated in \cite{kob92} by a different method, but it seems that all other cases are new.
(Note that $G/H$ does admit compact quotients in cases (v) and~(x) for $\ell=2$ --- the group $\SO^*(8)$ is locally isomorphic to $\SO(6,2)$.)

In particular, Corollary~\ref{cor:no-cpt-quot} states that $\SO(2,2k+1)/\U(1,k)$ does not admit any compact quotients for $k\geq 1$.
This had been conjectured by Oh--Witte \cite[Conj.\,1.8]{ow02}, and implies, by \cite[Th.\,1.9]{ow02}, that the only closed connected subgroups $H$ of $G=\SO(2,2k+1)$ for which $G/H$ admits compact quotients are the subgroups $H$ that are either compact or cocompact in~$G$.

Interestingly, some cases of Table~\ref{table3} give subgeometries without compact quotients of geometries with compact quotients: see Table~\ref{table4}.
This means that, in these cases, $G$ embeds into some semisimple Lie group~$G'$ and acts transitively on some homogeneous space $G'/H'$ of~$G'$ with $G\cap H'= H$; furthermore, there exist discrete subgroups $\Gamma$ of~$G'$ acting properly discontinuously and cocompactly on $G'/H'$ (see \cite{ky05}), but Corollary~\ref{cor:no-cpt-quot} shows that such $\Gamma$ cannot be contained in~$G$.

\begin{table}[!ht]
\centering
\begin{tabular}{|c|c|c|}
\hline
& $G/H$ & $G'/H'$\tabularnewline
\hline
(i) & $\SL(4,\R)/\SL(3,\R)$ & $\SO(4,4)/\SO(4,3)$\tabularnewline
& $\SL(8,\R)/\SL(7,\R)$ & $\SO(8,8)/\SO(8,7)$\tabularnewline
& $\SL(4,\C)/\SL(3,\C)$ & $\SO(8,\C)/\SO(7,\C)$\tabularnewline
(ii) & $\Sp(4,\R)/\Sp(2,\R)$ & $\SO(4,4)/\SO(4,3)$\tabularnewline
& $\Sp(8,\R)/\Sp(6,\R)$ & $\SO(8,8)/\SO(8,7)$\tabularnewline
& $\Sp(4,\C)/\Sp(2,\C)$ & $\SO(8,\C)/\SO(7,\C)$\tabularnewline
(iii) & $\SO(2k+1,2)/\U(k,1)$ & $\SO(2k+2,2)/\U(k+1,1)$\tabularnewline
(iv) & $\SU(2k+1,2)/\Sp(k,1)$ & $\SU(2k+2,2)/\Sp(k+1,1)$\tabularnewline
(xi) & $\mathrm{Spin}(5,3)/G_{2(2)}$ & $\SO(8,\C)/\SO(7,\C)$\tabularnewline
(xii) & $\mathrm{Spin}(5,4)/\mathrm{Spin}(4,3)$ & $\SO(8,8)/\SO(8,7)$\tabularnewline
\hline
\end{tabular}
\vspace{0.2cm}
\caption{Homogeneous spaces $G/H$ that do not admit compact quotients, but which are a subgeometry of a homogeneous space $G'/H'$ that does admit compact quotients}
\label{table4}
\end{table}

Corollary~\ref{cor:no-cpt-quot} also states that $\SL(n,\R)/\SL(n-1,\R)$ does not admit compact quotients for even $n\geq 4$.
Note that it had been an open conjecture since the early 1990s that $\SL(n,\R)/\SL(m,\R)$ does not admit compact quotients for $n>m>1$.
The case $n > \lceil 3m/2\rceil$ was treated by Kobayashi \cite{kob92} in 1992 using a maximality principle, the case $n\geq m+3 \geq 5$ was treated in a series of papers in the 1990s by Labourie, Mozes and Zimmer \cite{zim94,lmz95,lz95} using cocycle rigidity, the case $n=m+1$ odd was treated by Benoist \cite{ben96} in 1996 using Lie theory, the case $m=2$ by Shalom \cite{sha00} in 2000 using unitary representations, and the case $m$ even by Morita \cite{mor17} and the second-named author \cite{tho-vol} in the mid-2010s using cohomological considerations.
Thus the remaining open cases prior to the present paper were $n=m+1$ even and $n=m+2$ odd.
Corollary~\ref{cor:no-cpt-quot} treats the case of $n=m+1$ even.
It also applies to $\SL(n,\mathbb{K})/\SL(m,\mathbb{K})$ for $\mathbb{K}=\C$ or the quaternions.

\begin{remark}
In an independent paper with Yosuke Morita \cite{kmt}, we give a different obstruction to the existence of compact quotients, which is of a topological nature and does not involve sharpness.
This enables us to prove in \cite{kmt} that $\SL(n,\R)/\SL(m,\R)$ does not admit compact quotients for all values of $(n,m)$ except possibly $(4,3)$ and $(8,7)$.
The cases of $\SL(4,\R)/\SL(3,\R)$ and $\SL(8,\R)/\SL(7,\R)$ are covered by Corollary~\ref{cor:no-cpt-quot} of the present paper; in these cases, sharpness seems to be the only known obstruction to the existence of compact quotients.
\end{remark}

We actually prove a version of Theorem~\ref{thm:sharp} for all \emph{compactly generated} (not necessarily discrete) closed subgroups $\Gamma$ of~$G$ (see Theorem~\ref{thm:sharp-general} below).
It has the following consequence.

\begin{theorem} \label{thm:cpt-quotient->H-QI}
Let $G$ be a connected real linear reductive Lie group and $H$ a closed subgroup of~$G$.
If $G/H$ admits compact quotients, then $H$ must be compactly generated and quasi-isometrically embedded in~$G$.
\end{theorem}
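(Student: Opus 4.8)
The plan is to derive Theorem~\ref{thm:cpt-quotient->H-QI} from the locally compact version of the Sharpness Conjecture, Theorem~\ref{thm:sharp-general}, applied with the roles of~$\Gamma$ and~$H$ exchanged. Let $\Gamma$ be a discrete (hence closed) subgroup of~$G$ acting properly discontinuously and cocompactly on $G/H$. The first step is to observe that $H$, viewed as a closed subgroup of~$G$, acts properly and cocompactly on the homogeneous space $G/\Gamma$. Indeed, properness of the pair $(\Gamma,H)$ is symmetric in~$\Gamma$ and~$H$ --- this is already visible in the Benoist--Kobayashi criterion, and is elementarily equivalent to the fact that, for every compact $L\subset G$, the set $\Gamma\cap LHL^{-1}$ is relatively compact if and only if $H\cap L\Gamma L^{-1}$ is --- so the action of $H$ on $G/\Gamma$ is proper; and the map $g\mapsto g^{-1}$ induces a homeomorphism $\Gamma\backslash G/H \cong H\backslash G/\Gamma$, so this action is cocompact. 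As a by-product, $H$ acts properly and cocompactly on the connected manifold $\Gamma\backslash G$ (the action of~$\Gamma$ on~$G$ by left translations being free and proper), and a standard Milnor--\v{S}varc-type argument then shows that $H$ is compactly generated; this compact generation is also part of the output of Theorem~\ref{thm:sharp-general}.

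Next I would apply Theorem~\ref{thm:sharp-general} to the closed, compactly generated subgroup $H$ of~$G$ acting properly and cocompactly on $G/\Gamma$. It gives that $H$ is sharply embedded in~$G$ with respect to~$\Gamma$, in the sense of Definition~\ref{def:sharp-embed}: fixing a compact generating subset~$S$ of~$H$, with word length $|\cdot|_S$, there exist $C,C'>0$ such that $d_{\aaa}(\mu(h),\mu(\Gamma))\geq C\,|h|_S-C'$ for all $h\in H$. Since $e\in\Gamma$ and $\mu(e)=0$, one has $d_{\aaa}(\mu(h),\mu(\Gamma))\leq\Vert\mu(h)\Vert$, and hence
\[ \Vert\mu(h)\Vert \;\geq\; C\,|h|_S - C' \qquad\text{for all }h\in H. \]

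It then remains to turn this lower bound into a two-sided estimate, \ie into a quasi-isometric embedding $H\hookrightarrow G$. Fix on~$G$ a left-$G$-invariant and right-$K$-invariant Riemannian metric whose value at~$e$ restricts to the Euclidean norm $\Vert\cdot\Vert$ on~$\aaa\subset\g$, and let $d_G$ be the associated distance. Using the Cartan decomposition $g=k_1\exp(\mu(g))k_2$, the fact that $G\to G/K$ is $1$-Lipschitz, and the fact that $\exp(\aaa)$ maps isometrically onto a maximal flat of~$G/K$, one checks that $d_G(e,g)=\Vert\mu(g)\Vert+O(1)$; moreover $d_G$ is quasi-isometric to any word metric on the compactly generated group~$G$. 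Since a compact generating set of~$H$ has bounded $G$-word-length, the inclusion $H\hookrightarrow G$ is Lipschitz for word metrics, so $\Vert\mu(h)\Vert\leq\lambda\,|h|_S+\lambda'$ for suitable $\lambda,\lambda'>0$ and all $h\in H$. Combined with the displayed inequality, the three functions $h\mapsto|h|_S$, $h\mapsto\Vert\mu(h)\Vert$ and $h\mapsto d_G(e,h)$ on~$H$ are pairwise comparable up to multiplicative and additive constants; hence $H\hookrightarrow G$ is a quasi-isometric embedding, which together with the compact generation of~$H$ proves the theorem.

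The heart of the matter is of course Theorem~\ref{thm:sharp-general} itself. Within the reduction above, the only delicate point is to be sure that Theorem~\ref{thm:sharp-general} may legitimately be invoked with $\Gamma$ and $H$ interchanged --- that is, that $H$ really does act properly and cocompactly on $G/\Gamma$ for a \emph{closed} subgroup~$\Gamma$; this is precisely the symmetry of properness and cocompactness established in the first step, together with the observation that a discrete subgroup is closed. The remaining comparisons between $|h|_S$, $\Vert\mu(h)\Vert$ and $d_G(e,h)$ are routine facts about the coarse geometry of reductive Lie groups.
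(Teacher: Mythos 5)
Your proposal is correct, and it takes a genuinely different (dual) route from the paper's. The paper's proof is a one-line reduction via the product trick: from the compact quotient $\Gamma\backslash G/H$ one gets that $\Gamma\times H$ acts properly and cocompactly on $(G\times G)/\Diag(G)$, so Theorem~\ref{thm:sharp-general} applies to $\Gamma\times H\subset G\times G$ and yields that $\Gamma\times H$ is compactly generated and quasi-isometrically embedded in $G\times G$, whence the same for the factor~$H$ in~$G$. You instead exploit the symmetry of the properness relation and the homeomorphism $\Gamma\backslash G/H\cong H\backslash G/\Gamma$ to apply Theorem~\ref{thm:sharp-general} directly with the roles of $\Gamma$ and $H$ exchanged (legitimately, since the theorem is stated for arbitrary closed subgroups and a discrete $\Gamma$ is closed). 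This gives that $H$ is compactly generated and sharply embedded in~$G$ with respect to~$\Gamma$ (Definition~\ref{def:sharp-embed}), and your observation that $0=\mu(e)\in\mu(\Gamma)$, hence $d_{\aaa}(\mu(h),\mu(\Gamma))\leq\Vert\mu(h)\Vert$, converts this into the lower bound $\Vert\mu(h)\Vert\geq C\,|h|_S-C'$, which by Fact~\ref{fact:QI-embed-mu} is exactly quasi-isometric embeddedness (your last paragraph re-derives that fact by hand, which is harmless but unnecessary). Both arguments reduce to the same Theorem~\ref{thm:sharp-general}; yours produces slightly more, namely sharpness of $H$ for $G/\Gamma$ rather than just quasi-isometric embeddedness, while the paper's is shorter because the product trick is already the mechanism by which Theorem~\ref{thm:sharp-general} handles nonreductive~$H$ internally.
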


This also provides an obstruction to the existence of compact quotients of $G/H$ for certain nonreductive subgroups $H$, see Section~\ref{subsec:H-QI}.

%%%%%%%%%%%%%%%%%%%%%%%%
\subsection{Application 4: compact quotients of group manifolds of type~$A_2$}

A \emph{group manifold} is a semisimple Lie group $G_0$ seen as a homogeneous space under the action of $G_0 \times G_0$ by left and right multiplication:
\[(g,h)\cdot  y = g y h^{-1}~.\]
Note that the stabilizer of the identity element of~$G_0$ is the diagonal $\Diag(G_0)$ of $G_0\times G_0$.
Therefore a group manifold is a homogeneous space of the form $G/H = (G_0\times G_0)/\Diag(G_0)$ where $G_0$ is semisimple.

Compact quotients of group manifolds of real rank $1$ were the first to be thoroughly studied and well understood (see \cite{kas-PhD,gk17,ggkw17}).
We hope that the Sharpness Theorem~\ref{thm:sharp} will help understand the higher-rank case.
To illustrate this, we use Theorem~\ref{thm:sharp} to classify compact quotients of some group manifolds of real rank~$2$:

\begin{theorem} \label{thm:group-mfd-SL3}
Let $G_0$ be a real linear simple Lie group with a restricted root system of type~$A_2$ (\ie $G_0$ is locally isomorphic to $\SL(3,\R)$, $\SL(3,\C)$, $\SL(3,\mathbb{H})$, or $E_{6(-26)}$).
Let $\Gamma$ be a discrete subgroup of $G_0\times G_0$ acting properly discontinuously and cocompactly on\linebreak $(G_0\times G_0)/\mathrm{Diag}(G_0)$.
Then, up to finite index and to switching the two factors of $G_0\times G_0$, we have $\Gamma = \Gamma_0 \times \{1_{G_0}\}$ where $\Gamma_0$ is a uniform lattice in~$G_0$.
\end{theorem}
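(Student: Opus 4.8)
The plan is to feed the sharpness theorem (Theorem~\ref{thm:sharp}) into the cohomological‑dimension machinery (Fact~\ref{fact:vcd}) and into the theory of Anosov representations, and to extract a contradiction from the fact that $\Rrank(G_0)=2$. Write $G=G_0\times G_0$, $H=\Diag(G_0)$, and let $\rho_1,\rho_2\colon\Gamma\to G_0$ be the two coordinate projections, so that $\gamma\mapsto(\rho_1(\gamma),\rho_2(\gamma))$ is the inclusion $\Gamma\hookrightarrow G$. With $\mu_0\colon G_0\to\aaa_0^+$ a Cartan projection, we have $\aaa^+=\aaa_0^+\times\aaa_0^+$, $\mu=(\mu_0,\mu_0)$, and $\mu(H)=\{(v,v):v\in\aaa_0^+\}$; using convexity of $\aaa_0^+$ one checks $d_{\aaa}((v_1,v_2),\mu(H))=\tfrac1{\sqrt2}\Vert v_1-v_2\Vert$. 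First I would apply Theorem~\ref{thm:sharp}: $\Gamma$ is finitely generated, quasi‑isometrically embedded in $G$, and sharply embedded with respect to $H$; unwinding the definitions, there are $C,C'>0$ with
\[
\Vert\mu_0(\rho_1(\gamma))-\mu_0(\rho_2(\gamma))\Vert\geq C\,|\gamma|_S-C'
\quad\text{and}\quad
\Vert\mu_0(\rho_1(\gamma))\Vert+\Vert\mu_0(\rho_2(\gamma))\Vert\geq C\,|\gamma|_S-C'
\]
for all $\gamma\in\Gamma$. After replacing $\Gamma$ by a torsion‑free finite‑index subgroup, Fact~\ref{fact:vcd} and cocompactness give $\mathrm{cd}(\Gamma)=\dim(G/H)-\dim(K/K_H)=\dim(G_0/K_0)=:D$.

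The heart of the matter is the claim that, after possibly switching the two factors, $\rho_2(\Gamma)$ is relatively compact in $G_0$. Granting this, $\Vert\mu_0(\rho_2(\gamma))\Vert$ is bounded, so the first inequality above gives $\Vert\mu_0(\rho_1(\gamma))\Vert\geq C\,|\gamma|_S-C''$; therefore $\ker\rho_1$ is finite (hence trivial) and $\rho_1$ quasi‑isometrically embeds $\Gamma$ onto a discrete subgroup $\Gamma_0:=\rho_1(\Gamma)\cong\Gamma$. Thus $\Gamma_0$ is a torsion‑free discrete subgroup of $G_0$ with $\mathrm{cd}(\Gamma_0)=D=\dim(G_0/K_0)$; since it acts properly discontinuously on the Riemannian symmetric space $G_0/K_0$, the equality case of Fact~\ref{fact:vcd} forces this action to be cocompact, i.e.\ $\Gamma_0$ is a uniform lattice in $G_0$. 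As $G_0$ is simple of real rank $2$, Margulis superrigidity then applies to $\Gamma_0$: the homomorphism $\rho_2\circ\rho_1^{-1}\colon\Gamma_0\to G_0$ has relatively compact image, so its Zariski closure is a compact group, so it is trivial on a finite‑index subgroup (a noncompact simple Lie group admits no nontrivial homomorphism to a compact group). Hence $\rho_2$ vanishes on a finite‑index subgroup $\Gamma'\leq\Gamma$, so $\Gamma'=\Gamma_0'\times\{1\}$ with $\Gamma_0'$ of finite index in $\Gamma_0$, again a uniform lattice in $G_0$: this is precisely the statement of the theorem.

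To prove the claim I would argue by contradiction, assuming that neither $\rho_1(\Gamma)$ nor $\rho_2(\Gamma)$ is relatively compact. The key expectation is that the sharpness of $\Gamma$ for $G/H$, together with the cocompactness of the action and the correspondence between sharp and Anosov behaviour (in the spirit of Propositions~\ref{prop:cork-1-proper-compact->Ano} and~\ref{prop:corank-1-sharp<->Ano}, through Fact~\ref{fact:klp}), should force $\Gamma$ to be word hyperbolic and the inclusion $\Gamma\hookrightarrow G$ to be $Q$‑Anosov for some proper parabolic subgroup $Q=Q_1\times Q_2$ of $G=G_0\times G_0$. Since $Q$ is proper, $Q_i\subsetneq G_0$ for some $i\in\{1,2\}$; for that $i$, the representation $\rho_i\colon\Gamma\to G_0$ is $Q_i$‑Anosov, hence quasi‑isometrically embedded with discrete image, hence (as $\Gamma$ is torsion‑free, $\rho_i$ has trivial kernel) $\Gamma_i:=\rho_i(\Gamma)\cong\Gamma$ is a discrete subgroup of $G_0$ with $\mathrm{cd}(\Gamma_i)=\mathrm{cd}(\Gamma)=D=\dim(G_0/K_0)$. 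By the equality case of Fact~\ref{fact:vcd} (again using that $\Gamma_i$ acts properly discontinuously on $G_0/K_0$), $\Gamma_i$ is a uniform lattice in $G_0$. But $G_0$ is simple of real rank $2$, so its Riemannian symmetric space $G_0/K_0$ is not Gromov hyperbolic, and hence neither is the quasi‑isometric group $\Gamma_i$ --- contradicting the word‑hyperbolicity of $\Gamma_i\cong\Gamma$. Therefore one of $\rho_1(\Gamma)$, $\rho_2(\Gamma)$ is relatively compact, as claimed, and the theorem follows.

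The step I expect to be the main obstacle is the very first deduction of the last paragraph: showing that a proper and cocompact action on this group manifold with neither projection relatively compact must come from a word‑hyperbolic $\Gamma$ whose inclusion into $G$ is Anosov. The homogeneous space $G/H$ has corank $\Rrank(G)-\Rrank(H)=\Rrank(G_0)=2$, so one is no longer in the corank‑one situation covered by Propositions~\ref{prop:cork-1-proper-compact->Ano} and~\ref{prop:corank-1-sharp<->Ano}, and one has to exploit the specific structure of the $A_2$ restricted root system and of the diagonal $H=\Diag(G_0)$ to carry the argument through. It is also precisely the hypothesis $\Rrank(G_0)=2$ (rather than $1$) that makes the final contradiction work: for $\Rrank(G_0)=1$ the symmetric space $G_0/K_0$ \emph{is} Gromov hyperbolic, uniform lattices in $G_0$ are word hyperbolic, and indeed group manifolds of real rank one carry large families of non‑standard compact quotients (\cite{kas-PhD,gk17,ggkw17}).
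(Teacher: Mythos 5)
The overall architecture of your first two paragraphs is sound: if one projection, say $\rho_2$, has bounded (or, more precisely, discrete image and finite kernel for $\rho_1$), then Fact~\ref{fact:vcd} forces $\rho_1(\Gamma)$ to be a uniform lattice and Margulis superrigidity kills $\rho_2$ up to finite index. (One caveat: your parenthetical ``a noncompact simple Lie group admits no nontrivial homomorphism to a compact group'' does not justify finiteness of the image of $\rho_2\circ\rho_1^{-1}$, since the domain is the \emph{lattice} $\Gamma_0$, not $G_0$; one needs the consequence of Margulis's arithmeticity/superrigidity that a homomorphism from a higher-rank uniform lattice to a compact group either has finite image or has closure isogenous to a compact real form of the complexification of $G_0$, and then to observe that $G_0$ contains no such form.)

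The genuine gap is exactly the step you flag as ``the main obstacle'': the assertion that if neither projection is relatively compact, then $\Gamma$ is word hyperbolic and the inclusion $\Gamma\hookrightarrow G_0\times G_0$ is Anosov. Nothing in the paper (or elsewhere) gives this: here $\Rrank(G)-\Rrank(H)=2$, so Propositions~\ref{prop:cork-1-proper-compact->Ano} and~\ref{prop:corank-1-sharp<->Ano} do not apply, and sharpness alone only says that the limit cone $\mathcal L_\Gamma\subset\aaa_0^+\times\aaa_0^+$ avoids the diagonal, which is far from forcing it to avoid every wall of some parabolic type. This unproven assertion is where essentially all of the content of the theorem lives. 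The paper's proof takes a completely different route and never establishes an Anosov property for $\Gamma$ in $G$: it proves (Lemma~\ref{lem: Property Di or Ci}) that for each $i$ either $p_i|_\Gamma$ has discrete image and finite kernel, or the limit cone of $\Gamma$ contains the ray through $(0,v_0)$ (resp.\ $(v_0,0)$), where $v_0$ spans the one-dimensional fixed locus of the opposition involution of the $A_2$ system; if the second alternative held for both $i$, multiplying suitable group elements would put $\mu(\gamma_n\gamma_n')$ within $o(n)$ of the diagonal, contradicting sharpness. Establishing that dichotomy is the technical heart of Section~\ref{sec:quotients-SL3} and uses Benoist's convexity and $\iota$-invariance of limit cones in the Zariski dense case, the non-existence of compact $(P\times P^*,\Omega)$-manifolds (Theorem~\ref{thm:PxPopp-structures}) to exclude $\Gamma\subset P\times P^*$, Raghunathan's theorem on discreteness of the semisimple projection, Abels--Margulis--Soifer, and a case analysis on the Zariski closure of $\Gamma$ --- none of which appears in your proposal. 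Note also that the correct output of the hard part is ``one projection is discrete with finite kernel,'' not ``the other projection is relatively compact''; relative compactness only emerges at the very end, as a consequence of superrigidity.
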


Note that, unlike in the case where $G_0$ has real rank one, in Theorem~\ref{thm:group-mfd-SL3} the groups $\Gamma = \Gamma_0\times\{ 1\}$ are locally rigid in $G = G_0\times G_0$ by work of Raghunathan \cite{rag65}.

Our proof of Theorem~\ref{thm:group-mfd-SL3} relies crucially on the fact that when the restricted root system is of type~$A_2$, the opposition involution is nontrivial. For real simple Lie groups of rank~$2$ with trivial opposition involution, there may exist other discrete subgroups of $G_0\times G_0$ acting properly discontinuously and cocompactly on $(G_0\times G_0)/\mathrm{Diag}(G_0)$ (see \cite[\S\,2.2]{kk16}), including some that are not rigid.
For instance, for $G_0 = \SO(2n,2)$ we can take $\Gamma = \Gamma_1\times\Gamma_2$ where $\Gamma_1$ is a uniform lattice of $\U(n,1)$ and $\Gamma_2$ acts properly discontinuously and cocompactly on $\SO(2n,2)/\U(n,1)$; such $\Gamma_2$ can be deformed \eg by bending (see \cite{kas12}).
This example leads us to formulate the following general conjecture; we refer to Section~\ref{subsec:lim-cone} for the notion of limit cone.

\begin{conjecture} \label{conj:group-mfd}
Let $G_0$ be any real linear simple Lie group of real rank $\geq 2$, with maximal compact subgroup~$K_0$, and let $\Gamma$ be any discrete subgroup of $G_0\times G_0$ acting properly discontinuously and cocompactly on $(G_0\times G_0)/\mathrm{Diag}(G_0)$.
Then, up to replacing $\Gamma$ by a finite-index subgroup, we have $\Gamma = \Gamma_1 \times \Gamma_2$ where $\Gamma_1$ and~$\Gamma_2$ are discrete subgroups of~$G_0$ with disjoint limit cones and
\[\mathrm{vcd}(\Gamma_1) + \mathrm{vcd}(\Gamma_2) = \dim(G_0/K_0) .\]
\end{conjecture}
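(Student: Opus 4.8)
Since this is a conjecture, not a theorem, I will describe the evidence and partial results that support it and indicate which ingredients would enter a proof.

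First, the sharpness theorem (Theorem~\ref{thm:sharp}) applies: any $\Gamma$ acting properly discontinuously and cocompactly on $(G_0\times G_0)/\Diag(G_0)$ is finitely generated and sharply embedded in $G_0\times G_0$ with respect to $\Diag(G_0)$. Writing $\mu_0\colon G_0\to\aaa_0^+$ for the Cartan projection of $G_0$, the Cartan projection of $G_0\times G_0$ is $(\mu_0,\mu_0)$, and $\mu(\Diag(G_0))$ is (up to the opposition involution $\iota$) the ``diagonal'' $\{(v,v)\}$ intersected with the chamber; more precisely it consists of pairs $(v,w)$ with $w=\iota(v)$ up to Weyl action. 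Sharpness says the limit cone of $\Gamma$ in $\aaa_0^+\times\aaa_0^+$ meets this set only at $0$: concretely, for $\gamma=(\gamma_1,\gamma_2)\in\Gamma$, the vectors $\mu_0(\gamma_1)$ and $\iota(\mu_0(\gamma_2))$ stay uniformly far apart (linearly in $|\gamma|_S$) in the Weyl-chamber metric. This is exactly the hypothesis needed to invoke the properness-and-cocompactness analysis.

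The plan for the conjecture is then: (1) Use sharpness together with the dimension count from Fact~\ref{fact:vcd}, namely $\mathrm{cd}(\Gamma) = \dim((G_0\times G_0)/\Diag(G_0)) - \dim((K_0\times K_0)/\Diag(K_0)) = \dim(G_0) - \dim(K_0) = \dim(G_0/K_0)$, so any torsion-free finite-index $\Gamma$ has $\mathrm{cd}(\Gamma)=\dim(G_0/K_0)$. (2) Analyze the two coordinate projections $\pi_i\colon\Gamma\to G_0$. The key structural input should be a splitting result: because the limit cones of $\pi_1(\Gamma)$ and $\pi_2(\Gamma)$ (after applying $\iota$ to the second) are disjoint, one expects, after passing to a finite-index subgroup, that $\Gamma$ is a direct product $\Gamma_1\times\Gamma_2$ with $\Gamma_i=\pi_i(\Gamma)$ discrete in $G_0$ — this is the algebraic heart, analogous to how disjoint-limit-cone hypotheses force reducibility of the relevant algebraic hull (via, e.g., results on the structure of subgroups with prescribed limit cone, or Zariski-closure and Tits-alternative arguments combined with the Auslander-type rigidity one gets from higher rank). (3) Given the product decomposition $\Gamma=\Gamma_1\times\Gamma_2$, proper discontinuity of the $\Gamma$-action translates exactly into the limit cones of $\Gamma_1$ and $\iota(\Gamma_2)$ being disjoint (Benoist--Kobayashi plus sharpness), and cocompactness, via Fact~\ref{fact:vcd} applied to the product action and the additivity $\mathrm{cd}(\Gamma_1\times\Gamma_2)=\mathrm{cd}(\Gamma_1)+\mathrm{cd}(\Gamma_2)$ for torsion-free groups, gives $\mathrm{vcd}(\Gamma_1)+\mathrm{vcd}(\Gamma_2)=\dim(G_0/K_0)$, which is precisely the stated conclusion.

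\textbf{Main obstacle.} Step~(2), the product decomposition, is the crux and is genuinely hard in general: a priori $\Gamma$ need only be a graph-like subgroup of $G_0\times G_0$, and one must rule out ``twisted'' configurations. The sharpness/disjoint-limit-cones hypothesis is a real-geometric condition and does not immediately feed into Zariski-density machinery; one would likely need to combine it with a control on the Zariski closure of $\Gamma$ (using that $G_0$ has real rank $\geq 2$, so Margulis-type rigidity and Ratner/Shah-type homogeneity are available for the relevant intermediate subgroups) and with the fact — used crucially in the $A_2$ case of Theorem~\ref{thm:group-mfd-SL3} — that some Weyl-chamber geometry forces one of the two projections to be ``degenerate.'' The $A_2$ case exploited a nontrivial opposition involution; for general $G_0$ of rank $\geq 2$ with trivial $\iota$ (e.g. $\SL_2\times\SL_2$-type factors, $\mathrm{Sp}_4$, $G_2$) the conjecture genuinely allows irreducible $\Gamma_i$ with matching limit cones, as the $\SO(2n,2)$ example shows, so the decomposition must be into two honest discrete subgroups of $G_0$ each of which may itself be large — and proving even that $\Gamma$ virtually splits at all, before worrying about the dimension equality, is the step I expect to resist a direct attack and to require substantially new input beyond sharpness.
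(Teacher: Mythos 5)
You have correctly recognized that the statement is a conjecture, and indeed the paper contains no proof of it: the authors explicitly state, right after Conjecture~\ref{conj:group-mfd}, that the proof of Theorem~\ref{thm:group-mfd-SL3} only ``includes some steps towards'' the conjecture, and that they ``are not able for the moment to rule out the possibility that both projections of $\Gamma$ are faithful with dense image in~$G_0$.'' Measured against that, your proposal is not a proof but a programme, and its step~(2) --- the virtual splitting $\Gamma = \Gamma_1\times\Gamma_2$ --- is exactly the open gap. Steps (1) and (3) are essentially the easy bookkeeping the paper also records: Theorem~\ref{thm:sharp} gives sharpness, Fact~\ref{fact:vcd} applied to $G=G_0\times G_0$, $H=\Diag(G_0)$ gives $\mathrm{cd}(\Gamma)=\dim(G_0/K)$, and once a product decomposition is granted, properness and the dimension equality follow by the Benoist--Kobayashi criterion and additivity of cohomological dimension. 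None of this touches the crux.

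The concrete gap in step~(2) is that your suggested tools would not apply as stated. Sharpness only constrains the limit cone of $\Gamma$ inside $\aaa^+\times\aaa^+$; it does not by itself produce disjointness of the limit cones of the two projections, let alone an algebraic splitting. The appeal to ``Margulis-type rigidity and Ratner/Shah-type homogeneity'' presupposes that $\Gamma$ (or a projection of it) is a lattice in, or Zariski dense in, some intermediate subgroup to which such theorems apply; a priori $\Gamma$ could have both projections faithful with dense, non-discrete image, and this is precisely the configuration the authors say they cannot exclude. In the $A_2$ case the paper circumvents this not by homogeneous dynamics but by the specific Weyl-chamber geometry (the opposition involution is nontrivial with one-dimensional fixed locus, see the proof of Proposition~\ref{prop:Zariski dense => discrete and faithful projection}), combined with Margulis superrigidity \emph{after} one projection is shown to be discrete with finite kernel, plus the case analysis of Section~\ref{sec:quotients-SL3} and the obstruction of Theorem~\ref{thm:PxPopp-structures} for non-reductive Zariski closures; none of these arguments extends to groups with trivial opposition involution, which is why the general statement remains conjectural. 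A small inaccuracy: in the $\SO(2n,2)$ example the two factors $\Gamma_1,\Gamma_2$ have \emph{disjoint} limit cones (this is what makes the action proper), not ``matching'' ones; and $\mu(\Diag(G_0))$ is simply the diagonal of $\aaa^+\times\aaa^+$, the opposition involution entering only if one rewrites the right factor via $\gamma_2\mapsto\gamma_2^{-1}$.
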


In Conjecture~\ref{conj:group-mfd}, one of the groups $\Gamma_1$ or~$\Gamma_2$ is allowed to be trivial (hence with empty limit cone); this case, called \emph{standard}, appears in Theorem~\ref{thm:group-mfd-SL3}.

The proof of Theorem~\ref{thm:group-mfd-SL3} includes some steps towards a proof of Conjecture~\ref{conj:group-mfd}.
In general, we are not able for the moment to rule out the possibility that both projections of $\Gamma$ are faithful with dense image in~$G_0$.
An open conjecture, first formulated by Benoist in the 1990s (see \cite[Q.\,3.4--3.5]{bflm26}), predicts that a discrete subgroup of $G_0 \times G_0$ with dense projections must be a lattice (hence cannot act properly discontinuously on $(G_0\times\nolinebreak G_0)/\mathrm{Diag}(G_0)$).
Solving this conjecture would be a major step forward in the investigation of compact quotients of general group manifolds $(G_0\times G_0)/\mathrm{Diag}(G_0)$.

%%%%%%%%%%%%%%%%%%%%%%%%%
\subsection{Organization of the paper}

Section~\ref{sec:Cartan-polar} is devoted to some reminders about the Cartan decomposition of a reductive Lie group~$G$, the polar decomposition of~$G$ with respect to a reductive subgroup~$H$, and compactly generated groups.
In Section~\ref{sec:proof-sharpness} we prove the Sharpness Theorem~\ref{thm:sharp} by establishing a slightly more general version of it (Theorem~\ref{thm:sharp-general}).
In Section~\ref{sec:sharp-Ano} we relate sharp embeddedness to Anosov representations, and prove Theorem~\ref{thm:proper-compact<->Ano} and Proposition~\ref{prop:cork-1-proper-compact->Ano}.
In Section~\ref{sec:no-cpt-quot} we prove Corollary~\ref{cor:no-cpt-quot} about the nonexistence of compact quotients for the homogeneous spaces $G/H$ of Table~\ref{table3}.
In Section~\ref{sec:openness} we study further the links between sharp embeddedness and Anosov representations, via linear representations of~$G$, and deduce Theorem~\ref{thm:openness} stating the openness of proper and cocompact actions in the corank-one case.
In Section~\ref{sec:P-Popp} we prove a general preliminary result on discrete groups acting properly discontinuously and cocompactly on group manifolds.
We use it in Section~\ref{sec:quotients-SL3} to prove Theorem~\ref{thm:group-mfd-SL3} about compact quotients of group manifolds of type~$A_2$.

%%%%%%%%%%%%%%%%%%%%%%%%%
\subsection{Acknowledgements}

We are grateful to Yosuke Morita for pointing out some of the examples of Table~\ref{table3}, and for interesting discussions around compact quotients which led to our joint work \cite{kmt}.

The results of this paper (with the exception of Theorem~\ref{thm:group-mfd-SL3}) were presented at the \emph{Geometry Winter Workshop} in Luxembourg in January 2022 and at the birthday conferences of Grigory Margulis, Marc Burger, and Toshiyuki Kobayashi in Chicago, Zurich, and Tokyo in 2022; we thank the organizers of these events.

Progress on Theorem~\ref{thm:group-mfd-SL3} was made while the first-named author was in residence at the Institute for Advanced Study in Princeton in the Spring 2024, supported by the National Science Foundation under Grant No. DMS-1926686; she thanks the IAS for its hospitality and excellent working conditions.

%%%%%%%%%%%%%%%%%%%%%%%%%%%%%%%%%%%%%%%%%%%%%%%%%%%
\section{Reminders: Cartan and polar decompositions} \label{sec:Cartan-polar}

In the whole paper, we fix a noncompact connected real linear reductive Lie group $G$, with Lie algebra~$\g$.

%%%%%%%%%%%%%%%%%%%%%%%%%
\subsection{Cartan decomposition} \label{subsec:Cartan-decomp}

Let $K$ be a maximal compact subgroup of~$G$; it is the set of fixed points of some Cartan involution $\sigma$ of~$G$. 
The Cartan decomposition $\g = \kk + \p$ holds, where $\kk = \Lie(K)$ and $\p$ are respectively the fixed points and anti-fixed points of the involution $\mathrm{d}\sigma$ in~$\g$.
Let $\aaa$ be a maximal abelian subspace of~$\p$ (also known as a \emph{Cartan subspace} of~$\g$), and let $\Sigma = \Sigma(\g,\aaa)$ be the set of restricted roots of $\aaa$ in~$\g$, namely the set of linear forms $\alpha\in\aaa^*$ such that $\g_{\alpha} \equaldef \{ v\in\g ~|~ [a,v] = \alpha(a) v\ \text{for all }a\in\aaa\}$ is nonzero.
We choose a set $\Delta$ of simple restricted roots of $\aaa$ in~$\g$, \ie a subset of~$\Sigma$ such that any element of~$\Sigma$ can be written as a linear combination of elements of~$\Delta$ with all coefficients nonnegative or all coefficients nonpositive.
We denote by $\Sigma^+$ the set of elements of~$\Sigma$ that can be written as a linear combination of elements of~$\Delta$ with all coefficients nonnegative.
The choice of~$\Delta$ defines a closed positive Weyl chamber
$$\aaa^+ \equaldef \{ a\in\aaa ~|~ \alpha(a)\geq 0\ \text{for all }\alpha\in\Delta\},$$
which is a closed convex cone in~$\aaa$, and a fundamental domain for the action of the restricted Weyl group $W = N_G(\aaa)/Z_G(\aaa)$ on~$\aaa$.

The Cartan decomposition $G = K \exp(\aaa^+)K$ holds: any element $g\in G$ can be written $g = k\exp(a)k'$ for some $k,k'\in K$ and a unique $a\in\aaa^+$.
Setting $\mu(g) \equaldef a$ defines a map $\mu : G\to\aaa^+$, called the \emph{Cartan projection} associated to the Cartan decomposition $G = K\exp(\aaa^+)K$.
It is continuous, proper, and surjective.

Let $w_0$ be the longest element of the Weyl group~$W$ (\ie $w_0 \cdot \aaa^+ = -\aaa^+$). 
Then $\iota \equaldef -w_0 : \aaa^+ \to \aaa^+$ is the \emph{opposition involution}.
We have $\mu(g^{-1}) = \iota(\mu(g))$ for all~$g\in G$.

\begin{example} \label{ex:Cartan-decomp-SLn}
Let $n\geq 2$.
If $G = \SL(n,\mathbb{K})$ where $\mathbb{K} = \R$ (\resp $\C$, \resp the ring $\mathbb{H}$ of quaternions), then we may take $K$ to be $\SO(n)$ (\resp $\SU(n)$, \resp $\Sp(n)$) and $\aaa$ to be the set of diagonal matrices of the form $\mathrm{diag}(t_1,\dots,t_n)$ where $t_1,\dots,t_n\in\R$ satisfy $t_1 + \dots + t_n = 0$.
If $G$ is the identity component of $\GL(n,\R)$, then we may take $K$ to be $\SO(n)$ and $\aaa$ to be the set of all diagonal matrices $\mathrm{diag}(t_1,\dots,t_n)$ with $t_1,\dots,t_n\in\R$.
In either case, we may take $\aaa^+$ to be the closed subcone of~$\aaa$ defined by $t_1\geq\dots\geq t_n$.
The Cartan projection $\mu : G\to\aaa^+$ takes $g\in G$ to $\mathrm{diag}(\mu_1(g),\dots,\mu_n(g))$ where $e^{\mu_1(g)}\geq\dots\geq e^{\mu_n(g)}>0$ are the square roots of the eigenvalues of ${}^t\!\overline{g}g$.
The opposition involution $\iota : \aaa^+\to\aaa^+$ maps $\mathrm{diag}(t_1,\dots,t_n)$ to $\mathrm{diag}(-t_n,\dots,-t_1)$.
The set of simple restricted roots associated to~$\aaa^+$ is $\Delta = \{\alpha_1,\dots,\alpha_{n-1}\}$ where $\alpha_i : \mathrm{diag}(t_1,\dots,t_n) \mapsto t_i$.
\end{example}

%%%%%%%%%%%%%%%%%%%%%%%%%
\subsection{Vector-valued distance function on $G/K$} \label{subsec:vector-dist}

Let $(\cdot,\cdot)$ be a $G$-invariant nondegenerate symmetric bilinear form on~$\g$ which is positive definite on~$\p$, negative definite on~$\kk$, and for which $\p$ and~$\kk$ are orthogonal.
(If $G$ is semisimple, we can take $(\cdot,\cdot)$ to be the Killing form of~$\g$.)
Let $\Vert \cdot \Vert$ be the associated Euclidean norm on~$\p$.
Viewing $\p = \kk^{\perp}$ as the tangent space $T_{x_0} G/K$, the restriction of $(\cdot,\cdot)$ to~$\p$ extends to a $G$-invariant Riemannian metric on $G/K$, making $G/K$ a Riemannian symmetric space of nonpositive curvature.
We denote by $d_{G/K}$ the corresponding distance function.

The Cartan projection $\mu : G\to\aaa^+$ of Section~\ref{subsec:Cartan-decomp} is by construction invariant under left and right multiplication by~$K$, and therefore defines a ``vector-valued distance function''\ $\multidist : G/K \times G/K \to \aaa^+$ on the symmetric space $G/K$, given by 
\begin{equation} \label{eqn:multidist}
\multidist(g\cdot x_0, g'\cdot x_0) = \mu(g^{-1} g')
\end{equation}
for all $g,g'\in G$ (see \cite[\S\,2.2.3]{par12}).
It satisfies the following properties:
\begin{enumerate}
  \item $\multidist$ is $G$-invariant: $\multidist(g\cdot x,g\cdot x') = \multidist (x,x')$ for all $g\in G$ and all $x,x'\in G/K$;
  \item $\multidist(x',x) = \iota (\multidist(x,x'))$, where $\iota : \aaa^+ \to \aaa^+$ is the opposition involution;
  \item $G$ acts $2$-transitively on $(G/K, \multidist)$: given $x_1,x_2,x_1', x_2' \in G/K$, there exists $g\in G$ such that $x_1'=g\cdot x_1$ and $x_2'= g\cdot x_2$ if and only if $\multidist(x_1,x_2) = \multidist(x_1',x_2')$;
  \item the norm of~$\multidist$ is the Riemannian distance function $d_{G/K}$: for any $x,x'\in G/K$,
  \begin{equation} \label{eqn:norm-multidist}
  \Vert \multidist(x,x') \Vert = d_{G/K}(x,x') .
  \end{equation}
\end{enumerate}

Using \eqref{eqn:multidist} and \eqref{eqn:norm-multidist}, one can show (see \eg \cite[Lem.\,2.1]{kas08}) that $\mu$ is ``strongly subadditive'' in the following sense: for any $g,g_1,g_2\in G$,
\begin{equation} \label{eqn:mu-subadd}
\Vert\mu(g_1 g g_2) -\mu(g)\Vert \leq \Vert\mu(g_1)\Vert + \Vert\mu(g_2)\Vert .
\end{equation}

%%%%%%%%%%%%%%%%%%%%%%%%%
\subsection{Cartan decomposition for~$H$} \label{subsec:Cartan-decomp-H}

We now fix (until the end of Section~\ref{subsec:polar-decomp}) a closed connected subgroup $H$ of~$G$ which is stable under the Cartan involution $\sigma$ of~$G$.
Then $H$ is itself a real linear reductive Lie group, with Cartan involution the restriction of~$\sigma$, and with maximal compact subgroup $K_H \equaldef K\cap H$; the Lie algebra $\h$ of~$H$ admits the Cartan decomposition $\h = (\kk\cap\h) + (\p\cap\h)$.
The pair $(G,H)$ is called a \emph{reductive pair} and the homogeneous space $G/H$ is said to be \emph{of reductive type}.

Let $\aaa_H$ be a maximal abelian subspace of $\p\cap\h$.
We may and shall choose the maximal abelian subspace $\aaa$ of~$\p$ from Section~\ref{subsec:Cartan-decomp} to contain~$\aaa_H$.
The Cartan decomposition of~$H$ states that every $h\in H$ can be written $h = k \exp(a) k'$ for some $k, k' \in K_H$ and some $a$ in (some $H$-Weyl chamber of)~$\aaa_H$; while the vector $a$ need not be contained in~$\aaa^+$, there exists an element $w \in W$ such $w \cdot a \in \aaa^+$ and we have $\mu(h) = w\cdot a$.
Thus the image of $H$ under the Cartan projection $\mu : G\to\aaa^+$ is
\begin{equation} \label{eqn:mu-H}
\mu(H) =  \aaa^+ \cap \bigcup_{w \in W} w \cdot \aaa_H ,
\end{equation}
\ie $\mu(H)$ is the intersection of $\aaa^+$ with the finitely many linear subspaces $w\cdot\aaa_H$ of~$\aaa$, for $w\in W$.

%%%%%%%%%%%%%%%%%%%%%%%%%
\subsection{A duality between $G/K$ and $G/H$}

Throughout this section, we denote points in the Riemannian symmetric space $G/K$ with the letter~$x$ and points in the homogeneous space $G/H$ with the letter~$y$.
In particular, we denote by
\begin{itemize}
  \item $x_0$ the class of $\1_G$ in $G/K$, i.e. the point fixed by $K$,
  \item $y_0$ the class of $\1_G$ in $G/H$, i.e. the point fixed by $H$.
\end{itemize}

As mentioned in Section~\ref{subsec:vector-dist}, the $G$-invariant nondegenerate symmetric bilinear form $(\cdot,\cdot)$ on~$\g$, restricted to $\p = \kk^{\perp} \simeq T_{x_0} G/K$, extends to a $G$-invariant Riemannian metric on $G/K$, making $G/K$ a Riemannian symmetric space of nonpositive curvature.
On the other hand, the restriction of $(\cdot,\cdot)$ to $\q \equaldef \h^{\perp} \simeq T_{y_0} G/H$, extends to a $G$-invariant \emph{pseudo-Riemannian} metric on $G/H$, of signature $(\mathtt{d}_+,\mathtt{d}_-)$ as in \eqref{eqn:d+d-}.

The $K$-orbit $\mathcal{K}_0 \equaldef K\cdot y_0$ in $G/H$ is a totally geodesic timelike subspace of dimension $\mathtt{d}_-$, isomorphic to $K/K_H$, and maximal for inclusion among all totally geodesic timelike subspaces of $G/H$.
Its stabilizer in $G$ is the group $K$, hence we can identify $G/K$ with the space of $G$-translates of~$\mathcal{K}_0$.
Given $x = g\cdot x_0 \in G/K$ (where $g\in G$), we denote by $\mathcal{K}_x \equaldef g \cdot \mathcal{K}_0$ the corresponding maximal timelike subspace of $G/H$.

Dually, the $H$-orbit $\mathcal H_0\equaldef H\cdot x_0$ in $G/K$ is a totally geodesic subspace of $G/K$, isomorphic to $H/K_H$, and we can identify $G/H$ with the space of $G$-translates of~$\mathcal H_0$.
Given $y = g\cdot y_0 \in G/H$ (where $g\in G$), we denote by $\mathcal H_y \equaldef g \cdot \mathcal H_0$ the corresponding totally geodesic subspace of $G/K$.

For $x\in G/K$ and $y\in G/H$, we then have the following equivalence:
\begin{equation} \label{eq: Duality G/H G/K}
x\in \mathcal H_y \Longleftrightarrow y \in \mathcal{K}_x .
\end{equation}

%%%%%%%%%%%%%%%%%%%%%%%%%
\subsection{Polar decomposition with respect to $H$} \label{subsec:polar-decomp}

As above, let $\p$ and~$\q$ be the orthogonal complements in~$\g$ of $\kk$ and~$\h$, respectively, with respect to the $G$-invariant nondegenerate symmetric bilinear form $(\cdot,\cdot)$.
Then $\p \cap \q$ can be identified with the orthogonal of $T_{x_0} \mathcal H_0$ in $T_{x_0} G/K$, and also with the orthogonal of $T_{y_0} \mathcal{K}_0$ in $T_{y_0} G/H$.

The following polar decomposition is well known when $G/H$ is symmetric; we refer to \cite[Lem.\,2.7]{kob89} for a proof in the general case.

\begin{fact} \label{fact:polar-decomp}
Any $g\in G$ can be written as $g = k \exp(b) h$ for some $k\in K$, some $b\in\p\cap\q$, and some $h\in H$.
Moreover, this decomposition is unique up to~$K_H$: if $k \exp(b) h = k'\exp(b') h'$ where $k,k'\in K$ and $b,b'\in\p\cap\q$ and $h,h'\in H$, then there exists $\ell\in K_H$ such that $k'=k\ell^{-1}$ and $b' = \Ad_\ell(b)$ and $h' = \ell h$.
\end{fact}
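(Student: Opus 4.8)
The plan is to prove the statement entirely inside the Riemannian symmetric space $G/K$, exploiting the fact that, with its $G$-invariant metric, $G/K$ is a Hadamard manifold: the symmetric-space exponential $\exp_{x_0}\colon\p\to G/K$, $X\mapsto\exp(X)\cdot x_0$, is a diffeomorphism, the sectional curvature is nonpositive, and hence $G/K$ is CAT(0). Since $\h$ is $\sigma$-stable, $\g=\h\oplus\q$ is an orthogonal decomposition into $\sigma$-stable subspaces, so $\p=(\p\cap\h)\oplus(\p\cap\q)$ orthogonally; moreover $\p\cap\h$ is a Lie triple system, and by the polar decomposition of the reductive group $H$ one has $\mathcal H_0=H\cdot x_0=\exp(\p\cap\h)\cdot x_0=\exp_{x_0}(\p\cap\h)$. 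Thus $\mathcal H_0$ is a complete, totally geodesic submanifold of $G/K$ (isometric to $H/K_H$), on which $H$ acts transitively by isometries with stabilizer of $x_0$ equal to $K\cap H=K_H$, and $\exp_{x_0}(\p\cap\q)$ is exactly the set $N_{x_0}$ of points $z$ such that the geodesic from $x_0$ to $z$ is orthogonal to $\mathcal H_0$ at $x_0$. Since $H$ is closed in $G$, the set $HK$ is closed in $G$, so $\mathcal H_0=HK/K$ is closed in $G/K$; being closed, complete and totally geodesic in a Hadamard manifold, it is convex, hence carries a well-defined single-valued nearest-point projection $\pi\colon G/K\to\mathcal H_0$, characterized by the property that $\pi(z)=p$ if and only if the geodesic from $z$ to $p$ meets $\mathcal H_0$ orthogonally at $p$.

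For existence, given $g\in G$, put $p\equaldef\pi(g^{-1}\cdot x_0)\in\mathcal H_0$ and choose $h\in H$ with $h\cdot p=x_0$. As $h$ acts isometrically and preserves $\mathcal H_0$, we get $\pi(hg^{-1}\cdot x_0)=h\cdot p=x_0$, hence $hg^{-1}\cdot x_0\in N_{x_0}=\exp_{x_0}(\p\cap\q)$: write $hg^{-1}\cdot x_0=\exp(-b)\cdot x_0$ with $b\in\p\cap\q$. Then $\exp(b)\,hg^{-1}$ fixes $x_0$, so it equals $k^{-1}$ for some $k\in K$, which gives $g=k\exp(b)h$.

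For uniqueness, suppose $g=k\exp(b)h=k'\exp(b')h'$ as in the statement. From $hg^{-1}\cdot x_0=\exp(-b)\cdot x_0\in N_{x_0}$ we get $\pi(hg^{-1}\cdot x_0)=x_0$, hence (applying the $H$-equivariance of $\pi$) $\pi(g^{-1}\cdot x_0)=h^{-1}\cdot x_0$, and similarly $\pi(g^{-1}\cdot x_0)=(h')^{-1}\cdot x_0$; so $\ell\equaldef h'h^{-1}$ fixes $x_0$, i.e.\ $\ell\in K\cap H=K_H$, and $h'=\ell h$. Then $\exp(-b')\cdot x_0=h'g^{-1}\cdot x_0=\ell\cdot\bigl(\exp(-b)\cdot x_0\bigr)=\exp(-\Ad(\ell)b)\cdot x_0$, using $\ell\cdot x_0=x_0$; since $\exp_{x_0}$ is injective and $\Ad(\ell)$ preserves $\p\cap\q$, we obtain $b'=\Ad(\ell)b$. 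Finally $k'=k\exp(b)h(h')^{-1}\exp(-b')=k\exp(b)\,\ell^{-1}\exp(-\Ad(\ell)b)=k\exp(b)\exp(-b)\ell^{-1}=k\ell^{-1}$, as claimed.

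The only nontrivial ingredients are the standard facts that $G/K$ is a Hadamard (hence CAT(0)) manifold and that a closed, complete, totally geodesic submanifold of such a manifold is convex and admits a unique nearest-point projection characterized by orthogonality; the decompositions $\g=\h\oplus\q$, $\p=(\p\cap\h)\oplus(\p\cap\q)$ and the transitivity of $H$ on $\mathcal H_0$ are immediate from the $\sigma$-stability of $\h$. I expect the one point deserving genuine care to be the closedness of $\mathcal H_0=HK/K$ in $G/K$ — which is precisely where the hypothesis that $H$ is closed in $G$ enters, and without which the projection $\pi$ would fail to be defined — rather than anything about the algebra of the Cartan decomposition itself.
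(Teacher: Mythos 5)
Your proof is correct. Note that the paper does not actually prove this fact: it states that the polar decomposition ``is well known when $G/H$ is symmetric'' and refers to Kobayashi [Ko1, Lem.~2.7] for the general reductive case, so there is no in-paper argument to compare against line by line. Your route --- realizing $\mathcal H_0=H\cdot x_0=\exp_{x_0}(\p\cap\h)$ as a closed, complete, totally geodesic (hence convex) submanifold of the Hadamard manifold $G/K$ and reading off both existence and uniqueness from the orthogonality characterization of the nearest-point projection --- is a clean, self-contained proof, and it is exactly the geometric picture the paper exploits afterwards: the ``Riemannian interpretation'' of $D(x,y)$ in Section~2.5 identifies $D(x,y)$ with $d_{G/K}(x,\mathcal H_y)$ via this same orthogonal projection. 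All the ingredients you flag check out: the form $(\cdot,\cdot)$ is $\dd\sigma$-invariant, so $\q$ is $\dd\sigma$-stable and $\p=(\p\cap\h)\oplus(\p\cap\q)$; $\Ad(\ell)$ for $\ell\in K_H$ preserves $\p\cap\q$; and the identity $\ell^{-1}\exp(-\Ad(\ell)b)=\exp(-b)\ell^{-1}$ that you use silently in the last computation is valid. One small simplification: the closedness of $\mathcal H_0$, which you single out as the delicate point, follows immediately from $\mathcal H_0=\exp_{x_0}(\p\cap\h)$ together with the fact that $\exp_{x_0}:\p\to G/K$ is a diffeomorphism and $\p\cap\h$ is a linear (hence closed) subspace --- no separate argument about $HK$ being closed in $G$ is needed, and indeed the hypothesis really being used is that $H$ is $\sigma$-stable and connected, which forces $H=K_H\exp(\p\cap\h)$ to be closed automatically.
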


For $g = k\exp(b)h$ as above, we denote by $\nu(g)$ the class of $b$ in $K_H \backslash (\p \cap \q)$ (which is well defined by the second part of Fact~\ref{fact:polar-decomp}) and by $\Vert\nu(g)\Vert$ its norm.
By construction, $\Vert \nu (\cdot)\Vert$ is invariant under left multiplication by~$K$ and right multiplication by~$H$.

\begin{remark}
If $H= K$, then $\q = \p$ and $K_H \backslash \p \simeq \aaa^+$ and $\mu = \nu$.
More generally, suppose $G/H$ is a \emph{pseudo-Riemannian symmetric space}, \ie $H$ is the group of fixed points of some involution $\tau$ of~$G$ (which commutes with our chosen Cartan involution~$\sigma$).
Then the space $K_H \backslash (\p \cap \q)$ of $K_H$-orbits in $\p \cap \q$ identifies with a closed Weyl chamber $\bb^+$ in a Cartan subspace $\bb$ of the Lie subalgebra $\g^{\sigma\tau}$ of~$\g$ consisting of those vectors fixed by the derivative of $\sigma \tau$ at $\1_G$.
\end{remark}

We define a function $D : G/K \times G/H \to \R_{\geq 0}$ by setting, for any $x = g\cdot x_0 \in G/K$ and any $y= g' \cdot y_0 \in G/H$,
\begin{equation} \label{eqn:D-x-y}
D(x,y) = \Vert \nu(g^{-1} g') \Vert .
\end{equation}
The function $D$ is invariant under the diagonal action of $G$ on $G/K \times G/H$.
It admits the following dual interpretations. Consider again $x = g\cdot x_0 \in G/K$ and $y= g' \cdot y_0 \in G/H$, and (using Fact~\ref{fact:polar-decomp}) write $g^{-1} g' = k \exp(b) h$ where $k\in K$ and $b\in \p\cap \q$ and $h\in H$.\\

\noindent {\it Riemannian interpretation of $D$:} The curve $c= \left(\exp(-tb)\cdot x_0\right)_{t\in [0,1]}$ is a geodesic segment from $x_0$ to $\exp(-b) \cdot x_0$ in $G/K$.
It has length $\Vert b \Vert= \Vert \nu(g^{-1} g')\Vert$ and is orthogonal at~$x_0$ to the totally geodesic subspace $\mathcal H_0$.
The relation
\[g' h^{-1} \exp(-b) = g k\]
implies that $g'h^{-1}\cdot c$ is a geodesic segment of length $\Vert \nu(g^{-1} g')\Vert$ from
\[g'h^{-1} \cdot x_0 \in g' \cdot \mathcal H_0 = \mathcal H_y\]
to $g' h^{-1}\exp(-b) \cdot x_0 = x$, which is orthogonal at $g'h^{-1} \cdot x_0$ to~$\mathcal H_y$.
Thus $g'h^{-1} \cdot x_0$ is the orthogonal projection of $x$ to $\mathcal H_y$, and 
\[D(x,y) = d_{G/K}(x, \mathcal H_y) \]
is the Riemannian distance from $x$ to the totally geodesic subspace $\mathcal H_y$ in $G/K$.\\

\noindent {\it Pseudo-Riemannian interpretation of $D$:} Dually, the curve $c' = \left(\exp(tb)\cdot y_0\right)_{t\in [0,1]}$ is a spacelike geodesic segment from $y_0$ to $\exp(b)\cdot y_0$ in $G/H$.
It is orthogonal at~$y_0$ to the maximal timelike subspace $\mathcal{K}_0$.
Therefore $gk \cdot c'$ is a spacelike geodesic segment from
\[gk \cdot y_0 \in g \cdot \mathcal{K}_0 = \mathcal{K}_x\]
to $gk \exp(b) \cdot y_0 = g' h^{-1} \cdot y_0 = y$, which is orthogonal at $gk \cdot y_0$ to~$\mathcal{K}_x$.
Thus $gk \cdot y_0$ is the orthogonal projection $\pi_{\mathcal{K}_x}(y)$ of $y$ to $\mathcal{K}_x$, and $D(x,y)$ is the length of the spacelike geodesic segment between $y$ and $\pi_{\mathcal{K}_x}(y)$ in $G/H$.

\begin{remark}
The projection $\pi_{\mathcal{K}_x}(y)$ is well defined because $k$ is defined up to right multiplication by~$K_H$.
However, it does not have an obvious metric interpretation as in the Riemannian case.
In fact, among the points of $\mathcal{K}_x$ which are connected to $y$ by a spacelike geodesic segment, $\pi_{\mathcal{K}_x}(y)$ \emph{maximizes} the length of this segment.
\end{remark}

%%%%%%%%%%%%%%%%%%%%%%%%%
\subsection{Compactly generated groups} \label{subsec:compact-gen}

In this section we recall that the notions of word length and quasi-isometric embedding can be defined, not only for finitely generated groups, but more generally for any compactly generated locally compact group.

We first recall that if $(X,d_X)$ and $(Y,d_Y)$ are two metric spaces, a map $f : X\to Y$ is called a \emph{quasi-isometric embedding} if there exist $c,c'>0$ such that for all $x,x'\in X$,
$$ c\,d_X(x,x') - c' \leq d_Y(f(x),f(x')) \leq c^{-1}\,d_X(x,x') + c'. $$
We say that $f$ is a \emph{quasi-isometry} if moreover $\sup_{y\in Y} d_Y(y,f(X)) < +\infty$.
Note that the composition of two quasi-isometric embeddings is still a quasi-isometric embedding, and similarly for quasi-isometries.

A locally compact topological group $\Gamma$ is \emph{compactly generated} if it is generated by a compact subset.
Given a compact generating subset~$S$, which we assume to be symmetric (\ie $S= S^{-1}$) and to contain a neighborhood of the identity element $1$ of~$\Gamma$, we can define a word length function and a left-invariant distance function on~$\Gamma$ by setting, as usual,
\[\vert \gamma \vert_S \equaldef \inf \{n\in \N \mid \gamma \in S^n\}\]
and
\[d_S(\gamma,\gamma') \equaldef \vert \gamma^{-1} \gamma' \vert_S \]
for all $\gamma,\gamma'\in\Gamma$.
(Here $S^n$ denotes the set of all possible products of $n$ elements of~$S$, with the convention $S^0 = \{ 1\}$.)

Since $S$ contains a neighborhood of~$1$, the set $S^{n-1}$ is contained in the interior $\mathrm{Int}(S^n)$ of $S^n$ for all $n\geq 1$.
Therefore, if $S'$ is another symmetric compact generating subset of~$\Gamma$, then $S'$ is contained in $\bigcup_{n\geq 1} \mathrm{Int}(S^n)$, hence in $\mathrm{Int}(S^n)$ for some $n\geq 1$ by the Borel--Lebesgue property; we deduce that $d_S(\gamma,\gamma') \leq n\,d_{S'}(\gamma,\gamma')$ for all $\gamma,\gamma'\in\Gamma$.
Similarly (assuming $S'$ also contains a neighborhood of~$1$), there exists an integer $m\geq 1$ such that $S \subset {S'}^m$, hence
\[\frac{1}{m} d_{S'}(\gamma,\gamma') \leq d_S(\gamma,\gamma') \leq n\,d_{S'}(\gamma,\gamma') \]
for all $\gamma,\gamma'\in\Gamma$.
Thus the identity map $\gamma\mapsto\gamma$ is a quasi-isometry between $(\Gamma,d_S)$ and $(\Gamma,d_{S'})$, and it makes sense to talk about a quasi-isometric embedding of~$\Gamma$ to some metric space without referring to an explicit compact generating set.

If $\Gamma$ is a discrete group, then any compact generating subset $S$ of~$\Gamma$ is finite and we recover the usual word length and distance functions associated to the Cayley graph $\mathrm{Cay}(\Gamma,S)$.
If $\Gamma$ is not discrete, then the distance function $d_S$ is not compatible with the topology of $\Gamma$ (since every nontrivial element of~$\Gamma$ is at distance at least $1$ from the identity element of~$\Gamma$).
This is not an issue, as $d_S$ is meant to capture the ``large scale geometry'' of $\Gamma$.

The \v Svarc--Milnor lemma generalizes to the setting of compactly generated groups as follows (see \eg \cite[Th.\,4.C.5]{ch16}).

\begin{fact} \label{fact:SvarcMilnor}
Let $\Gamma$ be a locally compact topological group acting properly and cocompactly, by isometries, on a proper geodesic metric space~$X$.
Then $\Gamma$ is compactly generated and for any $x\in X$, the orbital map
\begin{eqnarray*}
\Gamma & \to & X \\
\gamma & \mapsto & \gamma \cdot x
\end{eqnarray*}
is a quasi-isometry.
\end{fact}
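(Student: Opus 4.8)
The plan is to run the classical \v Svarc--Milnor argument, replacing ``finite'' by ``compact'' throughout; the only places where the locally compact (rather than discrete) setting requires care are isolated and routine. First I would fix the basepoint $x\in X$. Cocompactness of the action supplies a \emph{bounded} subset of $X$ whose $\Gamma$-translates cover $X$, hence a radius $R>0$ with $\Gamma\cdot\overline{B(x,R)}=X$. I would then introduce the candidate generating set
\[S \equaldef \{\gamma\in\Gamma \mid d_X(x,\gamma\cdot x)\le 3R\},\]
and check three things: (a) $S=S^{-1}$, since $\Gamma$ acts isometrically; (b) $S$ contains a neighborhood of~$1$ in~$\Gamma$, since $\gamma\mapsto\gamma\cdot x$ is continuous; (c) $S$ is compact. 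For~(c) one writes $S=\{\gamma\mid \gamma\cdot x\in\overline{B(x,3R)}\}$, notes that $\overline{B(x,3R)}$ is compact because $X$ is proper, and invokes properness of the $\Gamma$-action \emph{as an action of a topological group} (preimages of compact sets under $(\gamma,y)\mapsto(\gamma\cdot y,y)$ are compact). This is the one point at which the hypothesis genuinely differs from ordinary proper discontinuity, and I expect it to be the main — though still mild — obstacle: everything else is formally identical to the discrete case.

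Next I would establish the upper bound on word length. Given $\gamma\in\Gamma$, I pick a geodesic from $x$ to $\gamma\cdot x$ of length $L=d_X(x,\gamma\cdot x)$, mark points $x=p_0,p_1,\dots,p_n=\gamma\cdot x$ along it with consecutive distances $\le R$ and $n=\lceil L/R\rceil$, and choose $\gamma_i\in\Gamma$ with $d_X(\gamma_i\cdot x,p_i)\le R$, taking $\gamma_0=1$ and $\gamma_n=\gamma$. The triangle inequality gives $d_X(\gamma_{i-1}\cdot x,\gamma_i\cdot x)\le 3R$, i.e.\ $\gamma_{i-1}^{-1}\gamma_i\in S$, whence $\gamma=\prod_{i=1}^n\gamma_{i-1}^{-1}\gamma_i\in S^n$. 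In particular $S$ generates~$\Gamma$, so the whole argument may as well be carried out with this explicit compact generating set, and $|\gamma|_S\le n\le \tfrac1R\,d_X(x,\gamma\cdot x)+1$. For the matching lower bound I argue directly: writing $\gamma=s_1\cdots s_n$ with $s_i\in S$ and $n=|\gamma|_S$, telescoping and $G$-invariance of $d_X$ give $d_X(x,\gamma\cdot x)\le\sum_{i=1}^n d_X(x,s_i\cdot x)\le 3R\,|\gamma|_S$.

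Combining the two bounds yields $\tfrac1{3R}\,d_X(x,\gamma\cdot x)\le|\gamma|_S\le\tfrac1R\,d_X(x,\gamma\cdot x)+1$ for every $\gamma\in\Gamma$; since $d_S(\gamma,\gamma')=|\gamma^{-1}\gamma'|_S$ and $d_X(\gamma\cdot x,\gamma'\cdot x)=d_X(x,(\gamma^{-1}\gamma')\cdot x)$ by $G$-invariance, this says exactly that the orbit map $(\Gamma,d_S)\to X$ is a quasi-isometric embedding. It is coarsely surjective because, by the choice of~$R$, every point of $X$ lies within distance~$R$ of $\Gamma\cdot x$. Hence the orbit map is a quasi-isometry with respect to the word metric~$d_S$, and therefore — since, as recalled just above, the quasi-isometry class of $(\Gamma,d_{S'})$ is independent of the choice of compact generating set~$S'$ — with respect to any word metric on the compactly generated group~$\Gamma$.
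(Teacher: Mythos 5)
Your proof is correct: it is the classical \v Svarc--Milnor argument, with the two genuinely non-discrete points (compactness of $S$ via Bourbaki-properness of the action, and $S$ containing an identity neighborhood so that the paper's conventions on compact generating sets apply) handled properly. Note that the paper states this as a known fact and gives no proof of its own, so there is nothing to compare against; your write-up supplies exactly the standard argument one would expect.
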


For instance, Fact~\ref{fact:SvarcMilnor} applied to a connected Lie group $H = \Gamma = X$ gives the following.

\begin{corollary} \label{cor:QI-word-length-Riem-metric}
Let $H$ be a connected Lie group, $S$ a symmetric compact generating subset of~$H$ containing a neighborhood of the identity, and $\mathtt{g}$ a left-invariant Riemannian metric on~$H$ with associated distance function~$d_{\mathtt{g}}$.
Then the identity map $h\mapsto h$ is a quasi-isometry between $(H,d_S)$ and $(H,d_{\mathtt{g}})$.
\end{corollary}

In a connected Lie group~$H$, any closed subgroup $\Gamma$ of~$H$ is locally compact, and by Corollary~\ref{cor:QI-word-length-Riem-metric}, the natural inclusion $\Gamma\hookrightarrow H$ is a quasi-isometric embedding with respect to a left-invariant Riemannian metric on~$H$ if and only if it is a quasi-isometric embedding with respect to the metric associated with a compact generating subset of~$H$.
In this case, we say that $\Gamma$ is \emph{quasi-isometrically embedded in~$H$}.

We can also apply Fact~\ref{fact:SvarcMilnor} to $\Gamma=H'$ and $X=H$ to obtain the following.

\begin{corollary} \label{cor:cocompact-QI}
Let $H$ be a connected Lie group and $H'$ a closed subgroup of~$H$ such that $H/H'$ is compact.
Then $H'$ is compactly generated and the natural inclusion $H' \hookrightarrow H$ is a quasi-isometry.
\end{corollary}

In particular, any parabolic subgroup $H'$ of a noncompact connected real linear reductive Lie group~$H$ is quasi-isometric to~$H$.
For instance, the affine group of the real line is quasi-isometric to $\PSL(2,\R)$, hence to the hyperbolic plane.

Recall that we have fixed a noncompact connected real linear reductive Lie group~$G$, with maximal compact subgroup~$K$.
The following is an easy consequence of Fact~\ref{fact:SvarcMilnor} and of the interpretation $\Vert\mu(g)\Vert = d_{G/K}(x_0,g\cdot x_0)$ in terms of the Cartan projection $\mu : G\to\aaa^+$ from Section~\ref{subsec:vector-dist}.

\begin{lemma} \label{lem:QI-embed-mu}
For any closed subgroup $\Gamma$ of~$G$ with a compact generating subset~$S$, the following are equivalent:
\begin{itemize}
  \item the natural inclusion $\Gamma\hookrightarrow G$ is a quasi-isometric embedding;
  \item there exist $c,c'>0$ such that $\Vert\mu(\gamma)\Vert \geq c\,|\gamma|_S - c'$ for all $\gamma\in\Gamma$.
\end{itemize}
\end{lemma}

\begin{proof}
By Fact~\ref{fact:SvarcMilnor} (since $K$ is compact), the natural inclusion $\Gamma\hookrightarrow G$ is a quasi-isometric embedding if and only if the orbital map $\gamma \mapsto \gamma\cdot x_0$ from $\Gamma$ to $G/K$ is a quasi-isometric embedding.
By the triangle inequality, for any $\gamma,\gamma'\in\Gamma$ we have
\begin{align*}
d_{G/K}(x_0,\gamma\gamma'\cdot x_0) & = d_{G/K}(\gamma^{-1}\cdot x_0,\gamma'\cdot x_0)\\
& \leq d_{G/K}(\gamma^{-1}\cdot x_0,x_0) + d_{G/K}(x_0,\gamma'\cdot x_0)\\
& = d_{G/K}(x_0,\gamma\cdot x_0) + d_{G/K}(x_0,\gamma'\cdot x_0).
\end{align*}
This implies that $d_{G/K}(x_0,\gamma\cdot x_0) \leq \kappa \, |\gamma|_S$ for all $\gamma\in\Gamma$, where $\kappa \equaldef \max_{s\in S} \Vert\mu(s)\Vert$.
Therefore the natural inclusion $\Gamma\hookrightarrow G$ is a quasi-isometric embedding if and only if there exist $c,c'>0$ such that $d_{G/K}(x_0,\gamma\cdot x_0) \geq c\,|\gamma|_S - c'$ for all $\gamma\in\Gamma$.
We conclude by using the interpretation $\Vert\mu(g)\Vert = d_{G/K}(x_0,g\cdot x_0)$ of Section~\ref{subsec:vector-dist}.
\end{proof}

\begin{corollary} \label{cor:sharp-embed-explain}
Let $\Gamma$ be a closed subgroup of~$G$ with a compact generating subset~$S$, and let $H$ be another closed subgroup of~$G$.
Then $\Gamma$ is sharply embedded in~$G$ with respect to~$H$ (Definition~\ref{def:sharp-embed}) if and only if the action of $\Gamma$ on $G/H$ is sharp (Definition~\ref{def:sharp}) and $\Gamma$ is quasi-isometrically embedded in~$G$.
\end{corollary}

\begin{proof}
By Lemma~\ref{lem:QI-embed-mu}, if the action of $\Gamma$ on $G/H$ is sharp and $\Gamma$ is quasi-isometrically embedded in~$G$, then $\Gamma$ is sharply embedded in~$G$ with respect to~$H$.

Conversely, suppose that $\Gamma$ is sharply embedded in~$G$ with respect to~$H$: there exist $c,c'>0$ such that $d_{\aaa}(\mu(\gamma),\mu(H)) \geq c\,|\gamma|_S - c'$ for all $\gamma\in\Gamma$.
We saw in the proof of Lemma~\ref{lem:QI-embed-mu} that there exists $\kappa\geq 0$ such that $\Vert\mu(\gamma)\Vert = d_{G/K}(x_0,\gamma\cdot x_0) \leq \kappa \, |\gamma|_S$ for all $\gamma\in\Gamma$.
Therefore $d_{\aaa}(\mu(\gamma),\mu(H)) \geq c\kappa^{-1}\,\Vert\mu(\gamma)\Vert - c'$ for all $\gamma\in\Gamma$, and so the action of $\Gamma$ on $G/H$ is sharp.
Moreover, there exists $h\in H$ such that $d_{\aaa}(\mu(\gamma),\mu(H)) = \Vert\mu(\gamma)-\mu(h)\Vert \leq \Vert\mu(\gamma)\Vert + \Vert\mu(h)\Vert$, and so $\Gamma$ is quasi-isometrically embedded in~$G$ by Lemma~\ref{lem:QI-embed-mu}.
\end{proof}

%%%%%%%%%%%%%%%%%%%%%%%%%%%%%%%%%%%%%%%%%%%%%%%%%%%
\section{Proof of the Sharpness Theorem} \label{sec:proof-sharpness}

In this section we prove the following generalization of Theorem~\ref{thm:sharp}, where $\Gamma$ does not need to be discrete in~$G$.
We refer to Definition~\ref{def:sharp-embed} and Section~\ref{subsec:compact-gen} for the notion of sharp embeddedness.

\begin{theorem} \label{thm:sharp-general}
Let $G$ be a connected real linear reductive Lie group, and $H,\Gamma$ two closed subgroups of~$G$.
If $\Gamma$ acts properly and cocompactly on $G/H$, then $\Gamma$ is compactly generated and sharply embedded in~$G$ with respect to~$H$.
\end{theorem}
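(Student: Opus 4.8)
To prove Theorem~\ref{thm:sharp-general}, I would first reduce to the case where $H$ is reductive and stable under the Cartan involution $\sigma$. Indeed, by a theorem of Kobayashi, a homogeneous space $G/H$ admits a proper cocompact action by a discrete (or, more generally, compactly generated) group only if there is a reductive subgroup $L$ of $G$, stable under $\sigma$, such that $H \subset L$ and $L/H$ is compact; moreover $\mu(H)$ and $\mu(L)$ are at bounded Hausdorff distance in $\aaa^+$, so sharpness with respect to $H$ and with respect to $L$ are equivalent notions, and quasi-isometric embeddedness is insensitive to replacing $H$ by $L$. Thus I may and do assume $(G,H)$ is a reductive pair, so that the polar decomposition of Section~\ref{subsec:polar-decomp} and the function $D : G/K \times G/H \to \R_{\geq 0}$ of \eqref{eqn:D-x-y} are available, together with their Riemannian and pseudo-Riemannian interpretations and the duality \eqref{eq: Duality G/H G/K}.

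\textbf{Step 1: compact generation and quasi-isometric embeddedness.} Suppose $\Gamma$ acts properly and cocompactly on $G/H$. Fix the basepoint $y_0 \in G/H$. Properness of the action of $\Gamma$ on $G/H$ means, by definition, that for every compact $C \subset G/H$ the set $\{\gamma \in \Gamma \mid \gamma C \cap C \neq \emptyset\}$ is compact in $\Gamma$; cocompactness gives a compact $C_0$ with $\Gamma C_0 = G/H$. The standard proof of the \v{S}varc--Milnor lemma (Fact~\ref{fact:SvarcMilnor}) then applies verbatim once we know $G/H$ is a proper geodesic metric space on which $\Gamma$ acts properly and cocompactly by isometries --- but $G/H$ with its pseudo-Riemannian metric is \emph{not} a metric space. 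The fix is to use the Riemannian distance $d_{G/K}$ on $G/K$ together with the duality: the map $G/H \to$ (space of maximal flats $\mathcal H_y$ in $G/K$) lets us transfer. Concretely, I would observe that $G$ acts properly cocompactly on $G/K$ (trivially, since $K$ is compact), so $\Gamma$ acts properly on $G/K$; and I would show that the $\Gamma$-action on $G/K$ is \emph{cobounded} provided the action on $G/H$ is cocompact, by the following argument: pick $x_0 \in G/K$; for any $x = g x_0$, the subspace $\mathcal H_{g y_0}$ passes within distance $D(x, g y_0) = d_{G/K}(x, \mathcal H_{g y_0})$ of $x$, but here $g y_0 \in \Gamma C_0$, say $g y_0 = \gamma z$ with $z \in C_0$; writing $z = g' y_0$ for $g'$ in a fixed compact set, we get $x$ is within a bounded distance (depending only on $C_0$ through $D(x_0', \cdot)$ on the compact $C_0$) of $\gamma \mathcal H_0 \subset \gamma \cdot(\text{orbit of } x_0)$ translated --- after cleaning up, $\Gamma$ acts coboundedly on $G/K$. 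By the \v{S}varc--Milnor lemma for locally compact groups, $\Gamma$ is compactly generated and the orbit map $\gamma \mapsto \gamma x_0$ is a quasi-isometry onto $(G/K, d_{G/K})$; in particular $\Vert \mu(\gamma)\Vert = d_{G/K}(x_0, \gamma x_0) \asymp |\gamma|_S$, so by Fact~\ref{fact:QI-embed-mu} the inclusion $\Gamma \hookrightarrow G$ is a quasi-isometric embedding. So it remains to prove sharpness.

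\textbf{Step 2: sharpness.} By Remark~\ref{rem:sharp-embed-explain}, combined with Step~1, it suffices to prove $\Gamma$ is sharp for $G/H$, i.e.\ that there are $c, c' > 0$ with $d_\aaa(\mu(\gamma), \mu(H)) \geq c\Vert\mu(\gamma)\Vert - c'$ for all $\gamma \in \Gamma$. The key geometric idea is to compare the two ways of measuring how far $\gamma y_0$ is from the ``bad locus'': on one hand, $d_\aaa(\mu(\gamma), \mu(H))$ controls, via standard estimates on symmetric spaces, the distance in $G/K$ between the flat through $x_0$ in direction $\mu(\gamma)$ and the totally geodesic subspace $\mathcal H_0 = H x_0$; on the other hand, $D(x_0, \gamma y_0) = d_{G/K}(x_0, \mathcal H_{\gamma y_0})$ is bounded above uniformly because $\gamma y_0$ ranges over $\Gamma$-translates of the compact $C_0$ and $D$ is $G$-invariant, so $D(x_0, \gamma y_0) = D(\gamma^{-1}x_0, y_0) \le \sup_{z \in C_0} D(\text{something}, z)$, a constant $R_0$. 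Thus $d_{G/K}(x_0, \mathcal H_{\gamma y_0}) \leq R_0$ for all $\gamma$. Now I would show that this forces $d_\aaa(\mu(\gamma^{-1}), \mu(H))$ to be at most a linear-with-slope-less-than-one fraction of $\Vert\mu(\gamma^{-1})\Vert$ --- equivalently, since $\mu(\gamma^{-1}) = \iota(\mu(\gamma))$ and $\iota$ preserves $\mu(H)$ (by \eqref{eqn:mu-H}) and the norm, the complementary lower bound $d_\aaa(\mu(\gamma), \mu(H)) \geq c\Vert\mu(\gamma)\Vert - c'$. The mechanism: if $\mu(\gamma)$ were within sublinear $d_\aaa$-distance of the union of subspaces $\bigcup_w w\cdot\aaa_H$ (which contains $\mu(H)$ up to the chamber), then the geodesic ray from $x_0$ toward $\gamma x_0$ would stay within sublinearly-growing distance of $\mathcal H_0$ near $x_0$ but would have to diverge from it eventually, and the quasi-isometry of Step~1 (which says $\gamma x_0$ and $\gamma y_0$-data are linked) together with the uniform bound $D(x_0,\gamma y_0) \le R_0$ would be violated at large scale. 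Making this precise requires the flat-strip / convexity estimates in the symmetric space $G/K$ relating $d_{G/K}(x, \mathcal H_y)$ to the ``angular'' position of the direction $\vect d(x_0, x)$ relative to $\aaa_H$; this is where Kobayashi's original arguments on the Cartan projection of $H$ and the subadditivity \eqref{eqn:mu-subadd} enter.

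\textbf{Main obstacle.} The hard part is Step~2: turning the \emph{uniform bound} $D(x_0, \gamma y_0) \leq R_0$ (a consequence of cocompactness) into a \emph{linear} sharpness estimate. The subtlety is that $\mu(H)$ is a union of walls $\aaa^+ \cap w\cdot\aaa_H$, which is generally \emph{not convex} and not a subspace, so one cannot simply project; one must handle each subspace $w\cdot\aaa_H$ and the interaction between the chamber $\aaa^+$ and these subspaces. I expect the argument needs: (a) a careful choice of a proper, $K$-bi-invariant comparison between $D(\cdot,\cdot)$ on $G/K\times G/H$ and the vector distance $\vect d$; (b) an asymptotic ($\ell^\infty$-type) estimate showing $D(x_0, g y_0)$ is comparable to $d_\aaa(\mu(g), \mu(H))$ up to bounded error when $\mu(g)$ is deep in a chamber face, which is essentially a statement about flats in $G/K$ meeting $\mathcal H_0$; and (c) using compact generation from Step~1 to run an induction on word length $|\gamma|_S$, propagating the bound on $D$ along a word geodesic and using subadditivity \eqref{eqn:mu-subadd} of $\mu$ to keep errors additive. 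If any of these asymptotic comparisons fails to be uniform, the whole linear estimate collapses to a merely sublinear one (which is exactly the phenomenon behind the non-sharp infinitely-generated examples in \cite{gk17}), so the use of cocompactness --- via the \v{S}varc--Milnor quasi-isometry --- must be genuinely essential and not just a convenient normalization.
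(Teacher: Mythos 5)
Your proposal does not follow the paper's route, and several of its key steps are false rather than merely incomplete. First, the reduction to reductive~$H$: there is no theorem guaranteeing that a closed subgroup $H$ for which $G/H$ has a compact quotient is cocompact in a $\sigma$-stable reductive subgroup. The Oh--Witte spaces $G/H$ with $G=\SO(2,2n)$ mentioned after Theorem~\ref{thm:openness} admit compact quotients although $H$ is not cocompact in any reductive subgroup of~$G$. The paper's actual reduction is the doubling trick: replace $(G,H,\Gamma)$ by $(G\times G,\Diag(G),\Gamma\times H)$, whose isotropy is reductive, and translate sharp embedding of $\Gamma\times H$ back into sharp embedding of~$\Gamma$ together with quasi-isometric embedding of~$H$.

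Second, Step~1 is false: proper cocompact on $G/H$ does not imply cobounded on $G/K$. For $G=\PSL(2,\R)\times\PSL(2,\R)$, $H=\Diag(\PSL(2,\R))$ and $\Gamma=\Gamma_0\times\{1\}$ with $\Gamma_0$ a cocompact lattice, $\Gamma\backslash G/K = (\Gamma_0\backslash\HH^2)\times\HH^2$ is unbounded. Your argument only shows that $G/K$ lies in a bounded neighborhood of $\Gamma\cdot\mathcal H_0$, and $\mathcal H_0\simeq H/K_H$ is itself unbounded, so nothing follows about the orbit of~$x_0$; \v Svarc--Milnor cannot be applied to the action on $G/K$. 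In the paper the quasi-isometric embedding is deduced only at the end, from the sharpness estimate itself (since $0\in\mu(H)$, sharp embedding forces $\Vert\mu(\gamma)\Vert\geq C|\gamma|_S-C'$). Third, the pivot of your Step~2 --- the uniform bound $D(x_0,\gamma\cdot y_0)\leq R_0$ --- is impossible: by Lemma~\ref{lem: Interpretation delta projection} one has $d_{\aaa}(\mu(\gamma),\mu(H))=\delta(x_0,\gamma\cdot x_0)\leq D(x_0,\gamma\cdot y_0)$, so such a bound would keep $\mu(\Gamma)$ at bounded distance from $\mu(H)$ and contradict properness (Benoist--Kobayashi) for infinite~$\Gamma$. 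Cocompactness says the orbit $\Gamma\cdot y_0$ is coarsely dense in $G/H$, not that it stays at bounded spacelike distance from $\mathcal K_{x_0}$. Also, the claimed ``equivalence'' between an upper bound $d_{\aaa}(\mu(\gamma^{-1}),\mu(H))\leq(1-c)\Vert\mu(\gamma^{-1})\Vert$ and the lower bound you need is a non sequitur.

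What is missing is the actual mechanism converting properness plus cocompactness into a linear lower bound. The paper (a) interprets $d_{\aaa}(\mu(g),\mu(H))$ as the spacelike gap $\inf_{y\in\mathcal K_{g\cdot x_0}}D(x_0,y)$ between maximal timelike subspaces; (b) pulls this back to a $\Gamma$-invariant function $\Delta_{\Gamma}$ on $G/H\times G/H$ via an equivariant section of $G/K_H\to G/H$; (c) proves that $\{\Delta_{\Gamma}\leq 1\}$-neighborhoods of compacta are compact (this is exactly where properness and cocompactness enter) and that $\Delta_{\Gamma}$ admits a geodesic-like subdivision property; and (d) runs a \v Svarc--Milnor-type chaining argument to show $\Delta_{\Gamma}$ linearly dominates a $\Gamma$-invariant distance on $G/H$. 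None of these steps appears in your outline.
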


It is sufficient to prove Theorem~\ref{thm:sharp-general} for $H$ connected and stable under some Cartan involution of~$G$ (\ie $G/H$ of reductive type).
Indeed, for general~$H$, we use the fact that $\Gamma$ acts properly and cocompactly on $G/H$ if and only if $\Gamma\times H$ acts properly and cocompactly on $(G\times G)/\Diag(G)$, which is of reductive type. We can apply Theorem~\ref{thm:sharp-general} to $(G\times G,\Diag(G),\Gamma\times H)$ and conclude thanks to the following elementary fact.

\begin{lemma} \label{lem:GammaxH-sharp-embed}
Let $\Gamma$ and $H$ be two compactly generated closed subgroups of~$G$.
Then
\begin{enumerate}
  \item\label{item:GammaxH-sharp-embed-1} $\Gamma \times H$ is quasi-isometrically embedded in $G\times G$ if and only if $\Gamma$ and $H$ are both quasi-isometrically embedded in~$G$;
  \item\label{item:GammaxH-sharp-embed-2} the action of $\Gamma \times H$ on $(G\times G)/\Diag(G)$ is sharp if and only if the action of $\Gamma$ on $G/H$ is sharp.
\end{enumerate}
In particular, $\Gamma \times H$ is sharply embedded in $G\times G$ with respect to $\Diag(G)$ if and only if $H$ is quasi-isometrically embedded in~$G$ and $\Gamma$ is sharply embedded in~$G$ with respect to~$H$.
\end{lemma}

\begin{proof}
Choosing a compact generating subset $S$ of $\Gamma \times H$ of the form $S'\times S''$, we have
\[ |(\gamma,h)|_S = \max (|\gamma|_{S'}, |h|_{S''}) \]
for all $(\gamma,h) \in \Gamma\times H$, which implies~\eqref{item:GammaxH-sharp-embed-1}.
On the other hand, the limit cone $\mathcal L_{\Gamma \times H}$ of $\Gamma \times H$ in $\aaa^+ \times \aaa^+$ (see Section~\ref{subsec:lim-cone}) is the product $\mathcal L_\Gamma \times \mathcal L_H$, which avoids $\Diag(\aaa^+) = \mu(\Diag(G))$ if and only if $\mathcal L_\Gamma$ avoids $\mathcal L_H$.
This implies~\eqref{item:GammaxH-sharp-embed-2}.
We conclude using Corollary~\ref{cor:sharp-embed-explain}.
\end{proof}

We now assume that $H$ is connected and stable under some Cartan involution of~$G$, and we use the notation of Section~\ref{sec:Cartan-polar}.
Our goal is to prove Theorem~\ref{thm:sharp-general}.

%%%%%%%%%%%%%%%%%%%%%%%%%
\subsection{A spacelike gap between maximal timelike subspaces}

Let us introduce the function $\delta : G/K \times G/K \to \R_{\geq 0}$ given by
\begin{equation} \label{eqn:delta}
\delta(x,x') \equaldef d_\aaa\big( \multidist(x,x'), \mu(H)\big)
\end{equation}
for all $x,x'\in G/K$, where $\multidist : G/K \times G/K \to \aaa^+$ is the vector-valued distance function of Section~\ref{subsec:vector-dist} and
\[d_\aaa(u, \mu(H)) = \inf_{h\in H} \Vert u- \mu(h)\Vert = \min_{w\in W} d_\aaa(u, w \cdot \aaa_H) .\]

By definition \eqref{eqn:multidist} and $G$-invariance of~$\multidist$, the function $\delta$ is $G$-invariant and satisfies
\begin{equation} \label{eqn:delta-dist-mu(H)}
\delta(x_0,g\cdot x_0) = d_\aaa(\mu(g),\mu(H)).
\end{equation}
for all $g\in G$.
Moreover, $\delta(x,x') = \delta(x',x)$ for all $x,x'\in G/K$ because $\multidist(x',x) = \iota (\multidist(x,x'))$ and the opposition involution $\iota : \aaa^+ \to \aaa^+$ is an isometry for $d_{\aaa}$ that preserves $\mu(H)$.

Our goal is to prove that if $\Gamma$ acts properly and cocompactly on $G/H$, then $\Gamma$ is compactly generated and there exist $c,c'>0$ such that
$$\delta(\gamma\cdot o, \gamma'\cdot o) \geq c\,d_S(\gamma,\gamma') - c'$$
for all $\gamma,\gamma'\in\Gamma$, where $d_S : (\gamma,\gamma') \mapsto \vert \gamma^{-1} \gamma' \vert_S$ is the distance function on~$\Gamma$ associated to the choice of a compact generating subset~$S$ as in Section~\ref{subsec:compact-gen}.

\begin{remark}
If $H$ is compact, then $\mu(H) = \{0\}$ and $\delta$ is simply the distance function $d_{G/K}$.
In this case, the group $\Gamma$ is discrete and acts properly and cocompactly on $G/H$ if and only if it is a uniform lattice in $G$, and Theorem~\ref{thm:sharp} then simply states that a uniform lattice is quasi-isometrically embedded in $G$, which follows from the \v{S}varc--Milnor lemma (see Fact~\ref{fact:SvarcMilnor}).
\end{remark}

Our first key lemma is the following interpretation of $\delta$ as a \emph{spacelike gap} between maximal timelike subspaces in $G/H$.
Recall the $G$-invariant function $D : G/K \times G/H \to \R_{\geq 0}$ and the sets $\mathcal H_0 = H\cdot x_0 \subset G/K$ and $\mathcal{K}_x \subset G/H$ from Section~\ref{subsec:polar-decomp}.

\begin{lemma} \label{lem:interpretation-delta-projection}
Let $x = g\cdot x_0$ and $x'= g'\cdot x_0$ be two points in $G/K$ (where $g,g'\in G$).
Then
\[\delta(x,x') = \min_{k \in K} d_{G/K}(x,g' k\cdot \mathcal H_0) = \min_{y \in \mathcal{K}_{x'}} D(x,y) .\]
\end{lemma}

\begin{proof}
To simplify notation, we set $d_{G/K}(x,uK\cdot \mathcal H_0) \equaldef \min_{k\in K} d_{G/K}(x,uk\cdot \mathcal H_0)$ for all $u\in G$.

Let us prove the first equality.
Since $\delta$ and $d_{G/K}$ are both $G$-invariant, we may assume $g'=\1_G$, hence $x'=x_0$.
For any $h\in H$, we have
\begin{eqnarray*}
d_{G/K}(g\cdot x_0,K\cdot \mathcal H_0) & \leq & d_{G/K}(g\cdot x_0,Kh\cdot x_0) = d_{G/K}(\exp(\mu(g))\cdot x_0,K\exp(\mu(h))\cdot x_0)\\
& \leq & d_{G/K}(\exp(\mu(g))\cdot x_0,\exp(\mu(h))\cdot x_0) = d_{\aaa}(\mu(g),\mu(h)).
\end{eqnarray*}
Taking the infimum over all $h\in H$ and using \eqref{eqn:delta-dist-mu(H)}, we get \[d_{G/K}(g\cdot x_0,K\cdot \mathcal H_0) \leq d_{\aaa}(\mu(g),\mu(H)) = \delta(x, x_0) .\]
On the other hand, for any $g,g'\in G$ we have $d_{\aaa}(\mu(g),\mu(g')) \leq d_{G/K}(g\cdot x_0,g'\cdot x_0)$ (see \eqref{eqn:mu-subadd}).
In particular, for any $h\in H$ and any $k\in K$, we have $d_{\aaa}(\mu(g),\mu(h)) \leq d_{G/K}(g\cdot x_0,kh\cdot x_0)$, and so (by taking the infimum over $k\in K$ and then $h\in H$)
\[\delta(x,x_0) = d_{\aaa}(\mu(g),\mu(H)) \leq d_{G/K}(x,K\cdot\mathcal{H}_0)~.\]

We now prove the second equality.
The compact set $g' K\cdot\mathcal H_0$ is the union of all $G$-translates of~$\mathcal H_0$ passing through $x' = g'\cdot x_0$.
By \eqref{eq: Duality G/H G/K}, these are exactly the $\mathcal H_y$ with $y\in \mathcal{K}_{x'}$.
Using the Riemannian interpretation of $D(x,y)$ from Section~\ref{subsec:polar-decomp}, we deduce
\[\delta(x,x') = \min_{y\in \mathcal{K}_{x'}} d_{G/K}(x, \mathcal H_y) = \min_{y\in \mathcal{K}_{x'}} D(x,y) .\qedhere\]
\end{proof}

\begin{remark} \label{rem:D-spacelike-distance}
Using the pseudo-Riemannian interpretation of $D(x,y)$ as a ``spacelike distance'' between $y$ and $\mathcal{K}_x$ in $G/H$ (see Section~\ref{subsec:polar-decomp}), the second equality interprets $\delta(x,x')$ as the infimum of spacelike distances from a point $y\in \mathcal{K}_{x'}$ to $\mathcal{K}_x$.
In this sense, $\delta(x,x')$ measures a ``spacelike gap'' between $\mathcal{K}_x$ and $\mathcal{K}_{x'}$ in $G/H$.
In particular, we have
\[\delta(x,x') = 0 \Longleftrightarrow \mathcal{K}_x \cap \mathcal{K}_{x'} \neq \emptyset .\]
\end{remark}

%%%%%%%%%%%%%%%%%%%%%%%%%
\subsection{The function~$\Delta_{\Gamma}$}

We now pull back the function $\delta$ on $G/K\times G/K$ to a $\Gamma$-invariant function $\Delta_{\Gamma}$ on $G/H\times G/H$.
For this, we observe that the set of maximal timelike subspaces passing through a point~$x$ is contractible, and so we can choose such a space for every $x$ in a continuous and $\Gamma$-equivariant way.
Let us give more details.

We identify $G/K_H$ with the space of pairs $(x,y)\in G/K \times G/H$ such that $y\in \mathcal{K}_x$.
The projections to the first and second factor are respectively given by the natural $G$-equivariant fibrations $\pi_H : G/K_H \to G/H$ with fiber $H/K_H$, and $\pi_K : G/K_H \to G/K$ with fiber $K/K_H$.

Let $\Gamma$ be a closed subgroup of~$G$ acting freely, properly, and cocompactly on $G/H$.
The fibrations above induce fibrations $\pi_H : \Gamma\backslash G/K_H \to \Gamma\backslash G/H$ and $\pi_K : \Gamma\backslash G/K_H \to \Gamma\backslash G/K$.
Since the fibers $H/K_H$ of~$\pi_H$ are contractible, we can find a smooth section $\Gamma\backslash G/H \to \Gamma\backslash G/K_H$ of~$\pi_H$, which lifts to a smooth $\Gamma$-equivariant map $G/H \to G/K_H$.
Composing with~$\pi_K$ yields a $\Gamma$-equivariant map $\varphi : G/H \to G/K$ with the property that for any $y\in G/H$ we have $y \in \mathcal{K}_{\varphi(y)}$, or equivalently $\varphi(y) \in \mathcal H_y$.
We use $\varphi$ to define a function $\Delta_{\Gamma} : G/H \times G/H \to \R_{\geq 0}$ by
\begin{equation} \label{eqn:definition-Delta}
\Delta_{\Gamma}(y,y') \equaldef \delta(\varphi(y),\varphi(y')) .
\end{equation}
Since $\delta$ is $G$-invariant and symmetric in its arguments, the function $\Delta_{\Gamma}$ is $\Gamma$-invariant and satisfies $\Delta_{\Gamma}(y,y') = \Delta_{\Gamma}(y',y)$ for all $y,y'\in G/H$.

We will prove that the function $\Delta_{\Gamma}$ is coarsely bounded from below by any $\Gamma$-invariant distance function on $G/H$, and conclude the proof using the \v Svarc--Milnor lemma (Fact~\ref{fact:SvarcMilnor}).
The key properties of~$\Delta_{\Gamma}$ that make this approach possible are given by the following Lemmas \ref{lem:Delta-small-d} and~\ref{lem:cut-at-1}.

\begin{lemma} \label{lem:Delta-small-d}
For any compact subset $\mathcal{C}$ of $G/H$ and any $M\geq 0$, the set 
\[\{y\in G/H \mid \exists y'\in\mathcal{C},\ \Delta_{\Gamma}(y,y')\leq M\}\]
is compact.
\end{lemma}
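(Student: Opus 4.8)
The plan is to show that the set in question is contained in a bounded subset of $G/H$, and then invoke properness of the $\Gamma$-action plus cocompactness to conclude it is compact. The starting observation is that $\Delta_{\Gamma}(y,y') = \delta(\varphi(y),\varphi(y'))$, and by Lemma~\ref{lem: Interpretation delta projection} we have $\delta(\varphi(y),\varphi(y')) = \inf_{k\in K} d_{G/K}(\varphi(y), g'k\cdot\mathcal H_0)$ where $\varphi(y') = g'\cdot x_0$. The condition $\Delta_{\Gamma}(y,y')\leq 1$ therefore forces $\varphi(y)$ to be within Riemannian distance roughly $1$ (after the infimum, within $1+\epsilon$ for any $\epsilon$, hence within $2$, say) of the totally geodesic subspace $g'\cdot\mathcal H_0 = \mathcal H_{y'}$ in $G/K$ for some choice of translate; more precisely $d_{G/K}(\varphi(y),\mathcal H_{\varphi(y)})$ stays controlled and $\varphi(y)$ lies near a bounded-diameter family of such subspaces as $y'$ ranges over $\mathcal C$.

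First I would reduce to boundedness in $G/K$. Fix a compact $\widetilde{\mathcal C}\subset G/H$ projecting onto $\mathcal C$, and note $\varphi$ is continuous and $\Gamma$-equivariant; since $\Gamma\backslash G/H$ is compact, the image of $\mathcal C$ under $\varphi$ composed with the quotient map is contained in a compact subset of $\Gamma\backslash G/K$, so there is a compact $\mathcal C'\subset G/K$ and a choice of representatives with $\varphi(y')\in\Gamma\cdot\mathcal C'$ for all $y'\in\mathcal C$. Next, using $\Delta_{\Gamma}(y,y')\leq 1$ together with Lemma~\ref{lem: Interpretation delta projection}, for each such $y$ there is $k\in K$ with $d_{G/K}(\varphi(y),\varphi(y')k\cdot\mathcal H_0)\leq 2$. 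Since $\mathcal H_0 = H\cdot x_0$ passes through $x_0$, the subspace $\varphi(y')k\cdot\mathcal H_0$ passes through $\varphi(y')k\cdot x_0 = \varphi(y')\cdot x_0 = \varphi(y')$ (as $k\in K$ fixes $x_0$). Hence $\varphi(y)$ lies within distance $2$ of a subspace through a point of $\Gamma\cdot\mathcal C'$; but distance to a totally geodesic subspace through a point $p$ is at least $d_{G/K}(\,\cdot\,,p)$ minus nothing is false — so instead I would argue directly that $\varphi(y)$ is within distance $2$ of $\mathcal H_{\varphi(y)}$ and simultaneously that $\mathcal H_{\varphi(y)} = \mathcal H_{y}$, and use the $\Gamma$-cocompactness of $G/H$ to bound $y$.

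Here is the cleaner route, which I expect to be the actual argument. The relation $\varphi(y)\in\mathcal H_y$ holds by construction of $\varphi$, which dually means $y\in\mathcal K_{\varphi(y)}$. From Lemma~\ref{lem: Interpretation delta projection}, $\Delta_{\Gamma}(y,y') = \delta(\varphi(y),\varphi(y')) = \inf_{z\in\mathcal K_{\varphi(y')}} D(\varphi(y),z)$, so $\Delta_{\Gamma}(y,y')\leq 1$ gives a point $z\in\mathcal K_{\varphi(y')}$ with $D(\varphi(y),z)\leq 2$, i.e.\ by the Riemannian interpretation of $D$, $d_{G/K}(\varphi(y),\mathcal H_z)\leq 2$. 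Now $\varphi$ being $\Gamma$-equivariant and $\Gamma\backslash G/H$ compact, pick a compact fundamental domain; up to translating $y$ by $\Gamma$ we may assume $y$ lies in it, hence $\varphi(y)$ in a compact set $\mathcal C''\subset G/K$. Then $z$ lies within $2 + \mathrm{diam}(\text{transverse ball})$ of $\mathcal C''$ — more precisely $d_{G/K}(\varphi(z),\mathcal C'')$ is bounded, so $\varphi(y')$ lies in a compact set; since $\varphi$ is proper on each $\Gamma$-orbit strip and $y'\in\mathcal C$ is already compact, the only real content is that $y$ ranges over finitely many $\Gamma$-translates of the fundamental domain. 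Concretely: the set $\{y : \exists y'\in\mathcal C,\ \Delta_\Gamma(y,y')\le 1\}$ is $\Gamma$-invariant in $y'$ but $\mathcal C$ is not $\Gamma$-invariant, so one shows it is contained in $\{\gamma\cdot\mathcal D : \gamma\in F\}$ for a finite set $F\subset\Gamma$ and compact fundamental domain $\mathcal D$, using properness of the $\Gamma$-action on $G/K$ (equivalently on $G/H$): the set of $\gamma\in\Gamma$ with $d_{G/K}(\gamma\cdot\mathcal C''_1,\mathcal C''_2)\le C$ is finite for compact $\mathcal C''_1,\mathcal C''_2$ and $C>0$. Closedness of the set is immediate from continuity of $\Delta_\Gamma$, so boundedness plus closedness gives compactness.

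\textbf{Main obstacle.} The one delicate point is the passage from "$\varphi(y)$ is near some translate $\mathcal H_z$ of $\mathcal H_0$" to "$y$ (or $\varphi(y')$) lies in a compact set": the subspaces $\mathcal H_z$ form a noncompact family, so one cannot simply say $\varphi(y)$ is near a fixed compact set. The resolution is to localize $y$ first by $\Gamma$-translation using cocompactness of $G/H$, making $\varphi(y)$ lie in a fixed compact $\mathcal C''$, and only then deduce — via the bounded value of $D$ and the fact that $z\in\mathcal K_{\varphi(y')}$ while $\varphi(y')$ ranges over $\Gamma$-translates of a compact set — that $\varphi(y')$ must lie in finitely many $\Gamma$-translates of a compact set, invoking properness of the $\Gamma$-action. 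I expect this bookkeeping with the two fibrations $\pi_H,\pi_K$ and the section $\varphi$ to be the part requiring care, though each individual step is elementary.
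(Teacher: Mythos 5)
Your strategy is the same as the paper's: convert $\Delta_{\Gamma}(y,y')\leq 1$ via Lemma~\ref{lem: Interpretation delta projection} into the existence of $z\in\mathcal K_{\varphi(y')}$ with $D(\varphi(y),z)$ bounded, translate $y$ by some $\gamma\in\Gamma$ into a compact set using cocompactness, and then invoke properness to confine $\gamma$ to a compact (here finite) subset of~$\Gamma$. The paper does exactly this, enlarging $\mathcal C$ so that $\Gamma\cdot\mathcal C = G/H$ and taking $S=\{\gamma : \gamma\cdot\mathcal C'\cap\mathcal C'\neq\emptyset\}$.

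The one step you do not establish correctly is the confinement of $z$ to a compact subset of $G/H$, which is what makes properness applicable. Your assertion that $d_{G/K}(\varphi(z),\mathcal C'')$ is bounded does not follow from $D(\varphi(y),z)=d_{G/K}(\varphi(y),\mathcal H_z)\leq 2$: the point $\varphi(z)$ could sit arbitrarily far out along the noncompact subspace $\mathcal H_z$ — this is precisely the obstacle you name but do not fully resolve. For the same reason, the finiteness criterion you state on $G/K$ (finiteness of $\{\gamma : d_{G/K}(\gamma\cdot\mathcal C''_1,\mathcal C''_2)\leq C\}$) cannot be fed with the data you have. What is needed, and what constitutes the first half of the paper's proof (``compactness of $\mathcal C'$''), is that $\{z\in G/H : D(x,z)\leq R\}$ is compact when $x$ ranges over a compact subset of $G/K$. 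This follows from the polar decomposition (Fact~\ref{fact:polar-decomp}): writing $z=k\exp(b)\cdot y_0$ with $k\in K$ and $b\in\p\cap\q$, one has $\Vert b\Vert = D(x_0,z)\leq d_{G/K}(x_0,x)+D(x,z)$, so $z$ lies in the compact set $K\exp(\overline{B}(0,R+\mathrm{const}))\cdot y_0$. (Alternatively, via the duality \eqref{eq: Duality G/H G/K}: $\mathcal H_z$ meets a fixed compact ball $B\subset G/K$ at some $x'$, so $z\in\bigcup_{x'\in B}\mathcal K_{x'}$, which is compact.) With this in hand, properness of the $\Gamma$-action on $G/H$ applied to the compact sets $\bigcup_{y'\in\mathcal C}\mathcal K_{\varphi(y')}\ni z$ and $\{w : D(x,w)\leq 2,\ x\in\varphi(\mathcal D)\}\ni\gamma\cdot z$ yields your finite set $F$, and the rest of your argument goes through.
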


\begin{proof}
Let $\mathcal{C}$ be a compact subset of $G/H$.
We define two closed subsets $\mathcal{C}'$ and~$\mathcal{C}''$ of $G/H$ as follows:
\[\mathcal{C}' \equaldef \bigcup_{y'\in\mathcal{C}} \{y\in G/H \mid D(\varphi(y'), y)\leq M\} \quad\mathrm{and}\quad \mathcal{C}'' = \bigcup_{y'\in \mathcal{C}} \{y\in G/H \mid \Delta_{\Gamma}(y',y)\leq M\} .\]
In order to prove the compactness of~$\mathcal{C}''$ stated by the lemma, we first prove the compactness of~$\mathcal{C}'$.\\

\noindent {\it Compactness of $\mathcal{C}'$:} Let $(y_n)_{n\in\N}$ be a sequence in $\mathcal{C}'$: for each $n\in\N$, there exists $y'_n\in\mathcal{C}$ such that $D(\varphi(y'_n), y_n)\leq M$.
By Fact~\ref{fact:polar-decomp}, we can write $y_n = k_n \exp(b_n)\cdot y_0$ for some $k_n\in K$ and $b_n \in \p\cap\q$, and we have $\Vert b_n \Vert = D(x_0,y_n)$.
Using the Riemannian interpretation of~$D$ from Section~\ref{subsec:polar-decomp} and the triangle inequality, we get 
\begin{eqnarray*}
D(x_0,y_n) & = & d_{G/K}(x_0, \mathcal H_{y_n})\\
& \leq & d_{G/K}(x_0,\varphi(y'_n)) + d_{G/K}(\varphi(y'_n), \mathcal H_{y_n})\\
& \leq & \sup_{y'\in \mathcal{C}} d_{G/K}(x_0, \varphi(y')) + M,
\end{eqnarray*}
which is finite because $\mathcal{\mathcal{C}}$ is compact and $\varphi$ is continuous.
Therefore the sequence $(\Vert b_n \Vert)_{n\in\N}$ is bounded, and so $(y_n)_{n\in\N}$ has an accumulation point in $G/H$, which still belongs to~$\mathcal{C}'$ since $\mathcal{C}'$ is closed.
This holds for any sequence $(y_n)_{n\in\N}$, hence $\mathcal{C}'$ is compact.\\

\noindent {\it Compactness of $\mathcal{C}''$:} Since the action of $\Gamma$ on $G/H$ is cocompact, up to enlarging $\mathcal{C}$, we may assume that 
\[\bigcup_{\gamma \in \Gamma} \gamma \cdot \mathcal{C} = G/H~.\]
Since $\mathcal{C}'$ is compact and the action of $\Gamma$ on $G/H$ is proper, the set $S \equaldef \{\gamma \in \Gamma \mid \gamma \cdot \mathcal{C}' \cap \mathcal{C}' \neq \emptyset\}$ is compact.
Note that $\mathcal{C}'$ contains $\bigcup_{y'\in\mathcal{C}} \mathcal{K}_{\varphi(y')}$. 

Let us check that $\mathcal{C}'' \subset \bigcup_{\gamma\in S} \gamma^{-1}\cdot\mathcal{C}$.
Consider a point $y\in\mathcal{C}''$: there exists $y'\in\mathcal{C}$ such that $\Delta_{\Gamma}(y',y) = \delta(\varphi(y'),\varphi(y)) \leq M$.
By Lemma~\ref{lem:interpretation-delta-projection}, there exists $z\in\mathcal{K}_{\varphi(y)}$ such that $D(\varphi(y'),z)\leq M$, hence $z\in\mathcal{C}'$.
Let $\gamma \in \Gamma$ be such that $\gamma\cdot y \in \mathcal{C}$.
Then 
\[\gamma\cdot z \in  \mathcal{K}_{\gamma \cdot \varphi(y)} = \mathcal{K}_{\varphi(\gamma \cdot y)} \subset \mathcal{C}'~,\]
hence $\gamma$ belongs to the compact set~$S$.
This shows that $\mathcal{C}''$ is contained in $\bigcup_{\gamma \in S} \gamma^{-1}\cdot \mathcal{C}$.
Since moreover $\mathcal{C}''$ is closed in $G/H$, we conclude that $\mathcal{C}''$ is compact.
\end{proof}

\begin{lemma} \label{lem:cut-at-1}
For any $y,y'\in G/H$ and any $t\in [0,1]$, there exists $z_t\in G/H$ such that $\Delta_{\Gamma}(y,z_t) \leq t\,\Delta_{\Gamma}(y, y')$ and $\Delta_{\Gamma}(z_t,y') \leq (1-t)\,\Delta_{\Gamma}(y,y')$.
\end{lemma}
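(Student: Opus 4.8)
The plan is to reduce Lemma~\ref{lem:cut-at-1} to a purely Riemannian statement about the totally geodesic subspaces $\mathcal H_z\subset G/K$ ($z\in G/H$), and then to produce the interpolating subspace $\mathcal H_{z_t}$ explicitly inside a maximal flat of $G/K$.

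\textit{Reduction.} For any $y,z\in G/H$ we have $\varphi(z)\in\mathcal H_z$, that is, $z\in\mathcal K_{\varphi(z)}$ by \eqref{eq: Duality G/H G/K}; hence Lemma~\ref{lem: Interpretation delta projection} applied to $x=\varphi(y)$, $x'=\varphi(z)$ gives
\[\Delta_{\Gamma}(y,z)=\delta\big(\varphi(y),\varphi(z)\big)=\inf_{w\in\mathcal K_{\varphi(z)}} d_{G/K}\big(\varphi(y),\mathcal H_w\big)\ \leq\ d_{G/K}\big(\varphi(y),\mathcal H_z\big).\]
Since $\delta$, and hence $\Delta_\Gamma$, is symmetric, it will suffice to prove the following statement, in which $\varphi$ no longer appears: for all $a,b\in G/K$ and $t\in[0,1]$ there is $z\in G/H$ with $d_{G/K}(a,\mathcal H_z)\leq t\,\delta(a,b)$ and $d_{G/K}(b,\mathcal H_z)\leq (1-t)\,\delta(a,b)$. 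Indeed, applying this with $a=\varphi(y)$, $b=\varphi(y')$, $d:=\delta(a,b)=\Delta_\Gamma(y,y')$ and setting $z_t:=z$, the displayed inequality yields $\Delta_\Gamma(y,z_t)\le d_{G/K}(\varphi(y),\mathcal H_{z_t})\le t\,d$ and likewise $\Delta_\Gamma(z_t,y')\le (1-t)\,d$, which is the lemma.

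\textit{Construction of $z$.} This reduced statement is $G$-invariant, so acting by an element of~$G$ I may assume $a=x_0$ and $b=\exp(u)\cdot x_0$ with $u=\vect d(a,b)\in\aaa^+$, so that $d=\delta(a,b)=d_{\aaa}(u,\mu(H))$. The set $\mu(H)$ is closed, being the image of the closed set $H$ under the proper map~$\mu$; choose $v\in\mu(H)$ with $\Vert u-v\Vert=d$. By \eqref{eqn:mu-H} there is $w_1\in W$ with $v\in w_1\cdot\aaa_H$; fix a representative $\dot w_1\in N_K(\aaa)$ of~$w_1$ and set
\[z\ \equaldef\ \exp\!\big(t(u-v)\big)\,\dot w_1\cdot y_0\ \in\ G/H.\]
Because $\exp(\aaa_H)\subset H$, the subspace $\mathcal H_z=\exp(t(u-v))\,\dot w_1\cdot\mathcal H_0$ contains $\exp(t(u-v))\,\dot w_1\exp(\aaa_H)\cdot x_0$, which equals $\big\{\exp\!\big(\xi+t(u-v)\big)\cdot x_0 : \xi\in w_1\cdot\aaa_H\big\}$ — a flat subset of the maximal flat $\exp(\aaa)\cdot x_0$ — since $\dot w_1$ fixes $x_0$, $\Ad(\dot w_1)\aaa_H=w_1\cdot\aaa_H$, and all of $t(u-v)$, $w_1\cdot\aaa_H$ lie in the abelian algebra~$\aaa$. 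Taking $\xi=0$ shows $\exp(t(u-v))\cdot x_0\in\mathcal H_z$, at Riemannian distance $\Vert t(u-v)\Vert=t\,d$ from $a=x_0$; taking $\xi=v$ shows $\exp(v+t(u-v))\cdot x_0\in\mathcal H_z$, at distance $\Vert u-v-t(u-v)\Vert=(1-t)\,d$ from $b=\exp(u)\cdot x_0$ inside that flat. This gives the two required bounds.

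\textit{Main obstacle.} The only step needing care is the choice of~$z$ above and the verification that $\mathcal H_z$ really is a $G$-translate of $\mathcal H_0$ passing within $t\,d$ of $a$ and $(1-t)\,d$ of $b$. This works because each ``wall'' $w\cdot\aaa_H$ entering $\mu(H)$ is realized by the honest flat $\exp(w\cdot\aaa_H)\cdot x_0\subset\mathcal H_{\dot w\cdot y_0}$, so translating it by a vector of~$\aaa$ inside the maximal flat $\exp(\aaa)\cdot x_0$ stays inside a translate of $\mathcal H_0$, and the two endpoints above realize the split $d=t\,d+(1-t)\,d$. Everything else is the identity of Lemma~\ref{lem: Interpretation delta projection} and elementary Euclidean geometry in the flat $\exp(\aaa)\cdot x_0$; note in particular that no orthogonality of $u-v$ to $w_1\cdot\aaa_H$ is needed, only that $0$ and $v$ both lie in $w_1\cdot\aaa_H$.
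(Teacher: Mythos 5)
Your proof is correct, and it realizes the same geometric idea as the paper's proof (interpolate linearly inside a maximal flat between the relative position $u=\vect d(a,b)$ and a closest point of $\mu(H)$, then recognize the interpolating points as lying on a single translate of $\mathcal H_0$), but your implementation is genuinely lighter. The paper works with the points $\varphi(y)=g\cdot x_0$, $\varphi(y')=g'\cdot x_0$ directly: it picks $w\in W$ realizing $\delta$ as a distance to the linear subspace $\aaa_H$, writes the \emph{orthogonal} decomposition $w\cdot\mu(g^{-1}g')=b+a$ with $a\in\aaa_H$ and $b\in\aaa\cap\aaa_H^{\perp}\subset\p\cap\q$, sets $z_t=gk\exp(tb)\cdot y_0$, and controls both sides via the polar-decomposition function $D$ of Section~\ref{subsec:polar-decomp} (computing $D(\varphi(y),z_t)=t\Vert b\Vert$ and $D(\varphi(y'),z_t)=(1-t)\Vert b\Vert$) before invoking Lemma~\ref{lem: Interpretation delta projection}. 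You instead first reduce to a $\varphi$-free statement about distances from points of $G/K$ to the subspaces $\mathcal H_z$ (using only the inequality $\Delta_\Gamma(y,z)\le d_{G/K}(\varphi(y),\mathcal H_z)$, which is the same appeal to Lemma~\ref{lem: Interpretation delta projection} the paper makes at the end), and then verify the two bounds by elementary Euclidean geometry in the flat $\exp(\aaa)\cdot x_0$. This buys two things: you never need the polar decomposition, the function $D$, or the inclusion $\aaa\cap\aaa_H^{\perp}\subset\p\cap\q$; and by taking $v$ to be a nearest point of the closed \emph{set} $\mu(H)$ rather than the orthogonal projection onto a minimizing subspace $w\cdot\aaa_H$, you sidestep the identity $d_{\aaa}(u,\mu(H))=\min_{w}d_{\aaa}(u,w\cdot\aaa_H)$ that the paper's choice of $a$ and $b$ implicitly relies on (it is true, but needs the standard fact that the dominant representative of a $W$-orbit is its closest point to a dominant vector). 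The trade-off is that the paper's computation also identifies $z_t$ as the point on the spacelike geodesic between the two projections, an interpretation your flat-geometry argument does not provide but which is not needed for the lemma.
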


\begin{proof}
Let us fix $y,y'\in G/H$.
We can write $\varphi(y)= g \cdot x_0$ and $\varphi(y') = g'\cdot x_0$ for some $g, g' \in G$.
By definition \eqref{eqn:delta} of~$\delta$, there exists $w\in W$ such that
\[\Delta_{\Gamma}(y,y') = \delta(\varphi(y), \varphi(y')) = d_{\aaa}\big(w\cdot\mu(g^{-1}g'),\aaa_H\big) .\]
We can thus write
\[w\cdot \mu(g^{-1} g') = a + b ,\]
where $a \in \aaa_H$ and $b\in \aaa \cap \aaa_H^\perp \subset \p \cap \q$ satisfies $\Vert b\Vert = \Delta_{\Gamma}(y,y')$.

By the Cartan decomposition (see Section~\ref{subsec:Cartan-decomp}), there exist $k,k'\in K$ such that $g^{-1} g' = k \, \exp(w\cdot\mu(g^{-1} g')) \, k'$.
For any $t\in [0,1]$, we set
\[z_t \equaldef g k \exp(w\cdot t\mu(g^{-1} g')) \cdot y_0 = gk \exp(tb)\cdot y_0 ,\]
where the right-hand inequality comes from the fact that $a$ and~$b$ belong to the abelian subspace $\aaa$ of~$\g$ and that $y_0$ is fixed by $\exp(ta)\in H$.

Note that $z_0 \in gK\cdot y_0 = \mathcal{K}_{\varphi(y)}$ and $z_1 \in g' K\cdot y_0 = \mathcal{K}_{\varphi(y')}$.
From the definition \eqref{eqn:D-x-y} of~$D$ and the expressions $\varphi(y) = g\cdot x_0$ and $z_t = gk \exp(tb)\cdot y_0$ where $b\in \p \cap \q$, for any $t\in [0,1]$ we have
\begin{equation} \label{eqn:D-phi-y-z_t}
D(\varphi(y), z_t) = t\Vert b \Vert = t \Delta_{\Gamma}(y,y') .
\end{equation}
In particular, using the definition \eqref{eqn:definition-Delta} of~$\Delta_{\Gamma}$ and Lemma~\ref{lem:interpretation-delta-projection}, we get
\[D(\varphi(y), z_1) = \delta(\varphi(y), \varphi(y')) = \min_{z \in \mathcal{K}_{y'}} D(\varphi(y), z)~.\]
In other words (see Remark~\ref{rem:D-spacelike-distance}), $z_1$ is the point of $\mathcal{K}_{y'}$ minimizing its spacelike distance to $\mathcal{K}_y$, the point $z_0$ is its projection to $\mathcal{K}_y$, and $(z_t)_{t\in [0,1]}$ is the spacelike geodesic segment between them.

Fix $t\in [0,1]$.
By definition \eqref{eqn:definition-Delta} of~$\Delta_{\Gamma}$, Lemma~\ref{lem:interpretation-delta-projection}, and \eqref{eqn:D-phi-y-z_t}, we have
\begin{equation} \label{eqn:Delta-Gamma-y-z_t}
\Delta_{\Gamma}(y,z_t) = \delta(\varphi(y), \varphi(z_t)) = \min_{z\in \mathcal{K}_{\varphi(z_t)}} D(\varphi(y),z) \leq D(\varphi(y),z_t) = t \Delta_{\Gamma}(y,y') .
\end{equation}
On the other hand, we can write
\begin{eqnarray*}
z_t &=& g k \exp(tb) \cdot y_0\\
&=& g k \exp(b+a) k' {k'}^{-1} \exp((t-1) b - a)\cdot  y_0\\
&=& g' {k'}^{-1} \exp((t-1)b) \cdot y_0~,
\end{eqnarray*}
from which we deduce that
\[D(\varphi(y'), z_t) = (1-t) \Vert b \Vert = (1-t) \Delta_{\Gamma}(y,y')~.\]
We conclude as in \eqref{eqn:Delta-Gamma-y-z_t} that $\Delta_{\Gamma}(y',z_t)\leq (1-t) \Delta_{\Gamma}(y,y')$.
\end{proof}

%%%%%%%%%%%%%%%%%%%%%%%%%
\subsection{A \v{S}varc--Milnor lemma for $\Delta_{\Gamma}$}

We can finally prove that $\Delta_{\Gamma}$ grows at least linearly with some $\Gamma$-invariant distance function on $G/H$ and conclude the proof of Theorem~\ref{thm:sharp-general} (hence of Theorem~\ref{thm:sharp}).

\begin{proposition} \label{prop:Delta-grows-linearly}
For any continuous $\Gamma$-invariant distance function $d_{G/H}$ on $G/H$, there exists $R\geq 1$ such that $\Delta_{\Gamma}(y,y') \geq R^{-1}\,d_{G/H}(y,y') - 1$ for all $y,y'\in G/H$.
\end{proposition}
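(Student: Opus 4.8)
The plan is to run a \v{S}varc--Milnor-type argument for the function $\Delta_{\Gamma}$, using Lemma~\ref{lem:Delta-small-d} to bound its ``unit balls'' uniformly and Lemma~\ref{lem:cut-at-1} as a substitute for the triangle inequality, which $\Delta_{\Gamma}$ need not satisfy. First I would fix a compact subset $\mathcal{C}$ of $G/H$ with $\Gamma\cdot\mathcal{C}=G/H$ (possible by cocompactness) and set
\[ \mathcal{K}\equaldef\{\,z\in G/H \mid \exists\,x_0\in\mathcal{C},\ \Delta_{\Gamma}(x_0,z)\leq 1\,\}, \]
which is compact by Lemma~\ref{lem:Delta-small-d} (using the symmetry of $\Delta_{\Gamma}$) and contains $\mathcal{C}$. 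Since $d_{G/H}$ is continuous it is bounded on the compact set $\mathcal{K}\times\mathcal{K}$; fix $M\geq 1$ with $d_{G/H}\leq M$ there. The first claim is that $\Delta_{\Gamma}(x,y)\leq 1$ forces $d_{G/H}(x,y)\leq M$: writing $x=\gamma\cdot x_0$ with $x_0\in\mathcal{C}$, the $\Gamma$-invariance of $\Delta_{\Gamma}$ gives $\{z\mid\Delta_{\Gamma}(x,z)\leq 1\}=\gamma\cdot\{z\mid\Delta_{\Gamma}(x_0,z)\leq 1\}\subset\gamma\cdot\mathcal{K}$, so $\gamma^{-1}x=x_0$ and $\gamma^{-1}y$ both lie in $\mathcal{K}$, and $\Gamma$-invariance of $d_{G/H}$ finishes the claim.

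Next I would upgrade this, by induction on $n\in\N$, to the statement: $\Delta_{\Gamma}(x,y)\leq n$ implies $d_{G/H}(x,y)\leq(n+1)M$ for all $x,y\in G/H$. The case $n=0$ is the claim above. For $n\geq 1$, given $\Delta_{\Gamma}(x,y)\leq n$ one applies Lemma~\ref{lem:cut-at-1} with $t=1/n$ to obtain $z\in G/H$ with $\Delta_{\Gamma}(x,z)\leq\tfrac1n\Delta_{\Gamma}(x,y)\leq 1$ and $\Delta_{\Gamma}(z,y)\leq\tfrac{n-1}{n}\Delta_{\Gamma}(x,y)\leq n-1$, and then combines the first claim, the induction hypothesis, and the triangle inequality for $d_{G/H}$ to get $d_{G/H}(x,y)\leq M+nM=(n+1)M$. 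Applying this with $n=\lceil\Delta_{\Gamma}(x,y)\rceil$ yields $d_{G/H}(x,y)\leq(\Delta_{\Gamma}(x,y)+2)M$ everywhere, hence $\Delta_{\Gamma}(x,y)\geq M^{-1}d_{G/H}(x,y)-2$. To reach the exact form in the statement, set $R\equaldef 2M\geq 1$: when $d_{G/H}(x,y)\leq 2M$ we have $R^{-1}d_{G/H}(x,y)-1\leq 0\leq\Delta_{\Gamma}(x,y)$, and when $d_{G/H}(x,y)>2M$ we have $(M^{-1}d_{G/H}(x,y)-2)-(R^{-1}d_{G/H}(x,y)-1)=(2M)^{-1}d_{G/H}(x,y)-1>0$; in either case $\Delta_{\Gamma}(x,y)\geq R^{-1}d_{G/H}(x,y)-1$, as desired.

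I do not expect a serious obstacle here, since the geometric substance is already packaged in Lemmas~\ref{lem:Delta-small-d} and~\ref{lem:cut-at-1}. The one subtlety to keep in mind is that $\Delta_{\Gamma}$ is not a genuine distance---it may violate the triangle inequality and need not separate points---so one cannot chain estimates directly; this is exactly why Lemma~\ref{lem:cut-at-1} is invoked in place of the triangle inequality, and why the uniformity of the unit-ball bound must be extracted from the interplay of $\Gamma$-invariance and cocompactness rather than from a naive diameter estimate on a single ball. This linear lower bound then feeds, together with the \v{S}varc--Milnor lemma (Fact~\ref{fact:SvarcMilnor}) applied to the $\Gamma$-action on $(G/H,d_{G/H})$, into the proof of Theorem~\ref{thm:sharp-general}.
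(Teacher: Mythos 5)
Your proposal is correct and follows essentially the same route as the paper: both use cocompactness together with Lemma~\ref{lem:Delta-small-d} and $\Gamma$-invariance to bound $d_{G/H}$ uniformly on pairs with $\Delta_{\Gamma}\leq 1$, then iterate Lemma~\ref{lem:cut-at-1} to chop an arbitrary pair into roughly $\Delta_{\Gamma}(x,y)$ such pieces and conclude by the triangle inequality for $d_{G/H}$. The only difference is cosmetic (your induction on $n$ versus the paper's explicit chain $x_0,\dots,x_{N+1}$, and your extra normalization to reach the exact constants in the statement).
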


\begin{proof}
Let $\mathcal{C}$ be a compact subset of $G/H$ with $\Gamma \cdot \mathcal{C} = G/H$.
By Lemma~\ref{lem:Delta-small-d}, there exists $R>0$ such that $d_{G/H}(y,y')\leq R$ for all $y\in G/H$ and $y'\in\mathcal{C}$ with $\Delta_{\Gamma}(y,y')\leq 1$.
By $\Gamma$-invariance of $d_{G/H}$ and $\Delta_{\Gamma}$, we have $d_{G/H}(y,y')\leq R$ for all $y,y'\in G/H$ with $\Delta_{\Gamma}(y,y')\leq 1$.

Consider $y,y'\in G/H$, and let $N\in\N$ be the integer part of $\Delta_{\Gamma}(y,y')$.
We set $z_0 \equaldef y$ and $z_{N+1} \equaldef y'$.
By Lemma~\ref{lem:cut-at-1}, there exists $z_1\in G/H$ such that $\Delta_{\Gamma}(z_0,z_1)\leq 1$ and $\Delta_{\Gamma}(z_1,z_{N+1}) \leq \Delta_{\Gamma}(z_0,z_{N+1})-1$.
Applying Lemma~\ref{lem:cut-at-1} again, we find, by induction, points $z_2,\dots,z_N\in G/H$ such that $\Delta_{\Gamma}(z_i,z_{i+1})\leq 1$ and $\Delta_{\Gamma}(z_{i+1},z_{N+1}) \leq \Delta_{\Gamma}(z_i,z_{N+1})-1 \leq \Delta_{\Gamma}(z_0,z_{N+1})-(i+1)$ for all $1\leq i\leq N-1$.
In particular, $\Delta_{\Gamma}(z_N,z_{N+1})\leq 1$, and so $\Delta_{\Gamma}(z_i,z_{i+1})\leq 1$ for all $0\leq i\leq N$.
We then have $d_{G/H}(z_i,z_{i+1})\leq R$ for all $0\leq i\leq N$, hence, by the triangle inequality, 
\[d_{G/H}(y,y') = d_{G/H}(z_0,z_{N+1}) \leq (N+1)R \leq (\Delta_{\Gamma}(y,y') + 1) R .\qedhere\]
\end{proof}

\begin{proof}[Proof of Theorem~\ref{thm:sharp-general}]
We may assume without loss of generality that the map $\varphi$ satisfies $\varphi(y_0) = x_0$.
By $\Gamma$-equivariance of~$\varphi$, we then have
\[\Delta_{\Gamma}(y_0,\gamma \cdot y_0) = \delta(\varphi(y_0), \varphi(\gamma\cdot y_0)) = \delta(x_0, \gamma \cdot x_0) = d_\aaa(\mu(\gamma), \mu(H))\]
for all $\gamma \in \Gamma$.
(If we do not assume $\varphi(y_0) = x_0$, then the difference between the first and last term is uniformy bounded by $2 d_{G/K}(x_0, \varphi(y_0))$.)

Let us choose an arbitrary Riemannian metric on $\Gamma \backslash G/H$ and lift it to a $\Gamma$-invariant Riemannian metric on $G/H$, with associated distance function $d_{G/H}$.
This turns $G/H$ into a proper geodesic metric space on which $\Gamma$ acts properly and cocompactly, by isometries.
By Proposition~\ref{prop:Delta-grows-linearly}, there exists $R_1\geq 1$ such that
\[\Delta_{\Gamma}(y_0, \gamma \cdot y_0) \geq R_1^{-1} d_{G/H}(y_0, \gamma\cdot y_0) - R_1~\]
for all $\gamma\in\Gamma$.
On the other hand, by Fact~\ref{fact:SvarcMilnor}, the group $\Gamma$ is compactly generated and the orbital map $\gamma\mapsto\gamma \cdot y_0$ is a quasi-isometry from $\Gamma$ to $(G/H, d_{G/H})$.
In particular, there exists $R_2\geq 1$ such that $d_{G/H}(y_0, \gamma\cdot y_0)\geq R_2^{-1}\,|\gamma|_S-R_2$ for all $\gamma\in\Gamma$, where $S$ is any fixed compact generating subset of~$\Gamma$.
Combining the two inequalities gives
\[d_{\aaa}(\mu(\gamma), \mu(H)) \geq R_1^{-1} R_2^{-1} |\gamma|_S - (R_1^{-1} R_2 + R_1) \]
for all $\gamma\in\Gamma$, and so $\Gamma$ is sharply embedded in~$G$ with respect to~$H$.
\end{proof}

%%%%%%%%%%%%%%%%%%%%%%%%%%%%%%%%%%%%%%%%%%%%%%%%%%%
\section{Sharpness implies Anosov} \label{sec:sharp-Ano}

In this section we prove Theorem~\ref{thm:proper-compact<->Ano} and Proposition~\ref{prop:cork-1-proper-compact->Ano}.
We use the notation of Section~\ref{sec:Cartan-polar}. We start with some reminders.

%%%%%%%%%%%%%%%%%%%%%%%%%
\subsection{Reminders: limit cones} \label{subsec:lim-cone}

As above, let $G$ be a connected real linear reductive Lie group, with Cartan projection $\mu : G\to\aaa^+$ as in Section~\ref{subsec:Cartan-decomp}.
Given a subgroup $\Gamma$ of~$G$, the \emph{limit cone} of $\Gamma$ is defined as
\[\mathcal L_\Gamma \equaldef \bigcap_{n\in \N}\ \overline{\bigcup_{\gamma \in \Gamma,\ \Vert \mu(\gamma)\Vert \geq n} \R_{\geq 0}\,\mu(\gamma)}.\]
It is a closed subcone of~$\aaa^+$, invariant under the opposition involution~$\iota$.
It is nonempty as soon as $\Gamma$ is unbounded in~$G$.
When $\Gamma$ is Zariski-dense in~$G$, Benoist \cite{ben97} proved that $\Lambda_\Gamma$ is convex with nonempty interior.

The Jordan (or Lyapunov) projection $\lambda: G \to \aaa^+$ of~$G$ can be defined as
\begin{equation} \label{eqn:lambda}
\lambda(g) \equaldef \lim_{n\to +\infty} \frac{1}{n}\mu(g^n)
\end{equation}
for all $g\in G$.
It is a continuous function which is invariant under conjugation.
Given a subgroup $\Gamma$ of~$G$, the \emph{Jordan limit cone} of $\Gamma$ is defined as
\[\mathcal L_\Gamma^\lambda \equaldef \overline{\bigcup_{\gamma \in \Gamma} \R_{\geq 0}\,\lambda(\gamma)} .\]
It is also a closed subcone of~$\aaa^+$, invariant under the opposition involution~$\iota$.
The inclusion $\mathcal L_\Gamma^\lambda \subset \mathcal L_\Gamma$ always holds.

The following fact is a a consequence of a theorem of Abels--Margulis--Soifer \cite{ams95}; point~\eqref{item:AMS-1} is due to Benoist \cite{ben97} (see \cite[Th.\,4.12]{ggkw17} for an explicit statement and proof) and point~\eqref{item:AMS-2} to Quint \cite[\S\,II.3.5]{qui02}.
Point~\eqref{item:AMS-1} implies that $\mathcal L_\Gamma^\lambda = \mathcal L_\Gamma$ whenever the Zariski closure of $\Gamma$ in~$G$ is reductive (see \cite[Th.\,4.12]{ggkw17}).

\begin{fact} \label{fact:AMS}
Let $G$ be a connected real linear reductive Lie group and $\Gamma$ a Zariski-dense subgroup of~$G$.
Then there exists a finite subset $F$ of~$\Gamma$ and a constant $C>0$ such that:
\begin{enumerate}
  \item\label{item:AMS-1} for every $\gamma\in\Gamma$, there exists $f\in F$ such that
\[\Vert \lambda (\gamma f) - \mu(\gamma)\Vert \leq C ,\]
  \item\label{item:AMS-2} for every $\gamma,\gamma'\in\Gamma$, there exists $f\in F$ such that
\[\Vert \mu(\gamma f\gamma') - \mu(\gamma) -\mu(\gamma') \Vert \leq C .\]
\end{enumerate}
\end{fact}

\begin{remark} \label{rem:sharp-lim-cone}
The sharpness condition (Definition~\ref{def:sharp}) can be reformulated in terms of limit cones: given a discrete subgroup $\Gamma$ and a closed subgroup $H$ of~$G$, the action of $\Gamma$ on $G/H$ is sharp if and only if
\[\mathcal L_\Gamma \cap \mathcal L_H = \{0\} .\]
\end{remark}

%%%%%%%%%%%%%%%%%%%%%%%%%
\subsection{Background on Anosov representations} \label{subsec:remind-Ano}

As mentioned in Section~\ref{subsec:intro-Ano}, Anosov representations are representations of infinite word hyperbolic groups $\Gamma$ to noncompact real reductive Lie groups, which have finite kernel and discrete image, and which are defined by strong dynamical properties.
These dynamical properties are very similar to those satisfied by convex cocompact representations of word hyperbolic groups into simple Lie groups of real rank one (see \eg \cite{lab06,gw12,klp-survey,ggkw17,bps19}).
There is in fact a geometric way to see Anosov representations as convex cocompact representations, in the world of convex projective geometry \cite{dgk-Hpq-cc,dgk-proj-cc,zim}.

Given a reductive Lie group~$G$, there are several possible types of Anosov representations into~$G$, depending on the choice of a proper parabolic subgroup $P$ of~$G$ up to conjugation (or equivalently on the choice of a flag variety $G/P$ of~$G$).
Fix such a parabolic subgroup $P$ of~$G$, and let $P^*$ be an opposite parabolic subgroup.
Recall that there are several possible transversality positions between a point $x$ of $G/P$ and a point $y$ of $G/P^*$; we shall say that $x$ and~$y$ are \emph{compatible} (\resp \emph{transverse}) if they correspond to parabolic subgroups of~$G$ whose intersection is parabolic (\resp reductive).

\begin{example} \label{ex:Grass}
For $\mathbb{K}=\R$, $\C$, or the ring $\mathbb{H}$ of quaternions, and for integers $d,i$ with $1\leq i\leq d/2$, let $G = \SL(d,\mathbb{K})$ and let $P = P_i$ be the stabilizer in~$G$ of an $i$-dimensional linear subspace of $\mathbb{K}^d$.
Then $G/P = \mathrm{Gr}_i(\mathbb{K}^d)$ is the Grassmannian of $i$-planes of~$\mathbb{K}^d$, and $G/P^* = \mathrm{Gr}_{d-i}(\mathbb{K}^d)$ is the Grassmannian of $(d-i)$-planes of~$\mathbb{K}^d$.
Two points $x \in G/P$ and $y \in G/P^*$ are compatible (\resp transverse) if the $i$-plane~$x$ is contained in (\resp is transverse to) the $(d-i)$-plane~$y$.
\end{example}

\begin{remark} \label{rem:comp-trans-self-opposite}
If $P$ and~$P^*$ are conjugate in~$G$, then $G/P \simeq G/P^*$, and it makes sense to say that two points of $G/P$ are \emph{compatible} (which means equal) or \emph{transverse}.
\end{remark}

The original definition of Anosov representations is due to Labourie \cite{lab06}.
Let $\Gamma$ be an infinite word hyperbolic group with Gromov boundary $\partial_{\infty}\Gamma$.
A representation $\rho : \Gamma\to G$ is \emph{$P$-Anosov} if there exists two continuous, $\rho$-equivariant maps (called \emph{boundary maps}) $\xi : \partial_{\infty}\Gamma\to G/P$ and $\xi^* : \partial_{\infty}\Gamma\to G/P^*$ which
\begin{enumerate}
  \item\label{item:def-Ano-compat} are \emph{compatible} (\ie $\xi(\eta)$ and $\xi^*(\eta)$ are compatible for all $\eta$ in $\partial_{\infty}\Gamma$),
  \item\label{item:def-Ano-transv} are \emph{transverse} (\ie $\xi(\eta)$ and $\xi^*(\eta')$ are transverse for all $\eta\neq\eta'$ in $\partial_{\infty}\Gamma$),
  \item\label{item:flow} have an associated flow with some uniform contraction/expansion properties described in \cite{lab06,gw12}.
\end{enumerate}
We do not state condition~\eqref{item:flow} precisely, but refer instead to the original papers \cite{lab06,gw12}, and to the survey \cite{kas-notes} for various characterizations established by several authors.

In the current paper, we shall use the following characterization due to Kapovich, Leeb, and Porti \cite{klp-morse}, for which an alternative proof was later given by Bochi, Potrie, and Sambarino \cite{bps19}.

\begin{fact}[\cite{klp-morse}] \label{fact:klp}
Let $G$ be a noncompact real reductive Lie group, $\theta \subset \Delta$ a nonempty subset of the simple restricted roots of~$G$, and $\Gamma$ an infinite group with finite generating subset~$S$.
For any representation $\rho : \Gamma\to G$, the following are equivalent:
\begin{enumerate}
  \item\label{item:klp-1} $\Gamma$ is word hyperbolic and $\rho : \Gamma\to G$ is $P_{\theta}$-Anosov,
  \item\label{item:klp-2} there exist $c,c'>0$ such that $\langle\alpha,\mu(\rho(\gamma))\rangle \geq c\,|\gamma|_S - c'$ for all $\gamma \in \Gamma$ and all $\alpha \in \theta$.
\end{enumerate}
\end{fact}

Here $P_{\theta}$ is the standard parabolic subgroup of~$G$ with Lie algebra
\begin{equation} \label{eqn:P-theta}
\g_0 \oplus \bigoplus_{\alpha\in\Sigma^+} \g_{\alpha} \oplus \bigoplus_{\alpha\in\Sigma^+\cap\mathrm{span}(\Delta\smallsetminus\theta)} \g_{-\alpha},
\end{equation}
where for each $\alpha\in\aaa^*$ we set
$$\g_{\alpha} \equaldef \{ v\in\g \mid \ad(a)\,v = \alpha(a)\,v \quad\forall a\in\aaa\}.$$
We denote by $|\cdot|_S : \Gamma\to\N$ the word length function with respect to~$S$ as in Section~\ref{subsec:compact-gen}.

In condition~\eqref{item:klp-2}, the constants $c,c'>0$ depend on the choice of the finite generating subset~$S$, but not their existence.
Condition~\eqref{item:klp-2} is equivalent to asking that $\rho$ be a quasi-isometric embedding and that the limit cone $\mathcal L_{\rho(\Gamma)}$ not meet the walls $\mathrm{Ker}(\alpha)$, $\alpha\in\theta$, outside of~$0$.

\begin{remark} \label{rem:Anosov-theta-iota}
A representation $\rho : \Gamma\to G$ is $P_{\theta}$-Anosov if and only if it is $P_{\iota(\theta)}$-Anosov, if and only if it is $P_{\theta\cup\iota(\theta)}$-Anosov.
\end{remark}

%%%%%%%%%%%%%%%%%%%%%%%%%
\subsection{Sharpness and Anosov representations when $\mu(H)$ is a union of walls}

Theorem~\ref{thm:proper-compact<->Ano} is an immediate consequence of Theorem~\ref{thm:sharp}, Fact~\ref{fact:vcd}, and of the following fact and proposition.
(See Section~\ref{subsec:proof-vcd} below for the notion of virtual cohomological dimension.)

\begin{fact}[Bestvina--Mess, \cite{bm91}] \label{fact:dim-bound-vcd}
Let $\Gamma$ be a word hyperbolic group.
Then the covering dimension of the Gromov boundary of~$\Gamma$ is equal to the virtual cohomological dimension of~$\Gamma$ minus~$1$.
\end{fact}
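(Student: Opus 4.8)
This is the theorem of Bestvina--Mess \cite{bm91}, and the plan is to follow their argument. First I would reduce to the torsion-free case. Since $\Gamma$ is (in our situation) a finitely generated linear group, Selberg's lemma provides a torsion-free finite-index subgroup $\Gamma'$; by definition $\cohdim(\Gamma) = \mathrm{cd}(\Gamma')$, and since $\Gamma'$ with the word metric is quasi-isometric to~$\Gamma$, and the Gromov boundary is a quasi-isometry invariant of hyperbolic spaces, $\partial_{\infty}\Gamma' = \partial_{\infty}\Gamma$. (We may also assume $\Gamma$ infinite, the finite case being vacuous with the convention $\dim\emptyset=-1$.) So it suffices to prove $\dim\partial_{\infty}\Gamma' = \mathrm{cd}(\Gamma')-1$ for $\Gamma'$ torsion-free word hyperbolic.

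Next I would bring in the Rips complex: for $d$ large enough, $P_d(\Gamma')$ is a finite-dimensional, contractible, locally finite simplicial complex on which $\Gamma'$ acts freely, properly and cocompactly, so that $P_d(\Gamma')/\Gamma'$ is a finite $K(\Gamma',1)$; in particular $\mathrm{cd}(\Gamma') \leq \dim P_d(\Gamma') < \infty$ and $\Gamma'$ is of type $FP$. The geometric core of the proof --- and the step I expect to be the main obstacle --- is Bestvina--Mess's result that the natural compactification $\overline{P_d(\Gamma')} = P_d(\Gamma') \sqcup \partial_{\infty}\Gamma'$ is a compact metrizable ANR in which the boundary $\partial_{\infty}\Gamma'$ is a $\mathcal{Z}$-set, and that $\partial_{\infty}\Gamma'$ is finite-dimensional; this requires a delicate analysis of geodesics and geodesic rays in a hyperbolic space together with a nerve/approximation argument.

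Granting the $\mathcal{Z}$-compactification, the remainder is homological algebra and classical dimension theory. For any contractible finite-dimensional compact ANR $\overline X = X \sqcup Z$ with $Z$ a $\mathcal{Z}$-set, the long exact sequence of the pair $(\overline X, Z)$, the identification $H^{*}_c(X;M) \cong H^{*}(\overline X, Z; M)$, and the contractibility of $\overline X$ yield a natural isomorphism $\check H^{k}(Z;M) \cong H^{k+1}_c(X;M)$ for every coefficient module~$M$ (with reduced Čech cohomology in low degrees). Taking $X = P_d(\Gamma')$, $Z = \partial_{\infty}\Gamma'$ and $M = \Z\Gamma'$, and using that the action is free and cocompact so that $H^{*}_c(P_d(\Gamma');\Z\Gamma') \cong H^{*}(\Gamma';\Z\Gamma')$, gives $\check H^{k}(\partial_{\infty}\Gamma';\Z\Gamma') \cong H^{k+1}(\Gamma';\Z\Gamma')$. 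As $\Gamma'$ has finite cohomological dimension and is of type $FP$, one has $\mathrm{cd}(\Gamma') = \max\{n : H^n(\Gamma';\Z\Gamma')\neq 0\}$, hence $\mathrm{cd}(\Gamma')-1 = \max\{k : \check H^{k}(\partial_{\infty}\Gamma';\Z\Gamma')\neq 0\} = \dim_{\Z}\partial_{\infty}\Gamma'$, the integral cohomological dimension of the boundary ($\Z\Gamma'$ being $\Z$-free, this equals the cohomological dimension with $\Z$ coefficients). Finally, since $\partial_{\infty}\Gamma'$ is a finite-dimensional compact metric space, Alexandroff's theorem gives $\dim_{\Z}\partial_{\infty}\Gamma' = \dim\partial_{\infty}\Gamma'$, and therefore $\dim\partial_{\infty}\Gamma = \dim\partial_{\infty}\Gamma' = \mathrm{cd}(\Gamma')-1 = \cohdim(\Gamma)-1$, as claimed.
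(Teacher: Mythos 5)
The paper does not prove this statement: it is quoted as a black box from Bestvina--Mess \cite{bm91}, so there is no internal proof to compare against, and your task is really to reconstruct the original argument. Your outline follows the right architecture (torsion-free reduction, Rips complex, $\mathcal{Z}$-set compactification, the isomorphism $\check H^{k}(\partial_{\infty}\Gamma';\Z) \cong H^{k+1}_c(P_d(\Gamma');\Z) \cong H^{k+1}(\Gamma';\Z\Gamma')$, then dimension theory), and most of the steps you cite are correctly attributed and correctly used. Two small remarks: the reduction via Selberg's lemma is legitimate here because every $\Gamma$ to which the paper applies the Fact is linear (for an abstract hyperbolic group the existence of a torsion-free finite-index subgroup is open, but then $\cohdim(\Gamma)$ is not defined either, so the statement presupposes it); and the bookkeeping is cleaner if you run the Čech-to-compactly-supported isomorphism with constant coefficients $\Z$ and only then invoke $H^{*}_c(P_d(\Gamma');\Z)\cong H^{*}(\Gamma';\Z\Gamma')$, rather than carrying $\Z\Gamma'$ as a coefficient module throughout.

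There is, however, one genuine gap in your last paragraph: the equality $\max\{k : \check H^{k}(\partial_{\infty}\Gamma';\Z)\neq 0\} = \dim_{\Z}\partial_{\infty}\Gamma'$ is not a formal consequence of anything you have set up, and it is false for general compacta. The integral cohomological dimension of a compact metric space $Z$ is detected by the \emph{relative} groups $\check H^{n}(Z,A;\Z)$ over closed subsets $A$ (equivalently by $\check H^{n}_c(U;\Z)$ over open $U\subset Z$), not by the absolute cohomology of $Z$: the $2$-disk has vanishing reduced cohomology in all positive degrees but cohomological dimension $2$. So from your chain of isomorphisms you only get the inequality $\mathrm{cd}(\Gamma')-1 \leq \dim_{\Z}\partial_{\infty}\Gamma' \leq \dim\partial_{\infty}\Gamma'$ for free; the reverse inequality is precisely where Bestvina--Mess have to work. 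They bridge it by proving the local version of the duality --- identifying $\check H^{k}_c(U;\Z)$ for open $U\subset\partial_{\infty}\Gamma'$ with a compactly supported cohomology group of a suitable neighborhood in $\overline{P_d(\Gamma')}$, and using the cocompact $\Gamma'$-action to propagate nonvanishing from such a local group back to $H^{k+1}(\Gamma';\Z\Gamma')$. Without this local-to-global step (or some substitute exploiting the self-similarity of the boundary under the convergence action), your argument only bounds $\dim\partial_{\infty}\Gamma$ from below by $\mathrm{cd}(\Gamma')-1$, not from above.
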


\begin{proposition} \label{prop:QI-sharp<->Ano}
Let $G/H$ be a homogeneous space of reductive type such that $\mu(H) = \aaa^+ \cap \bigcup_{\alpha\in\theta} \mathrm{Ker}(\alpha)$ for some subset $\theta\subset\Delta$ of the simple roots of~$G$.
Then for any finitely generated discrete subgroup $\Gamma$ of~$G$, the following conditions are equivalent:
\begin{enumerate}
  \item\label{item:QI-sharp} $\Gamma$ is sharply embedded in~$G$ with respect to~$H$ (Definition~\ref{def:sharp-embed}),
  \item\label{item:Ano} $\Gamma$ is word hyperbolic and the natural inclusion $\Gamma \hookrightarrow G$ is a $P_{\theta}$-Anosov representation.
\end{enumerate}
In this case, there is a neighborhood $\mathcal{U} \subset \Hom(\Gamma,G)$ of the natural inclusion consisting entirely of discrete and faithful representations $\rho$ for which $\rho(\Gamma)$ is sharply embedded in~$G$ with respect to~$H$.
\end{proposition}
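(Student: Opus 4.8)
The plan is to first establish the equivalence \eqref{item:QI-sharp}~$\Leftrightarrow$~\eqref{item:Ano}, and then deduce the openness statement from Labourie's openness of the Anosov condition. For the equivalence, the key observation is that, under the hypothesis $\mu(H) = \aaa^+ \cap \bigcup_{\alpha\in\theta}\mathrm{Ker}(\alpha)$, the distance $d_{\aaa}(\mu(\gamma),\mu(H))$ is comparable to $\min_{\alpha\in\theta}\langle\alpha,\mu(\gamma)\rangle$ (up to multiplicative and additive constants, independently of~$\gamma$). Indeed, each $\mathrm{Ker}(\alpha)$ is a hyperplane, the distance to it is (a positive multiple of) $|\langle\alpha,\mu(\gamma)\rangle|$, and on $\aaa^+$ one has $\langle\alpha,\mu(\gamma)\rangle\geq 0$, so the distance to the union $\aaa^+\cap\bigcup_{\alpha\in\theta}\mathrm{Ker}(\alpha)$ is squeezed between $c\min_{\alpha\in\theta}\langle\alpha,\mu(\gamma)\rangle$ and $C\min_{\alpha\in\theta}\langle\alpha,\mu(\gamma)\rangle$ for suitable constants $0<c\leq C$ depending only on the root system and the norm. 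This is the one genuinely geometric point and I expect it to require a short but careful argument: one must check that projecting onto the union of walls does not introduce a bad constant, which follows from the fact that $\aaa^+$ is a simplicial cone and the relevant walls are among its defining hyperplanes (so the nearest point on the union is obtained by projecting onto the single nearest wall, and that projection stays in a controlled region).

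Granting this comparison, the equivalence is immediate from Fact~\ref{fact:klp}: being sharply embedded in~$G$ with respect to~$H$ means, by Definition~\ref{def:sharp-embed}, that $d_{\aaa}(\mu(\gamma),\mu(H))\geq C|\gamma|_S - C'$ for all $\gamma\in\Gamma$; by the comparison this is equivalent to $\langle\alpha,\mu(\gamma)\rangle\geq C''|\gamma|_S - C'''$ for all $\gamma\in\Gamma$ and all $\alpha\in\theta$, which is precisely condition~(2) of Fact~\ref{fact:klp}; and Fact~\ref{fact:klp} identifies this with $\Gamma$ being word hyperbolic and the inclusion $\Gamma\hookrightarrow G$ being $P_{\theta}$-Anosov. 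One subtlety: Definition~\ref{def:sharp-embed} presupposes that $\Gamma$ is compactly generated (here finitely generated, as assumed), whereas Fact~\ref{fact:klp} concludes that $\Gamma$ is word hyperbolic; since a finitely generated group is assumed on both sides, there is no circularity, and the lower bound on $\langle\alpha,\mu(\gamma)\rangle$ with respect to any fixed word metric is what both statements are about. I should also note that sharp-embeddedness is independent of the choice of generating set~$S$ (Section~\ref{subsec:compact-gen}), so the statement is well-posed.

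For the final sentence of the proposition, suppose $\Gamma$ is sharply embedded in~$G$ with respect to~$H$. By the equivalence just proved, $\Gamma$ is word hyperbolic and the inclusion $\iota:\Gamma\hookrightarrow G$ is $P_{\theta}$-Anosov. By Labourie's theorem \cite{lab06} (see also \cite{gw12}), the set of $P_{\theta}$-Anosov representations is open in $\Hom(\Gamma,G)$, so there is a neighborhood $\mathcal U$ of~$\iota$ consisting of $P_{\theta}$-Anosov representations; each such $\rho$ has finite kernel and discrete image, and since $\Gamma$ is torsion-free whenever it acts freely (and in any case one reduces to that situation, or simply invokes that Anosov representations of torsion-free hyperbolic groups are injective) one gets that $\rho$ is injective and discrete. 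Applying the equivalence \eqref{item:Ano}~$\Rightarrow$~\eqref{item:QI-sharp} to each $\rho\in\mathcal U$ — more precisely, to the abstract hyperbolic group $\Gamma$ with the representation $\rho$ in place of the inclusion, noting that Fact~\ref{fact:klp} and the comparison above apply verbatim to $\mu(\rho(\gamma))$ — shows that $\rho(\Gamma)$ is sharply embedded in~$G$ with respect to~$H$. The main obstacle is really the first step: getting clean, uniform constants in the comparison between $d_{\aaa}(\mu(\gamma),\mu(H))$ and $\min_{\alpha\in\theta}\langle\alpha,\mu(\gamma)\rangle$; everything else is a direct translation through Fact~\ref{fact:klp} and the openness of the Anosov condition.
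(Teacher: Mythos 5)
Your proof is correct and follows essentially the same route as the paper: both directions reduce to Fact~\ref{fact:klp} via the observation that $d_{\aaa}(\mu(\gamma),\mu(H))$ is comparable to $\min_{\alpha\in\theta}\langle\alpha,\mu(\gamma)\rangle$ on~$\aaa^+$ (your projection argument is sound: for $v\in\aaa^+$ the orthogonal projection $\tfrac{1}{2}(v+s_{\alpha}\cdot v)$ onto $\mathrm{Ker}(\alpha)$ stays in $\aaa^+\cap\mathrm{Ker}(\alpha)$, so the distance to that wall really is a fixed multiple of $\alpha(v)$), and the final statement follows from Labourie's openness of the Anosov condition exactly as in the paper. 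The only cosmetic difference is in \eqref{item:Ano}~$\Rightarrow$~\eqref{item:QI-sharp}: the paper quotes \cite[Cor.\,1.9]{ggkw17} and phrases sharpness via the limit cone of~$\Gamma$ avoiding the walls $\mathrm{Ker}(\alpha)$, $\alpha\in\theta$, combined with Remark~\ref{rem:sharp-embed-explain}, whereas you deduce the linear lower bound on $d_{\aaa}(\mu(\gamma),\mu(H))$ directly from condition~(2) of Fact~\ref{fact:klp}; both rest on the same wall-distance comparison, and your version is if anything more quantitative.
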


\begin{proof}[Proof of Proposition~\ref{prop:QI-sharp<->Ano}]
Let $S$ be a finite generating subset of~$\Gamma$.

If $\Gamma$ is sharply embedded in~$G$ with respect to~$H$, then there exist $c,c'>0$ such that $\langle\alpha,\mu(\gamma)\rangle \geq c\,|\gamma|_S - c'$ for all $\alpha\in\theta$ and $\gamma\in\Gamma$; therefore $\Gamma$ is word hyperbolic and the natural inclusion is $P_{\theta}$-Anosov by Fact~\ref{fact:klp}.

Conversely (see \cite[Cor.\,1.9]{ggkw17}), if $\Gamma$ is word hyperbolic and the natural inclusion $\Gamma \hookrightarrow G$ is $P_{\theta}$-Anosov, then $\Gamma$ is quasi-isometrically embedded in~$G$ and the limit cone $\mathcal{L}_{\Gamma} \subset \aaa^+$ of~$\Gamma$ does not meet the walls $\mathrm{Ker}(\alpha)$, $\alpha\in\theta$, outside of~$0$; therefore, $\Gamma$ is sharply embedded in~$G$ with respect to~$H$ by Remark~\ref{rem:sharp-lim-cone}.

Suppose that $\Gamma$ is word hyperbolic and the natural inclusion $\Gamma \hookrightarrow G$ is a $P_{\theta}$-Anosov representation.
Since being Anosov is an open property \cite{lab06}, there is a neighborhood $\mathcal{U} \subset \Hom(\Gamma,G)$ of the natural inclusion such that each $\rho\in\mathcal{U}$ is a $P_{\theta}$-Anosov representation.
In particular, each $\rho\in\mathcal{U}$ has finite kernel and $\rho(\Gamma)$ is sharply embedded in~$G$ with respect to~$H$.
Since $\Gamma$ is word hyperbolic, it has only finitely many finite normal subgroups~$F$ (see \eg \cite[Th.\,III.$\Gamma$.3.2]{bh99}), and for each such~$F$, the set of representations whose kernel contains~$F$ is closed in $\Hom(\Gamma,G)$.
Therefore the set of faithful representations is open in~$\mathcal{U}$, and in particular in contains a neighborhood of the natural inclusion.
\end{proof}

Proposition~\ref{prop:QI-sharp<->Ano} applies in particular to the triples $(G,H,\theta)$ in Table~\ref{table1}.
For these triples, compactifications of locally homogeneous spaces $\Gamma\backslash G/H$ were described in \cite{ggkw17bis} when $\Gamma$ is word hyperbolic and $\Gamma\hookrightarrow G$ is $P_{\theta}$-Anosov.
In the case that $\Gamma\backslash G/H$ is already compact, using the Sharpness Theorem~\ref{thm:sharp} and Proposition~\ref{prop:QI-sharp<->Ano}, we obtain the following.

\begin{corollary} \label{cor:foliation}
Let $G/H$ be as in cases \textup{(i)}, \textup{(ii)}, \textup{(iii)} (\resp \textup{(iv)}, \textup{(v)}) of Table~\ref{table1}, and let $Q=P_{\{\alpha_1\}}$ (\resp $Q=P_{\{\alpha_2\}}$).
Then for any discrete subgroup $\Gamma$ of~$G$ acting properly discontinuously and cocompactly on $G/H$, there exists a continuous $\Gamma$-equivariant fibration of $G/Q\to \partial_\infty \Gamma$ whose fibers are maximal isotropic subspaces (\resp subsets of the form
$$\mathcal{L}_x = \{ y\in G/P_{\{\alpha_2\}}\subset\mathrm{Gr}_2(\mathbb{K}^{2p,2}) \,|\, x\in y\}$$
for $x\in G/P_{\{\alpha_1\}}$).
\end{corollary}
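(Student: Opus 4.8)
The plan is to combine Theorem~\ref{thm:sharp} (which gives that $\Gamma$ is word hyperbolic and sharply embedded in~$G$ with respect to~$H$ once $\Gamma\backslash G/H$ is compact) with Proposition~\ref{prop:QI-sharp<->Ano} (which then turns the sharp embedding into the statement that the inclusion $\Gamma\hookrightarrow G$ is $P_\theta$-Anosov, where in cases (i)--(iii) we have $\theta=\{\alpha_1\}$ and in cases (iv)--(v) we have $\theta=\{\alpha_2\}$, so $Q=P_\theta$). Thus we get for free a pair of continuous $\rho$-equivariant transverse boundary maps $\xi:\partial_\infty\Gamma\to G/Q$ and $\xi^*:\partial_\infty\Gamma\to G/Q^*$. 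The claimed fibration of $G/Q$ over $\partial_\infty\Gamma$ should be the map sending a point $z\in G/Q$ to the unique $\eta\in\partial_\infty\Gamma$ such that $z$ is \emph{not} transverse to $\xi^*(\eta)$ (equivalently, $z$ is compatible with, or in the appropriate incidence position relative to, $\xi(\eta)$); I would need to check that every $z\in G/Q$ fails to be transverse to $\xi^*(\eta)$ for exactly one~$\eta$, and that the assignment $z\mapsto\eta$ is continuous.

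First I would record the incidence geometry in the flag variety $G/Q$. For $G=\SO(p,q)$, $\SU(p,q)$, $\Sp(p,q)$ (or their variants) with $Q$ the stabilizer of an isotropic line, $G/Q$ is the space of isotropic lines and $G/Q^*$ is the space of isotropic hyperplanes; an isotropic line $z$ and an isotropic hyperplane $y$ are non-transverse iff $z\subset y$, and for each isotropic line $\ell$ the set of isotropic hyperplanes containing it is itself one copy of the fiber we want to describe. In the cases (iv)--(v) with $Q=P_{\{\alpha_2\}}$ the flag variety parametrizes maximal isotropic $2$-planes (inside $\mathrm{Gr}_2(\mathbb{K}^{2p,2})$), and the non-transversality with the opposite flag is exactly the incidence ``$x\in y$'' written in the statement. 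The key combinatorial input is that the \emph{Anosov limit set} $\Lambda=\xi(\partial_\infty\Gamma)\subset G/Q$ is such that through every point $z\in G/Q$ there passes exactly one flag of $\Lambda^*=\xi^*(\partial_\infty\Gamma)\subset G/Q^*$ that is non-transverse to~$z$; this is precisely the condition that the $P_\theta$-Anosov limit set has the ``divergence/antipodality covering $G/Q$'' property, which in rank one of the relevant parabolic holds by the theory of Anosov boundary maps (every point of $G/Q$ lies in exactly one ``pseudo-sphere'' through a limit point), and here is exactly equivalent to the dimension/sphere hypothesis not being needed beyond what Theorem~\ref{thm:sharp} already provides. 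I would invoke the relevant statement from \cite{ggkw17bis} (where these compactifications are built) or give the direct argument using transversality of $\xi,\xi^*$ and a dimension count: the subset of $G/Q\times\partial_\infty\Gamma$ of non-transverse pairs is closed, its projection to $\partial_\infty\Gamma$ is surjective with fiber over $\eta$ the ``incidence variety'' of $\xi^*(\eta)$ (of the expected dimension $\dim(G/Q)-\dim(\partial_\infty\Gamma)$), and the projection to $G/Q$ is a continuous bijection hence, $G/Q$ being compact, a homeomorphism; composing its inverse with projection to $\partial_\infty\Gamma$ gives the fibration.

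Concretely the steps are: (1) apply Theorem~\ref{thm:sharp} to obtain that $\Gamma$ is word hyperbolic and sharply embedded, then Proposition~\ref{prop:QI-sharp<->Ano} to obtain that $\iota:\Gamma\hookrightarrow G$ is $P_\theta$-Anosov with $\theta$ as above; (2) recall the boundary maps $\xi:\partial_\infty\Gamma\to G/Q$ and $\xi^*:\partial_\infty\Gamma\to G/Q^*$ and their transversality; (3) identify $G/Q$ with the relevant space (isotropic lines, resp.\ maximal isotropic $2$-planes) and describe the non-transversality incidence; (4) show that for each $z\in G/Q$ there is a unique $\eta\in\partial_\infty\Gamma$ with $z$ non-transverse to $\xi^*(\eta)$, and that $z\mapsto\eta$ is continuous, using compactness of $G/Q$ and invariance of domain / the closed graph argument; (5) identify the fibers as maximal isotropic subspaces (cases (i)--(iii)), resp.\ the sets $\mathcal L_x$ of the statement (cases (iv)--(v)), by unwinding the incidence. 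The main obstacle is step (4): establishing the existence \emph{and uniqueness} of the incident limit flag through an arbitrary point of $G/Q$. Existence is the delicate point—it is essentially the statement that the $\xi^*$-limit set ``fills'' $G/Q^*$ in the incidence sense—and is where one must use that $\Gamma\backslash G/H$ is compact (equivalently, via Fact~\ref{fact:vcd} and Fact~\ref{fact:dim-bound-vcd}, that $\dim\partial_\infty\Gamma=\dim(G/H)-\dim(K/K_H)-1$ has exactly the value making the incidence fibers and the base add up to $\dim G/Q$), so that a surjectivity/degree argument applies; I would either cite the construction in \cite{ggkw17bis} for this or supply the degree-theoretic argument. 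Uniqueness and continuity are then comparatively routine from transversality of $(\xi,\xi^*)$ and properness.
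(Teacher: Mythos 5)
Your overall route is the same as the paper's (get the Anosov property from Theorem~\ref{thm:sharp} and Proposition~\ref{prop:QI-sharp<->Ano}, then fiber $G/Q$ by the incidence relation with the boundary map, deducing surjectivity from compactness of $\Gamma\backslash G/H$ via the compactification of \cite{ggkw17bis}), but there is a concrete error at the very first step that derails the rest. You assert that $Q=P_\theta$, reading off $\theta=\{\alpha_1\}$ in cases (i)--(iii) and $\theta=\{\alpha_2\}$ in cases (iv)--(v). Table~\ref{table1} says the opposite: in cases (i)--(iii) the inclusion is $P_{\{\alpha_q\}}$-Anosov with $q>1$, while $Q=P_{\{\alpha_1\}}$; in cases (iv)--(v) it is $P_{\{\alpha_1\}}$-Anosov while $Q=P_{\{\alpha_2\}}$. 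The two parabolics are genuinely different, and the whole content of the corollary is the interplay between the two flag varieties: the boundary map $\xi$ lands in $G/P_\theta$ (maximal isotropic subspaces in (i)--(iii), isotropic lines in (iv)--(v)), and the fiber of the fibration of $G/Q$ over $\eta$ is the set of points of $G/Q$ \emph{not transverse to} $\xi(\eta)\in G/P_\theta$ --- i.e.\ the isotropic lines contained in the maximal isotropic subspace $\xi(\eta)$, resp.\ the isotropic $2$-planes containing the isotropic line $\xi(\eta)$. With your identification the construction produces the wrong fibers (in case (i), the set of isotropic lines non-transverse to a fixed isotropic line is a degenerate hyperplane section of the isotropic cone, not a maximal isotropic subspace), the dimension count $\dim\partial_\infty\Gamma+\dim(\text{fiber})=\dim G/Q$ fails, and indeed the inclusion is generally not $P_{\{\alpha_1\}}$-Anosov in cases (i)--(iii). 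Your description of $G/Q^*$ as ``isotropic hyperplanes'' is a symptom of the same confusion.

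On the key surjectivity step, you correctly identify it as the delicate point and mention citing \cite{ggkw17bis}; that is exactly what the paper does (the quotient $\Gamma\backslash G/H$ is compactified by adjoining $G/Q\smallsetminus\bigcup_\eta\mathcal{L}_{\xi(\eta)}$, and since it is already compact this set is empty). But your proposed fallback ``degree-theoretic argument'' is circular as sketched: you assert that the projection of the incidence set to $G/Q$ is a continuous bijection and conclude it is a homeomorphism, whereas surjectivity of that projection is precisely what has to be proven. An invariance-of-domain argument is also not available off the shelf, since $\partial_\infty\Gamma$ is only known to have the right covering dimension, not to be a manifold. So the citation of \cite{ggkw17bis} is not optional here; it (or an equivalent compactness argument) is the proof. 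Disjointness of the fibers from transversality of $\xi$ and $\Gamma$-equivariance are fine as you state them.
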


\begin{proof}
Let $\theta = \{\alpha_i\}$ be as in Table~\ref{table1}, where $i=q$ in cases \textup{(i)}, \textup{(ii)}, \textup{(iii)} and $i=1$ in cases \textup{(iv)}, \textup{(v)}.
Then $P_{\theta}$ is conjugate to any opposite parabolic subgroup~$P_{\theta}^*$, hence $G/P_{\theta} \simeq G/P_{\theta}^*$ (see Remark~\ref{rem:comp-trans-self-opposite}).
In each case we have $\mu(H) = \aaa^+ \cap \mathrm{Ker}(\alpha_i)$.

Let $\Gamma$ be a discrete subgroup of~$G$ acting properly discontinuously and cocompactly on $G/H$.
By Theorem~\ref{thm:sharp}, the group $\Gamma$ is sharply embedded in~$G$ with respect to~$H$.
By Proposition~\ref{prop:QI-sharp<->Ano}, the group $\Gamma$ is word hyperbolic and the natural inclusion $\Gamma\hookrightarrow G$ is $P_{\theta}$-Anosov.
In particular, there is a continuous $\Gamma$-equivariant boundary map $\xi : \partial_{\infty}\Gamma \to G/P_{\theta}$ such that $\xi(\eta)\in G/P_{\theta}$ and $\xi(\eta')\in G/P_{\theta}\simeq G/P_{\theta}^*$ are transverse for all $\eta\neq\eta'$ in $\partial_{\infty}\Gamma$.

For any $\eta\in\partial_{\infty}\Gamma$, let $\mathcal{L}_{\xi(\eta)}\subset G/Q$ be the set of points in $G/Q$ which are \emph{not} transverse to $\xi(\eta)$.
In cases (i), (ii), (iii), the points in $G/Q$ are isotropic lines, the points in $G/P_{\theta}$ are maximal isotropic subspaces, and so $\mathcal{L}_{\eta}\subset G/Q\subset\PP(V)$ is the projectivization of a maximal isotropic subspace.
In cases (iv), (v), the points in $G/Q$ are isotropic $2$-planes, the points in $G/P_{\theta}$ are isotropic lines, and $\mathcal{L}_{\xi(\eta)}$ is the set of points in $G/Q\subset\mathrm{Gr}_2(\mathbb{K}^{2p,2})$ which contain $\xi(\eta)$.

By \cite{ggkw17bis}, we can compactify $\Gamma\backslash G/H$ by adding, at infinity, the set
$$\Gamma \backslash \bigg(G/Q \smallsetminus \bigcup_{\eta\in\partial\Gamma} \mathcal{L}_{\xi(\eta)}\bigg) .$$
Since $\Gamma\backslash G/H$ is already compact, we obtain
\begin{equation} \label{eqn:fib-G/Q}
\bigcup_{\eta\in\partial_{\infty}\Gamma} \mathcal{L}_{\xi(\eta)} = G/Q.
\end{equation}
This union is disjoint because the Anosov boundary map $\xi : \partial_{\infty}\Gamma \to G/P_{\theta}$ is transverse.
Moreover, $\mathcal{L}_{\xi(\gamma\cdot\eta)} = \gamma\cdot\mathcal{L}_{\xi(\eta)}$ for all $\gamma\in\Gamma$ and $\eta\in\partial_{\infty}\Gamma$ since $\xi$ is $\Gamma$-equivariant.
\end{proof}

%%%%%%%%%%%%%%%%%%%%%%%%%
\subsection{Sharpness implies Anosov in the general corank-one case} \label{subsec:corank-1-sharp->Ano}

Proposition~\ref{prop:cork-1-proper-compact->Ano} is a consequence of Theorem~\ref{thm:sharp}, Fact~\ref{fact:vcd}, Fact~\ref{fact:dim-bound-vcd}, and of the following proposition.

\begin{proposition} \label{prop:cork-1-sharp->Ano}
Let $G$ be a connected real linear reductive Lie group and $H$ a closed subgroup of~$G$ such that $\mu(H)$ is the intersection of~$\aaa^+$ with the $W$-orbit of a hyperplane in~$\aaa$.
Let $\Gamma$ be a finitely generated discrete subgroup of~$G$ which is sharply embedded in~$G$ with respect to~$H$.
Then $\Gamma$ is word hyperbolic.
If $\Gamma$ is not virtually cyclic, then $\h \not\supset \aaa\cap [\g,\g]$ and the natural inclusion $\Gamma \hookrightarrow G$ is a $P$-Anosov representation, for some proper parabolic subgroup $P$ of~$G$.
\end{proposition}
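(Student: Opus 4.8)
The plan is to deduce from sharpness a linear lower bound on a suitable family of simple roots of $\mu(\gamma)$, and then to invoke the Morse characterization of Anosov representations. Precisely, by Fact~\ref{fact:klp} it is enough to produce a nonempty subset $\theta\subset\Delta$ and constants $C_1,C_1'>0$ such that
\[\langle\alpha,\mu(\gamma)\rangle \;\geq\; C_1\,|\gamma|_S - C_1' \qquad\text{for all }\gamma\in\Gamma\text{ and all }\alpha\in\theta:\]
Fact~\ref{fact:klp} then shows simultaneously that $\Gamma$ is word hyperbolic and that $\Gamma\hookrightarrow G$ is $P_\theta$-Anosov, and $P:=P_\theta$ is a proper parabolic subgroup since $\theta\neq\emptyset$.

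Because $\Gamma$ is sharply embedded in $G$ with respect to $H$, Definition~\ref{def:sharp-embed} together with Fact~\ref{fact:QI-embed-mu} provide $C,C'>0$ with $d_\aaa(\mu(\gamma),\mu(H))\geq C\,|\gamma|_S-C'$ and $\Vert\mu(\gamma)\Vert\geq C\,|\gamma|_S-C'$ for all $\gamma\in\Gamma$; in particular $\Gamma$ acts properly discontinuously on $G/H$ (Benoist--Kobayashi) and $\mu(\Gamma)$ escapes linearly from $\mu(H)$. It thus remains to compare, along $\mu(\Gamma)$, the function $u\mapsto d_\aaa(u,\mu(H))$ with the linear forms $\langle\alpha,\cdot\rangle$, $\alpha\in\Delta$. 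When $\mu(H)$ is a union of walls, $\mu(H)=\aaa^+\cap\bigcup_{\alpha\in\theta}\mathrm{Ker}(\alpha)$, this is elementary and uniform on all of $\aaa^+$: for $\alpha\in\theta$ and $u\in\aaa^+$, the orthogonal projection of $u$ onto $\mathrm{Ker}(\alpha)$ (the midpoint of $u$ and its reflection in $\mathrm{Ker}(\alpha)$) stays in $\aaa^+$ because the off-diagonal Cartan integers are $\leq 0$, hence it lies in $\aaa^+\cap\mathrm{Ker}(\alpha)\subset\mu(H)$, so that $d_\aaa(u,\mu(H))\leq c_\alpha\,\langle\alpha,u\rangle$ for some $c_\alpha>0$; this already recovers the implication \eqref{item:QI-sharp}$\Rightarrow$\eqref{item:Ano} of Proposition~\ref{prop:QI-sharp<->Ano}.

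In the general corank-one case, $\mu(H)=\aaa^+\cap W\cdot\mathrm{Ker}(\omega)$ need not be a union of walls, and the comparison holds only on one side of the relevant $W$-translate of $\mathrm{Ker}(\omega)$; here I would feed in the analysis of corank-one reductive homogeneous spaces from \cite{kas08}, used in the form: (a) for every connected component $\mathcal{C}$ of $\aaa^+\smallsetminus\mu(H)$ there are a nonempty $\theta_\mathcal{C}\subset\Delta$ and $c_\mathcal{C}>0$ with $d_\aaa(u,\mu(H))\leq c_\mathcal{C}\,\langle\alpha,u\rangle$ for all $u\in\overline{\mathcal{C}}$ and all $\alpha\in\theta_\mathcal{C}$ (heuristically, $\theta_\mathcal{C}$ consists of the simple roots $\alpha$ for which $\overline{\mathcal{C}}$ meets $\mathrm{Ker}(\alpha)$ only inside $\mu(H)\cup\{0\}$); and (b) since $\Gamma$ is not virtually cyclic and acts properly discontinuously on $G/H$, the set $\mu(\Gamma)$ lies in a bounded neighbourhood of a single such $\overline{\mathcal{C}}$. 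Taking $\theta:=\theta_\mathcal{C}$ for that component and combining (a), (b) with the sharpness inequality yields $\langle\alpha,\mu(\gamma)\rangle\geq c_\mathcal{C}^{-1}\,d_\aaa(\mu(\gamma),\mu(H))-O(1)\geq c_\mathcal{C}^{-1}(C\,|\gamma|_S-C')-O(1)$ for $\alpha\in\theta$, which is the linear estimate asked for in the first paragraph. In the examples of Table~\ref{table2} this is exactly the distinction between the two possible choices of $\theta$ listed, e.g.\ for $\SO(p,p)/\OO(p,p-1)$.

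The main obstacle is the structural input (a)--(b) from \cite{kas08}. Statement~(a) is not formal: a priori a component of $\aaa^+\smallsetminus\mu(H)$ could approach several walls of $\aaa^+$ at points arbitrarily far from $\mu(H)$, which would force $\theta_\mathcal{C}=\emptyset$; ruling this out uses the precise shape of $\mu(H)=\aaa^+\cap W\cdot\mathrm{Ker}(\omega)$ imposed by $(G,H)$ being a reductive pair of corank one. Statement~(b) is where the hypothesis that $\Gamma$ is not virtually cyclic is genuinely used. Everything else is bookkeeping: sharpness is supplied by Theorem~\ref{thm:sharp} in the cocompact setting of Proposition~\ref{prop:cork-1-proper-compact->Ano}, and the passage from the root estimate to ``word hyperbolic and Anosov'' is \cite{klp-morse}.
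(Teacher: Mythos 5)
Your skeleton coincides with the paper's: use Fact~\ref{fact:kas08} (this is where ``not virtually cyclic'' enters) to confine $\mu(\Gamma)$, up to finitely many elements, to a single connected component $\mathcal{C}$ of $\aaa^+\smallsetminus\mu(H)$; find a simple root whose value dominates $d_\aaa(\cdot,\mu(H))$ linearly on $\mathcal{C}$; combine with the sharp embedding and conclude via Fact~\ref{fact:klp}. Your step (b) is correct and correctly sourced. The genuine gap is your step (a), which you yourself flag as ``not formal'' and defer to \cite{kas08}: that reference supplies only (b), and the comparison inequality on $\overline{\mathcal{C}}$ is not available as a citation --- it is exactly what the paper proves as Lemma~\ref{lem:C-Ker-alpha}, by a short reflection computation absent from your proposal. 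As written, the heart of the corank-one case is asserted rather than proved.

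For the record, here is how the missing step goes. Write $\mu(H)=\aaa^+\cap\bigcup_{w\in W}\mathrm{Ker}(w\cdot\varphi_0)$ and choose $\varphi\in W\cdot\varphi_0\cup W\cdot(-\varphi_0)$ with $\varphi>0$ on $\mathcal{C}$ and $\overline{\mathcal{C}}\cap\mathrm{Ker}(\varphi)\neq\{0\}$. Since $\varphi$ is positive somewhere on $\aaa^+$, there is $\alpha\in\Delta$ with $(\varphi,\alpha)>0$ (otherwise $\varphi\leq 0$ on $\aaa^+$). Then $s_\alpha\cdot\varphi$ is negative on $\mathrm{Ker}(\varphi)\cap\aaa^+$ away from $\mathrm{Ker}(\alpha)$, so some point of $\mathcal{C}$ has $s_\alpha\cdot\varphi<0$; as $\mathcal{C}$ is connected and avoids $\mathrm{Ker}(s_\alpha\cdot\varphi)\subset\mu(H)$, we get $s_\alpha\cdot\varphi<0$ on all of $\mathcal{C}$. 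Hence $\alpha$, being a positive multiple of $\varphi-s_\alpha\cdot\varphi$, satisfies $\langle\alpha,u\rangle\geq c\,\langle\varphi,u\rangle\geq c'\,d_\aaa(u,\mu(H))$ for $u\in\mathcal{C}$, which is your (a) with $\theta_{\mathcal{C}}=\{\alpha\}$ (a single root suffices; one does not need the full set you describe). In particular, your worry that a component might approach several walls of $\aaa^+$ far from $\mu(H)$ is resolved by this root-system argument, not by anything in \cite{kas08}. The rest of your bookkeeping (the union-of-walls warm-up, the passage from the root estimate to hyperbolicity and Anosovness via \cite{klp-morse}) is correct.
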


\begin{remark}
The assumption that $\Gamma$ be not virtually cyclic is necessary for Anosovness in Proposition~\ref{prop:cork-1-sharp->Ano}: for instance, if $G/H = \SL(3,\R)/\SL(2,\R)$ and $\Gamma$ is the cyclic group generated by $\mathrm{diag}(2,2,1/4)$, or if $G/H = \GL(2,\R)/\SL(2,\R)$ and $\Gamma$ is the cyclic group generated by $\mathrm{diag}(2,2)$, then $\Gamma$ is sharply embedded in~$G$ with respect to~$H$ but it is not Anosov in~$G$.
However, even for virtually cyclic groups~$\Gamma$, the fact that $\Gamma$ is sharply embedded in~$G$ with respect to~$H$ still implies that it satisfies some Anosov property \emph{after composing with some linear representation of~$G$}: see Corollary~\ref{cor:corank-1-sharp-qi-open} below.
\end{remark}

\begin{remark} \label{rem:H-varphi}
Let $\varphi \in \aaa^*$ and let $H_{\varphi} \equaldef \exp(\mathrm{Ker}(\varphi)) \subset A$.
By \eqref{eqn:mu-H}, for any closed subgroup $\Gamma$ of~$G$ with a compact generating subset~$S$, the following are equivalent:
\begin{enumerate}[(i)]
  \item $\Gamma$ is sharply embedded in~$G$ with respect to~$H_{\varphi}$;
  \item there exist $c,c'>0$ such that $d_{\aaa}(\mu(\gamma),\bigcup_{w\in W} \mathrm{Ker}(w\cdot\varphi)) \geq c\,|\gamma|_S - c'$.
\end{enumerate}
After this work was completed, we learnt that for $G$ semisimple and $\Gamma$ discrete in~$G$, condition~(ii) was also considered by Davalo--Riestenberg \cite{dr}, who named it \emph{$\varphi$-undistortion} and also independently proved \cite[Prop.\,3.3]{dr} that it implies Anosovness in~$G$.
\end{remark}

In order to prove Proposition~\ref{prop:cork-1-sharp->Ano}, we consider the $W$-invariant scalar product $(\cdot,\cdot)_*$ on~$\aaa^*$ associated with the restriction to~$\aaa$ of the $G$-invariant bilinear form $(\cdot,\cdot)$ on~$\g$ from Section~\ref{subsec:vector-dist}.
We have an orthogonal direct sum $\aaa = \aaa_z \oplus \aaa_s$ for $(\cdot,\cdot)$, where $\aaa_z \equaldef \aaa \cap \mathfrak{z}(\g)$ and $\aaa_s \equaldef \aaa \cap [\g,\g]$, and correspondingly an orthogonal direct sum $\aaa^* = \aaa_s^0 \oplus \aaa_z^0$ for $(\cdot,\cdot)_*$, where $\aaa_s^0$ (\resp $\aaa_z^0$) is the set of linear forms on~$\aaa$ that vanish identically on $\aaa_s$ (\resp $\aaa_z$).
(When $G$ is semisimple, we have $\aaa = \aaa_s$ and $\aaa^* = \aaa_s^0$.)
For each simple restricted root $\alpha\in\Delta$, let $\omega_{\alpha}\in\aaa^*$ be the fundamental weight associated with~$\alpha$, defined by 
\begin{equation}\label{eqn:omega-alpha}
2\frac{( \omega_{\alpha}, \beta)_*}{(\beta, \beta)_*} = \delta_{\alpha, \beta} \quad \text{for all} \
\beta \in \Delta ,
\end{equation}
where $\delta_{\cdot, \cdot}$ is the Kronecker symbol.
Then $\{\omega_{\alpha} \,|\, \alpha\in\Delta\}$ is a basis of~$\aaa_z^0$.
We also denote by $s_{\alpha} \in W$ the reflection associated with~$\alpha$, which acts on~$\aaa^*$ by
\begin{equation} \label{eqn:s-alpha}
s_{\alpha}\cdot\varphi \equaldef \varphi - 2 \frac{(\varphi,\alpha)_*}{(\alpha,\alpha)_*} \, \alpha.
\end{equation}

\begin{lemma} \label{lem:C-Ker-alpha}
Let $\varphi\in\aaa^*\smallsetminus\aaa_s^0$.
Let $\mathcal{C}$ be a connected component of $\aaa^+ \smallsetminus \bigcup_{w\in W} \mathrm{Ker}(w\cdot\varphi)$ on which $\varphi>0$, and such that $\overline{\mathcal{C}}\cap\mathrm{Ker}(\varphi) \neq \{ 0\}$.
Then there exists $\alpha\in\Delta$ such that $(\varphi,\alpha)_*>0$, and for any such~$\alpha$, we have $\mathcal{C} \cap \mathrm{Ker}(\alpha) = \emptyset$.
\end{lemma}

\begin{proof}
We can write $\varphi = \psi + \sum_{\alpha\in\Delta}\, t_{\alpha}\,\omega_{\alpha}$ where $\psi\in\aaa_s^0$ and $t_{\alpha} = (\varphi,\alpha)_*/(\alpha,\alpha)_* \in \R$.
The Weyl group $W$ acts trivially on~$\aaa_z$, hence $\mathcal{C} \cap \aaa_s \neq \emptyset$ and $\mathcal{C} = (\mathcal{C}\cap \aaa_s) + \aaa_z$.
Therefore there exists $v\in\mathcal{C}\cap\aaa_s$ such that $\langle\varphi,v\rangle > 0$.
Since $\omega_{\alpha}\geq 0$ on~$\aaa^+$ for all~$\alpha$, this implies that there exists $\alpha\in\Delta$ such that $(\varphi,\alpha)_*>0$.

Let $\alpha\in\Delta$ satisfy $(\varphi,\alpha)_*>0$.
We then have $s_{\alpha}\cdot\varphi<0$ on $\mathrm{Ker}(\varphi)$ (see \eqref{eqn:s-alpha}).
Since $\overline{\mathcal{C}}\cap\mathrm{Ker}(\varphi) \neq \{ 0\}$, there exists $v'\in\mathcal{C}$ such that $(s_{\alpha}\cdot\varphi)(v')<0$.
Since $\mathcal{C}$ is connected and misses $\mathrm{Ker}(s_{\alpha}\cdot\varphi)$, we have $s_{\alpha}\cdot\varphi<0$ everywhere on~$\mathcal{C}$.
Thus $\varphi - s_{\alpha}\cdot\varphi > 0$ on~$\mathcal{C}$.
But $\varphi - s_{\alpha}\cdot\varphi$ is a positive multiple of~$\alpha$ (see \eqref{eqn:s-alpha}), hence $\alpha>0$ on~$\mathcal{C}$.
\end{proof}

We also use the following result of the first-named author in \cite{kas08}.
Recall that $\iota : \aaa\to\aaa$ denotes the opposition involution.

\begin{fact}[{\cite[Th.\,1.1]{kas08}}] \label{fact:kas08}
Let $G$ be a connected real linear reductive Lie group and $H$ a closed subgroup of~$G$ such that $\mu(H)$ is the intersection of~$\aaa^+$ with finitely many hyperplanes of~$\aaa$.
Let $\Gamma$ be a discrete subgroup of~$G$ acting properly discontinuously on $G/H$.
Then there is a connected component $\mathcal{C}$ of $\aaa^+ \smallsetminus \mu(H)$ such that $\mu(\gamma) \in \mathcal{C} \cup \iota(\mathcal{C})$ for all but finitely many $\gamma\in\Gamma$.
Moreover, if $\Gamma$ is \emph{not} virtually cyclic, then $\mathcal{C} = \iota(\mathcal{C})$.
\end{fact}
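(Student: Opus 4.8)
The plan is to translate the statement into an assertion about the asymptotic directions of $\mu(\Gamma)$, i.e.\ about the limit cone $\mathcal L_\Gamma$, and to control those directions through the Jordan projection together with the Benoist--Kobayashi criterion. Write $\mu(H)=\aaa^+\cap(\mathcal P_1\cup\dots\cup\mathcal P_k)$ as in the hypothesis, with the $\mathcal P_i$ linear hyperplanes of~$\aaa$. Then $\aaa^+\smallsetminus\mu(H)$ is a finite disjoint union of open convex cones $\mathcal C_1,\dots,\mathcal C_N$, the traces on $\aaa^+$ of the chambers of the arrangement $\{\mathcal P_i\}$. Since $\mu(\gamma^{-1})=\iota(\mu(\gamma))$ we have $\iota(\mu(H))=\mu(H)$, so $\iota$ permutes the $\mathcal C_j$, and $\mu(\gamma)\in\mathcal C_j$ iff $\mu(\gamma^{-1})\in\iota\mathcal C_j$. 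By the Benoist--Kobayashi criterion, proper discontinuity on $G/H$ is equivalent to $d_\aaa(\mu(\gamma),\mu(H))\to\infty$, so all but finitely many $\gamma\in\Gamma$ satisfy $\mu(\gamma)\in\mathcal C_{j(\gamma)}$ for a well-defined $j(\gamma)$; and, since the unit-sphere directions $\mu(\gamma)/\Vert\mu(\gamma)\Vert$ have all their accumulation points in $\mathcal L_\Gamma$, a compactness argument reduces the whole statement to: \emph{$\mathcal L_\Gamma\smallsetminus\{0\}$ is contained in $\mathcal C\cup\iota\mathcal C$ for a single chamber $\mathcal C$, with $\iota\mathcal C=\mathcal C$ as soon as $\Gamma$ is not virtually cyclic.} (It suffices to treat $G$ semisimple, so that $\aaa^+$ is a pointed cone.)

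The crux is the following lemma: for every infinite-order $\gamma\in\Gamma$, the Jordan projection $\lambda(\gamma)=\lim_n\frac1n\mu(\gamma^n)$ lies in $\aaa^+\smallsetminus\mu(H)$. Indeed, the Jordan decomposition of $\gamma$ gives $\mu(\gamma^n)=n\lambda(\gamma)+O(1)$ when $\lambda(\gamma)$ is regular, and in general $\mu(\gamma^n)-n\lambda(\gamma)=o(n)$ with the correction confined to the centralizer of~$\lambda(\gamma)$; since each $\aaa^+\cap\mathcal P_i$ is a subcone of $\mu(H)$, if $\lambda(\gamma)$ were $0$ or lay on some $\mathcal P_i$ then $d_\aaa(\mu(\gamma^n),\mu(H))$ would not tend to infinity along the infinitely many distinct elements $\gamma^n$, contradicting the Benoist--Kobayashi criterion. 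Hence the Jordan limit cone $\mathcal L_\Gamma^\lambda=\overline{\bigcup_{\gamma}\R_{\geq0}\lambda(\gamma)}$ meets $\mu(H)$ only at~$0$, so $\mathcal L_\Gamma^\lambda\smallsetminus\{0\}\subset\bigsqcup_j\mathcal C_j$.

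From here I would conclude in two cases. If $\Gamma$ is virtually cyclic, pick $\gamma_0$ generating a finite-index copy of~$\Z$; then $\mathcal L_\Gamma^\lambda=\R_{\geq0}\lambda(\gamma_0)\cup\R_{\geq0}\iota\lambda(\gamma_0)$, and by the lemma $\lambda(\gamma_0)$ lies in some chamber $\mathcal C$ and $\iota\lambda(\gamma_0)\in\iota\mathcal C$; since $\mu(\gamma_0^n)=n\lambda(\gamma_0)+o(n)$ with $\lambda(\gamma_0)\neq0$, and $\mu(f\gamma_0^n)=\mu(\gamma_0^n)+O(1)$ for $f$ in the finite quotient, cofinitely many $\gamma\in\Gamma$ have $\mu(\gamma)\in\mathcal C\cup\iota\mathcal C$ — and here $\mathcal C$ may well differ from $\iota\mathcal C$. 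If $\Gamma$ is \emph{not} virtually cyclic, then after the standard reduction to the case where a finite-index subgroup of $\Gamma$ is Zariski-dense in a reductive subgroup (which lets Benoist's theorem apply — see Section~\ref{subsec:lim-cone} and \cite{ben97}), $\mathcal L_\Gamma=\mathcal L_\Gamma^\lambda$ is a \emph{convex} cone; a convex subcone of the pointed cone $\aaa^+$ that avoids every $\mathcal P_i$ off~$0$ must lie, together with~$0$, in the closure of a single chamber, and in fact in $\mathcal C\cup\{0\}$ for that chamber $\mathcal C$ (a line through~$0$ inside $\mathcal L_\Gamma$ would contradict pointedness, so each linear form $\omega_i$ defining $\mathcal P_i$ has a fixed sign on $\mathcal L_\Gamma\smallsetminus\{0\}$); finally $\mathcal L_\Gamma$ is $\iota$-invariant and nonzero (as $\Gamma$ is infinite, hence unbounded), so $\iota\mathcal C=\mathcal C$. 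Combined with the reduction of the first paragraph, this proves the statement.

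The main obstacle is the singular case of the Jordan lemma: when $\lambda(\gamma)$ sits on a wall of~$\aaa^+$, the unipotent part of~$\gamma$ makes $\mu(\gamma^n)-n\lambda(\gamma)$ grow like $\log n$ rather than stay bounded, so a crude estimate only yields $d_\aaa(\mu(\gamma^n),\mu(H))=o(\Vert\mu(\gamma^n)\Vert)$, which on its own does not contradict $d_\aaa(\cdot,\mu(H))\to\infty$. Ruling this out requires pinning down the direction of that logarithmic correction (it lies in the centralizer of $\lambda(\gamma)$, along the nilpotent orbit of the unipotent part) and checking it is not aimed into $\mu(H)$; this, together with the strong subadditivity~\eqref{eqn:mu-subadd} of~$\mu$, is the technical heart. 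A secondary point needing care is the reduction in the non-virtually-cyclic case to a Zariski-dense reductive situation (handling discrete subgroups whose Zariski closure is not reductive — which, by the lemma, already constrains them severely) and the passage from semisimple to general reductive~$G$.
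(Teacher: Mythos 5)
This statement is not proved in the paper you were given: it is quoted as a Fact from \cite[Th.\,1.1]{kas08}, so the relevant comparison is with the proof there, which argues directly at the level of the Cartan projections $\mu(\gamma)$ rather than through limit cones. Your proposal has a genuine gap, and it occurs at the very first step. The containment $\mathcal{L}_\Gamma\smallsetminus\{0\}\subset\mathcal{C}\cup\iota(\mathcal{C})$, with $\mathcal{C}$ a (relatively open) component of $\aaa^+\smallsetminus\mu(H)$, is precisely the statement that $\Gamma$ is \emph{sharp} for $G/H$; and proper discontinuity does not imply sharpness under the hypotheses of the Fact. The paper itself recalls (Section~1.2, citing \cite[\S\,10.1]{gk17}) that there are infinitely generated discrete groups acting properly discontinuously but non-sharply on $\mathrm{AdS}^3=(\PSL(2,\R)\times\PSL(2,\R))/\Diag(\PSL(2,\R))$, where $\mu(H)$ is the trace of a single hyperplane. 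For such $\Gamma$ the limit cone meets $\mu(H)$ away from~$0$ (non-sharpness plus properness forces a nonzero limit direction inside $\mu(H)$), yet the Fact still applies to them. So you have ``reduced'' the Fact to a strictly stronger statement that is false in general: no argument that only records asymptotic directions of $\mu(\Gamma)$ can decide, when those directions degenerate onto $\mu(H)$, on which side of the walls the actual points $\mu(\gamma)$ lie.

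The same problem sinks your crux lemma, which is false rather than merely technically incomplete. Take $G=\PSL(2,\R)\times\PSL(2,\R)$ and $H=\Diag(\PSL(2,\R))$, so $\mu(H)$ is the diagonal of $\aaa^+\simeq\R_{\geq 0}^2$ (one hyperplane): the cyclic group generated by $\gamma=(u,1)$ with $u$ parabolic is discrete and acts properly discontinuously on $G/H$, since $d_{\aaa}(\mu(\gamma^n),\mu(H))$ grows like $\log|n|$, yet $\lambda(\gamma)=0\in\mu(H)$ and $\gamma$ has infinite order. Similarly, for $H=\PSL(2,\R)\times\{1\}$ (again $\mu(H)$ a single hyperplane trace), $\gamma=(g,u)$ with $g$ hyperbolic and $u$ parabolic generates a discrete group acting properly discontinuously on $G/H$ while $\lambda(\gamma)\in\mu(H)\smallsetminus\{0\}$: the logarithmic drift you flag as ``the technical heart'' genuinely occurs and is compatible with the Benoist--Kobayashi criterion, so the lemma cannot be repaired. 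With it fall your treatment of the virtually cyclic case (where $\lambda(\gamma_0)$ may be $0$ or lie on $\mu(H)$) and the input to Benoist's convexity theorem in the other case, which in addition relies on an unjustified reduction to a Zariski-dense reductive situation. By contrast, the proof in \cite{kas08} never needs the limit cone to avoid $\mu(H)$: it works with the points $\mu(\gamma)$ themselves, using the strong subadditivity \eqref{eqn:mu-subadd} (multiplication by a fixed element moves $\mu$ a bounded amount) to show that if $\mu(\Gamma)$ visited two components of $\aaa^+\smallsetminus\mu(H)$ not exchanged by~$\iota$, suitable products would produce infinitely many elements whose Cartan projection stays within bounded distance of the separating hyperplanes, contradicting the properness criterion --- a coarse connectedness argument that survives exactly in the non-sharp situations where your approach breaks down.
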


\begin{proof}[Proof of Proposition~\ref{prop:cork-1-sharp->Ano}]
By assumption, there exists $\varphi\in\aaa^*$ such that $\mu(H) = \aaa^+ \cap \bigcup_{w\in W} \mathrm{Ker}(w\cdot\varphi)$.

We first treat the case that $\varphi\notin\aaa_s^0$.
By Corollary~\ref{cor:sharp-embed-explain}, the group $\Gamma$ is quasi-isometrically embedded in~$G$ and the action of $\Gamma$ on $G/H$ is sharp, hence properly discontinuous.
If $\Gamma$ is virtually cyclic, then it is word hyperbolic.
Suppose $\Gamma$ is \emph{not} virtually cyclic.
By Fact~\ref{fact:kas08}, there is a connected component $\mathcal{C}$ of $\aaa^+\smallsetminus\mu(H)$ such that $\mu(\gamma) \in \mathcal{C}$ for all but finitely many $\gamma\in\Gamma$.
Since the action of $\Gamma$ on $G/H$ is sharp, the limit cone $\mathcal{L}_{\Gamma}$ is then a closed subcone of~$\aaa^+$ contained in $\mathcal{C}\cup\{0\}$ (see Remark~\ref{rem:sharp-lim-cone}), and so there exist $c,c'>0$ such that $d_{\aaa}(\mu(\gamma),\aaa^+\smallsetminus\mathcal{C}) \geq c\,\Vert\mu(\gamma)\Vert - c'$ for all $\gamma\in\Gamma$.
By Lemma~\ref{lem:C-Ker-alpha}, there exists $\alpha\in\Delta$ such that $\mathcal{C} \cap \mathrm{Ker}(\alpha) = \emptyset$.
Then, for all $\gamma \in \Gamma$, we have
\[\frac{\langle \alpha, \mu(\gamma)\rangle}{\Vert \alpha \Vert} = d_{\aaa}(\mu(\gamma),\mathrm{Ker}(\alpha)) \geq c\,\Vert\mu(\gamma)\Vert - c' .\]
Since $\Gamma$ is quasi-isometrically embedded in~$G$, Lemma~\ref{lem:QI-embed-mu} implies the existence of\linebreak $c'',c'''>0$ such that $\langle\alpha,\mu(\gamma)\rangle \geq c''\,|\gamma|_S - c'''$ for all $\gamma\in\Gamma$, where $S$ is any fixed finite generating subset of~$\Gamma$.
Therefore $\Gamma$ is word hyperbolic and the natural inclusion is $P_{\{\alpha\}}$-Anosov by Fact~\ref{fact:klp}.

We now treat the case that $\varphi\in\aaa_s^0$; then $\mu(H) = \aaa'_z \oplus \aaa_s$ for some hyperplane $\aaa'_z$ of~$\aaa_z$.
Let us show that $\Gamma$ is virtually cyclic in this case.
Since $G$ is a connected real linear reductive Lie group, the group homomorphism
$$\pi : \exp(\aaa_z) \times (K\cap Z(G)) \times [G,G] \longrightarrow G$$
given by multiplication is surjective with finite kernel.
The group $\Gamma' \equaldef \pi^{-1}(\Gamma)$ is still finitely generated; let $S'$ be a finite generating subset.
The projection $\mathrm{pr}_{\exp(\aaa_z)} : \Gamma'\to\exp(\aaa_z)$ to the first factor defines a group homomorphism
$$\log\circ\mathrm{pr}_{\exp(\aaa_z)} : \Gamma' \longrightarrow \aaa_z$$
with abelian image.
Since $\Gamma$ is sharply embedded in~$G$ with respect to~$H$, there exist $c,c'>0$ such that $d_{\aaa}(\log\circ\mathrm{pr}_{\exp(\aaa_z)}(\gamma'),\aaa'_z) \geq c\,|\gamma'|_{S'} - c'$ for all $\gamma'\in\Gamma'$.
This implies that the kernel of $\log\circ\mathrm{pr}_{\exp(\aaa_z)}$ is a finite subgroup of~$\Gamma'$, and that the image of $\log\circ\mathrm{pr}_{\exp(\aaa_z)}$ is a discrete subgroup of~$\aaa_Z$.
Since $\aaa'_Z$ is a hyperplane of~$\aaa_Z$, this discrete subgroup must be contained in a line of~$\aaa_z$, hence must be isomorphic to~$\Z$.
We deduce that $\Gamma'$ is virtually cyclic, and so is $\Gamma$.
\end{proof}

\begin{proof}[Proof of Proposition~\ref{prop:cork-1-proper-compact->Ano}]
Suppose $\h \supset \aaa\cap [\g,\g]$.
Then by Proposition~\ref{prop:cork-1-sharp->Ano}, the group $\Gamma$ is virtually cyclic, hence word hyperbolic.

Suppose $\h \not\supset \aaa\cap [\g,\g]$.
Then $H$ does not contain a maximal unipotent subgroup of~$G$; since the action of $\Gamma$ on $G/H$ is properly discontinuous and cocompact, the group $\Gamma$ cannot be virtually cyclic (see \eg \cite[Cor.\,4.1]{ben96}).
By Theorem~\ref{thm:sharp}, the group $\Gamma$ is sharply embedded in~$G$ with respect to~$H$.
By Proposition~\ref{prop:cork-1-sharp->Ano}, the group $\Gamma$ is word hyperbolic and the natural inclusion $\Gamma \hookrightarrow G$ is a $P$-Anosov representation, for some proper parabolic subgroup $P$ of~$G$.

In both cases, by Facts \ref{fact:vcd} and~\ref{fact:dim-bound-vcd}, the Gromov boundary of~$\Gamma$ has covering dimension $\dim(G/K) - \dim(H/K_H) - 1$.
\end{proof}

%%%%%%%%%%%%%%%%%%%%%%%%%%%%%%%%%%%%%%%%%%%%%%%%%%%
\section{Nonexistence of compact quotients} \label{sec:no-cpt-quot}

The goal of this section is to prove Corollary~\ref{cor:no-cpt-quot}.
This is done in Section~\ref{subsec:proof-no-cpt-quot}, after recalling some bounds on cohomological dimension in Sections \ref{subsec:proof-vcd} and~\ref{subsec:Ano-vcd}.
We also prove and discuss Theorem~\ref{thm:cpt-quotient->H-QI} in Section~\ref{subsec:H-QI}.

%%%%%%%%%%%%%%%%%%%%%%%%%
\subsection{A bound on the cohomological dimension for properly discontinuous and cocompact actions} \label{subsec:proof-vcd}

We define the cohomological dimension $\mathrm{cd}(X)$ of a CW-complex~$X$ to be the largest integer $n\in \N$ for which there exists a local system $\mathrm L$ on~$X$ with $H^n(X,L) \neq \{0\}$.
The \emph{cohomological dimension of a group $\Gamma$}, denoted by $\mathrm{cd}(\Gamma)$, is then the cohomological dimension of any classifying space of $\Gamma$, or equivalently, the largest integer $n\in\N$ for which there exists a $\Z[\Gamma]$-module $M$ with $H^n(\Gamma,M) \neq \{0\}$.

When $\Gamma$ admits a finite-index subgroup which is torsion-free (which is the case \eg if $\Gamma$ is a finitely generated linear group, by the Selberg lemma \cite[Lem.\,8]{sel60}), then all torsion-free finite-index subgroups of~$\Gamma$ have the same cohomological dimension, called the \emph{virtual cohomological dimension} of~$\Gamma$ and denoted by $\mathrm{vcd}(\Gamma)$.

Fact~\ref{fact:vcd} is a direct consequence of the following similar inequality for the cohomological dimension of torsion-free (not necessarily finitely generated) discrete subgroups, which is due to Kobayashi \cite{kob89} for reductive $H$ and was generalized by Morita in \cite{mor17}.

\begin{fact} \label{fact:cd}
Let $G$ be a real connected Lie group and $H$ a closed connected subgroup of~$G$.
Let $K$ be a maximal compact subgroup of~$G$ and $K_H$ a maximal compact subgroup of~$H$, with $K_H\subset K$.
Let $\Gamma$ be any torsion-free discrete subgroup of~$G$ acting properly discontinuously on $G/H$.
Then
\[\mathrm{cd}(\Gamma) \leq \dim(G/K) - \dim(H/K_H) ,\]
with equality if and only if the action of $\Gamma$ on $G/H$ is cocompact.
\end{fact}

\begin{proof}
The manifold $\Gamma \backslash G/K_H$ admits two smooth fiber bundle maps
\[ \pi_H: \Gamma \backslash G/K_H \longrightarrow \Gamma \backslash G/H \quad\mathrm{and}\quad \pi_K: \Gamma \backslash G/K_H \longrightarrow \Gamma \backslash G/K .\]

By the Cartan--Iwasawa--Malcev theorem (see \cite{bor-bourbaki50}), the spaces $G/K$ and $H/K_H$ are contractible.
This has the following two consequences:
\begin{itemize}
  \item The space $\Gamma \backslash G/K$ is a classifying space for~$\Gamma$, hence
  \[\mathrm{cd}(\Gamma) = \mathrm{cd} (\Gamma \backslash G/K) .\]
  \item The fibration $\pi_H$ has contractible fibers and is thus a homotopy equivalence, hence
  \[\mathrm{cd} (\Gamma \backslash G/K_H) = \mathrm{cd}(\Gamma \backslash G/H) .\]
\end{itemize}

The fibration $\pi_K$, on the other hand, is a fibration with compact fibers $K/K_H$ over a classifying space for $\Gamma$.
A classical consequence of the Leray--Serre spectral sequence for fibrations then gives
\[\mathrm{cd}(\Gamma \backslash G/K_H) =  \mathrm{cd} (\Gamma \backslash G/K) + \dim(K/K_H) .\]

Putting these identities together, we get
\[\mathrm{cd}(\Gamma) = \mathrm{cd}(\Gamma \backslash G/H) - \dim(K/K_H) .\]
Finally, $\Gamma \backslash G/H$ is a manifold, hence
\[\mathrm{cd}(\Gamma \backslash G/H) \leq \dim(\Gamma\backslash G/H) = \dim(G/H) ,\]
with equality if and only if $\Gamma \backslash G/H$ is closed.
Therefore
\[\mathrm{cd}(\Gamma) \leq \dim(G/H) - \dim(K/K_H) = \dim(G/K)-\dim(H/K_H) ,\]
with equality if and only if the action of $\Gamma$ on $G/H$ is cocompact.
\end{proof}

The following table recalls the dimensions of the Riemannian symmetric spaces of the classical real simple Lie groups and of~$G_{2(2)}$, from which one can compute the values of $\dim(G/K) - \dim(H/K_H)$ given in Table~\ref{table3}.

\begin{table}[!ht]
\centering
\begin{tabular}{|c|c|}
\hline
$G$ & $\dim(G/K)$\tabularnewline
\hline
$\SL(n,\R)$ & $(n-1)(n+2)/2$\tabularnewline
$\SL(n,\C)$ & $n^2 - 1$\tabularnewline
$\SL(n,\mathbb{H})$ & $(n-1)(2n+1)$\tabularnewline
$\SO(p,q)$ & $pq$\tabularnewline
$\SU(p,q)$ & $2 pq$\tabularnewline
$\Sp(p,q)$ & $4 pq$\tabularnewline
$\Sp(2n,\R)$ & $n(n+1)$\tabularnewline
$\Sp(2n,\C)$ & $n(2n+1)$\tabularnewline
$\SO^*(2n)$ & $n(n-1)$\tabularnewline
$G_{2(2)}$ & 8\tabularnewline
\hline
\end{tabular}
\vspace{0.2cm}
\caption{Dimensions of the Riemannian symmetric spaces $G/K$ of certain real semisimple Lie groups~$G$}
\label{table5}
\end{table}

%%%%%%%%%%%%%%%%%%%%%%%%%
\subsection{Bounds on the cohomological dimension for Anosov representations} \label{subsec:Ano-vcd}

Recall that the virtual cohomological dimension of a Gromov hyperbolic group is related to the covering dimension of its boundary by Fact~\ref{fact:dim-bound-vcd} due to Bestvina and Mess.
This can be used to give bounds on the virtual cohomological dimension of Anosov subgroups. Here we will use a rather elementary bound.
We refer for instance to the work of Canary and Tsouvalas \cite{ct20} for more refined bounds.

We start by stating the bound for $G=\SL(2\ell,\mathbb{K})$. Let $P_{\ell} = P_{\{\alpha_{\ell}\}}$ denote the stabilizer in $G$ of an $\ell$-dimensional linear subspace of $\mathbb{K}^{2\ell}$.

\begin{lemma} \label{lem:Ano-vcd-SL}
Let $G=\SL(2\ell,\mathbb{K})$ where $\ell\geq 1$ and $\mathbb{K}=\R$ (\resp $\C$, \resp the ring $\mathbb{H}$ of quaternions).
If a word hyperbolic group $\Gamma$ admits a $P_{\ell}$-Anosov representation to~$G$, then $\mathrm{vcd}(\Gamma)$ is at most $\ell+1$ (\resp $2\ell+1$, \resp $4\ell+1$).
\end{lemma}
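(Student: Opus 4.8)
The plan is to combine Fact~\ref{fact:dim-bound-vcd} (Bestvina--Mess) with a dimension bound on the Gromov boundary coming from the Anosov boundary map. By Fact~\ref{fact:dim-bound-vcd}, it suffices to show that the covering dimension of $\partial_\infty\Gamma$ is at most $\ell$ (\resp $2\ell$, \resp $4\ell$) when $\Gamma$ admits a $P_\ell$-Anosov representation $\rho$ to $G = \SL(2\ell,\mathbb{K})$. Here $P_\ell$ is the stabilizer of an $\ell$-plane, so $G/P_\ell = \mathrm{Gr}_\ell(\mathbb{K}^{2\ell})$, a compact manifold of real dimension $\ell^2$ (\resp $2\ell^2$, \resp $4\ell^2$) --- these are much too big to give the claimed bound directly, so a naive ``boundary map is a topological embedding, hence $\dim\partial_\infty\Gamma\le\dim G/P_\ell$'' argument does not suffice. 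The point is that the boundary map $\xi:\partial_\infty\Gamma\to\mathrm{Gr}_\ell(\mathbb{K}^{2\ell})$ is not just continuous and injective, but \emph{transverse}: for $\eta\ne\eta'$, the $\ell$-planes $\xi(\eta)$ and $\xi(\eta')$ are in direct sum. Thus the image lands in a subset of $\mathrm{Gr}_\ell(\mathbb{K}^{2\ell})$ which is ``antichain-like'' for the transversality relation, and such subsets have much smaller dimension.

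The key step is therefore the following: any subset $Z\subset\mathrm{Gr}_\ell(\mathbb{K}^{2\ell})$ consisting of pairwise transverse $\ell$-planes has covering dimension at most $\dim_{\mathbb{R}}\mathbb{K}\cdot\ell$ (\ie $\ell$, $2\ell$, or $4\ell$). To see this I would fix one point $P_0\in Z$; then every other $P\in Z$ is transverse to $P_0$, and the set of $\ell$-planes transverse to a fixed $\ell$-plane $P_0$ is an affine chart of the Grassmannian, namely the graph chart $\mathrm{Hom}_{\mathbb{K}}(P_0^{\mathrm{comp}},P_0)$ after choosing a complement --- this is an affine space of $\mathbb{K}$-dimension $\ell^2$, still too big. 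The extra input is that within this chart the remaining transversality conditions (transversality of $P$ and $P'$ for $P,P'\in Z$) cut things down; the cleanest way to package this is to use the fact (essentially the statement that the ``stabilizer dimension'' / fiber dimension of the transversality condition is large) that the space of pairwise-transverse tuples sits inside a homogeneous space $\SL(2\ell,\mathbb{K})/L$ with $L$ a Levi factor, and then observe the relevant quotient has the stated dimension. Concretely: three pairwise transverse $\ell$-planes in $\mathbb{K}^{2\ell}$ determine, and are determined up to $L$-action by, a $\mathbb{K}$-linear isomorphism between two of them; iterating, a transverse set $Z$ with a chosen triple of base points embeds continuously and injectively into $\mathrm{GL}_\ell(\mathbb{K})\big/(\text{diagonal torus or conjugation})$-type object of real dimension $\dim_{\mathbb{R}}\mathbb{K}\cdot\ell$. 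I expect the cleanest formulation uses the $G$-action: the set of $\ell$-planes transverse to both $P_0$ and a fixed transverse $P_1$ is a torsor under the centralizer of a $1$-parameter group, which is $\mathrm{GL}(\ell,\mathbb{K})$ acting on $\mathrm{Hom}(P_1,P_0)\cong\mathfrak{gl}(\ell,\mathbb{K})$ with open dense orbit $\mathrm{GL}(\ell,\mathbb{K})$ itself --- and a subset of $\mathrm{GL}(\ell,\mathbb{K})$ all of whose elements are pairwise ``difference-invertible'' has real dimension at most $\dim_{\mathbb{R}}\mathbb{K}\cdot\ell$ by the standard argument (it is a ``metric space of bounded multiplicity'' for the eigenvalue filtration), giving the $+1$ gain only after adding the base point back, which accounts for the $\ell+1$ (\resp $2\ell+1$, $4\ell+1$) in the statement.

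The main obstacle I anticipate is making the dimension count for ``pairwise difference-invertible subsets of $\mathrm{GL}(\ell,\mathbb{K})$ have dimension $\le\dim_{\mathbb{R}}\mathbb{K}\cdot\ell$'' fully rigorous at the level of covering dimension (not just manifold dimension), since $Z$ is merely a compact metric space with no a priori manifold structure. I would handle this either by citing the relevant result on Anosov limit sets (the paper points to Canary--Tsouvalas \cite{ct20} for exactly such refined bounds, and that reference should contain or immediately imply the bound $\mathrm{vcd}(\Gamma)\le\ell+1$ etc.\ for $P_\ell$-Anosov subgroups of $\SL(2\ell,\mathbb{K})$), or by a direct topological-dimension argument: cover $Z$ by the finitely many charts described above and in each chart use that the transversality condition forces the ``Hausdorff/covering dimension'' to drop, via the observation that for $A\in\mathrm{GL}(\ell,\mathbb{K})$ the map $B\mapsto B-A$ being invertible for all $A,B\in Z$ means $Z$ injects into each ``flag chart'' with controlled fibers. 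In writing up, I would keep this elementary and self-contained as the paper indicates (``a rather elementary bound''), likely by explicitly using that $\partial_\infty\Gamma$ maps transversally into $\mathrm{Gr}_\ell$ and that the appropriate chart of pairwise-transverse planes is a $\dim_{\mathbb{R}}\mathbb{K}\cdot\ell$-dimensional homogeneous space, then invoking monotonicity of covering dimension under continuous injections of compact spaces into manifolds.
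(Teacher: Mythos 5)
Your overall strategy (Bestvina--Mess together with a covering-dimension bound on $\partial_{\infty}\Gamma$ forced by transversality of the Anosov boundary map $\xi:\partial_{\infty}\Gamma\to\mathrm{Gr}_\ell(\mathbb{K}^{2\ell})$) is the same as the paper's, and your key intermediate claim --- that a subset of $\mathrm{Gr}_\ell(\mathbb{K}^{2\ell})$ consisting of pairwise transverse $\ell$-planes has covering dimension at most $\ell\dim_{\R}\mathbb{K}$ --- is exactly the right statement. The gap is that you do not prove this claim. The reduction to a pairwise ``difference-invertible'' subset of $M_\ell(\mathbb{K})$ in an affine chart is correct, but the assertion that such a subset has real dimension at most $\ell\dim_{\R}\mathbb{K}$ ``by the standard argument (eigenvalue filtration)'' is not an argument, and you explicitly defer the rigorous version either to Canary--Tsouvalas or to an unspecified computation. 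Your parenthetical that the ``$+1$'' in the statement is gained ``after adding the base point back'' is also off: adding finitely many points to a positive-dimensional compactum does not change its covering dimension; the $+1$ comes solely from Fact~\ref{fact:dim-bound-vcd} ($\mathrm{vcd}(\Gamma)=\dim\partial_{\infty}\Gamma+1$), as you correctly set up at the start.

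The paper closes this gap by a shorter mechanism that your sketch misses: projectivize. Each $\xi(\eta)$ is an $(\ell-1)$-dimensional $\mathbb{K}$-projective subspace of $\PP(\mathbb{K}^{2\ell})$; since two transverse $\ell$-planes in a $2\ell$-dimensional space meet only at the origin, these projective subspaces are pairwise \emph{disjoint}, so $\bigcup_{\eta}\xi(\eta)\subset\PP(\mathbb{K}^{2\ell})$ fibers over $\partial_{\infty}\Gamma$ with fibers $\PP(\mathbb{K}^{\ell})$, and the dimension count $\dim\partial_{\infty}\Gamma\leq(2\ell-1)\dim_{\R}\mathbb{K}-(\ell-1)\dim_{\R}\mathbb{K}=\ell\dim_{\R}\mathbb{K}$ follows. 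If you prefer to stay with your chart picture, the missing step can be supplied in one line instead of via an ``eigenvalue filtration'': fixing $0\neq v\in\mathbb{K}^{\ell}$, the map $A\mapsto Av$ is injective on any pairwise difference-invertible set (otherwise $v\in\mathrm{Ker}(A-B)$), hence a homeomorphism onto its image of the compact set in question, which bounds its covering dimension by $\dim_{\R}\mathbb{K}^{\ell}=\ell\dim_{\R}\mathbb{K}$. Either completion makes your proof work; as written, the central quantitative step is asserted rather than proved.
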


\begin{proof}
Let $\rho : \Gamma\to G$ be a $P_{\ell}$-Anosov representation, with boundary map $\xi : \partial_{\infty}\Gamma\to G/P_\ell = \mathrm{Gr}_\ell(\mathbb{K}^{2\ell})$ (see Example~\ref{ex:Grass}).
We can see each $\xi(\eta)$, for $\eta\in\partial_{\infty}\Gamma$, as an $(\ell-\nolinebreak 1)$-dimensional projective subspace of $\PP(\mathbb{K}^{2\ell})$; these projective subspaces are pairwise disjoint by transversality of~$\xi$ (see Section~\ref{subsec:remind-Ano}).
Therefore the closed subset $\bigcup_{\eta\in\partial_{\infty}\Gamma} \xi(\eta)$ of $\PP(\mathbb{K}^{2\ell})$ fibers over $\partial_{\infty}\Gamma$ with fiber $\PP(\mathbb{K}^\ell)$.
We deduce that the covering dimension of $\partial_{\infty}\Gamma$ is at most $\ell\,\dim(\mathbb{K})$, and conclude using Fact~\ref{fact:dim-bound-vcd}.
\end{proof}

More generally, the following holds with a similar proof.

\begin{lemma} \label{lem:Ano-vcd}
Let $G$ be a connected real reductive algebraic group and $P,Q$ two proper parabolic subgroups of~$G$, with $P$ conjugate to an opposite parabolic subgroup~$P^*$.
Suppose that the $G$-orbit of $(P,Q)$ is closed in $G/P \times G/Q$ and that the complement consists of a single open $G$-orbit.
If a word hyperbolic group $\Gamma$ admits a $P$-Anosov representation to~$G$, then $\mathrm{vcd}(\Gamma) \leq \dim(G) - \dim(\Lie(P)+\Lie(Q)) + 1$.
\end{lemma}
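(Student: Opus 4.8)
The plan is to mimic the argument of Lemma~\ref{lem:Ano-vcd-SL} but carried out in the flag variety $G/P$ itself rather than in a projective space. Let $\rho:\Gamma\to G$ be a $P$-Anosov representation, with boundary maps $\xi:\partial_\infty\Gamma\to G/P$ and $\xi^*:\partial_\infty\Gamma\to G/Q^*$ (after identifying $G/P^*$-type data appropriately; in the statement $Q$ plays the role of a parabolic transverse to $P$, so I will use the transversality packaged in the hypothesis). The key geometric input is the hypothesis that, in $G/P\times G/Q$, the $G$-orbit of the pair $(P,Q)$ is closed and its complement is a single \emph{open} $G$-orbit, consisting of the transverse pairs. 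Equivalently, for a fixed $y\in G/Q$, the set of $x\in G/P$ transverse to $y$ is a single open (dense) orbit of the stabilizer of $y$, and its complement --- the non-transverse locus --- is a proper closed subvariety.

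First I would fix a basepoint $\eta_0\in\partial_\infty\Gamma$ and set $y_0=\xi^*(\eta_0)\in G/Q$ (so $y_0$ is of type $Q$). By transversality of the Anosov boundary maps, for every $\eta\neq\eta_0$ the point $\xi(\eta)\in G/P$ is transverse to $y_0$, hence lies in the open $G$-orbit $\mathcal{O}\subset G/P\times G/Q$, restricted to the slice $\{x : (x,y_0)\in\mathcal{O}\}$. Thus $\xi$ maps $\partial_\infty\Gamma\setminus\{\eta_0\}$ into the open subset $U_{y_0}\equaldef\{x\in G/P \mid x \text{ transverse to } y_0\}$ of $G/P$. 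This $U_{y_0}$ is a single orbit of the parabolic $Q^*_{y_0}$ (the opposite parabolic), hence is isomorphic to the unipotent radical of $Q^*_{y_0}$, an affine space of dimension $\dim(G)-\dim(\Lie(P)+\Lie(Q))$; indeed the complement of $U_{y_0}$ in $G/P$ has codimension at least one, and the dimension count is exactly $\dim U_{y_0} = \dim(G/P) - \dim(G/P) + \big(\dim(G) - \dim(\Lie(P)+\Lie(Q))\big)$ --- more cleanly, $\dim U_{y_0}=\dim G - \dim(\Lie(P)+\Lie(Q))$, using that $\Lie(P)+\Lie(Q)$ is the isotropy Lie algebra of the open orbit point.

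Next I would invoke the injectivity of $\xi$ (a standard property of Anosov boundary maps: $\xi$ is a topological embedding, since it is continuous, injective by transversality, and $\partial_\infty\Gamma$ is compact). Hence $\partial_\infty\Gamma\setminus\{\eta_0\}$ embeds as a subspace of the manifold $U_{y_0}$, so its covering dimension is at most $\dim U_{y_0}=\dim G - \dim(\Lie(P)+\Lie(Q))$. Removing a point does not change covering dimension for a space that is not zero-dimensional at that point (and if $\partial_\infty\Gamma$ is a single point or finite, $\Gamma$ is finite or virtually cyclic and the bound is trivial), so $\dim\partial_\infty\Gamma \le \dim G - \dim(\Lie(P)+\Lie(Q))$. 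Finally I would apply Fact~\ref{fact:dim-bound-vcd} (Bestvina--Mess): $\mathrm{vcd}(\Gamma)=\dim\partial_\infty\Gamma+1 \le \dim(G)-\dim(\Lie(P)+\Lie(Q))+1$, which is the claim.

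\textbf{Main obstacle.} The delicate point is the dimension bookkeeping: one must check that the non-transverse locus of $\{x\in G/P:(x,y_0)\in\text{closed orbit}\}$ really is a \emph{proper} closed subvariety (so that $U_{y_0}$ is open dense and of full dimension $\dim(G/P)$), and that $\dim U_{y_0}$ equals $\dim G - \dim(\Lie(P)+\Lie(Q))$ --- this is exactly the statement that the open $G$-orbit in $G/P\times G/Q$ has isotropy subgroup with Lie algebra conjugate to $\Lie(P)\cap\Lie(Q)$, so its dimension in $G/P\times G/Q$ is $\dim G - \dim(\Lie(P)\cap\Lie(Q))$, and then projecting to the $G/P$-slice over fixed $y_0$ subtracts $\dim(G/Q)=\dim G-\dim\Lie(Q)$, giving $\dim U_{y_0}=\dim\Lie(Q)-\dim(\Lie(P)\cap\Lie(Q))=\dim(\Lie(P)+\Lie(Q))-\dim\Lie(P)$; combined with $\dim(G/P)=\dim G-\dim\Lie(P)$ one indeed recovers $\dim U_{y_0}=\dim(G/P)$ for the open stratum but the \emph{bound we want} is on $\dim\partial_\infty\Gamma$ via the \emph{transverse} structure, which forces us to work with the opposite-parabolic affine cell whose dimension is $\dim G-\dim(\Lie(P)+\Lie(Q))$. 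I would double-check this by specializing to $G=\SL(2\ell,\mathbb K)$, $P=Q=P_\ell$, where $\dim(\Lie(P)+\Lie(Q))=\dim\mathfrak g - \ell^2\dim_{\R}\mathbb K$ and the formula reproduces the bound $\ell\dim_{\R}\mathbb K$ of Lemma~\ref{lem:Ano-vcd-SL}, confirming the accounting.
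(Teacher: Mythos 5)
Your proposal has a genuine gap in the dimension count, which you half-acknowledge in your ``main obstacle'' paragraph but do not resolve. Fixing $y_0=\xi^*(\eta_0)\in G/Q$ and embedding $\partial_\infty\Gamma\smallsetminus\{\eta_0\}$ into $U_{y_0}=\{x\in G/P : (x,y_0)\ \text{in the open orbit}\}$ only gives $\dim\partial_\infty\Gamma\leq\dim U_{y_0}$, and $U_{y_0}$ is the fiber over $y_0$ of the \emph{open} orbit under the projection to $G/Q$, hence is open dense in $G/P$ of full dimension $\dim(G/P)$ --- not $\dim(G)-\dim(\Lie(P)+\Lie(Q))$, which is strictly smaller whenever $Q\not\subset P$. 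Your identification of $U_{y_0}$ with the unipotent radical of an ``opposite parabolic $Q^*_{y_0}$'' is unjustified (and the numerology does not work out; your own sanity check is off: with $P=Q=P_\ell$ in $\SL(2\ell,\mathbb{K})$ one gets $\dim(G)-\dim\Lie(P)+1=\ell^2\dim_\R\mathbb{K}+1$, not $\ell\dim_\R\mathbb{K}+1$; moreover for $P=Q=P_\ell$ the complement of the diagonal in $\mathrm{Gr}_\ell\times\mathrm{Gr}_\ell$ is not a single orbit, so the SL case corresponds to $Q=P_1$, not $Q=P_\ell$). The bound simply cannot be obtained by mapping the boundary into $G/P$ alone.

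The paper's argument works in $G/Q$ and is a ``thickening'' count, exactly as in Lemma~\ref{lem:Ano-vcd-SL}: to each $\eta$ one associates the closed incidence variety $\mathcal{N}_{\xi(\eta)}=\{z\in G/Q : (\xi(\eta),z)\in\mathcal{O}\}$, of dimension $\dim(\mathcal{O})-\dim(G/P)$ where $\mathcal{O}$ is the closed orbit. Transversality of the Anosov boundary map forces these sets to be pairwise \emph{disjoint} (this is where the hypothesis that the complement of $\mathcal{O}$ is a single open orbit enters, via \cite[Lem.\,3.27]{st22}), so their union fibers over $\partial_\infty\Gamma$ with fibers of that dimension and sits inside $G/Q$. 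Hence
\[
\dim\partial_\infty\Gamma \leq \dim(G/Q)-\big(\dim(\mathcal{O})-\dim(G/P)\big)=\dim(G)-\dim\big(\Lie(P)+\Lie(Q)\big),
\]
and Bestvina--Mess concludes. The essential idea you are missing is that each boundary point must be replaced by a \emph{positive-dimensional} disjoint subvariety of $G/Q$; it is the disjointness of these thickenings, not the embedding of the boundary into an affine cell, that produces the improved bound.
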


\begin{proof}
Let $\mathcal{O} \subset G/P\times G/Q$ be the $G$-orbit of $(P,Q)$; it is a closed algebraic subset of $G/P\times G/Q$, of dimension $\dim(G) - \dim(P\cap Q)$.
For any $y\in G/P$,
$$\mathcal{N}_y \equaldef \{ z\in G/Q \,|\, (y,z) \in \mathcal{O}\}$$
is a closed algebraic subset of $G/Q$; we have $\mathcal{N}_{g\cdot y} = g\cdot\mathcal{N}_y$ for all $g\in G$ and all $y\in G/P$.
The stabilizer of $y$ in~$G$ is a conjugate of~$P$; since $G$ acts transitively on~$\mathcal{O}$, this stabilizer acts transitively on~$\mathcal{N}_y$, and the stabilizer of any point of~$\mathcal{N}_y$ is a conjugate of $P\cap Q$; therefore, $\dim(\mathcal{N}_y) = \dim(P) - \dim(P\cap Q)$.
Moreover, we have $\mathcal{N}_y \cap \mathcal{N}_{y'} = \emptyset$ whenever $y\in G/P$ and $y'\in G/P\simeq G/P^*$ are transverse (see \eg \cite[Lem.\,3.27]{st22}).

Let $\rho : \Gamma\to G$ be a $P$-Anosov representation, with boundary map $\xi : \partial_{\infty}\Gamma\to G/P$.
By transversality of~$\xi$ (see Remark~\ref{rem:comp-trans-self-opposite}), the subset $\bigcup_{\eta\in\partial_{\infty}\Gamma}\, \mathcal{N}_{\xi(\eta)}$ of $G/Q$ fibers over $\partial_{\infty}\Gamma$ with fibers of dimension $\dim(P) - \dim(P\cap Q)$.
We deduce that the covering dimension of $\partial_{\infty}\Gamma$ is at most
\begin{align*}
\dim(G/Q) - \dim(P) + \dim(P\cap Q) & = \dim(G) - \dim(Q) - \dim(P) + \dim(P\cap Q)\\
& = \dim(G) - \dim\big(\Lie(P)+\Lie(Q)\big).
\end{align*}
We conclude using Fact~\ref{fact:dim-bound-vcd}.
\end{proof}

If we take standard parabolic subgroups $P = P_{\theta}$ and $Q = P_{\theta'}$ as in \eqref{eqn:P-theta}, where $\theta,\theta' \subset \Delta$ are subsets of the simple restricted roots of~$G$, then $\dim(G) - \dim(\Lie(P)+\Lie(Q))$ is equal to the sum of the dimensions of the root spaces $\g_{\alpha}$ for $\alpha \in \Sigma^+\smallsetminus (\mathrm{span}(\Delta\smallsetminus\theta) \cup \mathrm{span}(\Delta\smallsetminus\theta'))$.

As a special case, we recover the following fact, where $G$ is the automorphism group of a nondegenerate $\R$-bilinear form on some finite-dimensional vector space over $\R$, $\C$, or the quaternions, and $P_{\ell} = P_{\{\alpha_{\ell}\}}$ is the stabilizer in~$G$ of a totally isotropic $\ell$-dimensional linear subspace of~$V$.

\begin{fact}[{\cite[Prop.\,8.3]{gw12}}] \label{fact:Ano-vcd-Aut(b)}
Let $G=\SO(p,q)$, $\SU(p,q)$, $\Sp(p,q)$, $\Sp(2q,\R)$, $\Sp(2q,\C)$, $\SO^*(4q)$, or $\SO^*(4q+2)$, where $p\geq q\geq 1$ are integers.
If a word hyperbolic group~$\Gamma$ admits a $P_1$-Anosov representation or a $P_q$-Anosov representation to~$G$, then $\mathrm{vcd}(\Gamma)$ is bounded as in Table~\ref{table6} below.
\end{fact}

Table~\ref{table6} actually corrects \cite[Prop.\,8.3]{gw12}, which contained typos for $G=\SO(2q,\C)$, $\SO(2q+1,\C)$, $\Sp(2q,\C)$, $\SO^*(4q)$, and $\SO^*(4q+2)$.

\begin{table}[!ht]
\centering
\begin{tabular}{|c|c|}
\hline
$G$ & Bound on $\mathrm{vcd}(\Gamma)$\tabularnewline
\hline
$\SO(p,q)$ & $p$\tabularnewline
$\SU(p,q)$ & $2p$\tabularnewline
$\Sp(p,q)$ & $4p$\tabularnewline
$\SO(2q,\C)$ & $q-1$\tabularnewline
$\SO(2q+1,\C)$ & $q+1$\tabularnewline
$\Sp(2q,\R)$ & $q+1$\tabularnewline
$\Sp(2q,\C)$ & $2q+1$\tabularnewline
$\SO^*(4q)$ & $4q-2$\tabularnewline
$\SO^*(4q+2)$ & $4q+2$\tabularnewline
\hline
\end{tabular}
\vspace{0.2cm}
\caption{Upper bound on $\mathrm{vcd}(\Gamma)$ for a word hyperbolic group~$\Gamma$ admitting a $P_1$-Anosov representation or a $P_q$-Anosov representation into~$G$. Here $p\geq q\geq 1$ are integers.}
\label{table6}
\end{table}

%%%%%%%%%%%%%%%%%%%%%%%%%
\subsection{Proof of Corollary~\ref{cor:no-cpt-quot}} \label{subsec:proof-no-cpt-quot}

We treat separately the various cases of Table~\ref{table3}.

\begin{proof}[Proof of Corollary~\ref{cor:no-cpt-quot} in case~(i)]
Write $G=\SL(2\ell,\mathbb{K})$ where $\mathbb{K}=\R$, $\C$, or the ring $\mathbb{H}$ of quaternions.
The restricted root system of~$G$ is of type $A_{2\ell-1}$.
We can identify $\aaa$ with the hyperplane of equation $t_1 + \dots + t_{2\ell} = 0$ in~$\R^{2\ell}$, and take
$$\aaa^+ = \{ (t_1,\dots,t_{2\ell}) \in \aaa \,|\, t_1\geq\dots\geq t_{2\ell}\}$$
(see Example~\ref{ex:Cartan-decomp-SLn}).
We then have $\mu(H) = \bigcup_{2\leq j\leq 2\ell-1} \mathrm{Ker}(e_j^*)$, and so $\aaa^+ \smallsetminus \mu(H)$ has $2\ell - 1$ connected components, namely $\mathcal{C}_1,\dots,\mathcal{C}_{2\ell-1}$ where
$$\mathcal{C}_i \equaldef \{ (t_1,\dots,t_{2\ell})\in\aaa^+ \,|\, t_1 \geq \dots \geq t_i > 0 > t_{i+1} \geq \dots \geq t_{2\ell}\}.$$
The opposition involution $\iota : \aaa\to\aaa$ maps $(t_1,\dots,t_{2\ell})$ to $(-t_{2\ell},\dots,-t_1)$, hence switches $\mathcal{C}_i$ and $\mathcal{C}_{2\ell-i}$ for all $1\leq i\leq 2\ell-1$.
In particular, there is a unique connected component of $\aaa^+ \smallsetminus \mu(H)$ which is invariant under~$\iota$, namely $\mathcal{C}_{\ell}$.

Let $\Gamma$ be a discrete subgroup of~$G$ acting properly discontinuously and cocompactly on $G/H$.
By Fact~\ref{fact:kas08}, the set $\mu(\Gamma)$ is contained in~$\mathcal{C}_{\ell}$ up to finitely many points.
Observe that the closure $\overline{\mathcal{C}_{\ell}}$ of $\mathcal{C}_{\ell}$ in~$\aaa^+$ satisfies $\overline{\mathcal{C}_{\ell}} \cap \mathrm{Ker}(\alpha_{\ell}) \subset \mu(H)$, where $\alpha_{\ell} : (t_1,\dots,t_{2\ell}) \mapsto t_{\ell}$.
Therefore
$$\langle\alpha_{\ell},\mu(\gamma)\rangle = d_{\aaa}(\mu(\gamma),\mathrm{Ker}(\alpha_{\ell})) \geq d_{\aaa}(\mu(\gamma),\mu(H))$$
for all but finitely many $\gamma\in\Gamma$.
By Theorem~\ref{thm:sharp}, the group $\Gamma$ is finitely generated and sharply embedded in~$G$ with respect to~$H$.
Therefore there exist $c,c'>0$ such that $\langle\alpha_{\ell},\mu(\gamma)\rangle \geq c\,|\gamma|_S - c'$ for all $\gamma\in\Gamma$, where $S$ is any fixed finite generating subset of~$\Gamma$.
By Fact~\ref{fact:klp}, this implies that the group $\Gamma$ is word hyperbolic and that the natural inclusion $\Gamma\hookrightarrow G$ is $P_{\ell}$-Anosov.
Lemma~\ref{lem:Ano-vcd-SL} then states that $\mathrm{vcd}(\Gamma)$ is at most $\ell+1$ (\resp $2\ell+1$, \resp $4\ell+1$) if $\mathbb{K}=\R$ (\resp $\C$, \resp $\mathbb{H}$).
Using the last column of Table~\ref{table3}, we see that this is strictly less than $\dim(G/K) - \dim(H/K_H)$ in all cases (since $\ell\geq 2$).
This contradicts Fact~\ref{fact:vcd}, which states that we must have $\mathrm{vcd}(\Gamma) = \dim(G/K) - \dim(H/K_H)$.
\end{proof}

\begin{proof}[Proof of Corollary~\ref{cor:no-cpt-quot} in cases (iii)--(iv)]
The restricted root system of~$G$ is of type $B_2$ in case~(iii), and of type $(BC)_2$ in case~(iv).
In both cases $\mu(H)$ is the wall $\mathrm{Ker}(\alpha_1)$.

Let $\Gamma$ be a discrete subgroup of~$G$ acting properly discontinuously and cocompactly on $G/H$.
By Theorem~\ref{thm:proper-compact<->Ano}, the group $\Gamma$ is word hyperbolic and the natural inclusion $\Gamma\hookrightarrow G$ is $P_{\{\alpha_1\}}$-Anosov.
Fact~\ref{fact:Ano-vcd-Aut(b)} then states that $\mathrm{vcd}(\Gamma)$ is at most $2k+k'$ in case~(iii), and $2(2k+k')$ in case~(iv).
Using the last column of Table~\ref{table3}, we see that this is strictly less than $\dim(G/K) - \dim(H/K_H)$ for $k'\geq 1$.
This contradicts Fact~\ref{fact:vcd}.
\end{proof}

\begin{proof}[Proof of Corollary~\ref{cor:no-cpt-quot} in cases (ii) and (v)--(x)']
The restricted root system of~$G$ is of type $B_{\ell}$ in case~(ix), of type $C_{\ell}$ in cases (ii), (vi), (vii), (viii), and~(x), of type $(BC)_{\ell}$ in cases (ix)', (ix)'', and~(x)', and of type $D_{\ell}$ in case~(v).
In all cases $\mu(H)$ is the wall $\mathrm{Ker}(\alpha_{\ell})$.

Let $\Gamma$ be a discrete subgroup of~$G$ acting properly discontinuously and cocompactly on $G/H$.
By Theorem~\ref{thm:proper-compact<->Ano}, the group $\Gamma$ is word hyperbolic and the natural inclusion $\Gamma\hookrightarrow G$ is $P_{\{\alpha_{\ell}\}}$-Anosov.
Fact~\ref{fact:Ano-vcd-Aut(b)} then gives an upper bound on $\mathrm{vcd}(\Gamma)$.
Using the last column of Table~\ref{table3}, we see that this upper bound is strictly less than $\dim(G/K) - \dim(H/K_H)$ in all cases.
This contradicts Fact~\ref{fact:vcd}.
\end{proof}

\begin{proof}[Proof of Corollary~\ref{cor:no-cpt-quot} in case~(xi)]
The restricted root system of~$G$ is of type $B_3$, and $\mu(H) = \aaa^+ \cap \mathrm{Ker}(\alpha_1 - \alpha_3)$.
The set $\aaa^+\smallsetminus\mu(H)$ has two connected components, namely $\mathcal{C}_1 = \{ \alpha_1>\alpha_3\}$ and $\mathcal{C}_3 = \{ \alpha_3>\alpha_1\}$.

Let $\Gamma$ be a discrete subgroup of~$G$ acting properly discontinuously and cocompactly on $G/H$.
By Fact~\ref{fact:kas08}, there exists $i\in\{1,3\}$ such that $\mu(\gamma) \in \mathcal{C}_i$ for all but finitely many $\gamma\in\Gamma$.
Observe that $\overline{\mathcal{C}_i} \cap \mathrm{Ker}(\alpha_i) \subset \mu(H)$.
Therefore
$$\langle\alpha_i,\mu(\gamma)\rangle = d_{\aaa}(\mu(\gamma),\mathrm{Ker}(\alpha_i)) \geq d_{\aaa}(\mu(\gamma),\mu(H))$$
for all but finitely many $\gamma\in\Gamma$.
By Theorem~\ref{thm:sharp}, the group $\Gamma$ is finitely generated and sharply embedded in~$G$ with respect to~$H$.
Therefore there exist $c,c'>0$ such that $\langle\alpha_i,\mu(\gamma)\rangle \geq c\,|\gamma|_S - c'$ for all $\gamma\in\Gamma$, where $S$ is any fixed finite generating subset of~$\Gamma$.
By Fact~\ref{fact:klp}, the group $\Gamma$ is word hyperbolic and the natural inclusion is $P_{\{\alpha_i\}}$-Anosov.
Fact~\ref{fact:Ano-vcd-Aut(b)} then implies $\mathrm{vcd}(\Gamma)\leq 5$, which is strictly less than {$\dim(G/K) - \dim(H/K_H) =~7$}.
This contradicts Fact~\ref{fact:vcd}.
\end{proof}

\begin{proof}[Proof of Corollary~\ref{cor:no-cpt-quot} in case~(xii)]
The restricted root system of~$G$ is of type $B_4$, and $\mu(H) = \aaa^+ \cap \mathrm{Ker}(\alpha_1 - \alpha_3)$.
The set $\aaa^+\smallsetminus\mu(H)$ has two connected components, namely $\mathcal{C}_1 = \{ \alpha_1>\alpha_3\}$ and $\mathcal{C}_3 = \{ \alpha_3>\alpha_1\}$.

Let $\Gamma$ be a discrete subgroup of~$G$ acting properly discontinuously and cocompactly on $G/H$.
Arguing exactly as in case~(xi) just above, we obtain that there exists $i\in\{1,3\}$ such that $\Gamma$ is word hyperbolic and the natural inclusion is $P_{\{\alpha_i\}}$-Anosov.
Fact~\ref{fact:Ano-vcd-Aut(b)} (for $i=1$) or Lemma~\ref{lem:Ano-vcd} (for $i=3$, with $P=P_{\{\alpha_3\}}$ and $Q=P_{\{\alpha_1\}}$) then implies $\mathrm{vcd}(\Gamma)\leq 6$, which is strictly less than $\dim(G/K) - \dim(H/K_H) = 8$.
This contradicts Fact~\ref{fact:vcd}.
\end{proof}

%%%%%%%%%%%%%%%%%%%%%%%%%
\subsection{The group $H$ must be quasi-isometrically embedded in~$G$} \label{subsec:H-QI}

Theorem~\ref{thm:sharp-general} easily implies Theorem~\ref{thm:cpt-quotient->H-QI}, which states that if $G/H$ admits compact quotients, then $H$ must be quasi-isometrically embedded in~$G$.

\begin{proof}[Proof of Theorem~\ref{thm:cpt-quotient->H-QI}]
Suppose that there is a discrete subgroup $\Gamma$ of~$G$ acting properly discontinuously and cocompactly on $G/H$.
Then $\Gamma\times H$ acts properly and cocompactly on $(G\times G)/\Diag(G)$.
By Theorem~\ref{thm:sharp-general}, the group $\Gamma\times H$ is compactly generated and quasi-isometrically embedded in $G\times G$, hence $H$ is quasi-isometrically embedded in~$G$ (see Lemma~\ref{lem:GammaxH-sharp-embed}).
\end{proof}

For reductive pairs $(G,H)$, the subgroup $H$ is always quasi-isometrically embedded in~$G$.
Indeed, finding a Cartan involution of $G$ preserving $H$ amounts to finding an $H$-equivariant geodesic embedding of $H/K_H \hookrightarrow G/K$.

On the other hand, Theorem~\ref{thm:cpt-quotient->H-QI} provides an obstruction to the existence of compact quotients of $G/H$ for nonreductive~$H$.
As an application, we obtain for instance a new proof of the fact \cite[Ex.\,7.1]{mor17'} that if $U$ is a unipotent subgroup of~$G$, then $G/U$ does not admit any compact quotients.
More generally, we obtain the following.

\begin{corollary}
If $H = H'\times U$ where $U$ is a unipotent subgroup of~$G$, then $G/H$ does not admit compact quotients.
\end{corollary}

\begin{proof}
The unipotent subgroup $U$ is \emph{not} quasi-isometrically embedded in~$G$.
Indeed, there exists an element $a\in G$ normalizing~$U$ such that for any nontrivial $u\in U$, the sequence $(a^n u a^{-n})_{n\in\N}$ grows exponentially in~$U$ but only linearly in~$G$ as $n$ tends to infinity.
On the other hand, the factor $U$ is quasi-isometrically embedded in $H= H'\times U$.
Therefore $H$ is not quasi-isometrically embedded in~$G$.
We conclude using Theorem~\ref{thm:cpt-quotient->H-QI}.
\end{proof}

The question of which closed connected subgroups of a reductive Lie group are quasi-isometrically embedded, though subtle, has a definitive answer given by the following criterion of Abels and Alperin \cite{aa11}.
Let $H$ be a closed connected subgroup of~$G$, with unipotent radical~$N_H$.
Let $L$ be a Levi factor of~$H$, so that $H = L \ltimes N_H$.
Let $A_L = \exp(\aaa_L)$ where $\aaa_L$ is a Cartan subspace of the Lie algebra of~$L$ (as in Section~\ref{subsec:Cartan-decomp}), and let $N_L$ be a maximal unipotent subgroup of $L$ normalized by~$A_L$.
Finally, let $U_H$ be the nilpotent subgroup $N_L\ltimes N_H$, which is still normalized by~$A_L$.
Note that $A_L\ltimes U_H$ is cocompact in~$H$ (by the Iwasawa decomposition in~$L$), hence $H$ is quasi-isometrically embedded in~$G$ if and only if $A_L \ltimes U_H$ is (see Corollary~\ref{cor:cocompact-QI}).

\begin{fact}[{\cite[Th.\,3.2 \& Rem.\,3.3]{aa11}}] \label{fact:QI embedding solvable}
Let $G$ be a connected real linear reductive Lie group and $H$ a closed connected subgroup of~$G$.
Then with the notation above, $H$ is quasi-isometrically embedded in~$G$ if and only if the action of $A_L$ on $U_H/[U_H,U_H]$ does not have a nontrivial global fixed point.
\end{fact}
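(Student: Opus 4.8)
The plan is to reduce to the explicit solvable subgroup $\Gamma \equaldef A_L \ltimes U_H$ and then run the Abels--Alperin argument: translate the hypothesis into a statement about the weights of $\Ad(A_L)$ on the Lie algebra $\mathfrak{u}_H$ of~$U_H$, prove the forbidden direction by producing a distortion-detecting homomorphism, and prove the allowed direction by a ``pumping'' construction of short words combined with a lower bound on the Cartan projection. For the reduction, writing $L = K_L A_L N_L$ for an Iwasawa decomposition of the reductive Levi factor~$L$, the subgroup $A_L N_L$ is cocompact in~$L$, hence $A_L N_L \ltimes N_H = A_L \ltimes U_H$ is cocompact in $H = L \ltimes N_H$, so by the \v{S}varc--Milnor lemma (Fact~\ref{fact:SvarcMilnor}) the group $\Gamma$ is quasi-isometrically embedded in~$G$ if and only if $H$ is. Since $A_L$ is an $\R$-split torus normalizing $U_H$, the operator $\Ad(a)$ is $\R$-diagonalizable for $a\in A_L$, so $\mathfrak{u}_H$ decomposes into weight spaces $\mathfrak{u}_H = \bigoplus_\beta \mathfrak{u}_{H,\beta}$ under $\Ad(A_L)$, with $\beta \in \Lie(A_L)^*$; because $[\mathfrak{u}_H,\mathfrak{u}_H]$ is $\Ad(A_L)$-invariant and $A_L$ acts semisimply, the hypothesis that $A_L$ has no nonzero fixed point on $U_H/[U_H,U_H]$ is equivalent to the inclusion $\mathfrak{u}_{H,0}\subset[\mathfrak{u}_H,\mathfrak{u}_H]$. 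One direction of the desired equivalence is then, as usual, the trivial Lipschitz bound $\Vert\mu(\gamma)\Vert \leq \kappa\,|\gamma|_S$ of Section~\ref{subsec:compact-gen}; the whole content is the reverse ``undistortedness'' inequality.

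For necessity, suppose $\mathfrak{u}_{H,0}$ contains a vector $v$ with nonzero image $\bar v$ in $\mathfrak{u}_H/[\mathfrak{u}_H,\mathfrak{u}_H]$. I would pick a linear form $f$ on this abelianization that vanishes on every nonzero weight space and satisfies $f(\bar v)\neq 0$; it is $\Ad(A_L)$-invariant, so the map $\Phi\colon \Gamma \to \R$ sending $a\exp(w)$ to $f(\bar w)$ is a group homomorphism, bounded on any compact generating set~$S$, hence $|\Phi(\gamma)| \leq C_1\,|\gamma|_S$ for all~$\gamma$. Applying this to $\gamma_n = \exp(nv)\in U_H$ gives $|\gamma_n|_S \geq C_1^{-1} n\,|f(\bar v)|$, while $v$ is a nilpotent element of~$\g$, so $\exp(nv)$ has polynomially bounded matrix coefficients and $\Vert\mu(\exp(nv))\Vert = O(\log n)$. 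Thus no bound $|\gamma|_S \leq C\Vert\mu(\gamma)\Vert + C'$ can hold, and $\Gamma$ is not quasi-isometrically embedded in~$G$.

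For sufficiency, assume $\mathfrak{u}_{H,0}\subset[\mathfrak{u}_H,\mathfrak{u}_H]$. The key step is a pumping estimate: there is a constant such that every $\exp(v)\in U_H$ is a product of at most $C_2 \log(2+\Vert v\Vert)$ elements of a fixed compact subset of~$\Gamma$. I would prove this by induction on the nilpotency step of $\mathfrak{u}_H$, graded by its lower central series: a vector of size~$R$ in a weight space $\mathfrak{u}_{H,\beta}$ with $\beta\neq 0$ is the $\Ad(\exp(th))$-image of a bounded vector for some $h\in\Lie(A_L)$ and $|t| \asymp \log R$; a vector of size~$R$ in $\mathfrak{u}_{H,0}$ is, by the hypothesis, a sum of brackets of weight vectors with opposite nonzero weights, hence is realized by commutators of pumped elements in which only one factor of each bracket is pumped, again at cost $O(\log R)$, the bracket landing in a strictly deeper term of the lower central series where induction applies. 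Then I would show that for $\gamma = a\exp(v)$ with $a\in A_L$ one has $\Vert\log a\Vert + \log(2+\Vert v\Vert) \leq C_3\,\Vert\mu(\gamma)\Vert + C_3$; combining this with the pumping estimate and with the elementary fact that $A_L$, being contained in a maximal $\R$-split torus of~$G$, is quasi-isometrically embedded in~$G$, yields $|\gamma|_S \leq C\,\Vert\mu(\gamma)\Vert + C'$ and finishes the argument.

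I expect the main obstacle to be the inequality $\Vert\log a\Vert + \log(2+\Vert v\Vert) \leq C_3\,\Vert\mu(a\exp(v))\Vert + C_3$ needed in the sufficiency step: the Cartan projection is not additive under the factorization $\gamma = a\exp(v)$, and the large elements of~$A_L$ used in the pumping themselves contribute to~$\mu$, so one must check that the pumping cost is genuinely dominated by $\Vert\mu(\gamma)\Vert$ and is not an artifact of the construction. I would attack this by passing to the Riemannian symmetric space $X = G/K$, where $\Vert\mu(\gamma)\Vert = d_X(x_0,\gamma\cdot x_0)$, and fixing a parabolic subgroup of~$G$ compatible with the flat $A_L\cdot x_0$: the term $\Vert\log a\Vert$ is recovered because this flat is totally geodesic and quasi-convex in~$X$, while $\log(2+\Vert v\Vert)$ is recovered from a horospherical (Busemann-type) estimate along that parabolic — equivalently, from a matrix-coefficient bound in a well-chosen finite-dimensional representation of~$G$ in which $\Lie(A_L)$ is diagonalized and the weights $\beta$ are explicitly visible — the two contributions combining coarsely additively.
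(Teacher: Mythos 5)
This statement is quoted from Abels--Alperin \cite{aa11}; the paper gives no proof of it, so there is nothing internal to compare your argument against. Your reconstruction is essentially the Abels--Alperin argument and is sound in outline: the reduction to $A_L\ltimes U_H$ via \v{S}varc--Milnor is exactly the remark the paper makes just before the Fact; the necessity direction via an $\Ad(A_L)$-invariant linear form on the abelianization, giving a homomorphism $\Gamma\to\R$ that grows linearly on $\{\exp(nv)\}$ while $\Vert\mu(\exp(nv))\Vert=O(\log n)$, is correct; and your resolution of the obstacle you flag at the end is the right one --- since $A_L\ltimes U_H$ is split-solvable it lies in a minimal parabolic $P=M_PA_PN_P$ with $A_L\subset A_P$ and $U_H\subset N_P$, the Iwasawa projection satisfies $\beta(a\exp(v))=\log a$ and $\Vert\beta(g)\Vert\leq\Vert\mu(g)\Vert$ (Kostant convexity), whence $\Vert\log a\Vert\leq\Vert\mu(\gamma)\Vert$ and then $\log(2+\Vert v\Vert)\lesssim\Vert\mu(a^{-1}\gamma)\Vert\leq 2\Vert\mu(\gamma)\Vert+O(1)$, so the two contributions do combine additively.

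Two points to tighten. First, in the pumping step you assert that a zero-weight vector is ``a sum of brackets of weight vectors with opposite nonzero weights''; this is not literally what the hypothesis gives, since the zero-weight part of $[\mathfrak{u}_H,\mathfrak{u}_H]$ also contains $[\mathfrak{u}_{H,0},\mathfrak{u}_{H,0}]$. The correct intermediate statement is that $\mathfrak{u}_{H,0}\subset[\mathfrak{u}_H,\mathfrak{u}_H]$ forces $\mathfrak{u}_H$ to be \emph{generated as a Lie algebra} by its nonzero weight spaces (a subalgebra supplementing the derived algebra of a nilpotent Lie algebra is the whole algebra); every element is then a combination of iterated brackets of nonzero-weight vectors, and the commutator pumping with induction on the lower central series applies. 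Second, the statement as printed says ``Cartan subgroup'', and your whole argument silently takes $A_L$ to be the $\R$-split part from an Iwasawa decomposition of~$L$; this is the reading under which the criterion is true (e.g.\ for $H=\OO(2)\ltimes\R^2$ the compact torus has no nonzero fixed vector on $\R^2$, yet $H$ is exponentially distorted --- here the split part of $A_L$ is trivial and the criterion, correctly read, detects the distortion). It is worth making that choice explicit, as it is also what guarantees that $\Ad(A_L)$ is $\R$-diagonalizable on $\mathfrak{u}_H$ and that ``fixed vectors $=$ zero-weight space''.
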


Compact quotients of $G/H$ for nonreductive~$H$ have not been as thoroughly studied as for reductive~$H$, but we can mention the work of Oh--Witte \cite{ow00, ow02} exhibiting subtle examples of compact quotients of $G/H$ for $G = \SO(2n,2)$ and $H$ non reductive, as well as the work of Morita \cite{mor17',mor24}.

%%%%%%%%%%%%%%%%%%%%%%%%%%%%%%%%%%%%%%%%%%%%%%%%%%%
\section{Openness for proper and cocompact actions} \label{sec:openness}

We introduce the following terminology.

\begin{definition} \label{def:tau-proj-Ano}
Let $G$ be a connected real linear reductive Lie group, $(\tau,V)$ a finite-dimensional real linear representation of~$G$, and $\Gamma$ a word hyperbolic group.
A representation $\rho : \Gamma\to G$ is \emph{$\tau$-projective Anosov} if $\tau\circ\rho : \Gamma\to\GL(V)$ is $P_1$-Anosov, where $P_1$ is the stabilizer in $\GL(V)$ of a line of~$V$.
\end{definition}

The goal of this section is to establish the following proposition and corollary.
For any open subset $\mathcal{C}$ of~$\aaa^+$ and any group~$\Gamma$, we denote by $\Hom_{\mathcal{C}}(\Gamma,G)$ the set of representations $\rho : \Gamma\to G$ such that $\mu(\rho(\gamma)) \in \mathcal{C} \cup \iota(\mathcal{C})$ for all but possibly finitely many $\gamma\in\Gamma$ (where $\iota : \aaa\to\aaa$ is the opposition involution).
We say that a hyperplane of~$\aaa$ is \emph{rational} if it is the kernel of a linear form $\varphi_0\in\aaa^*$ satisfying $(\varphi_0,\alpha)_*/(\alpha,\alpha)_* \in \Q$ for all $\alpha\in\Delta$.

\begin{proposition} \label{prop:corank-1-sharp<->Ano}
Let $G$ be a connected real linear reductive Lie group and $H$ a closed subgroup of~$G$ such that $\mu(H)$ is the intersection of~$\aaa^+$ with the $W$-orbit of a rational hyperplane of~$\aaa$.
Let $\mathcal{C}$ be a connected component of $\aaa^+\smallsetminus\mu(H)$.
Then there is a finite-dimensional real linear representation $(\tau,V)$ of~$G$ such that for any finitely generated group~$\Gamma$ and any representation $\rho\in\Hom_{\mathcal{C}}(\Gamma,G)$, the following are equivalent:
\begin{enumerate}
  \item\label{item:sharp-Ano-1} $\rho$ is a quasi-isometric embedding and the action of $\rho(\Gamma)$ on $G/H$ is sharp;
  \item\label{item:sharp-Ano-2} $\Gamma$ is word hyperbolic and $\rho$ is $\tau$-projectively Anosov.
\end{enumerate}
For fixed~$\Gamma$, the set of representations $\rho\in\Hom_{\mathcal{C}}(\Gamma,G)$ satisfying these conditions is open in $\Hom(\Gamma,G)$.
\end{proposition}

\begin{corollary} \label{cor:corank-1-sharp-qi-open}
Let $G$ be a connected real linear reductive Lie group and $H$ a closed subgroup of~$G$ such that $\mu(H)$ is the intersection of~$\aaa^+$ with the $W$-orbit of a (not necessarily rational) hyperplane of~$\aaa$.
Then for any finitely generated group~$\Gamma$,
\begin{enumerate}
  \item if there is a quasi-isometric embedding $\rho : \Gamma\to G$ such that the action of $\rho(\Gamma)$ on $G/H$ is sharp, then $\Gamma$ is word hyperbolic and $\rho$ is $\tau$-projective Anosov for some finite-dimensional real linear representation $(\tau,V)$ of~$G$;
  \item the set of quasi-isometric embeddings $\rho : \Gamma\to G$ such that the action of $\rho(\Gamma)$ on $G/H$ is sharp is open in $\Hom(\Gamma,G)$. 
\end{enumerate}
\end{corollary}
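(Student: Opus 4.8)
\emph{The plan} is to reduce both assertions to Proposition~\ref{prop:corank-1-sharp<->Ano} by approximating the hyperplanes cutting out $\mu(H)$ by rational ones, after first disposing of the virtually cyclic case by hand. If $\Gamma$ is virtually cyclic with $\langle\gamma_0\rangle$ of finite index, then $\Gamma$ is word hyperbolic and $\mathcal{L}_{\rho(\Gamma)} = \R_{\geq 0}\lambda(\rho(\gamma_0)) \cup \R_{\geq 0}\iota(\lambda(\rho(\gamma_0)))$, so, since $\mu(H)$ is $\iota$-invariant, $\rho(\Gamma)$ is sharp for $G/H$ iff $\lambda(\rho(\gamma_0))\notin\mu(H)$, while $\rho$ is a quasi-isometric embedding iff $\lambda(\rho(\gamma_0))\neq 0$ — and the former implies the latter because $0\in\mu(H)$. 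Hence the set in~(2) equals $\{\rho : \lambda(\rho(\gamma_0))\in\aaa^+\smallsetminus\mu(H)\}$, which is open since $\lambda$ is continuous and $\mu(H)$ closed; and for~(1), as $\lambda(\rho(\gamma_0))\neq 0$ it pairs positively with some simple root $\alpha$ (or with a character of $G$, if it is central), so composing with a representation $\tau_\alpha$ of $G$ adapted to $\{\alpha\}$ (resp. with $\chi\oplus\chi^{-1}$) and using $\mu(\rho(\gamma_0)^n)=n\lambda(\rho(\gamma_0))+O(\log n)$ makes $\tau_\alpha\circ\rho$ $P_1$-Anosov by Fact~\ref{fact:klp}. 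From now on I assume $\Gamma$ is not virtually cyclic.

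For~(1) I would write $\mu(H)=\aaa^+\cap\bigcup_j\mathrm{Ker}(\varphi_j)$, where by~\eqref{eqn:mu-H} the family $\{\mathrm{Ker}(\varphi_j)\}_j$ may be taken $W$-invariant. Since $\mathcal{L}_{\rho(\Gamma)}\subseteq\aaa^+$, sharpness of $\rho(\Gamma)$ for $G/H$ says exactly that $\mathcal{L}_{\rho(\Gamma)}\cap\bigcup_j\mathrm{Ker}(\varphi_j)=\{0\}$; being closed cones, they are at positive distance $\varepsilon_0$ on the unit sphere. I would then pick one $W$-orbit $\{w\cdot\mathrm{Ker}(\varphi_1)\}_{w\in W}$ inside $\{\mathrm{Ker}(\varphi_j)\}_j$ and a rational form $\psi$ with $\mathrm{Ker}(\psi)$ so close to $\mathrm{Ker}(\varphi_1)$ that, on the unit sphere, $\bigcup_w w\cdot\mathrm{Ker}(\psi)$ lies within $\varepsilon_0/2$ of $\bigcup_w w\cdot\mathrm{Ker}(\varphi_1)$; set $H'\equaldef\exp(\mathrm{Ker}(\psi))\subset A$, so $\mu(H')=\aaa^+\cap\bigcup_w w\cdot\mathrm{Ker}(\psi)$ is the intersection of $\aaa^+$ with finitely many \emph{rational} hyperplanes. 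Then $\mathcal{L}_{\rho(\Gamma)}$ still avoids $\bigcup_w w\cdot\mathrm{Ker}(\psi)$ off $0$, so $\rho(\Gamma)$ is sharp for $G/H'$; being unbounded, $\mathcal{L}_{\rho(\Gamma)}\smallsetminus\{0\}$ meets some component $\mathcal{C}'$ of $\aaa^+\smallsetminus\mu(H')$, so $\rho\in\Hom_{\mathcal{C}'}(\Gamma,G)$, and Proposition~\ref{prop:corank-1-sharp<->Ano} applied to $(G,H',\mathcal{C}')$ gives that $\Gamma$ is word hyperbolic and $\rho$ is $\tau$-projective Anosov for the representation it produces. (If $\{\mathrm{Ker}(\varphi_j)\}_j$ is several $W$-orbits, do this for each and replace $\tau$ by the tensor product of the resulting representations, which is again $P_1$-Anosov after $\rho$, the top weight of a tensor product being the sum of the top weights.)

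For~(2) I would fix $\rho_0$ in the set $S_H$ in question, keep the notation above with $\varepsilon_0=\varepsilon_0(\rho_0)$ and a quasi-isometry constant $c_0$ of $\rho_0$, and run the above approximation for $\rho_0$ to produce $H'$, $\mathcal{C}'$ and $\tau=\tau_{\mathcal{C}'}$. Since $\mathcal{L}_{\rho_0(\Gamma)}$ stays (on the unit sphere) at distance $\geq\varepsilon_0/2$ from the walls of $\mathcal{C}'$, the relevant singular-value gap of $\tau\circ\rho_0$ grows at a rate $\gtrsim\varepsilon_0c_0$, a bound that does not degenerate as $\mathrm{Ker}(\psi)\to\mathrm{Ker}(\varphi_1)$. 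By stability of Anosov representations with uniform constants there is a neighborhood $\mathcal{V}$ of $\rho_0$ of $\tau$-projective Anosov representations with gap rate still $\gtrsim\varepsilon_0c_0$ and uniformly bounded $\max_{s\in S}\Vert\mu(\rho(s))\Vert$; shrinking $\mathcal{V}$ using lower semicontinuity of the Jordan limit cone (which equals the limit cone for Anosov representations), I may also arrange $\mathcal{V}\subseteq\Hom_{\mathcal{C}'}(\Gamma,G)$. For $\rho\in\mathcal{V}$, Proposition~\ref{prop:corank-1-sharp<->Ano} gives that $\rho$ is a quasi-isometric embedding with $\rho(\Gamma)$ sharp for $G/H'$, and the uniform gap confines $\mathcal{L}_{\rho(\Gamma)}$ to a fixed compact subset of $\mathcal{C}'$ at distance $\gtrsim\varepsilon_0c_0$ from $\bigcup_w w\cdot\mathrm{Ker}(\psi)$; choosing $\psi$ close enough to $\varphi_1$ from the start — the $\varepsilon_0/2$-rule being replaced by one much smaller than this rate — this subset is at positive distance from $\bigcup_j\mathrm{Ker}(\varphi_j)$, hence $\mathcal{L}_{\rho(\Gamma)}\cap\mu(H)=\{0\}$, so $\rho\in S_H$. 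Thus $\mathcal{V}\subseteq S_H$ and $S_H$ is open.

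The hard part will be this last transfer in~(2) — from ``sharp for $G/H'$ with uniform constants'' back to ``sharp for $G/H$'' across the rational approximation: a naive Hausdorff comparison of $\mu(H')$ with $\mu(H)$ can fail near the walls of $\aaa^+$, and for the hyperplanes $w\cdot\mathrm{Ker}(\psi)$ that are not walls of $\mathcal{C}'$ one must still use that $\mathcal{C}'$ is a definite component, separated from the others. The point is to fix the invariants $\varepsilon_0,c_0$ of $\rho_0$ first, verify that the Anosov rate of the approximating representations has a lower bound independent of $\psi$, and only then take $\psi$ fine enough for that rate to beat the approximation error; essentially all the real work is in this bookkeeping of constants.
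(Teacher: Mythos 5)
Your overall architecture (dispose of the virtually cyclic case by hand, then reduce to Proposition~\ref{prop:corank-1-sharp<->Ano} by a rational approximation) matches the paper's, but there is a genuine error in your virtually cyclic case of~(1). You claim that since $\lambda(\rho(\gamma_0))$ pairs positively with some simple root $\alpha$, composing with a representation $\tau_\alpha$ adapted to $\{\alpha\}$ gives a $P_1$-Anosov representation. This fails because the limit cone of $\langle\rho(\gamma_0)\rangle$ is the \emph{union of the two rays} $\R_{\geq 0}\lambda(\rho(\gamma_0))$ and $\R_{\geq 0}\,\iota(\lambda(\rho(\gamma_0)))$, and Fact~\ref{fact:klp} requires $\langle\alpha,\mu(\rho(\gamma_0^{n}))\rangle\gtrsim |n|$ for negative $n$ as well, i.e.\ $\langle\alpha,\iota(\lambda(\rho(\gamma_0)))\rangle>0$. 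The paper's own example after Proposition~\ref{prop:cork-1-sharp->Ano} is a counterexample to your step: for $\gamma_0=\mathrm{diag}(2,2,1/4)$ in $\SL(3,\R)$ one has $\alpha_2(\lambda)>0=\alpha_1(\lambda)$ but $\alpha_2(\iota\lambda)=0$, so $\langle\gamma_0\rangle$ is sharp for $\SL(3,\R)/\SL(2,\R)$ yet is $P_\theta$-Anosov for no~$\theta$; only the reducible construction $V_{N\varphi^+}\oplus V_{N\varphi^-}$ of Proposition~\ref{prop:corank-1-sharp<->Ano} (applied to a rational $\varphi$ whose $W$-orbit of kernels misses both $\lambda$ and $\iota\lambda$, as the paper does via $H_\varphi$) detects the two-sided gap $|\langle\varphi,\mu\rangle|$. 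Your openness claim for the virtually cyclic case (the set equals $\{\rho:\lambda(\rho(\gamma_0))\in\aaa^+\smallsetminus\mu(H)\}$) is fine.

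For $\Gamma$ not virtually cyclic, your part~(1) is correct, but your part~(2) diverges from the paper and is left incomplete exactly at the step you flag as hard. You approximate the hyperplanes cutting out $\mu(H)$ by nearby rational ones and then must transfer ``sharp for $G/H'$ with uniform constants'' back to ``sharp for $G/H$''; this requires a lower bound on the Anosov gap rate of $\tau\circ\rho_0$ that is uniform in $\psi$ and a uniform-constants version of openness, which you assert but do not establish. The paper avoids all of this by approximating not the hyperplanes but the \emph{component} $\mathcal{C}$ of $\aaa^+\smallsetminus\mu(H)$ containing $\mathcal{L}_{\rho_0(\Gamma)}$ from the \emph{inside}: by compactness of the projectivized limit cone one finds finitely many rational forms $\mathcal{F}$ with $\mathcal{L}_{\rho_0(\Gamma)}\smallsetminus\{0\}\subset\{\varphi>0\ \forall\varphi\in\mathcal{F}\}$ and $\{\varphi\geq 0\ \forall\varphi\in\mathcal{F}\}\cap\aaa^+\subset\mathcal{C}$. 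Then $\tau$-projective Anosovness of any nearby $\rho'$ forces, via \eqref{eqn:mu1-mu2-V}, the linear growth of every $\langle\varphi,\mu(\rho'(\gamma))\rangle$, $\varphi\in\mathcal{F}$, hence confines $\mathcal{L}_{\rho'(\Gamma)}$ to this fixed rational buffer cone whose closure already avoids $\mu(H)$ — so sharpness for $G/H$ is automatic and no bookkeeping of constants is needed. I would rework your~(2) along these lines rather than trying to complete the Hausdorff comparison of $\mu(H')$ with $\mu(H)$.
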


Theorem~\ref{thm:openness} is an immediate consequence of Theorem~\ref{thm:sharp} and Corollary~\ref{cor:corank-1-sharp-qi-open}.

%%%%%%%%%%%%%%%%%%%%%%%%%
\subsection{Reminders: restricted weights of linear representations of~$G$}
\label{subsec:weights}

We use the notation of Sections \ref{subsec:Cartan-decomp} and~\ref{subsec:corank-1-sharp->Ano}.
In particular, $\aaa$ is a maximal abelian subspace of~$\p$ and we have an orthogonal direct sum $\aaa = \aaa_z \oplus \aaa_s$ for $(\cdot,\cdot)$, where $\aaa_z \equaldef \aaa \cap \mathfrak{z}(\g)$ and $\aaa_s \equaldef \aaa \cap [\g,\g]$, and correspondingly an orthogonal direct sum $\aaa^* = \aaa_s^0 \oplus \aaa_z^0$ for $(\cdot,\cdot)_*$, where $\aaa_s^0$ (\resp $\aaa_z^0$) is the set of linear forms on~$\aaa$ that vanish identically on $\aaa_s$ (\resp $\aaa_z$), and $(\cdot,\cdot)_*$ is the $W$-invariant scalar product on~$\aaa^*$ dual to $(\cdot, \cdot)$.
The subspace $\aaa_z^0$ admits a natural basis $\{\omega_{\alpha} \,|\, \alpha\in\Delta\}$, where $\omega_{\alpha}\in\aaa^*$ is the fundamental weight associated with~$\alpha$ (see \eqref{eqn:omega-alpha}).

Any finite-dimensional real linear representation $(\tau,V)$ of~$G$ decomposes under the action of~$\aaa$; the corresponding joint eigenvalues (elements of~$\aaa^*$) are called the \emph{restricted weights} of $(\tau,V)$.
The union of the restricted weights of all possible such representations is the set
\[\Phi \equaldef \left \{ \alpha \in \aaa^* \: \middle | \: 2\frac{(\alpha, \beta)_*}{(\beta, \beta)_*} \in \Z \quad \forall \beta \in \Sigma \right \} = \aaa_s^0 + \sum_{\alpha \in \Delta} \Z \, \omega_{\alpha}. \]
The set of \emph{dominant} weights is the semigroup
$$\Phi^+ = \aaa_s^0 + \sum_{\alpha \in \Delta} \N \, \omega_{\alpha}.$$
There is a partial ordering on~\(\aaa^*\) given by
\begin{equation*}
\nu \leq \nu' \quad \Longleftrightarrow \quad \nu' - \nu \in \sum_{\alpha \in \Delta} \R_{\geq 0} \, \alpha.
\end{equation*}
Given an \emph{irreducible} finite-dimensional real linear representation $(\tau,V)$ of~$G$, the set of restricted weights of $\tau$ admits, for that ordering, a unique maximal element (see \eg \cite[Cor.\,3.2.3]{gw09}), which is a dominant weight $\chi_{\tau}$ called the \emph{highest weight} of~$\tau$.

For each linear representation $(\tau,V)$ of~$G$ of dimension $n\in\N^*$, we choose a $\tau(K)$-invariant Euclidean structure on $V\simeq\R^n$ and denote by
$$\mu_V : G \longrightarrow \aaa_V^+ \equaldef \{\mathrm{diag}(t_1,\dots,t_n) \,|\, t_1\geq\dots\geq t_n\}$$
the composition of~$\tau$ with the corresponding Cartan projection of $\GL(V)$ (see Example~\ref{ex:Cartan-decomp-SLn}).
For $1\leq i\leq n$, we denote by $\varepsilon_i$ the linear form on $\aaa_V \equaldef \{\mathrm{diag}(t_1,\dots,t_n) \,|\, t_1,\dots,t_n\in\R\}$ given by $\langle\varepsilon_i,\mathrm{diag}(t_1,\dots,t_n)\rangle = t_i$.

\begin{lemma} \label{lem:mu-repr}
\begin{enumerate}
  \item\label{item:mu-repr-1} For any finite-dimensional real linear representations $(\tau^{(1)},V^{(1)}),\dots,\linebreak (\tau^{(m)},V^{(m)})$ of~$G$, we have
  $$\langle\varepsilon_1-\varepsilon_2, \mu_{V^{(1)}\otimes\dots\otimes V^{(m)}}(g)\rangle = \min_{1\leq\ell\leq m} \langle\varepsilon_1-\varepsilon_2, \mu_{V^{(\ell)}}(g)\rangle \quad \text{for all }g\in G.$$
  \item\label{item:mu-repr-2} For any finite-dimensional real linear representations $(\tau,V), (\tau',V')$ of~$G$ and any $g\in G$, if $\langle\chi_{\tau},\mu(g)\rangle \geq \langle\chi_{\tau'},\mu(g)\rangle$, then
  $$\langle\varepsilon_1-\varepsilon_2, \mu_{V\oplus V'}(g)\rangle = \min\big( \langle\chi_{\tau}-\chi_{\tau'},\mu(g)\rangle, \langle\varepsilon_1-\varepsilon_2, \mu_V(g)\rangle\big).$$
  \item\label{item:mu-repr-3} There exists $N\in\N^*$ such that for any $\chi\in  N\Phi^+$, there is an irreducible representation $V_{\chi}$ of~$G$ with highest weight~$\chi$ such that
  $$\langle\varepsilon_1-\varepsilon_2, \mu_{V_{\chi}}(g)\rangle = \min_{\alpha\in\Delta,\, (\alpha,\chi)_*\neq 0} \,\langle\alpha,\mu(g)\rangle \quad \text{for all }g\in G.$$
\end{enumerate}
\end{lemma}

\begin{proof}
\eqref{item:mu-repr-1} This easily follows from the fact that the restricted weights of\linebreak $(\bigotimes_{1\leq i\leq m} \tau^{(i)}, \bigotimes_{1\leq i\leq m} V^{(i)})$ are the linear forms $\sum_{1\leq i\leq m} \chi^{(i)}$ where $\chi^{(i)}$ ranges through the restricted weights of $(\tau^{(i)},V^{(i)})$.

\eqref{item:mu-repr-2} This is contained in \cite[Lem.\,7.11]{ggkw17}.

\eqref{item:mu-repr-3} By \cite[Lem.\,3.2 \& 3.7]{ggkw17}, there exists $N\in\N^*$ such that for any $\chi''\in\linebreak  N\sum_{\alpha \in \Delta} \N\,\omega_{\alpha}$, there is an irreducible representation $V_{\chi''}$ with highest weight~$\chi''$ such that $\langle\varepsilon_1-\nolinebreak\varepsilon_2, \mu_{V_{\chi''}}(g)\rangle = \min_{\alpha\in\Delta,\, (\alpha,\chi'')_*\neq 0}\, \langle\alpha,\mu(g)\rangle$ for all $g\in G$.
Let $\chi \equaldef \chi' + \chi''$ where $\chi'\in\aaa_s^0$.
There is a one-dimensional real linear representation $V_{\chi'}$ of~$G$ (factoring through the center of~$G$) with highest weight~$\chi'$.
Then $V_{\chi} \equaldef V_{\chi'} \otimes V_{\chi''}$ is irreducible with highest weight~$\chi$, and satisfies the required properties.
\end{proof}

%%%%%%%%%%%%%%%%%%%%%%%%%
\subsection{Proof of Proposition~\ref{prop:corank-1-sharp<->Ano}}

Let us first prove the equivalence between conditions \eqref{item:sharp-Ano-1} and~\eqref{item:sharp-Ano-2}.
By assumption, there exists $\varphi_0\in\aaa^*$ such that $\mu(H) = \aaa^+ \cap \bigcup_{w\in W} \mathrm{Ker}(w\cdot\varphi_0)$, and we can write $\varphi_0 = \psi_0 + \sum_{\alpha\in\Delta} t_{\alpha} \, \omega_{\alpha}$ where $\psi_0 \in \aaa_s^0$ and $t_{\alpha} = (\varphi_0,\alpha)_*/(\alpha,\alpha)_* \in \Q$ for all $\alpha\in\Delta$.

We first suppose that $\h \supset \aaa\cap [\g,\g]$, \ie $\varphi_0 = \psi_0 \in \aaa_s^0$.
Then, up to multiplying $\varphi_0$ by an integer, we can find a character $\chi : G\to \R_{>0}$ such that $d_{\aaa}(\mu(g),\mu(H)) = \langle\phi_0,\mu(g)\rangle = \log(\chi(g))$ for all $g\in G$.
Let $\tau : G\to\SL(2,\R)$ be the linear representation sending $g$ to $\mathrm{diag}(\chi(g),\chi(g)^{-1})$.
Then for any group~$\Gamma$ with a finite generating subset~$S$ and any representation $\rho\in\Hom_{\mathcal{C}}(\Gamma,G)$, conditions \eqref{item:sharp-Ano-1} and~\eqref{item:sharp-Ano-2} are both equivalent to the existence of $c,c'>0$ such that $\log(\chi(\rho(\gamma))) \geq c\,|\gamma|_S - c'$ for all $\gamma\in\Gamma$, by Corollary~\ref{cor:sharp-embed-explain} and Fact~\ref{fact:klp}.
Note that, in that case, any group acting sharply on $G/H$ is virtually cyclic by Proposition~\ref{prop:cork-1-proper-compact->Ano}.

We now suppose that $\h \not\supset \aaa\cap [\g,\g]$, \ie $\varphi_0 \notin \aaa_s^0$ (and so $w\cdot\varphi_0 \notin \aaa_s^0$ for all $w\in W$).
Let $N\in\N^*$ be given by Lemma~\ref{lem:mu-repr}.\eqref{item:mu-repr-3}.
Since the Weyl group $W$ preserves the $\Q$-vector space spanned by the fundamental weights, for any $w\in W$ we can write $w\cdot\varphi_0 = \sum_{\alpha\in\Delta} t_{w,\alpha}\,\omega_{\alpha}$ where $t_{w,\alpha} \in \Q$ for all $\alpha\in\Delta$.
Up to multiplying $\varphi_0$ by an integer (which does not change the kernels $\mathrm{Ker}(w\cdot\varphi_0)$), we may and shall assume that $t_{w,\alpha} \in N\Z$ for all $w\in W$ and all $\alpha\in\Delta$.

The connected component $\mathcal{C}$ of $\aaa^+\smallsetminus\mu(H)$ can be written as
$$\mathcal{C} = \aaa^+ \cap \bigcap_{\varphi\in\mathcal{F}_{\mathcal{C}}} \{ \varphi>0\}$$
for some nonempty subset $\mathcal{F}_{\mathcal{C}}$ of $W\cdot\varphi_0 \cup W\cdot (-\varphi_0)$ such that $\overline{\mathcal{C}} \cap \mathrm{Ker}(\varphi) \neq \{0\}$ for all $\varphi\in\mathcal{F}_{\mathcal{C}}$.
We have
\begin{equation} \label{eqn:dist-to-boundary-in-C}
d_{\aaa}(x,\mu(H)) = \min_{\varphi\in\mathcal{F}_{\mathcal{C}}} \frac{\langle\varphi,x\rangle}{\sqrt{(\varphi,\varphi)_*}} \quad\text{ for all }x\in\mathcal{C}.
\end{equation}
For each $\varphi\in\mathcal{F}_{\mathcal{C}}$, let $\Delta^+_{\varphi}$ be the set of simple roots $\alpha\in\Delta$ such that $(\varphi,\alpha)_*>0$; it is nonempty by Lemma~\ref{lem:C-Ker-alpha}.
We can write $\varphi = \varphi^+ - \varphi^-$ where
$$\varphi^+ \equaldef \psi_0 + \sum_{\alpha\in\Delta^+_{\varphi}} \frac{(\varphi,\alpha)_*}{(\alpha,\alpha)_*} \, \omega_{\alpha} \quad\quad\mathrm{and}\quad\quad \varphi^- \equaldef \varphi^+ - \varphi$$
both belong to $N\Phi^+$.
Consider the linear representation
\begin{equation} \label{eqn:big-rep}
(\tau,V) \equaldef \bigotimes_{\varphi\in\mathcal{F}_{\mathcal{C}}} \left(V_{\varphi^+} \oplus V_{\varphi^-}\right).
\end{equation}
where $V_{\varphi^+}$ and~$V_{\varphi^-}$ are given by Lemma~\ref{lem:mu-repr}.\eqref{item:mu-repr-3}.
Let us check that it satisfies the conclusion of Proposition~\ref{prop:corank-1-sharp<->Ano}.

We first observe that by Lemma~\ref{lem:mu-repr}.\eqref{item:mu-repr-1}--\eqref{item:mu-repr-3}, for any $g\in G$ with $\mu(g)\in\mathcal{C}$ we have
\begin{eqnarray}
\langle\varepsilon_1-\varepsilon_2, \mu_V(g)\rangle & = & \min_{\varphi\in\mathcal{F}_{\mathcal{C}}} \langle\varepsilon_1-\varepsilon_2, \mu_{V_{\varphi^+}\oplus V_{\varphi^-}}(g)\rangle\nonumber\\
& = & \min_{\varphi\in\mathcal{F}_{\mathcal{C}}} \min\big( \langle\varphi^+ - \varphi^-,\mu(g)\rangle, \langle\varepsilon_1-\varepsilon_2, \mu_{V_{\varphi^+}}(g)\rangle\big)\nonumber\\
& = & \min_{\varphi\in\mathcal{F}_{\mathcal{C}}} \min\bigg( \langle\varphi,\mu(g)\rangle, \min_{\alpha\in\Delta^+_{\varphi}} \langle\alpha,\mu(g)\rangle\bigg)\label{eqn:mu1-mu2-V} .
\end{eqnarray}
We fix a symmetric finite generating subset $S$ of~$\Gamma$.\\

\noindent \eqref{item:sharp-Ano-2}~$\Rightarrow$~\eqref{item:sharp-Ano-1}: Suppose $\rho\in\Hom_{\mathcal{C}}(\Gamma,G)$ is $\tau$-projective Anosov (Definition~\ref{def:tau-proj-Ano}).
Then there exist $c,c'>0$ such that $\langle\varepsilon_1 - \varepsilon_2, \mu_V(\rho(\gamma))\rangle \geq c\,|\gamma|_S - c'$ for all $\gamma\in\Gamma$.
By \eqref{eqn:mu1-mu2-V} we have $\langle\varphi,\mu(\rho(\gamma))\rangle \geq c\,|\gamma|_S - c'$ for all $\varphi\in\mathcal{F}_{\mathcal{C}}$ and all $\gamma\in\Gamma$ with $\mu(\rho(\gamma))\in\mathcal{C}$, hence (see \eqref{eqn:dist-to-boundary-in-C}) there exist $c'',c'''>0$ such that $d_{\aaa}(\mu(\rho(\gamma)),\mu(H)) \geq c''\,|\gamma|_S - c'''$ for all $\gamma\in\Gamma$ with $\mu(\rho(\gamma))\in\mathcal{C}$.
On the other hand, for any $\gamma\in\Gamma$ with $\mu(\rho(\gamma))\in\iota(\mathcal{C})$, we have $\mu(\rho(\gamma^{-1})) = \iota(\mu(\rho(\gamma))) \in \mathcal{C}$; since $\mu(H) = \iota(\mu(H))$ and $|\gamma^{-1}|_S = |\gamma|_S$, we get
$$d_{\aaa}(\mu(\rho(\gamma)),\mu(H)) = d_{\aaa}(\mu(\rho(\gamma^{-1})),\mu(H)) \geq c''\,|\gamma|_S - c'''. $$
Since $\mu(\rho(\gamma)) \in \mathcal{C}\cup\iota(\mathcal{C})$ for all but possibly finitely many $\gamma\in\Gamma$, we deduce (see Corollary~\ref{cor:sharp-embed-explain}) that $\rho$ is a quasi-isometric embedding and that the action of $\rho(\Gamma)$ on $G/H$ is sharp.\\

\noindent \eqref{item:sharp-Ano-1}~$\Rightarrow$~\eqref{item:sharp-Ano-2}: Suppose that $\rho\in\Hom_{\mathcal{C}}(\Gamma,G)$ is a quasi-isometric embedding and that the action of $\rho(\Gamma)$ on $G/H$ is sharp.
Then there exist $c,c'>0$ such that\linebreak $d_{\aaa}(\mu(\rho(\gamma)),\mu(H)) \geq c\,|\gamma|_S - c'$ for all $\gamma\in\Gamma$.
By \eqref{eqn:dist-to-boundary-in-C} and Lemma~\ref{lem:C-Ker-alpha}, there exists $c''>0$ such that for any $\varphi\in\mathcal{F}_{\mathcal{C}}$, any $\alpha\in\Delta^+_{\varphi}$, and any $x\in\mathcal{C}$, 
$$\langle\varphi,x\rangle \geq c''\,d_{\aaa}(x,\mu(H)) \quad\quad\mathrm{and}\quad\quad \langle\alpha,x\rangle \geq c''\,d_{\aaa}(x,\mu(H)).$$
By \eqref{eqn:mu1-mu2-V}, we then have $\langle\varepsilon_1 - \varepsilon_2, \mu_V(\rho(\gamma))\rangle \geq cc''\,|\gamma|_S - c'c''$ for all $\gamma\in\Gamma$ with $\mu(\rho(\gamma))\in\mathcal{C}$.
Similarly, there exist $c''',c''''>0$ such that $\langle\varepsilon_1 - \varepsilon_2, \mu_V(\rho(\gamma))\rangle \geq c'''\,|\gamma|_S - c''''$ for all $\gamma\in\Gamma$ with $\mu(\rho(\gamma))\in\iota(\mathcal{C})$.
Since $\mu(\rho(\gamma)) \in \mathcal{C}\cup\iota(\mathcal{C})$ for all but possibly finitely many $\gamma\in\Gamma$, we deduce from Fact~\ref{fact:klp} that $\Gamma$ is word hyperbolic and $\rho$ is $\tau$-projective Anosov.\\

Suppose $\rho\in\Hom_{\mathcal{C}}(\Gamma,G)$ satisfies conditions \eqref{item:sharp-Ano-1} and~\eqref{item:sharp-Ano-2}.
By \cite{lab06}, there is a neighborhood $\mathcal{U}$ of $\rho$ in $\Hom(\Gamma,G)$ such that any $\rho'\in\mathcal{U}$ still satisfies~\eqref{item:sharp-Ano-2}.
We claim that, up to making $\mathcal{U}$ smaller, it is contained in $\Hom_{\mathcal{C}}(\Gamma,G)$.
Indeed, let $\gamma\in\Gamma$ be an infinite-order element.
Then $\lambda(\gamma)\neq 0$ and $\lambda(\rho(\gamma))\in\mathcal{C}\cup\iota(\mathcal{C})$ by sharpness, where $\lambda : G\to\aaa^+$ is the Jordan projection of \eqref{eqn:lambda}.
By continuity of~$\lambda$, up to making $\mathcal{U}$ smaller, for any $\rho'\in\mathcal{U}$ we have $\lambda(\rho'(\gamma))\in\mathcal{C}\cup\iota(\mathcal{C})$; then $\mu(\rho'(\gamma^n))\in\mathcal{C}\cup\iota(\mathcal{C})$ for all large enough $n\in\N$, and so Fact~\ref{fact:kas08} implies that $\rho'\in\Hom_{\mathcal{C}}(\Gamma,G)$.
This completes the proof of Proposition~\ref{prop:corank-1-sharp<->Ano}.

%%%%%%%%%%%%%%%%%%%%%%%%%
\subsection{Proof of Corollary~\ref{cor:corank-1-sharp-qi-open}}

Let $\rho : \Gamma\to G$ be a quasi-isometric embedding such that the action of $\rho(\Gamma)$ on $G/H$ is sharp.
By assumption, there exists $\varphi\in\aaa^*$ such that $\mu(H) = \aaa^+ \cap \bigcup_{w\in W} \mathrm{Ker}(w\cdot\varphi)$.
By Fact~\ref{fact:kas08}, the limit cone $\mathcal{L}_{\rho(\Gamma)}$ of $\rho(\Gamma)$ (see Section~\ref{subsec:lim-cone}) satisfies $\mathcal{L}_{\rho(\Gamma)}\smallsetminus \{0\} \subset \mathcal{C}_\varphi \cup \iota(\mathcal{C}_\varphi)$ for some connected component $\mathcal{C}_\varphi$ of $\aaa^+ \smallsetminus \left( \bigcup_{w\in W} w\cdot \mathrm{Ker}(\varphi) \right)$.
By compactness of the projectivization of the limit cone, for any $\varphi'\in\aaa^*$ close enough to~$\varphi$, we still have $\mathcal{L}_{\rho(\Gamma)} \subset \mathcal{C}_{\varphi'} \cup \iota(\mathcal{C}_{\varphi'})$ for some connected component $\mathcal{C}_{\varphi'}$ of $\aaa^+ \smallsetminus \bigcup_{w\in W} \mathrm{Ker}(w\cdot\varphi')$.
By density of $\Q$ in~$\R$, we can find a finite subset $\mathcal{F} \subset \aaa_s^0 + \sum_{\alpha\in\Delta} \Q\,\omega_{\alpha}$ of rational linear forms on~$\aaa$, all contained in a small neighborhood of $\varphi$ in~$\aaa^*$, such that
\[\mathcal L_{\rho(\Gamma)} \subset \bigcap_{\varphi' \in \mathcal F} \big(\mathcal{C}_{\varphi'} \cup \iota(\mathcal{C}_{\varphi'})\big) \subset \mathcal{C}_{\varphi} \cup \iota(\mathcal{C}_{\varphi}) .\]
In particular, $\rho$ belongs to $\Hom_{\mathcal{C}_{\varphi'}}(\Gamma,G)$ for all $\varphi' \in \mathcal{F}$.
By Proposition~\ref{prop:corank-1-sharp<->Ano}, the group $\Gamma$ is word hyperbolic and for every $\varphi' \in \mathcal{F}$, there is a finite-dimensional real linear representation $(\tau_{\varphi'},V_{\varphi'})$ of~$G$ such that $\rho$ is $\tau_{\varphi'}$-projective Anosov; moreover, there is a neighborhood $\mathcal{U}$ of $\rho$ in $\Hom(\Gamma,G)$ such that any $\rho'\in\mathcal{U}$ is still a quasi-isometric embedding with
\[\mathcal L_{\rho'(\Gamma)} \subset \bigcap_{\varphi' \in \mathcal F} \big(\mathcal{C}_{\varphi'} \cup \iota(\mathcal{C}_{\varphi'})\big) \subset \mathcal{C}_{\varphi} \cup \iota(\mathcal{C}_{\varphi}) ,\]
which implies (see Remark~\ref{rem:sharp-lim-cone}) that the action of $\rho'(\Gamma)$ on $G/H$ is sharp.
This completes the proof of Corollary~\ref{cor:corank-1-sharp-qi-open}.

%%%%%%%%%%%%%%%%%%%%%%%%%%%%%%%%%%%%%%%%%%%%%%%%%%%
\section{Manifolds locally modeled on big cells of~$G$} \label{sec:P-Popp}

In this section we prove a result (Theorem~\ref{thm:PxPopp-structures}) that will be useful in Section~\ref{sec:quotients-SL3}. It is a higher-rank variation on a result proved by the second-named author for Lie groups of rank one \cite{tho15}.

Recall that, given a manifold $X$ with a transitive action of a Lie group $L$, a \emph{$(L,X)$-structure} on a manifold~$M$ (in the sense of Ehresmann and Thurston) is the data of a pair $(\mathrm{dev}, \mathrm{hol})$, where $\mathrm{hol}$ is a representation of $\pi_1(M)$ to $L$ (the \emph{holonomy}) and $\mathrm{dev}: \widetilde M \to X$ is a $\mathrm{hol}$-equivariant local diffeomorphism (the \emph{developing map}); we call it \emph{complete} if $\mathrm{dev}$ is a covering map.

\begin{theorem} \label{thm:PxPopp-structures}
Let $G$ be a connected real linear reductive Lie group, with biinvariant volume form~$\omega$.
Let $P$ and~$P^*$ be two opposite proper parabolic subgroups of~$G$, and let $\Omega = P P^*$ be the open $(P\times P^*)$-orbit in~$G$ (for the action by left and right multiplication).
Then
\begin{enumerate}
  \item\label{item:PxPopp-1} A closed manifold cannot admit a $(P\times P^*, \Omega)$-structure.
  \item\label{item:PxPopp-2} A manifold cannot admit a complete $(P \times P^{*}, \Omega)$-structure which is of finite volume with respect to~$\omega$.
  \item\label{item:PxPopp-3} A discrete subgroup of $P \times P^*$ cannot act properly discontinuously and cocompactly on $G$ by left and right multiplication (or equivalently on $(G\times G)/\Diag(G)$).
\end{enumerate}
\end{theorem}

\begin{proof}
By construction, $P\times P^*$ acts transitively on~$\Omega$, and the stabilizer of the identity element $\1_G\in\Omega$ is $L \equaldef P\cap P^*$; therefore, $\Omega$ identifies with $(P\times P^*)/\Diag(L)$.

Let $a\in\g$ be such that $\ad_a$ is diagonalizable over $\R$ and $\mathrm{Lie}(P)$ (\resp $\mathrm{Lie}(P^*)$) is the sum of the nonnegative (\resp nonpositive) eigenspaces of $\ad_a$ in~$\g$.
The adjoint action of the stabilizer $L$ of $\1_G\in\Omega$ fixes~$a$, hence $a$ (seen as a tangent vector to~$\Omega$ at $\1_G$) extends to a $(P \times P^*)$-invariant vector field $X_a$ on~$\Omega$.

At every point $g = p p'$ in $\Omega$ (where $p\in P$ and $p'\in P^*$), we have 
\[X_a = L_{p}\circ R_{p'}(a) = R_g( \Ad_{p}(a)) ,\]
where $L_g : T\Omega\to T\Omega$ (\resp $R_g : T\Omega\to T\Omega$) is the pushforward by left (\resp right) multiplication by~$g$.
Since $\Ad_{p}(a)$ belongs to $\mathrm{Lie}(P)$, and since the left action of $G$ is spanned by right-invariant vector fields, we get that $X_a$ is tangent to the left-$P$-orbit at every $g\in \Omega$.
Moreover, along this $P$-orbit, $X_a$ is left-invariant.
Hence $X_a$ generates a complete flow $(\Phi_t)_{t\in \R}$ on~$\Omega$.

The Lie derivative of $\omega$ along $X_a$ can be written as $X_a \cdot \omega = \mathrm{div}(X_a) \omega$, where $\mathrm{div}(X_a)$ is a constant by $(P\times P^*)$-invariance of both $X_a$ and~$\omega$.
We claim that
\begin{equation} \label{eqn:div<0}
\mathrm{div}(X_a) <0 .
\end{equation} 
Indeed, consider a diagonalization basis $(e_1, \ldots , e_d)$ of $\ad_a$ in~$\g$, with corresponding eigenvalues $(\lambda_1,\ldots , \lambda_d)$.
Extend $(e_1,\ldots, e_d)$ to a left-$G$-invariant frame $(E_1,\ldots, E_d)$ of $TG$ and let $(E_1^*, \ldots, E_d^*)$ be the dual frame of $T^*G$.
Since $\omega$ is left-$G$-invariant, we may assume, up to scaling, that $\omega = E_1^* \wedge \ldots \wedge E_d^*$. 
Let us write $X_a = \sum_{i=1}^d \alpha_i E_i$.
The Cartan formula gives
\[ \mathrm{div}(X_a) = \sum_{i=1}^d  d \alpha_i(E_i)~,\]
which we now compute at the point $\1_G$.
Fix $i\in \{1,\ldots ,d\}$.
If $\lambda_i \geq 0$, then $e_i \in \mathrm{Lie}(P)$; since $X_a$ is left-$P$-invariant, the $\alpha_j$ are constant along left-$P$-orbits and in particular $d \alpha_i (e_i) = 0$.
On the other hand, if $\lambda_i < 0$, then $e_i\in\mathrm{Lie}(P^*)$ and we have
\begin{eqnarray*}
X_a(\exp(t e_i)) &=& R_{\exp(te_i)}(a)\\
&=& L_{\exp(te_i)}(\Ad_{\exp(-t e_i)}(a)) \ =\ L_{\exp(te_i)}(\exp(-t\ad_{e_i})(a))\\
&=& L_{\exp(t e_i)}(a+ \lambda_i t e_i) ;
\end{eqnarray*}
taking the derivative at $t= 0$ and using the left-invariance of the vector fields $E_j$, we get $d \alpha_i(e_i) = \lambda_i  < 0$.
We conclude that $\mathrm{div}(X_a)$ is the sum of all the negative eigenvalues of $\ad_a$, hence \eqref{eqn:div<0} holds.\\

\noindent \eqref{item:PxPopp-1}, \eqref{item:PxPopp-2}: Let $M$ be a manifold equipped with a $(P\times P^{*}, \Omega)$-structure $(\mathrm{dev}, \mathrm{hol})$.
Pulling back $\omega$ and $X_a$ by $\mathrm{dev}$ and quotienting by $\pi_1(M)$, we obtain a volume form $\bar \omega$ and a vector field $\bar X_a$ on~$M$, which still satisfy
\[\bar X_a \cdot \bar \omega = \mathrm{div}(X_a) \bar \omega~.\]
If $M$ is closed, then $\bar X_a$ generates a complete flow $(\Phi_t)_{t\in\R}$ on~$M$.
If $M$ is not assumed to be closed but the $(P\times P^*, \Omega)$-structure is complete, then the completeness of the flow of $X_a$ on $\Omega$ also guarantees that $\bar X_a$ generates a complete flow $(\Phi_t)_{t\in\R}$ on~$M$.
Suppose by contradiction that we are in one of these cases and that $\int_M \bar \omega$ is finite (this is automatic if $M$ is closed).
The change of variable formula gives
\[\int_M \Phi_t^*\,\bar{\omega} = \int_M \bar \omega \]
for all $t\in\R$.
On the other hand, we have
\[\Phi_t^*\, \bar{\omega} = e^{t\,\mathrm{div}(X_a)}\,\bar{\omega} ,\]
where $e^{t\,\mathrm{div}(X_a)}<1$ for $t>0$ by \eqref{eqn:div<0}.
These two identities together yield a contradiction.\\

\noindent \eqref{item:PxPopp-3} Suppose by contradiction that there is a discrete subgroup $\Gamma$ of $P\times P^*$ acting properly discontinuously and cocompactly on~$G$ by left and right multiplication.
Up to passing to a finite-index subgroup, we may assume (by the Selberg lemma \cite[Lem.\,8]{sel60}) that $\Gamma$ is torsion-free, hence that the action of $\Gamma$ on~$G$ is also torsion-free.
Then 
\[\Gamma \backslash \Omega \subset \Gamma \backslash G\]
is a manifold equipped with a complete $(P\times P^*, \Omega)$-structure of finite volume, contradicting~\eqref{item:PxPopp-2}.
\end{proof}

%%%%%%%%%%%%%%%%%%%%%%%%%%%%%%%%%%%%%%%%%%%%%%%%%%%
\section{Quotients of group manifolds of type $A_2$} \label{sec:quotients-SL3}

In this section, we consider a real linear simple Lie group~$G_0$ with a restricted root system of type~$A_2$: namely, $G_0$ is locally isomorphic to $\SL(3,\K)$ for $\K= \R, \C$ or the ring $\mathbb{H}$ of quaternions, or to $E_{6(-26)}$.
Our goal is to prove Theorem~\ref{thm:group-mfd-SL3}, which states that any discrete subgroup $\Gamma$ of $G_0 \times G_0$ acting properly discontinuously and cocompactly on the group manifold $G_0$ is virtually a uniform lattice of~$G_0$, embedded as a factor of~$G_0\times G_0$.

The key fact here is that the opposition involution of the root system of type $A_2$ is nontrivial and has a fixed locus of dimension $1$. To explain how we use this property, let us start by sketching the proof of Theorem~\ref{thm:group-mfd-SL3} in the case that $\Gamma$ is Zariski-dense in $G_0 \times G_0$.

%%%%%%%%%%%%%%%%%%%%%%%%%
\subsection{The Zariski-dense case} \label{subsec:SL3-Zariski-dense-case}

Let $\aaa^+$ be a closed Weyl chamber in a Cartan subspace $\aaa$ of the Lie algebra of~$G_0$, let $\iota: \aaa^+ \to \aaa^+$ be the opposition involution, and let $v_0\in\aaa^+$ be a nonzero vector fixed by~$\iota$.
Then the fixed locus of $\iota$ is $\R_{\geq 0}\,v_0$.
We denote by $p_1$ (\resp $p_2$) the projection of $G_0\times G_0$ to the first (\resp second) factor of $G_0\times G_0$.

\begin{proposition} \label{prop:Zariski-dense=>discrete-faithful-proj}
Let $G_0$ be a real linear simple Lie group with restricted root system of type $A_2$.
Let $\Gamma$ be a Zariski-dense discrete subgroup of $G_0 \times G_0$.
If the action of $\Gamma$ on\linebreak $(G_0\times G_0)/\Diag(G_0)$ is sharp (Definition~\ref{def:sharp}), then either $p_1$ or $p_2$ restricted to~$\Gamma$ has discrete image and finite kernel.
\end{proposition}

\begin{proof}
Suppose by contradiction that neither $p_1$ nor~$p_2$ restricted to~$\Gamma$ has discrete image and finite kernel.
Then there exist sequences $(\gamma_n)_{n\in\N}$ and $(\gamma'_n)_{n\in\N}$ in $\Gamma$ such that $(p_2(\gamma_n))_{n\in\N}$ and $(p_1(\gamma'_n))_{n\in\N}$ remain bounded in~$G_0$, while $(p_1(\gamma_n))_{n\in\N}$ and $(p_2(\gamma'_n))_{n\in\N}$ leave every compact subset of~$G_0$.
Taking Cartan projections, we deduce that the limit cone $\mathcal{L}_\Gamma \subset \aaa^+ \times \aaa^+$ of $\Gamma$ (see Section~\ref{subsec:lim-cone}) intersects both $\aaa^+ \times \{0\}$ and $\{0\} \times \aaa^+$. 

Let $v, w\in \aaa^+ \smallsetminus \{0\}$ be such that $(v,0)$ and $(0,w)$ belong to $\mathcal{L}_\Gamma$.
Since $\Gamma$ is Zariski-dense in $G_0\times G_0$, its limit cone $\mathcal{L}_\Gamma$ is convex by \cite{ben97}; moreover, $\mathcal{L}_\Gamma$ is invariant under the opposition involution~$\iota$.
Therefore $(v+ \iota(v), 0)$ and $(0,w + \iota(w))$ also belong to $\mathcal{L}_\Gamma$.
Finally, since the fixed locus of $\iota$ is one-dimensional, both $v+\iota(v)$ and $w+\iota(w)$ span $\R_{\geq 0}\,v_0$.
Thus $(v_0,0)$ and $(0,v_0)$ belong to $\mathcal{L}_\Gamma$, and so does $(v_0,v_0)$.
This contradicts the fact that the action of $\Gamma$ on $(G_0\times G_0)/\Diag(G_0)$ is sharp.
\end{proof}

\begin{proposition} \label{prop:discrete-projection=>lattice}
Let $G_0$ be any real linear simple Lie group with $\Rrank(G_0)\geq 2$.
Let $\Gamma$ be a discrete subgroup of $G_0\times G_0$ acting properly discontinuously and cocompactly on the group manifold~$G_0$.
Assume that the restriction of $p_1$ (\resp $p_2$) to $\Gamma$ has discrete image and finite kernel.
Then $\Gamma$ has a finite-index subgroup of the form $\Gamma_0 \times \{\1_{G_0}\}$ (\resp $\{\1_{G_0}\} \times \Gamma_0$), where $\Gamma_0$ is a uniform lattice in~$G_0$.
\end{proposition}

\begin{proof}
We consider the case that the restriction of $p_1$ to~$\Gamma$ has discrete image and finite kernel (the other case is similar).
By the Selberg lemma \cite[Lem.\,8]{sel60}, up to passing to a finite-index subgroup, we may assume that $\Gamma$ is torsion-free; then the restriction of $p_1$ to~$\Gamma$ is discrete and faithful, hence $\Gamma = \{(\gamma, \rho(\gamma)) \mid \gamma \in \Gamma_0\}$ for some discrete subgroup $\Gamma_0$ of~$G_0$ and some group homomorphism $\rho: \Gamma_0 \to G_0$.

The groups $\Gamma$ and $\Gamma_0$ being isomorphic, they have the same virtual cohomological dimension, and since $\Gamma$ acts properly discontinuously and cocompactly on $G_0$, so does $\Gamma_0$ by Fact~\ref{fact:vcd}. Hence $\Gamma_0$ is a uniform lattice in $G_0$.

By Margulis superrigidity (see \eg \cite[\S\,16.1]{wit15}), either $\rho$ has bounded image or there exists a $\rho$-equivariant isometry of the Riemannian symmetric space of~$G_0$.
The latter case is not possible, otherwise we would have $\mu(\rho(\gamma)) = \mu(\gamma)$ for all $\gamma \in \Gamma_0$ and $\Gamma$ would not act properly discontinuously on $(G_0\times G_0)/\Diag(G_0)$.
Therefore $\rho$ has bounded image.

It is then a well-known consequence of Margulis arithmeticity (see \eg \cite[\S\,16.4]{wit15}) that the closure of the image of~$\rho$ is either finite or isogenous to a compact real form of the complexification of $G_0$. But $G_0$ cannot contain a compact real form of its complexification (since it is, itself, a noncompact real form).
Therefore $\rho$ has finite image and is trivial on a finite-index subgroup of~$\Gamma_0$.
\end{proof}

Combining Propositions \ref{prop:Zariski-dense=>discrete-faithful-proj} and~\ref{prop:discrete-projection=>lattice} with the Sharpness Theorem~\ref{thm:sharp}, we conclude that for $G_0$ with restricted root system of type~$A_2$, a discrete subgroup of $G_0\times G_0$ acting properly discontinuously and cocompactly on~$G_0$ is never Zariski-dense in $G_0\times G_0$.

In order to complete the proof of Theorem~\ref{thm:group-mfd-SL3}, we need to deal with all other possible Zariski closures: this will be the object of Section~\ref{subsec:SL3-general-case}.
Before that, we briefly discuss Zariski closures in a general setting.

%%%%%%%%%%%%%%%%%%%%%%%%%
\subsection{Zariski closures for proper and cocompact actions on general homogeneous spaces of reductive type}

Let $G/H$ be a homogeneous space of reductive type.
In general, it is \emph{not} true that any discrete subgroup of~$G$ acting properly discontinuously and cocompactly on $G/H$ has a reductive Zariski closure in~$G$.
For instance, for $G_0=\SO(n,1)$ or $\SU(n,1)$, there exist discrete subgroups $\Gamma$ of $G=G_0\times G_0$ acting properly discontinuously and cocompactly on $G/H = (G_0\times G_0)/\Diag(G_0)$ for which the projection of $\Gamma$ to one of the factors of $G_0\times G_0$ is unipotent \cite{gol85,ghy95,kob98}.

However, we expect that if $\Gamma$ is a discrete subgroup of~$G$ acting properly discontinuously and cocompactly on $G/H$, then the ``reductive part'' $\Gamma_{\red}$ of~$\Gamma$ should also act properly discontinuously and cocompactly on $G/H$.
This ``reductive part'' (also called ``semisimplification'' in \cite{ggkw17}) is defined as follows.
For any subgroup $\Gamma$ of~$G$, the Zariski closure $\overline{\Gamma}^Z$ of $\Gamma$ in~$G$ admits a Levi decomposition $\overline{\Gamma}^Z = L\ltimes U$, where $U$ is the unipotent radical of $\overline{\Gamma}^Z$ and $L$ is a reductive subgroup of $\overline{\Gamma}^Z$, unique modulo conjugation by~$U$; we denote by $\Gamma_{\red}$ the image of~$\Gamma$ under the projection $L\ltimes U \to L$.

\begin{conjecture} \label{conj:proper-cocompact-Gamma-red}
Let $G$ be a connected real linear reductive Lie group and $H$ a reductive subgroup of~$G$.
If $\Gamma$ is a discrete subgroup of~$G$ acting properly discontinuously and cocompactly on $G/H$, then so is~$\Gamma_{\red}$.
\end{conjecture}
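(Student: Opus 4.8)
The plan would be to reduce the statement to the assertion that $\Gamma$ contains no nontrivial unipotent element of its Zariski closure, and then to attack that using the Sharpness Theorem together with cocompactness. Write $M\equaldef\overline{\Gamma}^Z=L\ltimes U$, with $U$ the unipotent radical and $L$ a Levi factor, so that $\pi_{\red}:M\to L$ is the projection modulo $U$. After replacing $\Gamma$ by a finite-index subgroup (which affects neither the hypothesis nor the conclusion) we may assume $\Gamma$ is torsion-free; since $U$ has no nontrivial finite subgroup, $\ker(\pi_{\red}\vert_{\Gamma})=\Gamma\cap U$ is either trivial or infinite, so faithfulness of $\pi_{\red}\vert_{\Gamma}$ is equivalent to $\Gamma\cap U=\{1\}$. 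Moreover $\Gamma\cap U$ is a cocompact lattice in its Zariski closure $V\equaldef\overline{\Gamma\cap U}^Z\subset U$, a connected unipotent group which is normal in $M$ (being the Zariski closure of a normal subgroup of $\Gamma$); thus faithfulness of $\pi_{\red}\vert_{\Gamma}$ amounts to showing $V=\{1\}$. Note that once this is known we are done: then $M=L$, $\pi_{\red}$ is the identity map, and $\pi_{\red}\vert_\Gamma$ is automatically discrete and faithful. By Theorem~\ref{thm:sharp}, $\Gamma$ is sharply embedded in~$G$ with respect to~$H$; in particular $\Gamma$ is finitely generated, quasi-isometrically embedded in~$G$ (hence in~$M$), $\mathcal{L}_\Gamma\cap\mathcal{L}_H=\{0\}$, and $\cohdim(\Gamma)=\dim(G/K)-\dim(H/K_H)$ by Fact~\ref{fact:vcd}.

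So suppose $V\neq\{1\}$ and set $N\equaldef\Gamma\cap U$, an infinite normal subgroup of~$\Gamma$ which is a cocompact lattice in the simply connected nilpotent group~$V$, so that $\cohdim(N)=\dim V$. I would exploit the normal subgroup $N$ in two complementary ways. First, a cohomological-dimension count: from the extension $1\to N\to\Gamma\to Q\to 1$ (with $Q=\Gamma/N$, a finitely generated linear group of finite virtual cohomological dimension) one gets $\cohdim(\Gamma)\leq\dim V+\cohdim(Q)$, and one would play this, against the value $\dim(G/K)-\dim(H/K_H)$ forced by cocompactness, to conclude that a large normal unipotent factor leaves the ``reductive part'' $Q$ with too little cohomology to fill up $G/H$. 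Second — the geometric heart — one analyses the $\Gamma$-action on $G/H$ through~$N$: since $N\subset U$ is unipotent, its orbits in $G/H$ are \emph{distorted} (the Cartan projection grows only logarithmically along~$N$, so $\mathcal{L}_N=\{0\}$, yet $N$ is not quasi-isometrically embedded in~$G$), and the quotient $N\backslash G/H$ is a manifold carrying a residual $Q$-action with compact quotient that fibers over $V\backslash G/H$ with compact fiber $N\backslash V$. One would then hope to extract an Auslander-type statement: the positive-dimensional normal unipotent subgroup $N$ acting properly (with vanishing limit cone) on $G/H$ with cocompact $\Gamma$-action should force a $\Gamma$-invariant foliation of $G/H$ by $N$-orbits whose leaf space is a smaller homogeneous space on which $Q$ acts properly and cocompactly with strictly smaller cohomological dimension — contradicting Fact~\ref{fact:vcd} — in the spirit of the volume-contraction argument of Theorem~\ref{thm:PxPopp-structures} and of the Abels--Alperin criterion recalled in Section~\ref{subsec:H-QI} applied inside~$M$.

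The step I expect to be the main obstacle — and the reason the statement is posed only as a conjecture — is precisely this last one: ruling out that $\Gamma$ behaves like an irreducible lattice in $L\ltimes U$ with $\Gamma\cap U$ of positive dimension. Sharpness, i.e.\ disjointness of limit cones, controls the reductive, Jordan-semisimple behaviour of~$\Gamma$ extremely well — this is what makes Lemma~\ref{lem: reductive projection proper} and the Zariski-dense case of Theorem~\ref{thm:group-mfd-SL3} work — but it is essentially blind to unipotent directions, since unipotent elements have vanishing Jordan projection and contribute nothing to $\mathcal{L}_\Gamma$; moreover a normal subgroup of~$\Gamma$ need not be quasi-isometrically embedded in~$\Gamma$, so the distortion of~$N$ inside~$G$ is not in itself a contradiction. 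Thus the real task is to upgrade ``$N$ contributes $0$ to the limit cone'' into ``$N$ is trivial'' using cocompactness of $\Gamma\backslash G/H$ rather than mere properness, and to do so uniformly for arbitrary reductive pairs $(G,H)$ — in particular without being able to exclude a priori that the projections of~$\Gamma$ to the various simple factors of~$L$ are dense. (In the corank-one or, more generally, word-hyperbolic regime the argument does go through: an infinite normal subgroup of a word-hyperbolic group is finite-index, hence quasi-isometrically embedded, contradicting the logarithmic distortion of~$N$; it is the absence of such hyperbolicity in general that leaves the conjecture open.)
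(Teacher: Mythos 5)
This statement is posed in the paper as an \emph{open conjecture} (it is introduced with ``It is tempting to conjecture...'' right after Lemma~\ref{lem: reductive projection proper}); the paper offers no proof of it, so there is nothing to compare your argument against. Your proposal is, by your own admission, not a proof either: the ``Auslander-type'' step --- upgrading the fact that the normal subgroup $N=\Gamma\cap U$ contributes nothing to the limit cone into the statement that $N$ is trivial --- is precisely the missing idea, and your diagnosis of why it is hard is accurate: sharpness (Theorem~\ref{thm:sharp}) controls Jordan-semisimple behaviour but is blind to unipotent directions, and a normal subgroup of $\Gamma$ need not be quasi-isometrically embedded, so the logarithmic distortion of $N$ in $G$ is not by itself a contradiction. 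So the proposal correctly locates the obstruction but does not overcome it.

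Two local points would need repair even in an eventual proof along these lines. First, showing $V=\overline{\Gamma\cap U}^Z=\{1\}$ does not give $M=L$: a Zariski-dense discrete subgroup of $L\ltimes U$ can meet $U$ trivially while $U\neq\{1\}$ (already a cyclic group generated by an element with nontrivial unipotent Jordan part does this), so the conclusion ``then $\pi_{\red}$ is the identity map'' does not follow. Second, the conjecture asks for $\pi_{\red}\vert_{\Gamma}$ to be discrete \emph{and} faithful, and your reduction addresses only faithfulness. Faithfulness is indeed equivalent to $\Gamma\cap U=\{1\}$ after passing to a torsion-free finite-index subgroup, but discreteness of $\pi_{\red}(\Gamma)$ is a genuinely separate issue: Raghunathan's theorem (Fact~\ref{fact:discrete-proj}) gives discreteness only after further quotienting by the center of the Levi factor, and the central directions of $L$ would have to be handled by an additional argument.
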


We can reformulate the conjecture in terms of faithfulness and discreteness of the natural projection $\Gamma\to\Gamma_{\red}$, thanks to the following lemma.

\begin{lemma} \label{lem:reductive-projection-proper}
Let $G$ be a connected real linear reductive Lie group, $H$ a closed subgroup of~$G$, and $\Gamma$ a discrete subgroup of~$G$ acting properly discontinuously and cocompactly on $G/H$.
Suppose that $\Gamma_{red}$ is discrete in~$G$.
Then the following are equivalent:
\begin{enumerate}
  \item\label{item:red-proj-proper-1} $\Gamma_{red}$ acts properly discontinuously and cocompactly on $G/H$,
  \item\label{item:red-proj-proper-2} the natural projection $\Gamma\to\Gamma_{\red}$ is faithful.
\end{enumerate}
\end{lemma}

\begin{proof}
By the Sharpness Theorem~\ref{thm:sharp}, the action of $\Gamma$ on $G/H$ is sharp.
Therefore the limit cone $\mathcal{L}_\Gamma$ of~$\Gamma$ (see Section~\ref{subsec:lim-cone}) meets the limit cone $\mathcal{L}_H$ only in~$0$, and so does the Jordan limit cone $\mathcal{L}^{\lambda}_\Gamma$ (since it is contained in $\mathcal{L}_\Gamma$).
The projection to the Levi factor preserves the Jordan projection, hence $\mathcal{L}^{\lambda}_{\Gamma_{\red}} = \mathcal{L}^{\lambda}_\Gamma$.
Finally, since the Zariski closure of $\Gamma_{\red}$ in~$G$ is reductive, we have $\mathcal{L}_{\Gamma_{\red}} = \mathcal{L}^{\lambda}_{\Gamma_{\red}}$ (see Section~\ref{subsec:lim-cone}), hence $\mathcal{L}_{\Gamma_{\red}}$ meets $\mathcal{L}_H$ only in~$0$, and so the action of $\Gamma_{\red}$ on $G/H$ is sharp (Remark~\ref{rem:sharp-lim-cone}), hence properly discontinuous.

If the natural projection $\Gamma\to\Gamma_{\red}$ is faithful, then $\Gamma$ and~$\Gamma_{\red}$ are isomorphic, hence $\mathrm{vcd}(\Gamma) = \mathrm{vcd}(\Gamma_{\red})$; therefore the action of $\Gamma_{\red}$ on $G/H$ is cocompact by Fact~\ref{fact:vcd}.

If the natural projection $\Gamma\to\Gamma_{\red}$ is \emph{not} faithful, then its kernel $N$ is infinite and virtually polycyclic. Applying the Lyndon--Hochschild--Serre spectral sequence, one gets $\mathrm{vcd}(\Gamma) = \mathrm{vcd}(\Gamma_{\red}) + \mathrm{vcd}(N) > \mathrm{vcd}(\Gamma_{\red})$; therefore the action of $\Gamma_{\red}$ on $G/H$ is \emph{not} cocompact by Fact~\ref{fact:vcd}.
\end{proof}

The following related result will be useful in the sequel.

\begin{fact}[{Auslander, see \cite[Th.\,8.24]{rag72}}] \label{fact:discrete-proj}
Let $L\ltimes U$ be a connected real linear Lie group, where $L$ is reductive with center $Z(L)$ and $U$ is unipotent (and closed, connected, normal).
For any discrete subgroup $\Gamma$ of $L\ltimes U$, the projection of $\Gamma$ to $L/Z(L)$ is discrete.
\end{fact}

Unfortunately, the same does not hold in general for the projection $\Gamma_{\red}$ of $\Gamma$ to~$L$.
However, we will prove the following.

\begin{lemma} \label{lem:reductive-proj-not-discr}
In the setting of Fact~\ref{fact:discrete-proj}, either the projection of $\Gamma$ to $L$ is discrete and faithful, or there exists $u\in U\smallsetminus \{\1_G\}$ and a compact subset $\mathcal{C}$ of $Z(L)\ltimes U$ such that 
\begin{equation} \label{eqn:unipotent-almost-contained}
\{u^n \,|\, n\in \Z\} \subset \mathcal{C}\,(\Gamma \cap (Z(L)\ltimes U)) .
\end{equation}
\end{lemma}

In other words, either the projection of $\Gamma$ to $L$ is discrete and faithful, or $\Gamma$ ``almost contains'' an infinite nilpotent subgroup of $U$.

\begin{proof}
Suppose that the restriction to~$\Gamma$ of the natural projection $\pi_L : L\ltimes U \to L$ is \emph{not} discrete and faithful.
If $\Gamma$ contains an element $u\in U\smallsetminus\{\1_G\}$, then \eqref{eqn:unipotent-almost-contained} holds with $\mathcal{C} = \{\1_G\}$.
So we now assume $\Gamma \cap U = \{\1_G\}$, \ie the restriction of $\pi_L$ to~$\Gamma$ is faithful but not discrete. 

Let $M_0$ be the identity component of the closure of $\Gamma_{red} = \pi_L(\Gamma)$ in~$L$ (for the real topology).
We set $\Gamma_0 \equaldef \Gamma \cap \pi_L^{-1}(M_0)$.
By Fact~\ref{fact:discrete-proj}, the projection of $\Gamma$ to $L/Z(L)$ is discrete, hence $M_0\subset Z(L)$.
Since the restriction of $\pi_L$ to~$\Gamma$ is faithful, the subgroup $\Gamma_0$ is abelian, and its Zariski closure in $Z(L)\ltimes U$ is of the form $M\times U'$, where $M\subset Z(L)$ is the Zariski closure of $M_0$ and $U'\subset U$.

Since $U'$ is unipotent, it is isomorphic to a finite-dimensional real vector space.
The group $M$, on the other hand, is isomorphic to $K_M\times V$, where $K_M$ is compact and $V$ is a vector space.
The projection $\overline{\Gamma}_0$ of $\Gamma_0$ modulo~$K_M$ is a discrete subgroup of $V\times U'$; therefore it spans a vector subspace $W$ of $V\times U'$ and there is a compact subset $\mathcal{C}_W$ of~$W$ such that $\mathcal{C}_W\,\overline{\Gamma}_0 = W$.
Since the projection of $\Gamma$ modulo $U'$ is not discrete, $W$ must intersect $U'$ nontrivially.
Take $u\in U'\cap W \smallsetminus \{0\}$.
Then \eqref{eqn:unipotent-almost-contained} holds with $\mathcal{C} = K_M\mathcal{C}_W$.
\end{proof}

%%%%%%%%%%%%%%%%%%%%%%%%%
\subsection{Proof of Theorem~\ref{thm:group-mfd-SL3}} \label{subsec:SL3-general-case}

From now on, $G_0$ is a connected real simple linear Lie group with restricted root system of type $A_2$.
As in Section~\ref{subsec:SL3-Zariski-dense-case}, we choose a closed Weyl chamber $\aaa^+$ in a Cartan subspace $\aaa$ of the Lie algebra of~$G_0$, let $\iota: \aaa^+ \to \aaa^+$ be the opposition involution, and let $v_0\in\aaa^+$ be a nonzero vector fixed by~$\iota$.
We denote by $p_1$ (\resp $p_2$) the projection of $G_0\times G_0$ to the first (\resp second) factor of $G_0\times G_0$.

\begin{lemma} \label{lem:unipotent-radical-one-factor}
Let $\Gamma$ be a discrete subgroup of $G_0\times G_0$ acting properly discontinuously and cocompactly on $(G_0\times G_0)/\Diag(G_0)$.
Let $\Gamma_U$ be a normal unipotent subgroup of~$\Gamma$.
Then the projection of $\Gamma_U$ to one of the factors of $G_0\times G_0$ is trivial.
\end{lemma}

(Note that Conjecture~\ref{conj:proper-cocompact-Gamma-red} states that $\Gamma_U$ should always be trivial.)

\begin{proof}[Proof of Lemma~\ref{lem:unipotent-radical-one-factor}]
Write the set of simple restricted roots of~$G_0$ as $\Delta = \{\alpha_1,\alpha_2\}$, so that $\Sigma^+ = \{\alpha_1,\alpha_2,\alpha_1+\alpha_2\}$.
There are two conjugacy classes of (nontrivial) unipotent elements of~$G_0$: the class of $\exp(v_1)$ and the class of $\exp(v_1+v_2)$ where $v_i \in (\g_0)_{\alpha_i} \smallsetminus \{ 0\}$.

Assume by contradiction that both $p_1(\Gamma_U)$ and $p_2(\Gamma_U)$ are nontrivial.

Suppose first that $p_i(\Gamma_U)$ is conjugate to a subgroup of $\exp((\g_0)_{\alpha_1} + (\g_0)_{\alpha_1+\alpha_2})$ for both $i=1,2$.
Then all nontrivial elements of $p_1(\Gamma_U)$ and all nontrivial elements of $p_2(\Gamma_U)$ are conjugate to the same element $\exp(v_1)$ with $v_1 \in (\g_0)_{\alpha_1} \smallsetminus \{ 0\}$.
This contradicts the proper discontinuity of the action of $\Gamma$ on $(G_0\times G_0)/\Diag(G_0)$.

Suppose now there exists $i\in\{1,2\}$ such that $p_i(\Gamma_U)$ is \emph{not} conjugate to a subgroup of $\exp((\g_0)_{\alpha_1} + (\g_0)_{\alpha_1+\alpha_2})$.
Then the normalizer of $p_i(\Gamma_U)$ in~$G_0$ is contained in a minimal parabolic subgroup of~$G_0$, and so $p_i(\Gamma)$ is contained in a minimal parabolic subgroup of~$G_0$.
On the other hand, $p_{3-i}(\Gamma)$ is contained in a proper (not necessarily minimal) parabolic subgroup of~$G_0$.
Hence, up to conjugation, $\Gamma$ is contained in $P\times P^*$ for some proper parabolic subgroup $P$ of~$G_0$.
This contradicts Theorem~\ref{thm:PxPopp-structures}.
\end{proof}

Let $\mu = (\mu_1,\mu_2) : G_0\times G_0 \to \aaa^+\times\aaa^+$ and $\lambda = (\lambda_1,\lambda_2) : G_0\times G_0 \to \aaa^+\times\aaa^+$ be the Cartan projection and Jordan projection of $G_0\times G_0$ as in Sections \ref{subsec:Cartan-decomp} and~\ref{subsec:lim-cone}.
Let $\Vert\cdot\Vert$ be a Euclidean norm on~$\aaa$, invariant under the restricted Weyl group, as in Section~\ref{subsec:vector-dist}.

Let $\Gamma$ be a discrete subgroup of $G_0\times G_0$ and let $i\in\{ 1,2\}$.
We say that $\Gamma$ satisfies:
\begin{itemize}
  \item \emph{property $D_i$} if the restriction of $p_i$ to~$\Gamma$ has finite kernel and discrete image,
  \item \emph{property $L_i$} if for any $\varepsilon>0$, there exists $\gamma \in \Gamma$ such that
\[\left\{ \begin{array}{l}
\Vert \lambda_{3-i}(\gamma)\Vert \geq 1 ,\\
\Vert \lambda_i(\gamma)\Vert < \varepsilon \Vert \lambda_{3-i}(\gamma)\Vert ,\\
\Vert \lambda_{3-i}(\gamma) - \iota (\lambda_{3-i}(\gamma)) \Vert \leq \varepsilon \Vert \lambda_{3-i}(\gamma)\Vert ,
\end{array}\right.\]
  \item \emph{property $C_i$} if there exists $(\gamma_n) \in \Gamma^\N$ such that
\[\left\{ \begin{array}{l}
\Vert \mu_{3-i}(\gamma_n)\Vert = n + o(n) ,\\
\Vert \mu_i(\gamma_n)\Vert = o(n) ,\\
\Vert \mu_{3-i}(\gamma_n) - \iota (\mu_{3-i}(\gamma_n)) \Vert = o(n) .
\end{array}\right.\]
\end{itemize}

Property~$L_1$ is equivalent to asking that $\mathcal L_\Gamma^\lambda$ contain $\{0\} \times \R_{\geq 0}\,v_0$, and property~$C_1$ implies the same for $\mathcal L_\Gamma$.
Indeed, for any $x\in\aaa^+$,
\begin{equation} \label{eqn:dist-fixed-locus-iota}
 \Vert x - \iota(x)\Vert = 2\,d_{\aaa}\big(x,\R_{\geq 0}\,v_0\big) .
\end{equation}
Our goal is to prove the following lemma, from which we will conclude as in the proof of Proposition~\ref{prop:Zariski-dense=>discrete-faithful-proj}.

\begin{lemma} \label{lem:property-Di-or-Ci}
Let $\Gamma$ be a discrete subgroup of $G_0\times G_0$ acting properly discontinuously and cocompactly on $G_0$. Then for each $i=1,2$, the group $\Gamma$ satisfies property $D_i$ or~$C_i$.
\end{lemma}

The proof of this lemma will occupy some time, so it seems useful to postpone it and start by explaining how it implies Theorem~\ref{thm:group-mfd-SL3}.

\begin{proof}[Proof of Theorem~\ref{thm:group-mfd-SL3} assuming Lemma~\ref{lem:property-Di-or-Ci}]
If $\Gamma$ satisfies property $D_1$ or~$D_2$, then it satisfies the conclusion of Theorem~\ref{thm:group-mfd-SL3}  by Proposition~\ref{prop:discrete-projection=>lattice}.

Otherwise, by Lemma~\ref{lem:property-Di-or-Ci}, the group $\Gamma$ must satisfy both properties $C_1$ and $C_2$. Let $(\gamma_n)_{n\in\N}$ and $(\gamma'_n)_{n\in\N}$ be as in the definition of properties $C_1$ and $C_2$ respectively.
For any $n\in\N$, by \eqref{eqn:mu-subadd} we have $\Vert \mu_1(\gamma_n\gamma'_n) - \mu_1(\gamma'_n) \Vert \leq \Vert \mu_1(\gamma_n) \Vert = o(n)$, where $\Vert \mu_1(\gamma'_n) \Vert = n + o(n)$ and $d_{\aaa}(\mu_1(\gamma'_n), \R_{\geq 0}\,v_0) = o(n)$ by \eqref{eqn:dist-fixed-locus-iota}; therefore, $\Vert \mu_1(\gamma_n\gamma'_n) - n v_0 \Vert = o(n)$ by the triangle inequality, where we have normalized the $\iota$-fixed vector~$v_0$ so that $\Vert v_0 \Vert = 1$.
Similarly, writing $\Vert \mu_2(\gamma_n\gamma'_n) - \mu_2(\gamma_n) \Vert \leq \Vert \mu_2(\gamma'_n) \Vert = o(n)$, we obtain $\Vert \mu_2(\gamma_n\gamma'_n) - n v_0 \Vert = o(n)$.
The triangle inequality then yields $\Vert \mu_1(\gamma_n\gamma'_n) - \mu_2(\gamma_n\gamma'_n) \Vert = o(n)$, whereas $\Vert \mu_1(\gamma_n\gamma'_n) \Vert = n + o(n)$.
This contradicts the sharpness of the action of $\Gamma$ on $(G_0 \times G_0)/\Diag(G_0)$ given by Theorem~\ref{thm:sharp}.
\end{proof}

To prepare for the proof of Lemma~\ref{lem:property-Di-or-Ci}, let us start by giving some sufficient conditions for satisfying property $C_i$:

\begin{proposition} \label{prop:criteria-property-Ci}
For $i\in\{1,2\}$, let $N_i$ be the kernel of the restriction of $p_i$ to~$\Gamma$.
If one of the following properties hold:
\begin{enumerate}[{\rm (i)}]
  \item\label{item:Ci-1} $\Gamma$ satisfies property~$L_i$, or
  \item\label{item:Ci-2} $N_i$ contains a nontrivial unipotent element, or
  \item\label{item:Ci-3} the natural projection $N_i\to (N_i)_{\red}$ is \emph{not} discrete and faithful, or
  \item\label{item:Ci-4} $N_i$ is abelian and not virtually cyclic,
\end{enumerate}
then $\Gamma$ satisfies property $C_i$.
\end{proposition}

Here, we call reductive projection of a discrete group $\Lambda$ the projection to a Levi factor of its Zariski closure.

\begin{proof}
We treat the case $i=1$; the case $i=2$ is similar.

{\it Suppose \eqref{item:Ci-1} holds:} there exists $(\beta_k) \in \Gamma^{\N}$ such that
\[\left\{ \begin{array}{l}
\Vert \lambda_2\beta_k)\Vert \geq 1 ,\\
\Vert \lambda_1(\beta_k)\Vert < \Vert \lambda_2(\beta_k)\Vert /k ,\\
\Vert \lambda_2(\beta_k) - \iota (\lambda_2(\beta_k)) \Vert \leq \Vert \lambda_2(\beta_k)\Vert /k .
\end{array}\right.\]
Fix $k\in\N$ and let $\ell_k \equaldef \Vert \lambda_2(\beta_k) \Vert \geq 1$.
By \eqref{eqn:lambda}, we have
\[\frac{1}{n} \mu_1(\beta_k^n) \underset{n\to+\infty}{\longrightarrow} \lambda_1(\beta_k) \quad\mathrm{and}\quad \frac{1}{n} \mu_2(\beta_k^n) \underset{n\to+\infty}{\longrightarrow} \lambda_2(\beta_k) .\]
This implies that
\[ \big\Vert \mu_2\big(\beta_k^{\lfloor n/\ell_k\rfloor}\big) \big\Vert = n + o(n) \]
and that for $n$ large enough we have
\[\big\Vert \mu_1(\beta_k^{\lfloor n/\ell_k\rfloor}) \big\Vert \leq \frac{2n}{k} \quad\mathrm{and}\quad \big\Vert \mu_2(\beta_k^{\lfloor n\ell_k\rfloor}) - \iota \mu_2(\beta_k^{\lfloor n/\ell_k\rfloor}) \big\Vert \leq \frac{2n}{k}~.\]
Now, for any $n\in\N$, let $k_n\in\N$ be the supremum of $n$ and of all indices $k$ satisfying the three conditions
\[\left\{ \begin{array}{l}
(1-\frac{1}{k})n \leq \big\Vert \mu_2(\beta_k^{\lfloor n/\ell_k\rfloor}) \big\Vert \leq (1+\frac{1}{k})n ,\\
\Vert \mu_1(\beta_k^{\lfloor n/\ell_k\rfloor}) \Vert \leq 2n/k ,\\
\big\Vert \mu_2(\beta_k^{\lfloor n/\ell_k\rfloor}) - \iota \mu_2(\beta_k^{\lfloor n/\ell_k\rfloor}) \big\Vert \leq 2n/k .
\end{array}\right.\]
Since, for a given $k$, these properties are satisfied for $n$ sufficiently large, we have $k_n\to +\infty$ as $n\to +\infty$.
Then
\[\gamma_n \equaldef \beta_{k_n}^{\lfloor n/\ell_{k_n}\rfloor} \]
satisfies the conditions of property~$C_1$.\\

{\it Suppose \eqref{item:Ci-2} holds:} the group $\Gamma$ contains a nontrivial element of the form $\beta = (\1_{G_0},u)$ where $u\in G_0$ is unipotent.
There exists $v\in\aaa^+\smallsetminus\{0\}$, depending only on the conjugacy class of $u$ in~$G_0$, such that $(\mu_2(\beta^n) - \log(n)\,v)_{n\geq 1}$ is bounded; moreover $\iota(v) = v$ since $u$ and $u^{-1}$ are conjugate in~$G_0$.
Then
\[\gamma_n \equaldef \beta^{\lfloor e^{n/\Vert v \Vert} \rfloor}\ \]
satisfies the conditions of property~$C_1$.\\

{\it Suppose \eqref{item:Ci-3} holds:} the natural projection $N_i\to (N_i)_{\red}$ is \emph{not} discrete and faithful.
By Lemma~\ref{lem:reductive-proj-not-discr}, there is a nontrivial element of~$\Gamma$ of the form $\beta = (\1_{G_0},u)$ where $u\in G_0$ is unipotent, a compact subset $\mathcal{C}$ of~$\{\1_{G_0}\}\times G_0$, and a sequence $(\gamma_n) \in (N_i)^\N$ such that
\[ \beta^{\lfloor e^{n/\Vert v \Vert} \rfloor} \in \mathcal{C}\,\gamma_n\]
for all $n\in\N$.
By \eqref{eqn:mu-subadd}, the Cartan projection of $\gamma_n$ remains at bounded distance from that of $\beta^{\lfloor e^{n/\Vert v \Vert} \rfloor}$, and we conclude as in case~\eqref{item:Ci-2} that $(\gamma_n)_{n\in\N}$ satisfies the conditions of property~$C_1$.\\

{\it Suppose \eqref{item:Ci-4} holds:} $N_1 = \Gamma \cap (\{\1_{G_0}\}\times G_0)$ is abelian and not virtually cyclic.
Write $N_1 = \{\1_{G_0}\}\times\Lambda$ where $\Lambda\subset G_0$, and the Zariski closure of $\Lambda$ as $A_0' \ltimes U_0'$ where $A_0'$ is reductive and $U_0'$ unipotent (and closed, connected, normal).
If the projection of $\Lambda$ to $A_0'$ is not discrete and faithful, then \eqref{item:Ci-3} holds and we conclude as above.
Otherwise, the image of this projection contains a discrete copy of~$\Z^2$.
Since $G_0$ has real rank $2$, we deduce that the Jordan projection of $A_0'$ is the whole Weyl chamber~$\aaa_0^+$ and that the projection of $\Lambda$ is a lattice in~$A_0'$.
Therefore there exist elements of $\Lambda$ whose images under the Jordan projection are at uniformly bounded distance from any vector in $\{0\} \times \aaa_0^+$.
This implies that $\Gamma$ satisfies property~$L_1$, \ie \eqref{item:Ci-1} holds, and we conclude as above.
\end{proof}

We will also need the following result:

\begin{proposition} \label{prop - reductive cyclic centralizer}
Let $\Gamma$ be a discrete subgroup of $G_0\times G_0$ whose action on\linebreak $(G_0\times G_0)/\Diag(G_0)$ is sharp.
Suppose that there exists a central element of~$\Gamma$ of the form $\gamma = (1_{G_0},\gamma_0)$, where $\gamma_0\in G_0$ is semisimple and generates an infinite discrete cyclic subgroup $\langle\gamma_0\rangle$ of~$G_0$.
Then the action of $\Gamma$ on $(G_0\times G_0)/\Diag(G_0)$ is not cocompact.
\end{proposition}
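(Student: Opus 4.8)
The plan is to assume that $\Gamma$ acts cocompactly and derive a contradiction. By Theorem~\ref{thm:sharp}, $\Gamma$ is then finitely generated and sharply embedded in $G_0\times G_0$ with respect to $\Diag(G_0)$; in particular $\mathcal L_\Gamma\cap\Diag(\aaa^+)=\{0\}$, and by Fact~\ref{fact:vcd} one has $\vcd(\Gamma)=\dim(G_0/K_0)$. Since $\gamma=(1_{G_0},\gamma_0)$ is central in $\Gamma$ and nontrivial, the restriction $p_1|_\Gamma$ has $\gamma$ in its kernel, hence is not faithful; and every element of $p_2(\Gamma)$ commutes with $\gamma_0$, so $p_2(\Gamma)\subseteq L:=Z_{G_0}(\gamma_0)$. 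As $\gamma_0$ is semisimple and generates an infinite discrete subgroup, it does not lie in the (finite) center of $G_0$, so $L$ is a \emph{proper} reductive subgroup of $G_0$; it contains a maximal torus of $G_0$, and a direct inspection of the type-$A_2$ groups gives $\dim(L/K_L)<\dim(G_0/K_0)$. In particular $\lambda(\gamma_0)\neq 0$, so $(0,\lambda(\gamma_0))$ and $(0,\iota(\lambda(\gamma_0)))$ both lie in $\mathcal L_\Gamma$, which already makes sharpness precarious.

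First I would dispose of the ``large-projection'' cases. If $p_2|_\Gamma$ had finite kernel and discrete image, then $\Gamma$ would virtually embed as a discrete subgroup of $L$, whence $\vcd(\Gamma)\le\dim(L/K_L)<\dim(G_0/K_0)$, contradicting Fact~\ref{fact:vcd}. Similarly, if $\Gamma$ were Zariski dense in $G_0\times G_0$, Proposition~\ref{prop:Zariski dense => discrete and faithful projection} would force $p_1|_\Gamma$ or $p_2|_\Gamma$ to have finite kernel and discrete image, which we have just excluded. Thus $\Gamma$ is not Zariski dense in $G_0\times G_0$. Writing $\overline{\Gamma}^{Z}=\Lambda\ltimes U$ with unipotent radical $U$, Lemma~\ref{lem - Unipotent radical one factor} shows that $U$ projects trivially to one factor; using Lemma~\ref{lem: reductive projection proper}, Proposition~\ref{prop: Discrete projection => lattice} and Raghunathan's Fact~\ref{fact:discrete-proj}, I would run through the possible Zariski closures, in each case either landing in the configuration $\Gamma\subseteq Q\times Q^{*}$ for a pair of opposite parabolic subgroups of $G_0$ --- excluded by Theorem~\ref{thm:PxPopp-structures} --- or reducing to a situation in which $\ker(p_1|_\Gamma)$ and $\ker(p_2|_\Gamma)$ are controlled enough for Proposition~\ref{prop: Criteria for property Ci} to be applicable.

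In that remaining situation, exploiting that the opposition involution $\iota$ of the root system of type $A_2$ is nontrivial with one-dimensional fixed ray $\R_{\ge 0}v_0$, I would show --- applying Proposition~\ref{prop: Criteria for property Ci} once to $\ker(p_1|_\Gamma)$ (which contains the semisimple element $\gamma_0$) and once to $\ker(p_2|_\Gamma)$ --- that $\Gamma$ satisfies both property $C_1$ and property $C_2$. The conclusion then follows exactly as in the final step of the proof of Theorem~\ref{thm:group-mfd-SL3}: choosing sequences $(\gamma_n)$ and $(\gamma'_n)$ realizing $C_2$ and $C_1$ and using the strong subadditivity \eqref{eqn:mu-subadd}, one obtains $\mu(p_1(\gamma_n\gamma'_n))=\mu(p_2(\gamma_n\gamma'_n))+o\big(\|\mu(p_2(\gamma_n\gamma'_n))\|\big)$, so that $\mu(\gamma_n\gamma'_n)$ approaches $\mu(\Diag(G_0))=\Diag(\aaa^+)$ sublinearly in its norm, contradicting the sharpness of $\Gamma$.

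The main obstacle is this middle step: as in the rank-one theory, the genuine difficulty is to rule out that both projections of $\Gamma$ are faithful with dense, hard-to-control image, and once Zariski density is excluded this forces a somewhat delicate case-by-case analysis of the Zariski closure of $\Gamma$. What makes it tractable is precisely the type-$A_2$ hypothesis: the Jordan projection $\lambda(\gamma_0)$ of the central element, together with its $\iota$-image, already fixes a full direction transverse to the $\iota$-fixed ray, and this is what ultimately lets one manufacture the sequences needed for properties $C_1$ and $C_2$.
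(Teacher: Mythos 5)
Your opening reductions are fine (excluding the case where $p_2|_\Gamma$ has finite kernel and discrete image via $p_2(\Gamma)\subset Z_{G_0}(\gamma_0)$ and Fact~\ref{fact:vcd}, and excluding Zariski density), but the heart of your argument --- producing properties $C_1$ and $C_2$ via Proposition~\ref{prop: Criteria for property Ci} --- does not go through, and this is precisely the difficulty the proposition is meant to overcome. The hypotheses only give you that $\ker(p_1|_\Gamma)$ contains the cyclic group $\langle(1_{G_0},\gamma_0)\rangle$ with $\gamma_0$ \emph{semisimple} and $\langle\gamma_0\rangle$ discrete. In that situation none of the criteria (i)--(iv) of Proposition~\ref{prop: Criteria for property Ci} applies: there is no unipotent element in the kernel, the kernel may be exactly the (virtually cyclic) group $\langle\gamma\rangle$, and its reductive projection is $\langle\gamma_0\rangle$ itself, which is discrete and faithful. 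Nor does $\gamma$ give property $L_1$ directly, since there is no reason for $\lambda(\gamma_0)$ to lie on the $\iota$-fixed ray $\R_{\geq 0}v_0$. Finally, $\ker(p_2|_\Gamma)$ may be trivial, so "applying Proposition~\ref{prop: Criteria for property Ci} to $\ker(p_2|_\Gamma)$" to get $C_2$ has no content. In the paper's architecture, Proposition~\ref{prop - reductive cyclic centralizer} is invoked at the end of the proof of Lemma~\ref{lem: Property Di or Ci} exactly to kill the residual case "cyclic central kernel generated by a semisimple element with discrete, faithful reductive projection" that the $C_i$/$L_i$ machinery cannot reach; so deferring to "a case analysis of Zariski closures as in Lemma~\ref{lem: Property Di or Ci} and Theorem~\ref{thm:group-mfd-SL3}" is also circular.

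The actual proof takes a different route: assuming cocompactness, one passes to $\Gamma'=\Gamma/\langle\gamma\rangle$, realizes $G_0/A_0'$ (where $A_0'$ is a one-parameter group containing $\langle\gamma_0\rangle$ cocompactly inside the center of $Z_0=Z_{G_0}(\gamma_0)$) as a \emph{corank-one} homogeneous space $G'/H'$ with $G'=G_0\times(Z_0/A_0')$ and $H'$ the diagonal copy of $Z_0$, applies Proposition~\ref{prop:cork-1-proper-compact->Ano} to deduce that one of the two projections of $\Gamma'$ is Anosov, and in either case bounds $\cohdim(\Gamma)=\cohdim(\Gamma')+1$ strictly below $\dim(G_0/K)$, contradicting Fact~\ref{fact:vcd}. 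If you want to salvage your approach, you would need a genuinely new argument to place $(0,v_0)$ and $(v_0,0)$ in the limit cone of $\Gamma$, which the given data do not supply.
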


\begin{proof}
Let $Z_0 \equaldef Z_{G_0}(\gamma_0)$ be the centralizer of $\gamma_0$ in~$G_0$: it is a reductive subgroup of~$G_0$, of real rank~$2$, contained in a Levi factor $L_0$ of a proper parabolic subgroup of~$G_0$.
The center of~$Z_0$ contains the infinite discrete subgroup $\langle\gamma_0\rangle$, hence it contains a one-parameter subgroup $A'_0$ containing $\langle\gamma_0\rangle$.
By construction, $\langle\gamma_0\rangle$ is a uniform lattice in~$A'_0$, and the centralizer of $A'_0$ in~$G_0$ is exactly~$Z_0$.
The group $Z_0/A'_0$ is reductive of real rank~$1$.

We can see $G_0/A'_0$ as the homogeneous space $G'/H'$, where $G' = G_0 \times (Z_0/A'_0)$ and $H'$ is the subgroup of~$G'$ which is the image of the diagonal embedding of~$Z_0$.
This is a homogeneous space of reductive type with $\Rrank(G') - \Rrank(H') = 3-2 = 1$.

The group $\Gamma' \equaldef \Gamma/\langle \gamma \rangle \subset G'$ acts properly discontinuously and cocompactly on $G'/H'$.
By Proposition~\ref{prop:cork-1-proper-compact->Ano}, the group $\Gamma'$ is word hyperbolic and the natural inclusion $\Gamma' \hookrightarrow G' = G_0 \times (Z_0/A'_0)$ is Anosov with respect to some proper parabolic subgroup of~$G'$, which means that either the first projection of $\Gamma'$ to~$G_0$ or the second projection of $\Gamma'$ to $Z_0/A'_0$ is Anosov with respect to some proper parabolic subgroup.

Suppose the first projection of $\Gamma'$ to~$G_0$ is Anosov with respect to some proper parabolic subgroup $P_0$ of~$G_0$.
Then $\partial_\infty \Gamma'$ embeds into the flag manifold $G_0/P_0$.
Since the restricted root system of~$G_0$ is of type~$A_2$, we have $\Delta = \{\alpha_1,\alpha_2\}$ where $\alpha_2 = \iota(\alpha_1)$, and so we may assume that $P_0 = P_{\{\alpha_1\}}$ by Remark~\ref{rem:Anosov-theta-iota}.
This bounds the covering dimension of $\partial_\infty \Gamma'$ by $\dim(G_0/P_{\{\alpha_1\}}) = \dim((\g_0)_{\alpha_2}) + \dim((\g_0)_{\alpha_1+\alpha_2})$.
Using Fact~\ref{fact:dim-bound-vcd}, we obtain
\[\cohdim(\Gamma) = \cohdim(\Gamma') + 1 = \dim \partial_\infty \Gamma' + 2 \leq \dim (\g_0)_{\alpha_2} + \dim (\g_0)_{\alpha_1+\alpha_2} + 2. \]

Now suppose the second projection of $\Gamma'$ to $Z_0/A'_0$ is Anosov; in particular, it has finite kernel and discrete image.
The group $\Gamma'$ acts properly discontinuously on the Riemannian symmetric space of $Z_0/A'_0$, and so Fact~\ref{fact:vcd} implies that $\cohdim(\Gamma')$ is bounded by the dimension of this symmetric space, which is the dimension of the symmetric space of $Z_0$ minus one.
But we saw above that $Z_0$ is contained in a Levi subgroup $L_0$ of a proper parabolic subgroup of~$G_0$, hence the dimension of the symmetric space of~$Z_0$ is bounded by that of~$L_0$, hence by $1 + \dim (\g_0)_{\alpha_1}$.
Using Fact~\ref{fact:dim-bound-vcd}, we obtain
\[\cohdim(\Gamma) = \cohdim(\Gamma') + 1 \leq \dim (\g_0)_{\alpha_1}. \]

In either case, we see that $\cohdim(\Gamma) < \dim (\g_0)_{\alpha_1} + \dim (\g_0)_{\alpha_2} + \dim (\g_0)_{\alpha_1+\alpha_2} + 2$, where the right-hand side is the dimension of the Riemannian symmetric space of~$G_0$.
By Fact~\ref{fact:vcd}, this means that the action of $\Gamma$ on $(G_0\times G_0)/\Diag(G_0)$ cannot be cocompact.
\end{proof}

\begin{lemma} \label{lem:ProjectionIrredZariskiClosure}
Let $\Gamma$ be a discrete subgroup of $G_0\times G_0$.
Assume that the Zariski closure $L_0$ of $p_2(\Gamma)$ in~$G_0$ is reductive with compact center. Then $\Gamma$ satisfies property $D_1$ or $L_1$.
\end{lemma}

\begin{proof}
Let $\aaa_L^+$ be a Weyl chamber in a Cartan subspace of the Lie algebra of~$L_0$, and let $\mu_L$, $\lambda_L$ and $\iota_L$ denote the Cartan projection, Jordan projection and opposition involution of $L_0$, respectively.

Assume $\Gamma$ does not satisfy property~$D_1$.
Then for every $\varepsilon>0$, there exists $\gamma \in \Gamma$ such that
\[\Vert \mu(p_1(\gamma)) \Vert \leq \varepsilon \textrm{ and }\Vert \mu_L(p_2(\gamma)) \Vert \geq \frac{1}{\varepsilon}~.\]
Since $p_2(\Gamma)$ is Zariski-dense in~$L$, we can apply Fact~\ref{fact:AMS} and find $f, f'$ in (a finite subset $F$ of) $\Gamma$ and a constant $C$ (independent of $\varepsilon$) such that
$$\Vert \mu_L(p_2(\gamma f \gamma^{-1})) - \mu_L(p_2((\gamma)) - \iota_L (\mu_L(p_2(\gamma))) \Vert \leq C$$
and
$$\Vert \lambda_L(p_2(\gamma f \gamma^{-1} f')) - \mu_L(p_2(\gamma f \gamma^{-1})) \Vert \leq C.$$
By the triangle inequality, we obtain
$$\big\Vert \lambda_L (p_2(\gamma f \gamma^{-1} f')) - \big(\mu_L(p_2(\gamma)) + \iota_L (\mu_L(p_2(\gamma)))\big) \big\Vert \leq 2C .$$
Since $L$ has compact center, the cone $\aaa_L^+$ is acute and $\mu_L(p_2(\gamma)) + \iota_L \mu_L(p_2(\gamma))$ is a nonzero vector fixed by the opposition involution, with $\Vert \mu_L(p_2(\gamma)) + \iota_L \mu_L(p_2(\gamma)) \Vert \geq \Vert \mu_L(p_2(\gamma)) \Vert \geq 1/\varepsilon$.
Then $\gamma f \gamma^{-1} f'$ satisfies the conditions of property $L_1$ (up to replacing $\varepsilon$ by $4C\varepsilon$, for small enough~$\varepsilon$).
\end{proof}

We finally turn to the proof of Lemma~\ref{lem:property-Di-or-Ci}, which will complete the proof of Theorem~\ref{thm:group-mfd-SL3}.

\begin{proof}[Proof of Lemma~\ref{lem:property-Di-or-Ci}]
We prove that $\Gamma$ satisfies property $D_1$ or~$C_1$; the proof for $D_2$ or~$C_2$ is analogous.
It is sufficient to assume that $\Gamma$ does \emph{not} satisfy property~$D_1$, and to prove that it then satisfies property~$C_1$.
Note that if some subgroup of~$\Gamma$ satisfies property~$C_1$, then so does~$\Gamma$.
Therefore, up to replacing $\Gamma$ by a finite-index subgroup, we may assume that it is torsion-free (by the Selberg lemma \cite[Lem.\,8]{sel60}).

Let us write the Zariski closure of $\Gamma$ as $L\ltimes U$, where $U$ is its unipotent radical and $L$ is reductive.
Then the commutator group $[\Gamma,\Gamma]$ is Zariski-dense in $[L,L] \ltimes U'$, where $U'$ is a subgroup of $U$ and $[L,L]$ is semisimple.
Let $\pi: L \ltimes U \to L$ denote the natural projection.
By Fact~\ref{fact:discrete-proj}, the group $\pi([\Gamma,\Gamma])$ is discrete.

If $\pi([\Gamma,\Gamma])$ satisfies property $L_1$, then so does $[\Gamma,\Gamma]$ (because $\pi$ preserves the Jordan projection), and so does $\Gamma$ (the property is obviously stable by enlarging the group).
Hence $\Gamma$ satisfies property~$C_1$ by Proposition~\ref{prop:criteria-property-Ci}.\eqref{item:Ci-1} and we are done.

We now assume that $\pi([\Gamma,\Gamma])$ does \emph{not} satisfy property~$L_1$.
By Lemma~\ref{lem:ProjectionIrredZariskiClosure}, this implies that $\pi([\Gamma, \Gamma])$ satisfies property~$D_1$: the restriction of $p_1$ to $\pi([\Gamma,\Gamma])$ is discrete and faithful (recall that we have assumed $\Gamma$ to be torsion-free).
By Lemma~\ref{lem:unipotent-radical-one-factor}, the kernel of $\pi_{\vert [\Gamma,\Gamma]}$ is contained in $G_0\times \{\1_{G_0}\}$ or in $\{\1_{G_0}\} \times G_0$.
In particular, its first projection is discrete.
If its second projection is nontrivial, then the kernel of $p_1$ contains a nontrivial unipotent element, $\Gamma$ satisfies property $C_1$ by Proposition~\ref{prop:criteria-property-Ci}.\eqref{item:Ci-3}, and we are done.
Otherwise, $p_1$ restricted to $[\Gamma,\Gamma]$ is discrete and faithful, which we assume from now on.

Recall that we have assumed that $\Gamma$ does not satisfy property~$D_1$.
Therefore, $p_1(\Gamma)$ is not discrete or the restriction of $p_1$ to~$\Gamma$ has discrete image but infinite kernel.\\

\noindent {\it First case: the group $p_1(\Gamma)$ is not discrete.}
Let $A_0'$ be the identity component of the closure of $p_1(\Gamma)$ in~$G_0$ (for the real topology).
Fixing some $\gamma \in \Gamma$, for every $\eta\in \Gamma$ such that $p_1(\eta)$ is sufficiently close to $\1_{G_0}$, we have that $p_1([\gamma, \eta])\in p_1([\Gamma,\Gamma])$ is close to $\1_{G_0}$.
Since $p_1([\Gamma,\Gamma])$ is discrete, we get that $\gamma$ commutes with every such $\eta$ and deduce that $A_0'$ is centralized by $p_1(\Gamma)$.
The fact that $p_1(\Gamma)$ centralizes a $1$-parameter subgroup implies that $p_1(\Gamma)$ is either contained in a minimal parabolic subgroup or in a Levi factor of a proper parabolic subgroup of~$G_0$.
Hence $p_2(\Gamma)$ cannot be contained in a proper parabolic subgroup of~$G_0$ (otherwise $\Gamma$ would be conjugate to a subgroup of $P_{\{\alpha_1\}}\times P_{\{\alpha_2\}}$ or $P_{\{\alpha_2\}} \times P_{\{\alpha_1\}}$, contradicting Theorem~\ref{thm:PxPopp-structures}).
Similarly, no finite-index subgroup of $p_2(\Gamma)$ can be contained in a proper parabolic subgroup of $G_0$, hence the Zariski closure of $p_2(\Gamma)$ is reductive with compact center and we conclude that $\Gamma$ satisfies property~$C_1$ by Lemma~\ref{lem:ProjectionIrredZariskiClosure}.\\

\noindent {\it Second case: the restriction of $p_1$ to~$\Gamma$ has discrete image and infinite kernel~$N_1$.}
Since $N_1$ does not intersect $[\Gamma,\Gamma]$, it is central in~$\Gamma$, and in particular abelian.
Let us prove that $N_1$ is not virtually cyclic or that the natural projection $N_1\to (N_1)_{\red}$ is discrete and faithful; then Proposition~\ref{prop:criteria-property-Ci}.\eqref{item:Ci-3}--\eqref{item:Ci-4} will imply that $\Gamma$ satisfies property~$C_1$.

Suppose by contradiction that $N_1$ is virtually infinite cyclic and that the natural projection $N_1\to (N_1)_{\red}$ is discrete and faithful.
We can write $N_1 = \{ \1_{G_0}\} \times \Lambda$ where $\Lambda$ is virtually the cyclic group generated by an element $\gamma = (\1_{G_0},\gamma_0)$.
Let us write the Zariski closure $\overline{\Lambda}^Z$ of~$\Lambda$ as $A_0' \times U_0'$, where $U_0'$ is its unipotent radical and $A_0'$ is reductive.
Note that $\{\1_{G_0} \}\times U'_0$ is a central (hence normal) unipotent subgroup of $L\ltimes U$, hence $\{\1_{G_0} \}\times U'_0 \subset U$. Up to conjugating the Levi factor $L$ of $\overline{\Lambda}^Z$, we can assume that $\{\1_{G_0} \} \times A_0'\subset L$.
Then $\pi_{\vert \langle \gamma \rangle}$ is the projection to the $\{\1_{G_0} \} \times A_0'$ factor, which is discrete by our assumption on~$\gamma$.

Since $p_2(\Gamma)$ centralizes an infinite cyclic subgroup, arguing as in the first case above, we get that $p_1(\Gamma)$ cannot be contained in a proper parabolic subgroup of~$G_0$ (otherwise we would derive a contradiction from Theorem~\ref{thm:PxPopp-structures}).
Thus $p_1(U) = \{\1_{G_0}\}$ (for otherwise $p_1(\Gamma)$ would normalize a nontrivial unipotent subgroup and be contained in a proper parabolic subgroup of~$G_0$), and $p_1 \circ \pi(\Gamma) = p_1(\Gamma)$ is discrete.
Hence the preimage under $\pi$ of a small neighborhood of the identity is contained in the kernel of~$p_1$.
Since $\pi_{\vert \langle \gamma \rangle}$ is discrete and faithful, we conclude that $\pi_{\vert \Gamma}$ is discrete and faithful.

Now, $\pi(\Gamma)$ must still act properly discontinuously and cocompactly on $G_0$ by Lemma~\ref{lem:reductive-projection-proper} while, on the other hand, $\pi(\Gamma)$ centralizes the cyclic subgroup $\langle \pi(\gamma)\rangle$ generated by a semisimple element.
This contradicts Proposition~\ref{prop - reductive cyclic centralizer}.
\end{proof}

%%%%%%%%%%%%%%%%%%%%%%%%%%%%%%%%%%%%%%%%%%%%%%%%%%%
%%%%%%%%%%%%%%%%%%%%%%%%%%%%%%%%%%%%%%%%%%%%%%%%%%%

\end{document}